\theoremstyle{plain}
\newtheorem{thm}{Theorem}[section]
\newtheorem{lem}[thm]{Lemma}
\newtheorem{cor}[thm]{Corollary}
\newtheorem{prop}[thm]{Proposition}
\theoremstyle{remark}
\newtheorem{remark}{Remark}
\theoremstyle{definition}
\newtheorem{defn}[thm]{Definition}
\numberwithin{equation}{section}
\newcommand{\E}{\mathbb{E}}
\newcommand{\C}{\mathbb{C}}
\newcommand{\Z}{\mathbb{Z}}
\newcommand\ttp{\mathtt{p}}
\newcommand{\nn}{\ensuremath{\mathbb{N}}}
\newcommand{\cc}{\ensuremath{\mathbb{C}}}
\newcommand{\Epsilon}{\ensuremath{\mathcal{E}}}
\newcommand{\cd}{\mathcal{D}}
\newcommand{\Leg}[2]{\left( \frac{#1}{#2} \right)}
\newcommand{\bigO}{\textnormal{O}}
\newcommand\OO{\textnormal{O}}
\newcommand{\abs}[1]{{\left| #1\right|}}
\newcommand\mRe{\mathop{\mathrm{Re}}}
\newcommand\mIm{\mathop{\mathrm{Im}}}
\newcommand\ord{\mathop{\mathrm{ord}}}
\newcommand\lcm{\mathop{\mathrm{lcm}}}
\newcommand\sgn{\mathop{\mathrm{sgn}}}
\newcommand\Res{\mathop{\mathrm{Res}}}
\renewcommand\geq{\geqslant}
\renewcommand\leq{\leqslant}
\newcommand\sumast{\mathop{\sum\nolimits^{\ast}}}
\newcommand\prodast{\mathop{\prod\nolimits^{\ast}}}
\newcommand\dd{\mathrm{d}}
\newcommand\exmid{\mathrel{\|}}
\newcommand\sumdy{\mathop{\sum\nolimits^{\mathrm{dy}}}}
\newcommand\wdt{0.23\textwidth}
\title{The shape of quadratic Gauss paths}
\author{Justine Dell}
\address{University of California San Diego (UCSD), Department of Mathematics, 9500 Gilman Drive \#0112, La Jolla, CA 92093, USA}
\email{jsdell@ucsd.edu}
\author{Djordje Mili\'cevi\'c}
\address{Bryn Mawr College, Department of Mathematics, 101 North Merion Avenue, Bryn Mawr, PA 19010, USA}
\curraddr{Institute for Advanced Study, 1 Einstein Drive, Princeton, NJ 08540, USA}
\email{dmilicevic@brynmawr.edu}
\thanks{Research supported in part by the National Science Foundation Grant DMS-1903301, the Simons Foundation Award MPS-TSM-00008085, and by the Charles Simonyi Endowment (D.M.).}
\subjclass[2020]{11L05 Primary, 11L40, 11N64, 60F17, 60G17, 60G50 Secondary}
\keywords{Legendre symbol, Gauss sums, short character sums, random Fourier series, probability in Banach spaces, shapes of exponential sum paths}
\begin{document}

\begin{abstract}
We consider the distribution of quadratic Gauss paths, polygonal paths joining partial sums of quadratic Gauss sums to square-free fundamental discriminant moduli in a dyadic range $[Q,2Q]$. We prove that this striking ensemble converges in law, as $Q\to\infty$, to a random Fourier series we explicitly describe, and we prove a convergence in probability result and a classification result for the limiting shapes that explain the visually remarkable properties of these Gauss paths.
\end{abstract}

\maketitle
\setcounter{tocdepth}{1}
\begin{center}
\begin{minipage}{0.75\textwidth}
\tableofcontents
\end{minipage}
\end{center}

\section{Introduction}
\label{intro}

\subsection{Gauss paths}
Cancellation in exponential sums is a major player across analytic number theory. A fascinating insight into its chaotic formation is provided by polygonal paths joining the consecutive partial sums, which have been studied at least since the work of Lehmer~\cite{Lehmer1976} and Loxton~\cite{Loxton1983,Loxton1985} on exponential sums with quadratic and other analytically defined smoothly varying phases. The pioneering work of Kowalski--Sawin~\cite{KowalskiSawin2016} and subsequent papers including \cite{RicottaRoyer2018,RicottaRoyerShparlinski2020,MilicevicZhang2023,Hussain2022,HussainLamzouri2024} extended this paradigm and studied the distribution of paths arising from oscillatory sums of arithmetic origin, such as Kloosterman and character sums.

\begin{figure}[ht]
    \centering
    \includegraphics[height=0.3\textwidth]{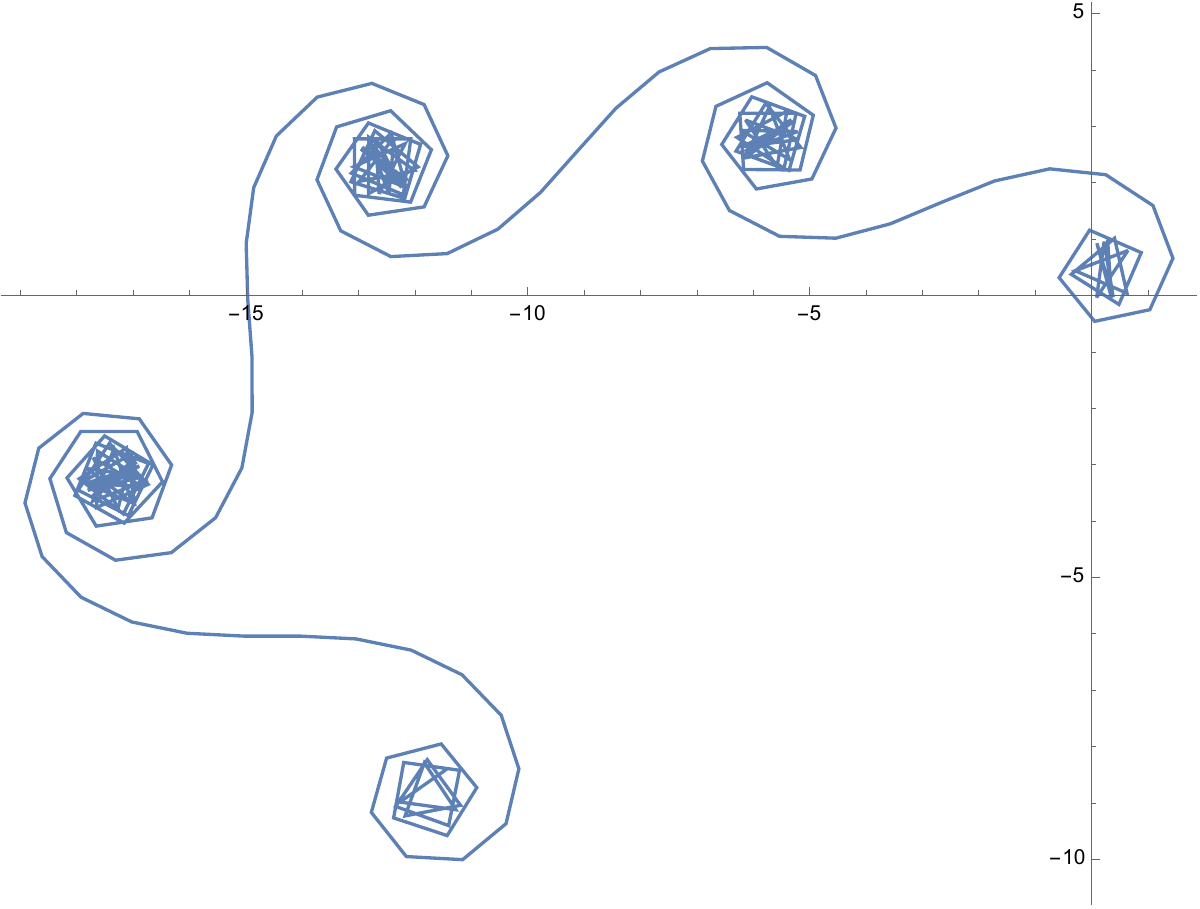}
\qquad
    \includegraphics[height=0.3\textwidth]{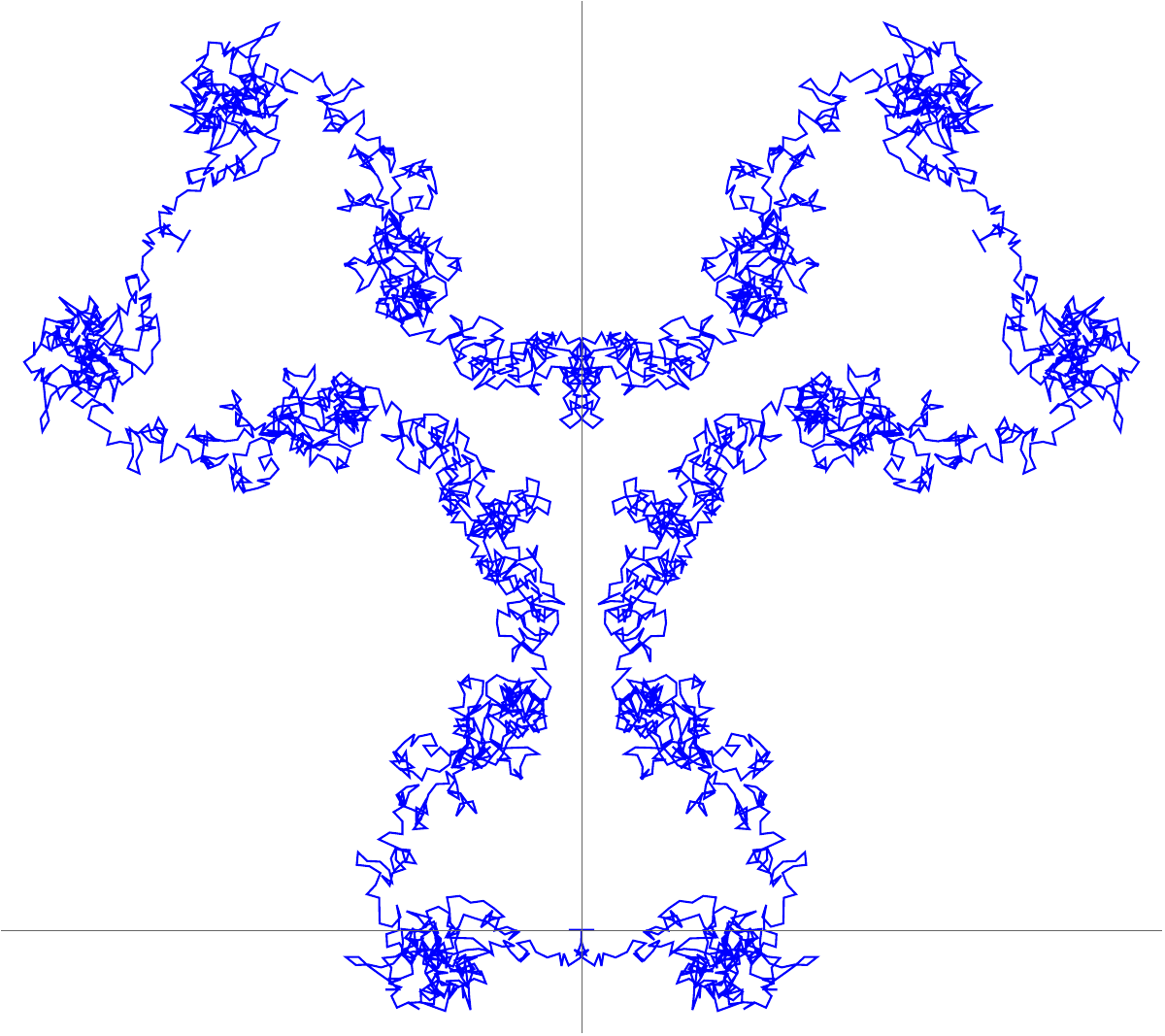}
    \caption{On the left, the path of partial sums of $\sum_{n=1}^Ne(tn^{1/2})$, as considered by Loxton~\protect{\cite{Loxton1983}}, here for $t=1{,}000$, $450\leqslant N\leqslant 650$. On the right, the Kloosterman path corresponding to the partial sums of $\mathrm{Kl}(5,1;3^8)$~\protect{\cite{MilicevicZhang2023}}.}
    \label{fig: earlypaths}
\end{figure}

A natural family of character paths arises from the normalized quadratic Gauss sums to square-free fundamental discriminant moduli:
\begin{equation}
\label{gausssum-def}
G(c)=\frac1{\sqrt{c}}\sum_{m=1}^c\Big(\frac mc\Big)e^{2\pi im/c}.
\end{equation}
We term the polygonal paths joining the partial sums of $G(c)$ Gauss paths. Having perhaps been conditioned to expect arithmetically defined sums such as Kloosterman sums to exhibit fractal-like behavior seen in Figure~\ref{fig: earlypaths}, the pictures of several (mostly randomly chosen) Gauss paths to large moduli shown in Figure~\ref{fig: gausspaths} might take one for a surprise.

\begin{figure}[ht]
    \centering
    \includegraphics[height=0.28\textwidth]{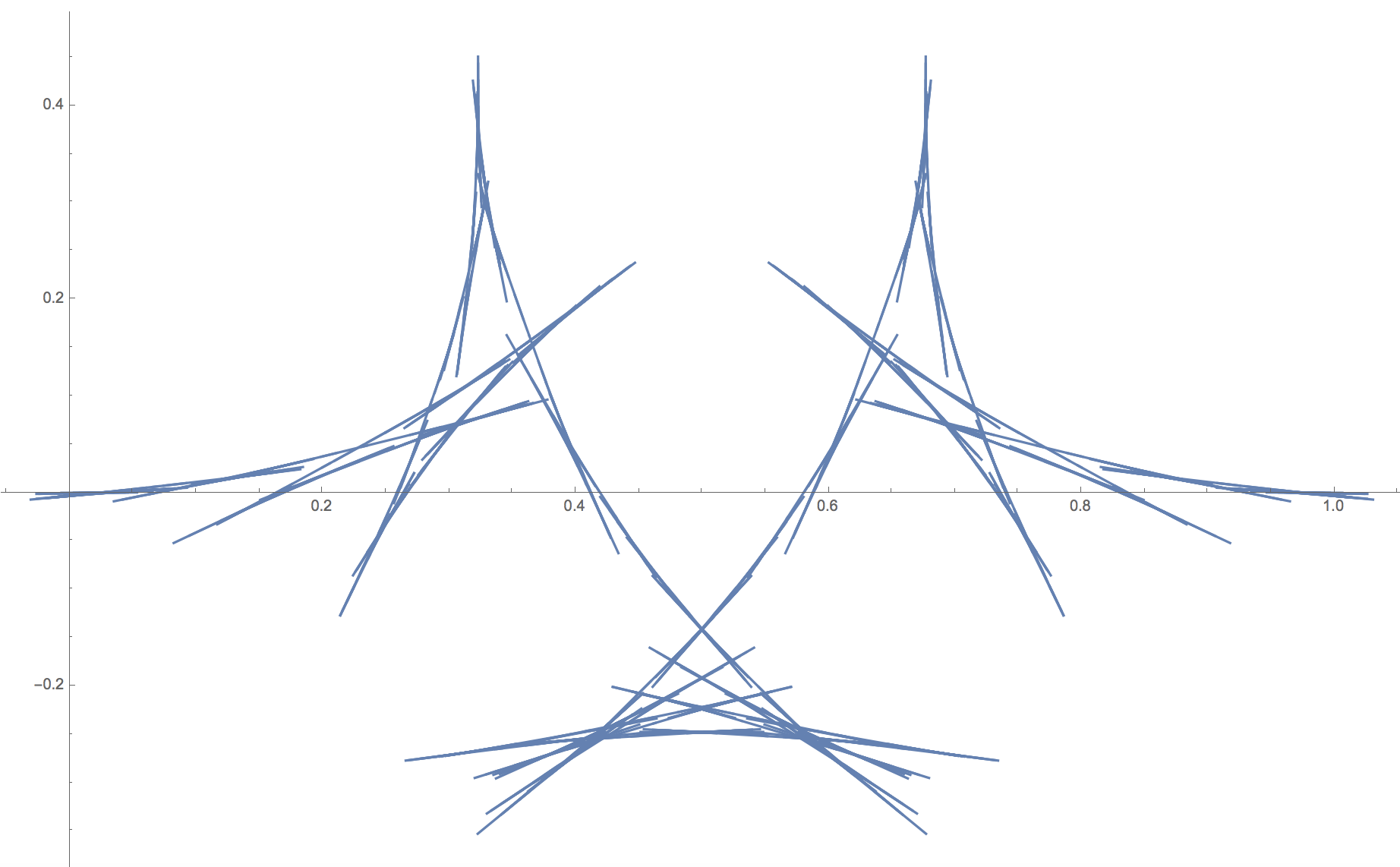}
\qquad    
    \includegraphics[height=0.28\textwidth]{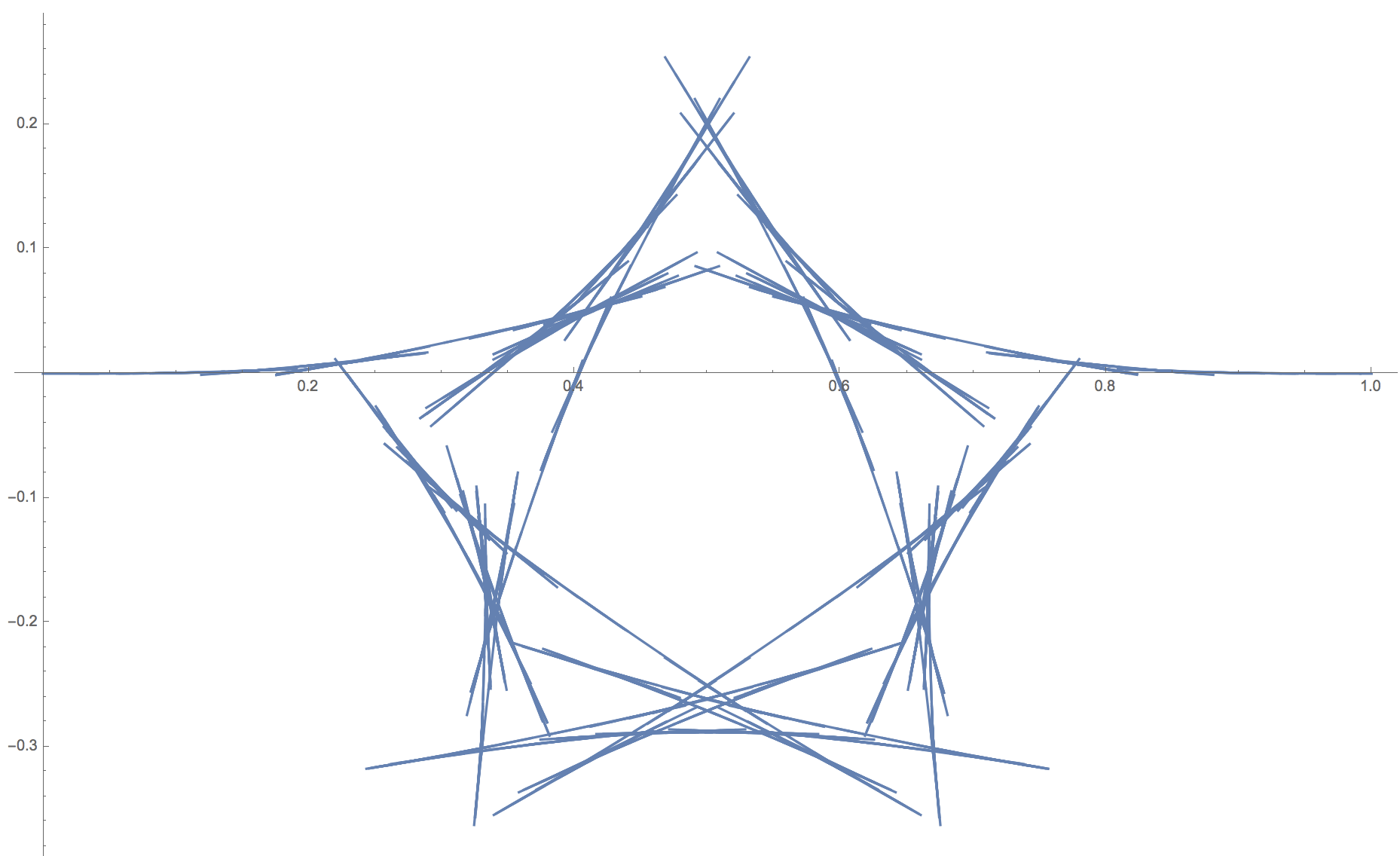}
    \caption{The quadratic Gauss paths for $c = 163841$ and $c = 224737$.}
    \label{fig: gausspaths}
\end{figure}

Pictures don't lie, of course, and provide additional food for thought when, after even just a modest amount of experimentation, stunningly similar pictures start showing up; see Figure~\ref{fig: moregausspaths}.

What is going on here? We can observe very long stretches in which the Legendre symbol $\big(\frac mc\big)$ has a definite (statistical) preference for one of the $\pm 1$ signs over the other, and it is easy to believe that this behavior is guided by $c$ falling in certain residue classes to some small moduli. But why would only a few small moduli seemingly matter, and what is the deal with the sharp reversals?

\begin{figure}[ht]
    \centering
    \includegraphics[height=0.28\textwidth]{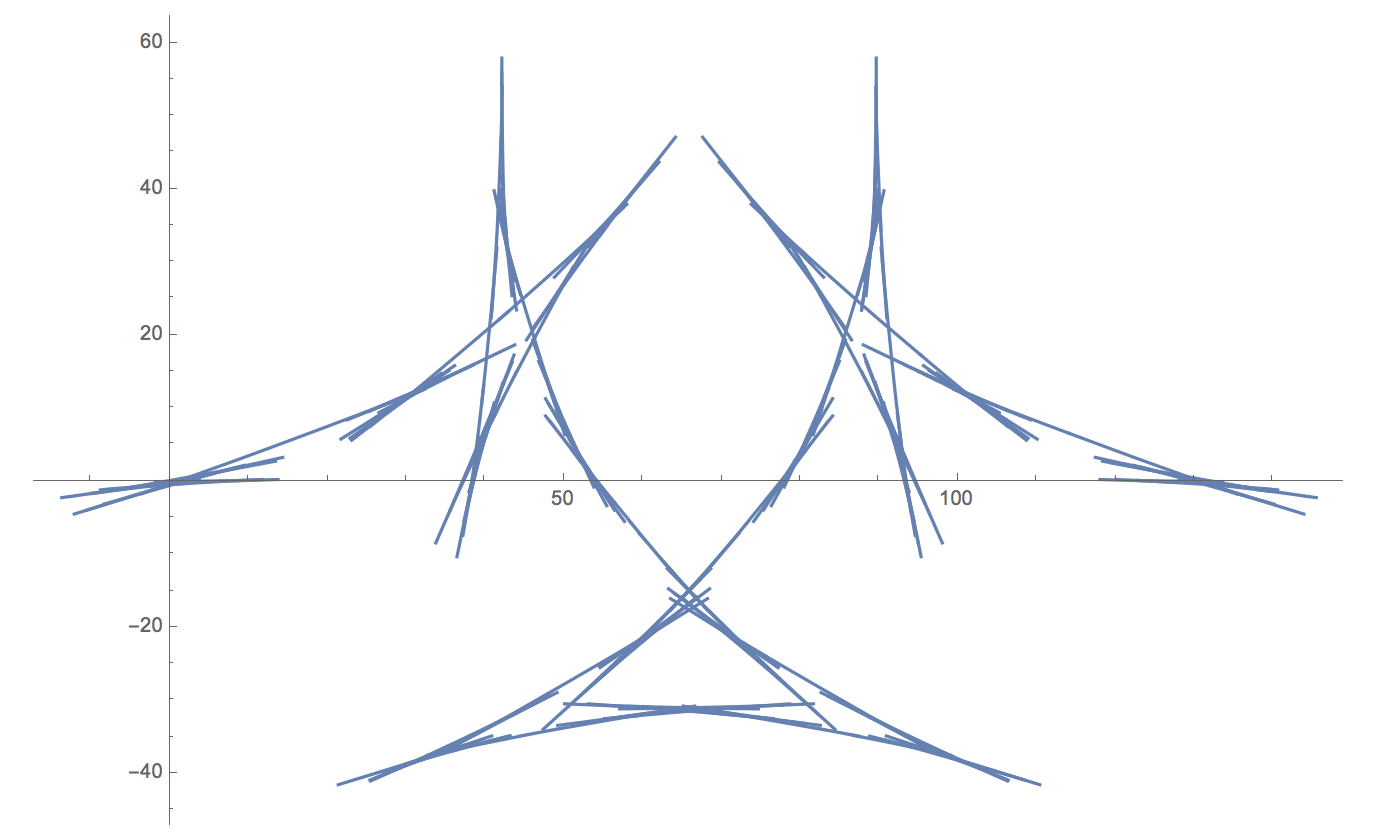}
\qquad    
    \includegraphics[height=0.28\textwidth]{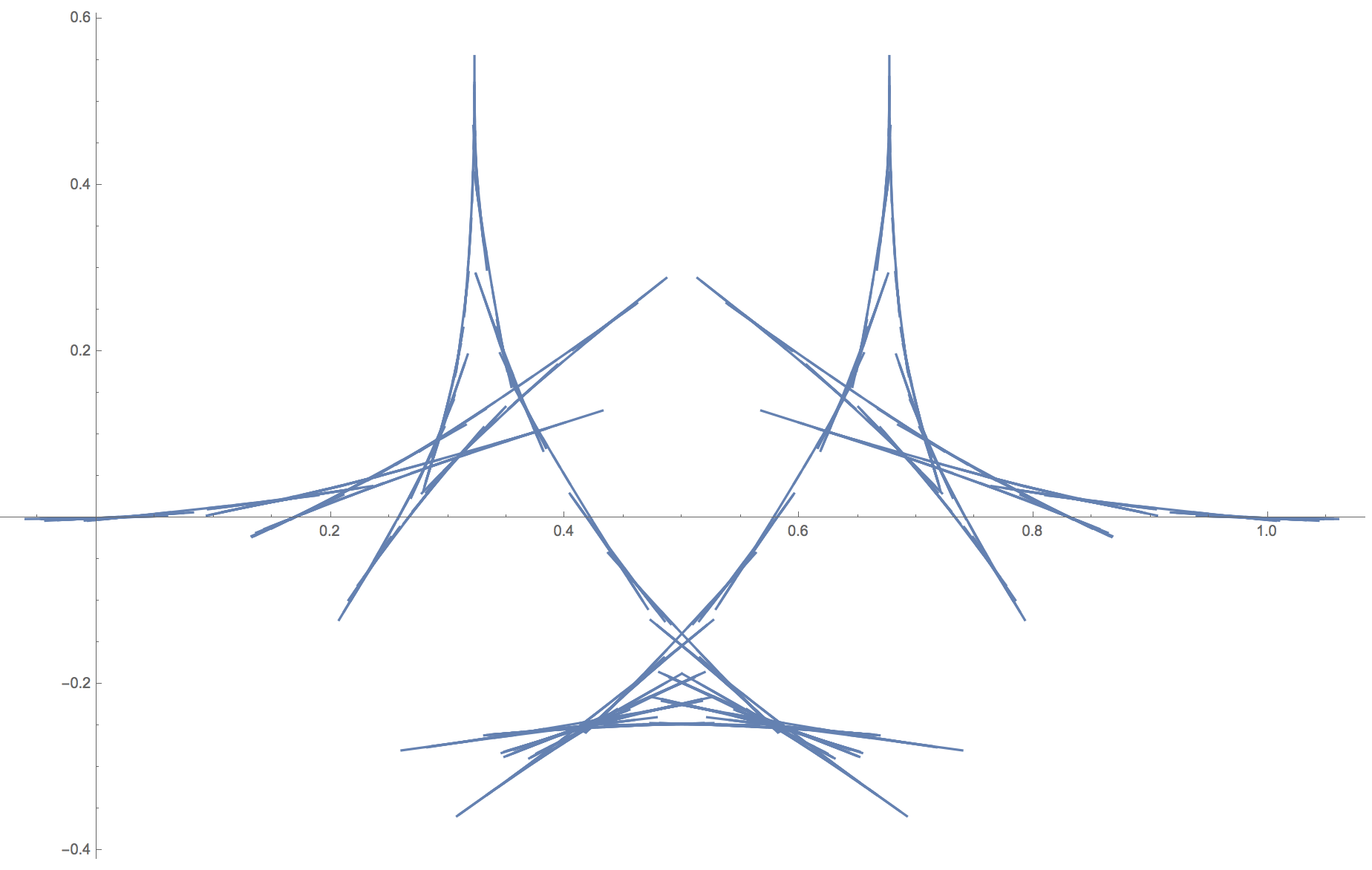}
    \caption{The quadratic Gauss paths for $c = 17393$ and $c = 128201$.}
    \label{fig: moregausspaths}
\end{figure}

\subsection{Limiting distribution}
Let $\cd$ denote the set of positive, square-free integers $c$ for which $c \equiv 1 \bmod{4}$. We now formally define the \emph{quadratic Gauss path} $G(t; c)$ for $c\in\cd$.
For $t = {j}/{(c-1)}$ for some $j \in [0, c-1]\cap\mathbb{Z}$, we let $G(t; c)=g_j$, where $g_j = c^{-1/2}\sum_{m=1}^{j}\Leg{m}{c}e_c(m)$ is the $j$th partial sum of \eqref{gausssum-def}. For ${(j-1)}/{(c-1)} < t < {j}/{(c-1)}$ ($j \in [1, c-1]\cap\mathbb{Z}$), we obtain $G(t; c)$ by linearly interpolating between $g_j$ and $g_{j+1}$. Then, $G(\cdot; c): [0,1] \to \C$ is a continuous function which maps $t \mapsto G(t; c)$.

For every $Q\geqslant 3$, we may consider the sample space $\cd_Q:=[Q, 2Q] \cap \cd$ with the uniform probability measure $m_Q$, and the map
\[ \cd_Q \to C^0([0,1], \C),\quad  c \mapsto G(\cdot; c)\] 
can be viewed as a $C^0([0, 1], \C)$-valued random variable $G_Q$ on this probability space.
We collect some relevant background on probability in Banach spaces in \S\ref{prob-Banach} for convenient reference. Note that $|\mathcal{D}_Q|\sim Q/(3\zeta(2))$; see \eqref{DQ-comput}.

Our first main result, Theorem~\ref{thm1}, establishes that the random variables $(G_Q)$ converge in law, as $Q\to\infty$, to a specific $C^0([0,1],\mathbb{C})$-valued random variable $G^{\ast}$, which we are about to describe. It incorporates completely multiplicative random variables $X_n$, which are the same as those used in the probabilistic model for the Jacobi symbols $(d/n)$ (as $d$ ranges through all fundamental discriminants) described in \cite{GranvilleSoundararajan2003}. For every odd prime $p$, we let $X_p$ be the random variable which takes the value 0 with probability $1/(p+1)$ and takes the values 1 and $-1$ each with probability $p/2(p+1)$; in other words, it is the identity random variable on the sample space $\{0, 1, -1\}$ equipped with the measure $\lambda_p$ defined by 
\begin{equation}
 \lambda_p(1) = \frac{p}{2(p+1)}, \quad \lambda_p(-1) = \frac{p}{2(p+1)}, \quad \lambda_p(0) = \frac{1}{p+1}. \label{eq: lambda-p} \end{equation}
For $p=2$, we let $X_2$ be the usual Bernoulli random variable; that is, the identity random variable on $\{1,-1\}$ with the measure
\begin{equation}
 \lambda_2(1)=\lambda_2(-1)=\frac12.
\label{eq: lambda-2}
\end{equation}
Let $(X_p)$ be a sequence of independent random variables of laws $\lambda_p$, and let $X_m$ be completely multiplicative random variables defined, for $m = \pm p_1^{a_1} \cdots p_k^{a_k}$, as
\begin{equation}
\label{Xm-def}
X_m = X_{p_1}^{a_1} \cdots X_{p_k}^{a_k}.
\end{equation}

\begin{thm}
\label{thm1}
Let $(X_m)$ be a completely multiplicative sequence of random variables of law given by \eqref{eq: lambda-p}--\eqref{Xm-def}.
\begin{enumerate}
\item\label{thm1-item1}
The random Fourier series
\begin{equation}
\label{limiting-random-partial-sums}
G^*(t) := \sum_{n \neq -1, 0} X_n \left( \frac{e((n+1)t) -1}{2 i \pi (n+1)} \right) + t
\end{equation}
converges almost surely to a continuous function and so defines a $C^0([0, 1], \cc)$-valued random variable $G^{\ast}$.
\item\label{thm1-item2}
The sequence of random variables $(G_Q)$ converges in law to $G^{\ast}$ as $Q\to\infty$.
\end{enumerate}
\end{thm}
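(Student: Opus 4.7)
My plan is to handle both parts of the theorem via a Polya--Vinogradov-style Fourier decomposition of the Gauss path. Expanding the indicator $\mathbf{1}_{[1,j]}$ on $\zz/c\zz$ in finite Fourier series and using the classical evaluation $\tau(\chi_c)=\sqrt c$ for the primitive real character $\chi_c=\Leg{\cdot}{c}$ (valid since $c\in\cd$ forces $c\equiv 1\pmod 4$), then reindexing $h\mapsto -(n+1)$ and using $\chi_c(-1)=1$, one obtains for $t=j/(c-1)$
\[
G(t;c)=t+\sum_{n\neq -1,0}\Leg{n}{c}\,\frac{\alpha_j(n)}{c},\qquad \frac{\alpha_j(n)}{c}\xrightarrow[c\to\infty]{}\frac{e((n+1)t)-1}{2\pi i(n+1)},
\]
with convergence uniform for $|n|\ll c^{1/2}$. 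This identifies $G(t;c)$ with a truncated discrete analogue of \eqref{limiting-random-partial-sums}, with the Legendre symbols $\Leg{n}{c}$ playing the role of the $X_n$'s; the tails are controlled deterministically by Parseval on $\zz/c\zz$, uniformly in $c$.

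For part~\eqref{thm1-item1}, the coefficients $1/(n+1)$ are square-summable, so the symmetric partial sums $S_N(t):=\sum_{0<|n+1|\le N}X_n(e((n+1)t)-1)/(2\pi i(n+1))$ form an $L^2$-bounded martingale converging a.s.\ in $L^2([0,1])$. To upgrade to a.s.\ uniform convergence (and hence continuity of $G^{\ast}$) I would invoke Kahane's subgaussian framework: each dyadic block $S_{2N}-S_N$ is a linear combination of the independent bounded random variables $X_p$ for $p\in(N,2N]$ plus a controlled remainder from composite indices, so a Salem--Zygmund chaining bound yields $\sup_t|S_{2N}-S_N|\ll \sqrt{\log N}/\sqrt N$ with exponential concentration, and Borel--Cantelli produces uniform Cauchyness on $[0,1]$.

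For part~\eqref{thm1-item2} I would follow the standard two-step program of finite-dimensional convergence plus tightness. For fixed $t_1,\ldots,t_r$, the joint moments $\E_{c\in\cd_Q}\prod_j G_Q(t_j)^{a_j}\overline{G_Q(t_j)}^{b_j}$ expand, via the Fourier formula above truncated at some $N$, into character averages
\[
\frac{1}{|\cd_Q|}\sum_{c\in\cd_Q}\Leg{M}{c},\qquad M=\prod_i n_i^{\epsilon_i}.
\]
By quadratic reciprocity (using $c\equiv 1\pmod 4$), each $\Leg{M}{c}$ becomes a Dirichlet character in $c$ of conductor dividing $|M|$; Möbius-inversion squarefree sieving distributes $c$ uniformly over residue classes and recovers in the limit precisely the joint moment of $X_M=\prod X_{n_i}^{\epsilon_i}$ under $\bigotimes_p\lambda_p$, matching the Granville--Soundararajan model built into \eqref{eq: lambda-p}--\eqref{Xm-def}. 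Sending $Q\to\infty$ first and $N\to\infty$ second yields convergence of all finite-dimensional distributions.

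Tightness in $C^0([0,1],\cc)$ is where I expect the heaviest work, and I view it as the main obstacle. Kolmogorov's criterion demands $\E_{c\in\cd_Q}|G_Q(t)-G_Q(s)|^{2k}\ll|t-s|^k(\log(1/|t-s|))^{O_k(1)}$ uniformly in $Q$ for some $k\ge 2$. The increment is essentially a short character sum of length $\sim c|t-s|$, whose $2k$-th moment again reduces to $|\cd_Q|^{-1}\sum_c\Leg{M}{c}$ with now potentially very large $M$. The delicate point is separating the diagonal contribution (which supplies the desired $|t-s|^k$ scaling coming from the Plancherel-like square function of the deterministic kernel) from the off-diagonal cancellation, extracted via Polya--Vinogradov applied to each nontrivial $M$ together with a large-sieve or higher-moment mean-value input strong enough to survive the supremum over the pair $(s,t)$. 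Once tightness is established, Prokhorov combined with the finite-dimensional convergence upgrades the statement to convergence in law of $(G_Q)$ to $G^{\ast}$, completing the proof.
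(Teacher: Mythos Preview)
Your outline for part~\eqref{thm1-item2} is essentially the paper's own program: the method of moments for finite-dimensional convergence (reducing to averages of $\Leg{M}{c}$ over $c\in\cd_Q$, separating the square contribution from the rest), followed by Kolmogorov's tightness criterion. The paper carries out the off-diagonal estimate using Heath-Brown's quadratic large sieve rather than P\'olya--Vinogradov alone, and for tightness it splits into three ranges of $|t-s|$, combining a Burgess-type bound for mixed character sums (Proposition~\ref{Chang-Burgess}) with a tailored large-sieve inequality (Lemma~\ref{HB-var}); your sketch is vaguer but aims at the same objects.

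Part~\eqref{thm1-item1}, however, has a genuine gap. The $X_n$ are \emph{not} independent, nor martingale differences in the filtration you implicitly use: for composite $n$ (say $n=2^k$), $X_n$ is completely determined by the small primes already present in the filtration, and in particular $\E[X_4\mid X_2]=X_2^2$ is a.s.\ equal to $1$, not $0$. Hence the symmetric partial sums $S_N$ are not a martingale, and the dyadic block $S_{2N}-S_N$ is not ``a linear combination of the independent $X_p$ for $p\in(N,2N]$ plus a controlled remainder'': the composite indices in $(N,2N]$ are the bulk of the block and are strongly correlated with every earlier block. The Kahane/Salem--Zygmund chaining machinery you invoke requires exactly the independence (or orthogonality) you do not have, so the claimed subgaussian bound $\sup_t|S_{2N}-S_N|\ll\sqrt{\log N}/\sqrt N$ is unsupported.

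The paper's fix is to abandon the additive ordering entirely and pass to the multiplicative one: summing first over $n$ with $P^+(|n|)\le y$ and studying the increments $R_{y_1,y_2}=S_{y_2}-S_{y_1}$. Each such increment factors (as in \eqref{Ry1y2-decomp}) into a $y_1$-smooth outer sum times an inner sum over integers $m$ with $P^-(m)>y_1$; it is this inner sum, built from the \emph{independent} $X_p$ with $p>y_1$, that admits the high-moment bound of Proposition~\ref{prop: small-sum} and drives the Borel--Cantelli argument. Without a decomposition that isolates genuinely independent randomness, your approach to part~\eqref{thm1-item1} does not go through.
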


\subsection{Convergence in probability and the atlas of shapes}
We now turn our attention to the visually observed sensitivity of the Gauss paths $G(\cdot;c)$ on congruence properties of $c$ to small moduli. To this end, for a parameter $Z\geqslant 1$, and let $\bm{\epsilon}_Z=(\epsilon_p)_{p\leqslant Z}$ denote any fixed choice of $\epsilon_p\in\{-1,0,1\}$ over primes $p\leqslant Z$ (with $\epsilon_2\in\{-1,1\}$).

Now, on the one hand, we may consider the sample space
\begin{equation}
\label{DQCQ-def}
\cd_{Q,\bm{\epsilon}_Z}=\Big\{n\in\cd_Q:\Big(\frac pn\Big)=\epsilon_p\text{ for every }p\leqslant Z\Big\}
\end{equation}
equipped with the uniform probability measure and the $C^0([0,1],\mathbb{C})$-valued random variable $G_{Q,\bm{\epsilon}_Z}:c\mapsto G(\cdot;c)$; in other words, the random variable $G_{Q,\bm{\epsilon}_Z}$ is obtained from $G_Q$ by conditioning on the event that $(p/c)=\epsilon_p$ for all $p\leqslant Z$. On the other hand, letting $(X_p)_{p>Z}$ be a sequence of independent random variables of laws $\lambda_p$ as in \eqref{eq: lambda-p}, we may consider the sequence of  completely multiplicative random variables defined as
\[ X_{m,\bm{\epsilon}_Z}=\prod_{\substack{p^{a_p}\exmid m\\p\leqslant Z}}\epsilon_p^{a_p}\prod_{\substack{p^{a_p}\exmid m\\ p>Z}}X_p^{a_p}. \]
In other words, the random variables $X_{m,\bm{\epsilon}_Z}$ are obtained as in \eqref{Xm-def}, but changing the law of $\lambda_p$ for $p\leqslant Z$ to the delta mass on $\epsilon_p$. Then we may consider the random Fourier series
\begin{equation}
\label{Gast-def}
G^{\ast}_{\bm{\epsilon}_Z}(t)=\sum_{n\neq -1,0}X_{n,\bm{\epsilon}_Z}\left( \frac{e((n+1)t) -1}{2 i \pi (n+1)} \right) + t
\end{equation}
and the deterministic Fourier series
\begin{equation}
\label{Gsharp-def}
G^{\sharp}_{\bm{\epsilon}_Z}(t)=\mathbb{E}(G^{\ast}_{\bm{\epsilon}_Z}(t))=\sum_{\substack{n\neq -1,0\\ p\mid n\,\Rightarrow\,p\leqslant Z}}\prod_{p^{a_p}\exmid n}\epsilon_p^{a_p}\left( \frac{e((n+1)t) -1}{2 i \pi (n+1)} \right) + t.
\end{equation}

\begin{figure}[ht]
    \centering
\[ \begin{matrix}
\includegraphics[width=\wdt]{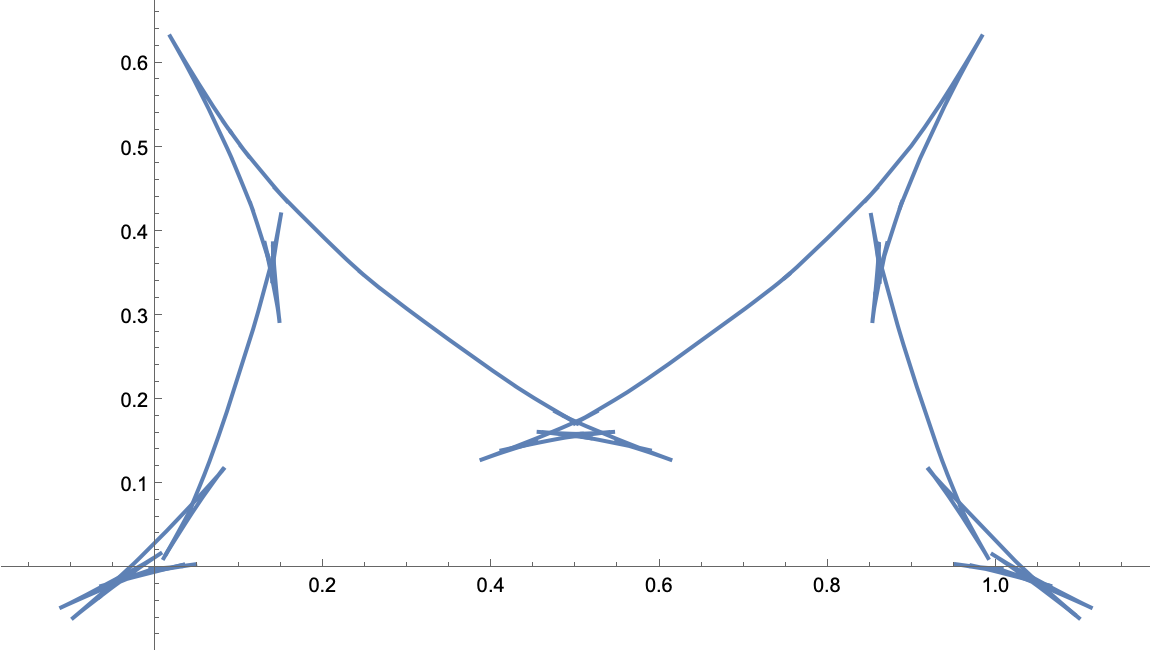} &
\includegraphics[width=\wdt]{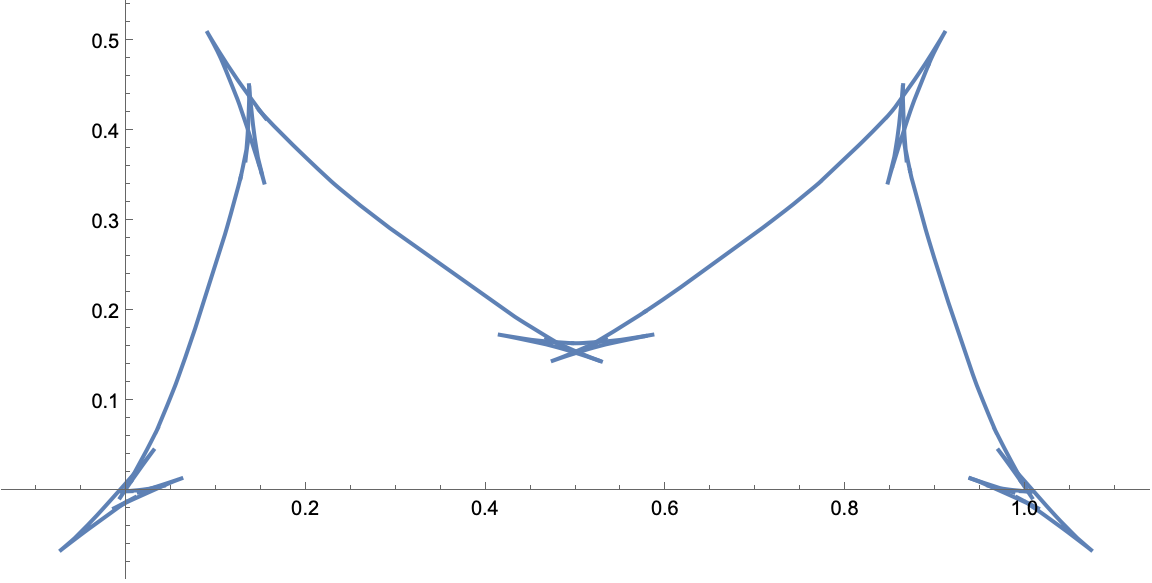} &
\includegraphics[width=\wdt]{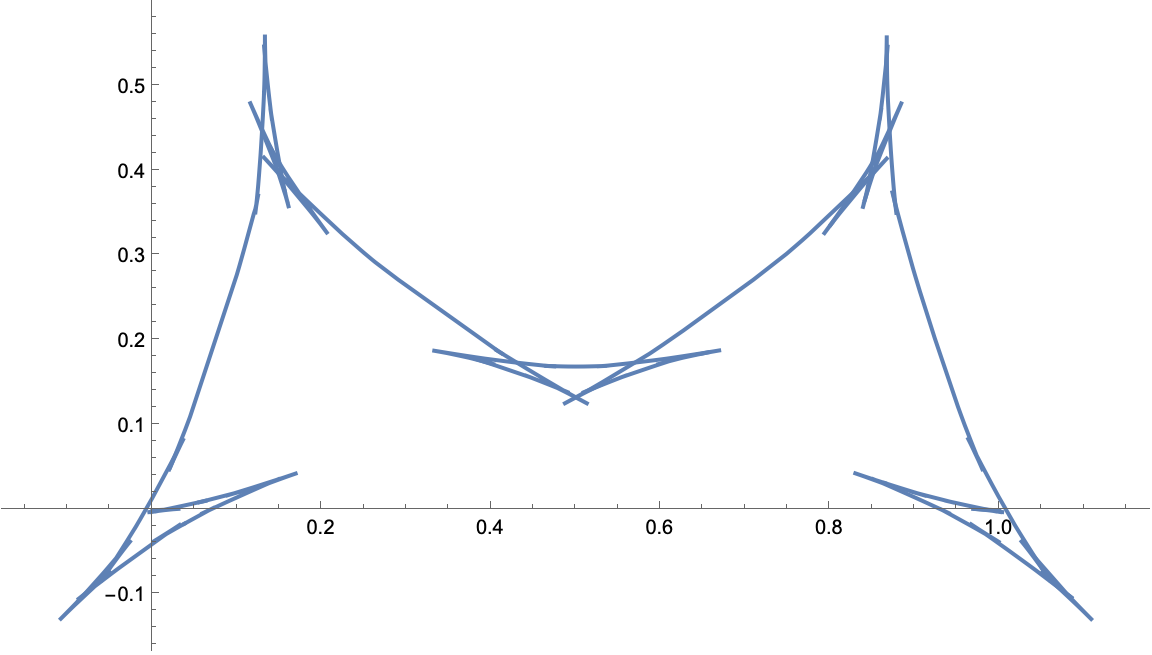}\\
\includegraphics[width=\wdt]{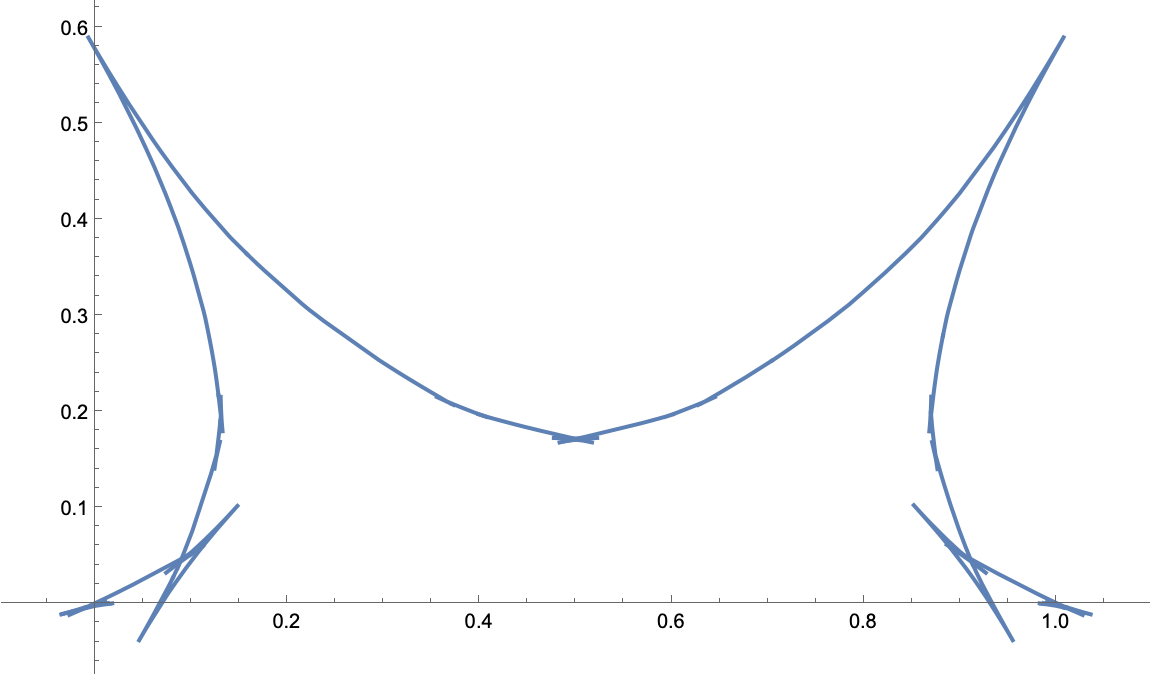} &
\includegraphics[width=\wdt]{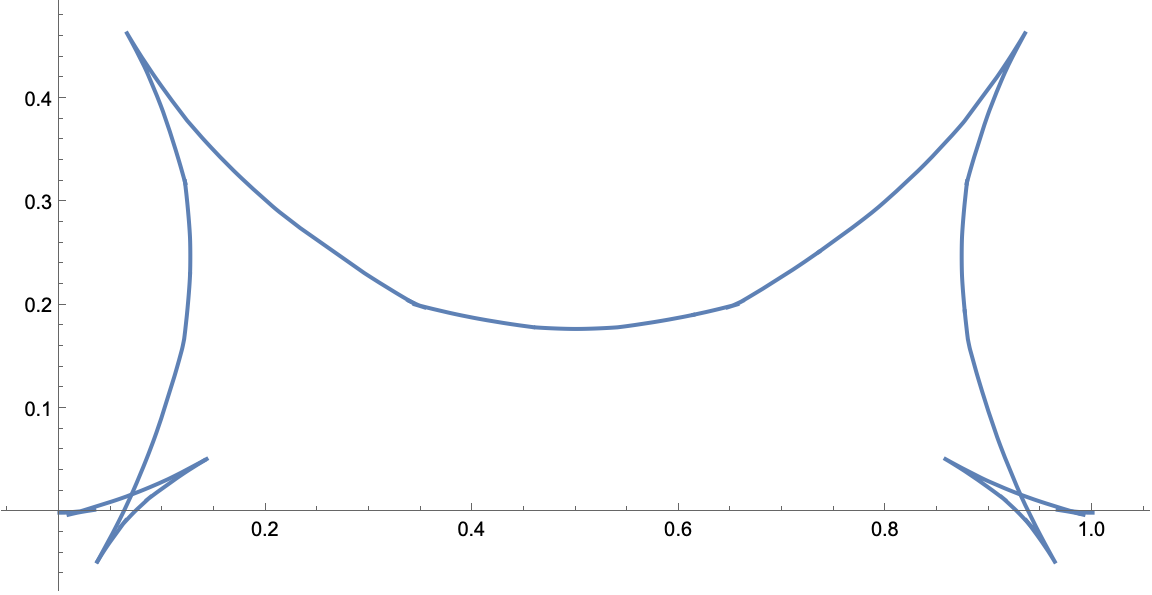} &
\includegraphics[width=\wdt]{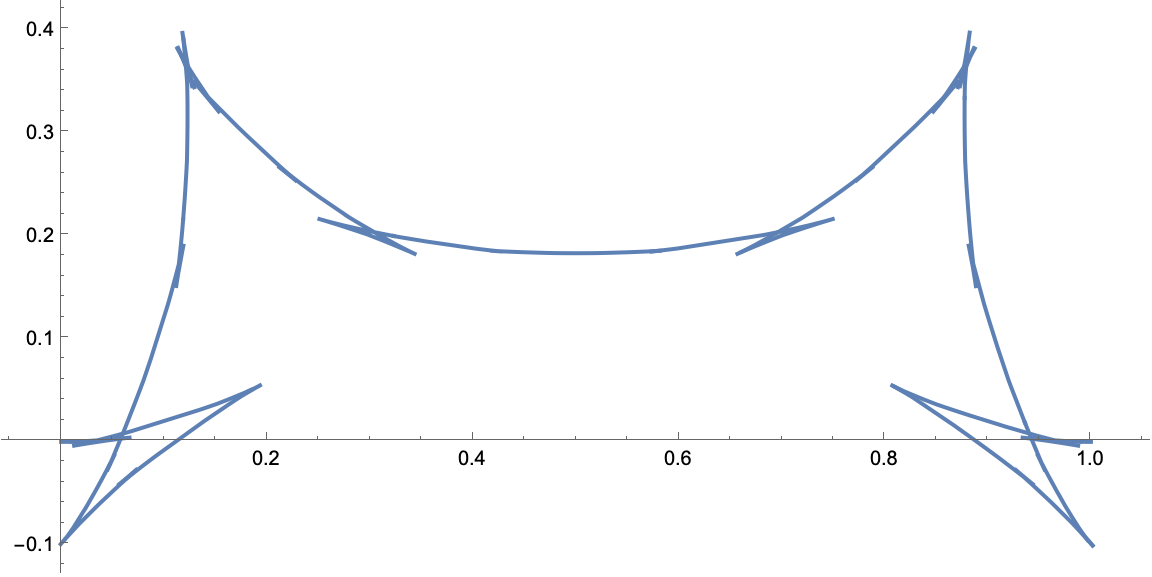}\\
\includegraphics[width=\wdt]{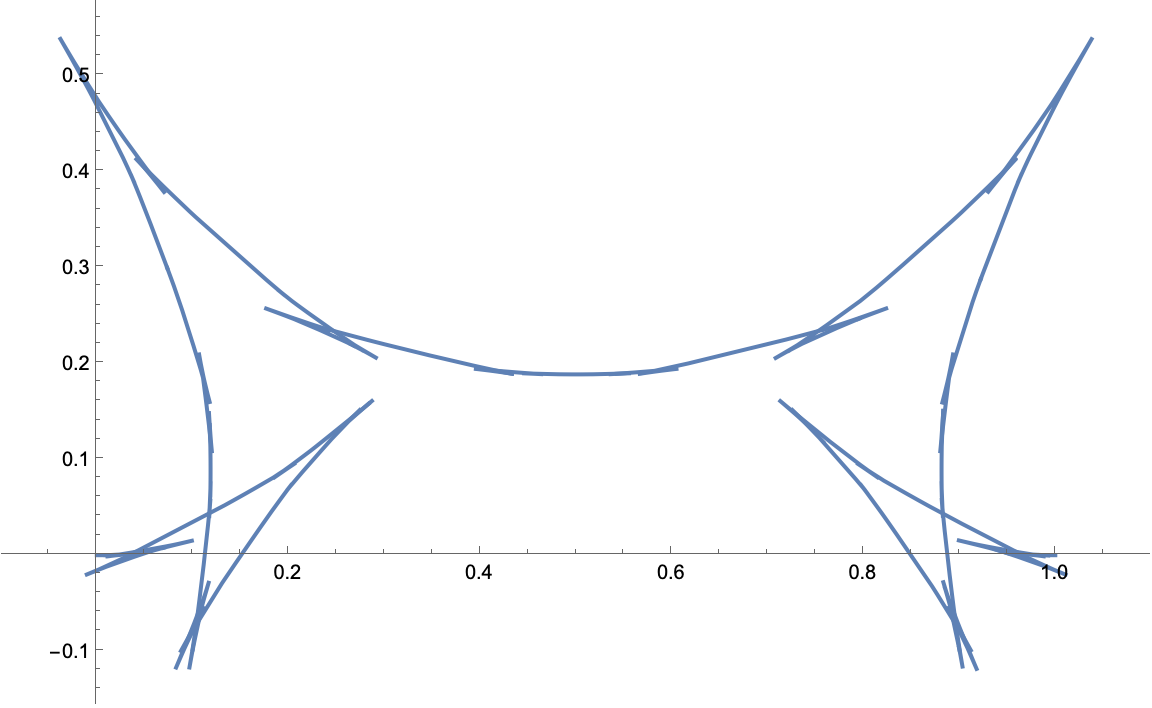} &
\includegraphics[width=\wdt]{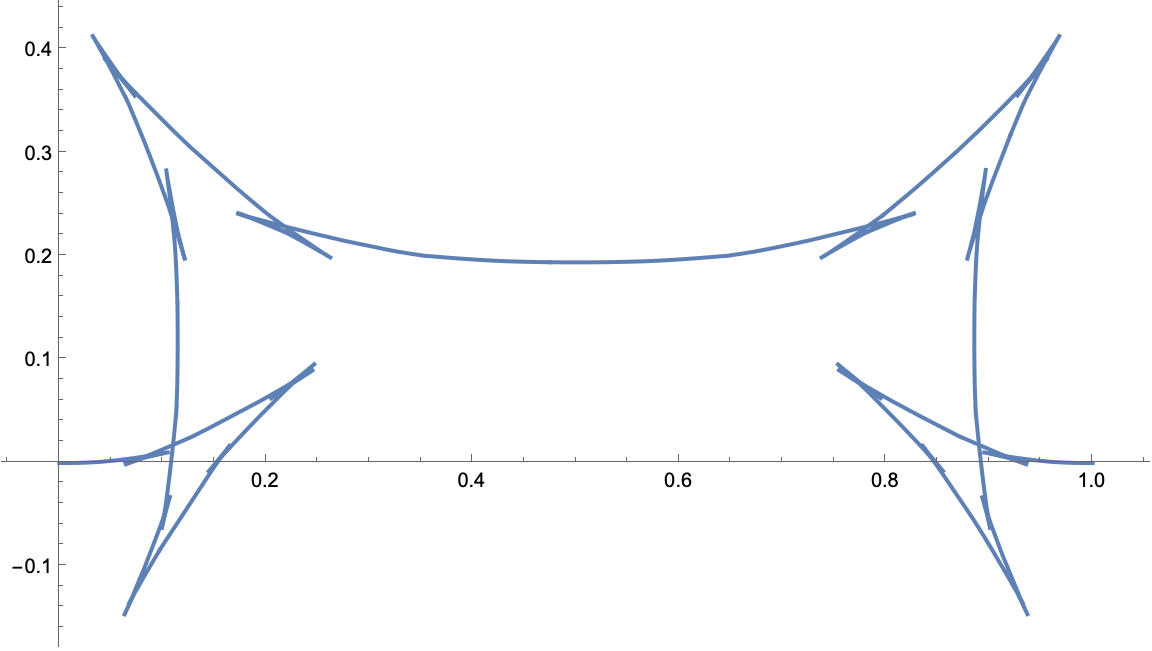} &
\includegraphics[width=\wdt]{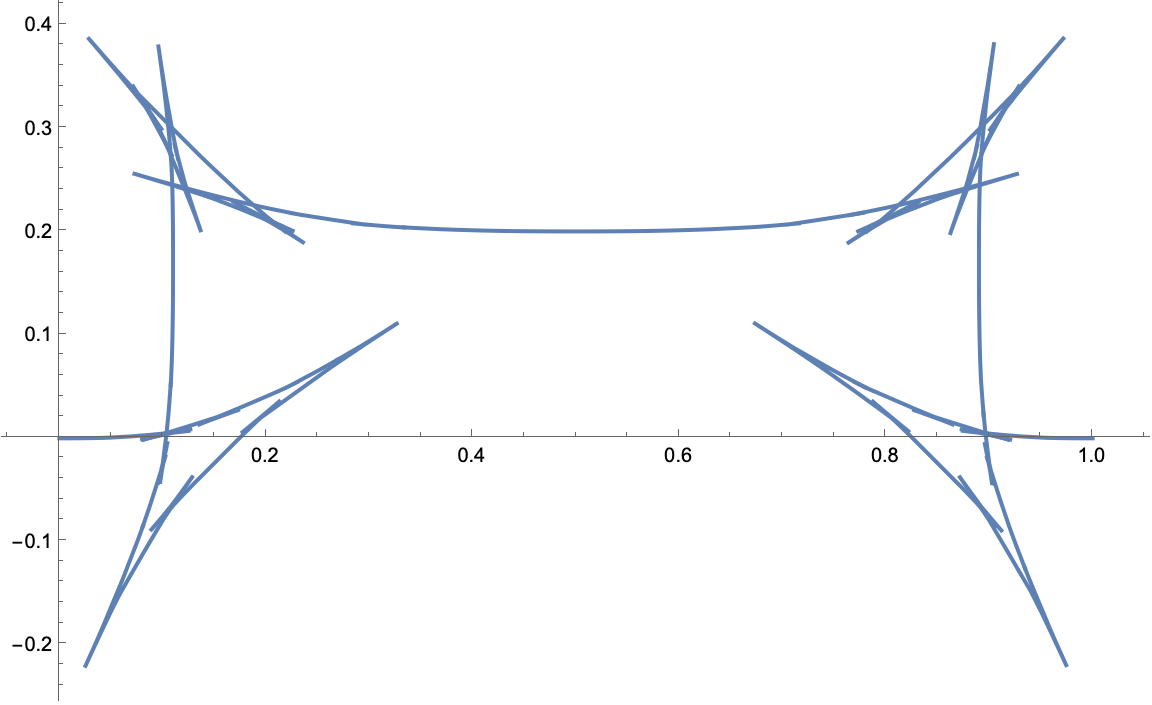}\\
\includegraphics[width=\wdt]{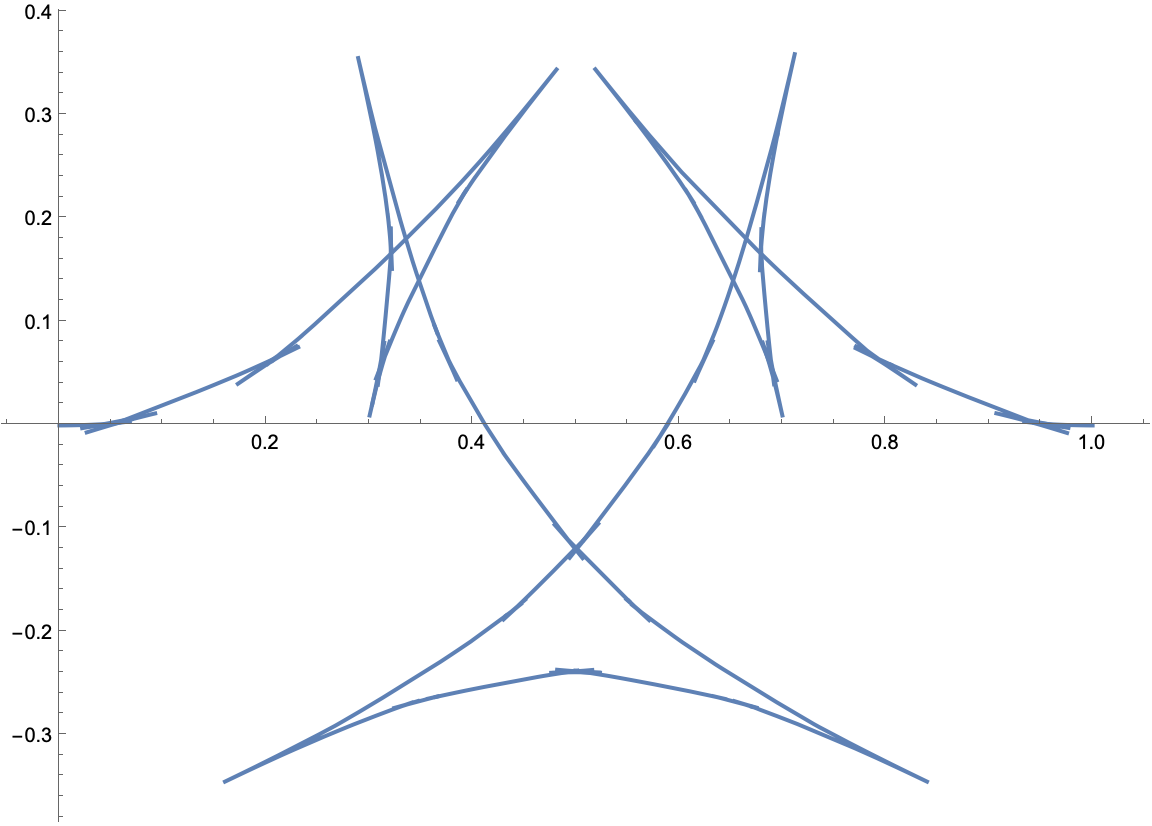} &
\includegraphics[width=\wdt]{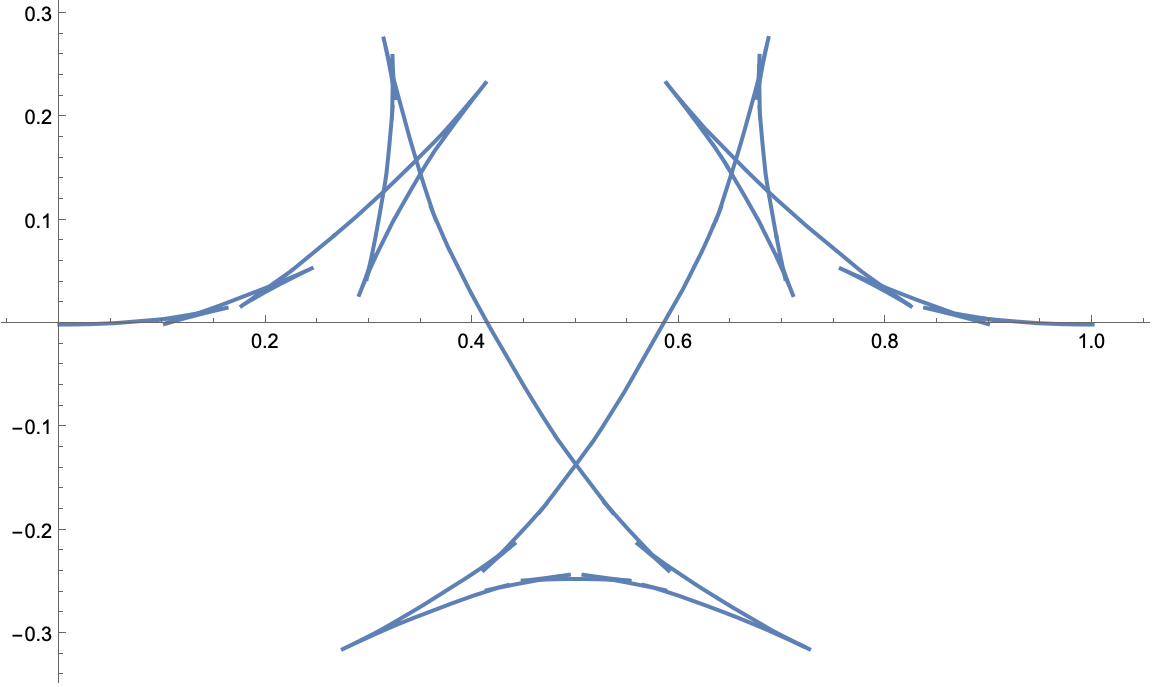} &
\includegraphics[width=\wdt]{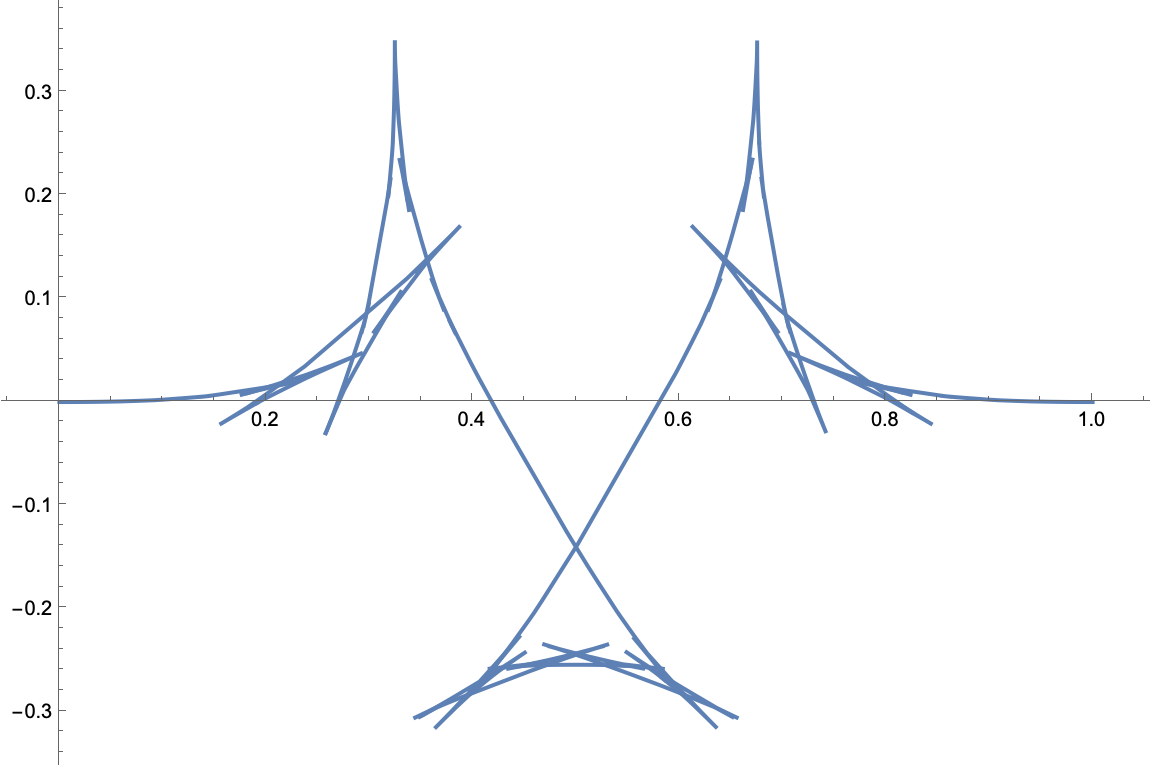}\\
\includegraphics[width=\wdt]{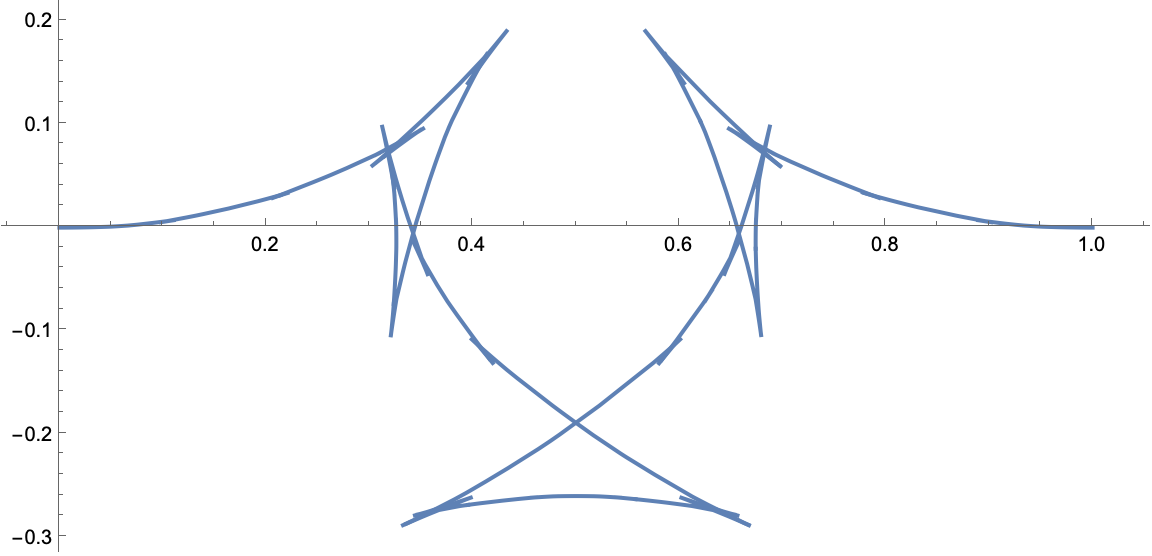} &
\includegraphics[width=\wdt]{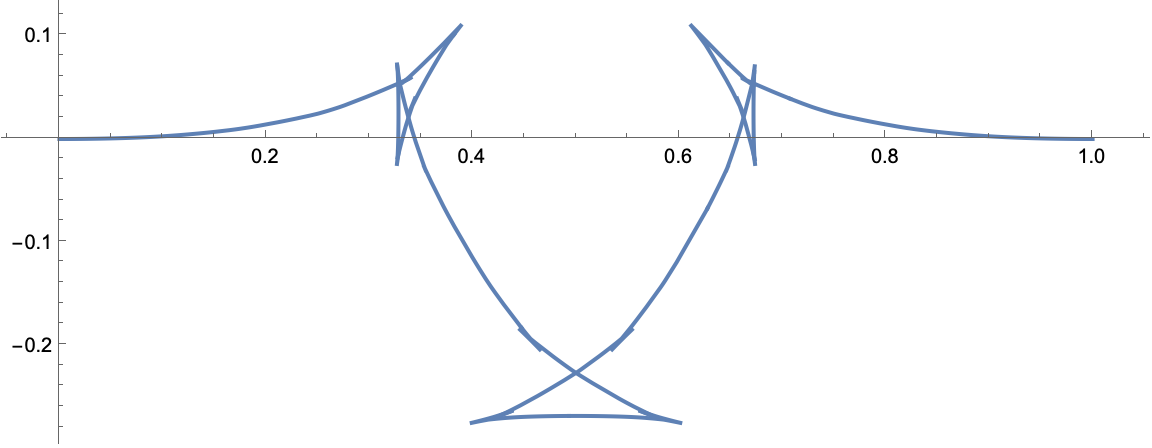} &
\includegraphics[width=\wdt]{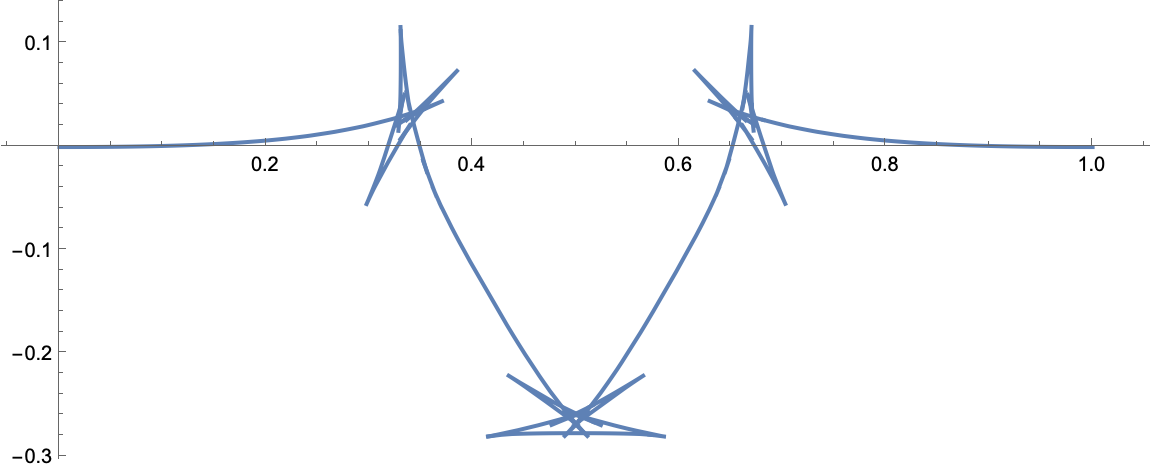}\\
\includegraphics[width=\wdt]{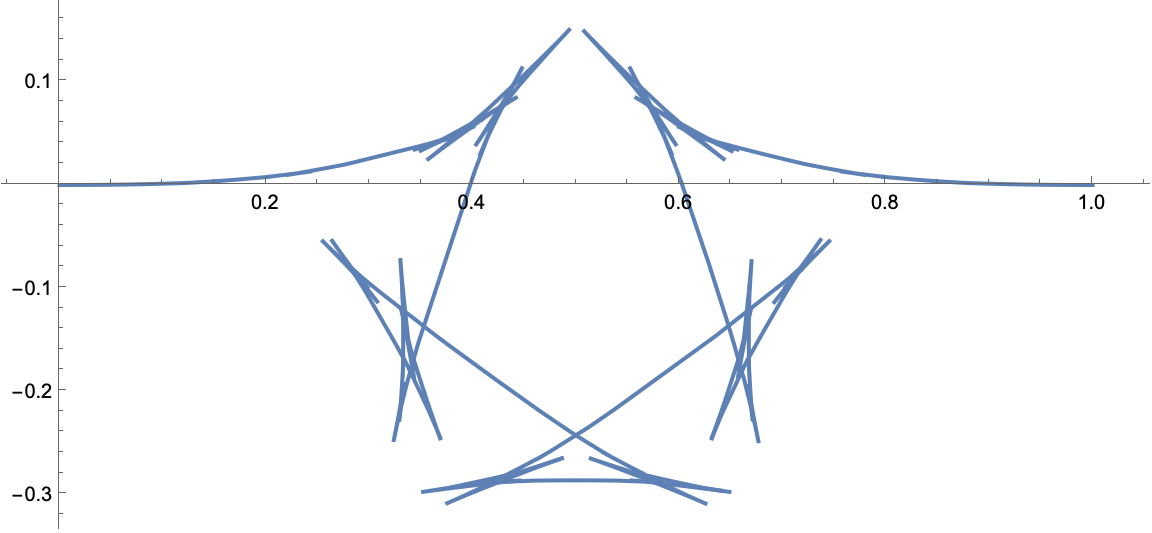} &
\includegraphics[width=\wdt]{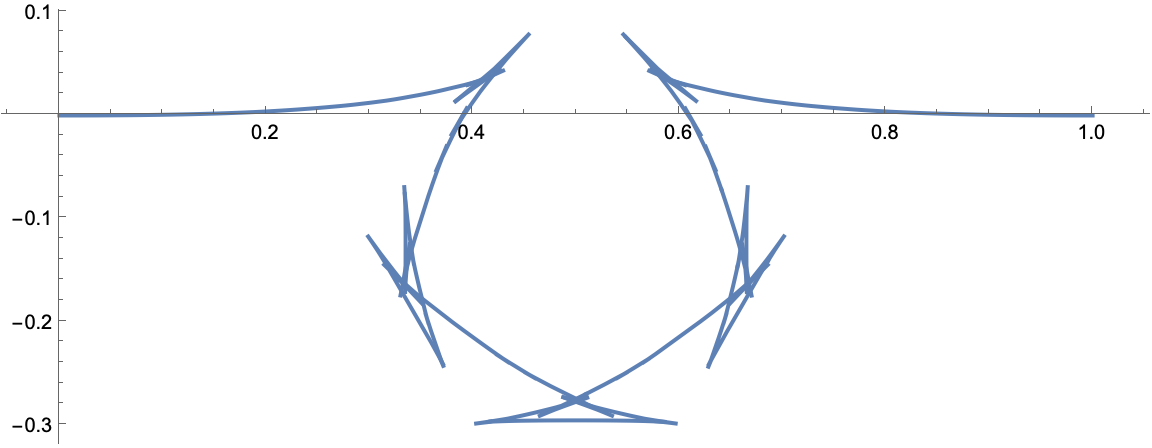} &
\includegraphics[width=\wdt]{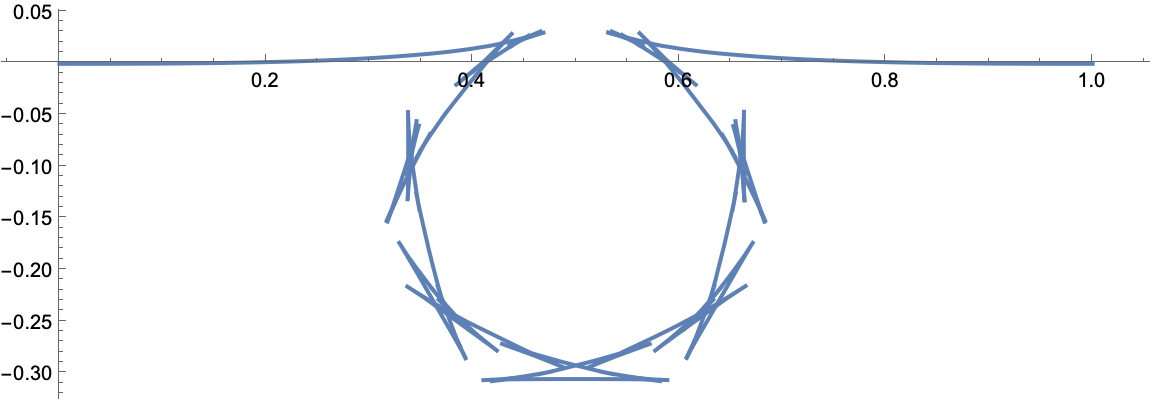}\\
\end{matrix}
 \]
    \caption{The atlas of paths $G^{\sharp}_{\bm{\epsilon}_5}$ over all $18=2\cdot 3\cdot 3$ choices of $(\epsilon_2,\epsilon_3,\epsilon_5)$, arranged in the lexicographic ordering.}
    \label{fig: atlas-5}
\end{figure}

The series defining $G^{\sharp}_{\bm{\epsilon}_Z}(t)$ is akin to the classical everywhere continuous, nowhere differentiable Weierstrass function. A small sample consisting of all possible shapes $G^{\sharp}_{\bm{\epsilon}_5}$ is shown in Figure~\ref{fig: atlas-5}. The following Theorem~\ref{thm2} formalizes the empirical observation that Gauss paths tend to be strongly aligned close to the shapes in the ensemble of the deterministic paths $G^{\sharp}_{\bm{\epsilon}_Z}(t)$ (according to $c\in\mathcal{D}_{Q,\bm{\epsilon}_Z}$ as in \eqref{DQCQ-def} for an increasingly large $Z$), which we may correspondingly term the \emph{atlas of shapes} of quadratic Gauss paths.

\begin{thm}
\label{thm2}
Let $Z\geqslant 1$, and let $\bm{\epsilon}_Z=(\epsilon_p)_{p\leqslant Z}$ denote any fixed choice of $\epsilon_p\in\{-1,0,1\}$ over primes $p\leqslant Z$, with $\epsilon_2\in\{-1,1\}$.
\begin{enumerate}
\item\label{thm2-gastz}
The random Fourier series $G^{\ast}_{\bm{\epsilon}_Z}(t)$ defined in \eqref{Gast-def} converges {a.s.} to a continuous function and defines a $C^0([0,1],\mathbb{C})$-valued random variable $G^{\ast}_{\bm{\epsilon}_Z}$. Moreover, the sequence of random variables $(G_{Q,\bm{\epsilon}_Z})$ defined in \eqref{DQCQ-def} converges in law to $G^{\ast}_{\bm{\epsilon}_Z}$ as $Q\to\infty$.
\item\label{thm2-gsharpZ}
The deterministic Fourier series $G^{\sharp}_{\bm{\epsilon}_Z}(t)$ converges absolutely and uniformly to a continuous function. If $\epsilon_p=-1$ for at least two $p\leqslant Z$, this deterministic path $G^{\sharp}_{\bm{\epsilon}_Z}(t)$ satisfies
\[ \lim_{t\to t_0\pm}\frac{G^{\sharp}_{\bm{\epsilon}_Z}(t)-G^{\sharp}_{\bm{\epsilon}_Z}(t_0)}{e(t_0)(t-t_0)}=\pm\infty\text{ or }\mp\infty \]
on an everywhere dense set of points $t_0\in [0,1]\cap\mathbb{Q}$ satisfying \eqref{nonvanishing-s} and described in Proposition~\ref{main-cor}. If $\epsilon_p=-1$ for exactly one $p\leqslant Z$, the same holds for $t_0=a/q$ satisfying the additional condition \eqref{all-QRs}; see Remark~\ref{remark1}.
\item\label{exceptional-probabilities}
For every $\delta>0$,
\[ \mathbb{P}\Big(\|G_{\bm{\epsilon}_Z}^{\ast}-G_{\bm{\epsilon}_Z}^{\sharp}\|_{\infty}\geqslant\delta\Big)\to 0\quad (Z\to\infty), \]
as well as
\begin{equation}
\label{double-limit}
\lim_{Z\to\infty}\limsup_{Q\to\infty}\mathbb{P}\Big(\|G_{Q,\bm{\epsilon}_Z}-G_{\bm{\epsilon}_Z}^{\sharp}\|_{\infty}\geqslant\delta\Big)=0.
\end{equation}
\end{enumerate}
\end{thm}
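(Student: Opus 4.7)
The strategy is to bootstrap from Theorem~\ref{thm1}: parts \eqref{thm2-gastz} and \eqref{exceptional-probabilities} are conditional/centered variants of its statement, while the analytical content of \eqref{thm2-gsharpZ} is contained in Proposition~\ref{main-cor}.

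For \eqref{thm2-gastz}, write $G^{\ast}_{\bm{\epsilon}_Z} = G^{\sharp}_{\bm{\epsilon}_Z} + R_{\bm{\epsilon}_Z}$, where $R_{\bm{\epsilon}_Z}$ is the subseries over those indices $n$ with at least one prime factor $> Z$. Since $G^{\sharp}_{\bm{\epsilon}_Z}$ is a deterministic absolutely convergent series (see the next paragraph), a.s.\ uniform convergence reduces to that of $R_{\bm{\epsilon}_Z}$, which is a centered random Fourier series of the same coefficient size as $G^{\ast}$; the a.s.\ uniform convergence argument from Theorem~\ref{thm1}\eqref{thm1-item1} then applies verbatim. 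For the convergence in law, I would view $\cd_{Q,\bm{\epsilon}_Z}\subset \cd_Q$ as the conditioning on the finite-dimensional event $\mathcal{A}_{\bm{\epsilon}_Z}=\{(p/c)=\epsilon_p:p\leq Z\}$, while on the limiting side $G^{\ast}_{\bm{\epsilon}_Z}$ is $G^{\ast}$ conditioned on $\{X_p=\epsilon_p:p\leq Z\}$. The joint convergence of $(G_Q,((p/c))_{p\leq Z})$ to $(G^{\ast},(X_p)_{p\leq Z})$ is already part of the characteristic-function computation in Theorem~\ref{thm1}\eqref{thm1-item2}; in particular $m_Q(\mathcal{A}_{\bm{\epsilon}_Z})\to\prod_{p\leq Z}\lambda_p(\epsilon_p)>0$. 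Restricting this joint weak convergence to the continuity set $\mathcal{A}_{\bm{\epsilon}_Z}$ gives the conditional convergence.

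For \eqref{thm2-gsharpZ}, absolute convergence is immediate:
\[ \bigg|\frac{e((n+1)t)-1}{2\pi i(n+1)}\bigg|\leq \frac{1}{\pi|n+1|},\qquad \sum_{\substack{n\neq 0\\ p\mid n\Rightarrow p\leq Z}}\frac{1}{|n|}\leq 2\prod_{p\leq Z}(1-1/p)^{-1}<\infty. \]
The sharp-reversal assertion is a direct application of Proposition~\ref{main-cor} (with Remark~\ref{remark1} in the single-negative case), exactly as signaled in the theorem statement.

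For \eqref{exceptional-probabilities}, every $X_{n,\bm{\epsilon}_Z}$ occurring in the difference $G^{\ast}_{\bm{\epsilon}_Z}-G^{\sharp}_{\bm{\epsilon}_Z}$ has mean zero, since it contains at least one symmetric factor $X_p$ with $p>Z$. Thus this difference is a centered random Fourier series with pointwise variance
\[ \mathbb{E}\big|G^{\ast}_{\bm{\epsilon}_Z}(t)-G^{\sharp}_{\bm{\epsilon}_Z}(t)\big|^2\ll \sum_{\substack{n\neq -1,0\\ p\mid n\text{ for some }p>Z}}\frac{1}{(n+1)^2}\to 0\quad (Z\to\infty) \]
by dominated convergence. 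To upgrade to sup-norm convergence in probability, I would replay on this residual the same dyadic block/chaining argument used in Theorem~\ref{thm1}\eqref{thm1-item1} to obtain $\mathbb{E}\|G^{\ast}_{\bm{\epsilon}_Z}-G^{\sharp}_{\bm{\epsilon}_Z}\|_\infty\to 0$ and conclude by Markov. The double limit \eqref{double-limit} then follows from Portmanteau: as $\{f:\|f-G^{\sharp}_{\bm{\epsilon}_Z}\|_\infty\geq\delta\}$ is closed in $C^0([0,1],\mathbb{C})$, part \eqref{thm2-gastz} gives
\[ \limsup_{Q\to\infty}\mathbb{P}\big(\|G_{Q,\bm{\epsilon}_Z}-G^{\sharp}_{\bm{\epsilon}_Z}\|_\infty\geq\delta\big)\leq \mathbb{P}\big(\|G^{\ast}_{\bm{\epsilon}_Z}-G^{\sharp}_{\bm{\epsilon}_Z}\|_\infty\geq\delta\big), \]
which vanishes as $Z\to\infty$. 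The main technical obstacle I expect is this sup-norm control of the centered residual --- extending the pointwise variance to uniform convergence --- which requires careful dyadic frequency blocking and a Kahane-type moment comparison.
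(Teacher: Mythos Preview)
Your outline is broadly sound, and for part~\eqref{thm2-gsharpZ} it matches the paper exactly. For parts~\eqref{thm2-gastz} and~\eqref{exceptional-probabilities} you take a somewhat different route from the paper, and there is one genuine gap.

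\textbf{The gap.} You assert that joint convergence of $\big(G_Q,((p/c))_{p\leqslant Z}\big)$ to $\big(G^{\ast},(X_p)_{p\leqslant Z}\big)$ ``is already part of the characteristic-function computation'' in Theorem~\ref{thm1}\eqref{thm1-item2}. It is not. The paper uses the method of moments (Proposition~\ref{moments-eval-prop}), computing only the moments $\mathcal{M}_Q(\bm{t};\bm{m},\bm{n})$ of the path values $G_Q(t_i)$; the symbols $(p/c)$ never appear as separate random variables. Your conditioning argument would require you to \emph{extend} the moment computation to mixed moments of the form $\mathbb{E}\big(\prod_i G_Q(t_i)^{n_i}\overline{G_Q(t_i)}^{m_i}\prod_{p\leqslant Z}(p/c)^{r_p}\big)$, which is doable (it just shifts the Jacobi symbol in the main-term sieve) but is additional work, not a citation. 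The paper avoids this by instead rerunning the entire machinery of \S\S\ref{sec-rv}--\ref{sec-inlaw} directly over the restricted sample space $\mathcal{D}_{Q,\bm{\epsilon}_Z}$: the a.s.\ convergence argument is literally unchanged for $y_1\geqslant Z$ since $X_{n,\bm{\epsilon}_Z}=X_n$ on $A_{y_1}$; the moment evaluation is redone with $\eta_Z$ replacing $\eta$ and with the sieving of Lemma~\ref{main-term-lemma} adjusted to $\mathcal{D}_{Q,\bm{\epsilon}_Z}$; and tightness follows for free from Proposition~\ref{moment-claim} since $|\mathcal{D}_{Q,\bm{\epsilon}_Z}|\asymp_Z|\mathcal{D}_Q|$.

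\textbf{Part~\eqref{exceptional-probabilities}.} Your plan (pointwise variance $\to 0$, then upgrade to $\mathbb{E}\|\cdot\|_\infty\to 0$ by a chaining argument, then Markov) would work but is more indirect than what the paper does. The key observation you are missing is that $G^{\sharp}_{\bm{\epsilon}_Z}$ is precisely (up to the factor $e(t)/2\pi i$ and the $+t$) the partial sum $S^{\ast}_{Z,\bm{\epsilon}_Z}$, so $G^{\ast}_{\bm{\epsilon}_Z}-G^{\sharp}_{\bm{\epsilon}_Z}=\lim_{y\to\infty}(S^{\ast}_{y,\bm{\epsilon}_Z}-S^{\ast}_{Z,\bm{\epsilon}_Z})$. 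Lemma~\ref{ry1y2-lemma} (in its $S^{\ast}$ variant, with $y_1=Z$) then gives the explicit tail bound $\mathbb{P}(\|G^{\ast}_{\bm{\epsilon}_Z}-G^{\sharp}_{\bm{\epsilon}_Z}\|_\infty>\delta)\ll\exp(-\delta^2 Z^{1/7})$ directly, with no separate chaining needed --- the ``technical obstacle'' you flag is already packaged inside Proposition~\ref{prop: small-sum}. Your Portmanteau argument for the double limit~\eqref{double-limit} is essentially what the paper does (it uses the test function $\varphi(f)=\min(\delta,\|f-G^{\sharp}_{\bm{\epsilon}_Z}\|_\infty)$ rather than the closed-set form, but the content is the same).
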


Our argument in fact provides an explicit upper bound for the exceptional probabilities in item \eqref{exceptional-probabilities} (after $\limsup_{Q\to\infty}$ in \eqref{double-limit}) of size $\ll\exp(-\delta^2Z^{1/8})$ for $Z\geqslant Z_1(\delta)$ (see \eqref{explicit-conclusion-1} and \eqref{explicit-conclusion-2}). We chose not to optimize this rate, which is already quite rapidly decreasing in $Z$ and thus suggests an explanation for why the experimentally observed shapes (which of course are just a finite and presumably random sample) appear to fall into very few classes dictated by congruence classes modulo several small primes. It bears emphasizing that Theorem~\ref{thm2} does \emph{not} claim that the paths $G(\cdot;c)$ must be uniformly close to $G^{\sharp}_{\bm{\epsilon}_Z}(t)$ for all $c\in\cd_{Q,\bm{\epsilon}_Z}$, nor can such a statement be true. A path $G(\cdot;c)$ will be substantially away from $G_{\bm{\epsilon}_Z}^{\sharp}$ as long as the values of $(p/c)$ over $p>Z$ exhibit a significant bias, and the same is true for a sample of the random path $G^{\ast}_{\bm{\epsilon}_Z}$ if the corresponding sample of $(X_p)_{p>Z}$ is biased. The point is that such an event is rare. Figures~\ref{fig: gsharp-eps} and \ref{fig: m11m111} illustrate Theorem~\ref{thm2} and these points. We also point the reader to Remarks \ref{remark1} and \ref{remark2} and the accompanying Figures \ref{fig: g71} and \ref{fig: g71-zoom}, which illustrate some delicate phenomena for limiting paths $G^{\sharp}_{\bm{\epsilon}_Z}(t)$ when all (or all but one) $\epsilon_p=1$.

\begin{figure}[ht]
    \centering
    \includegraphics[width=0.9\textwidth]{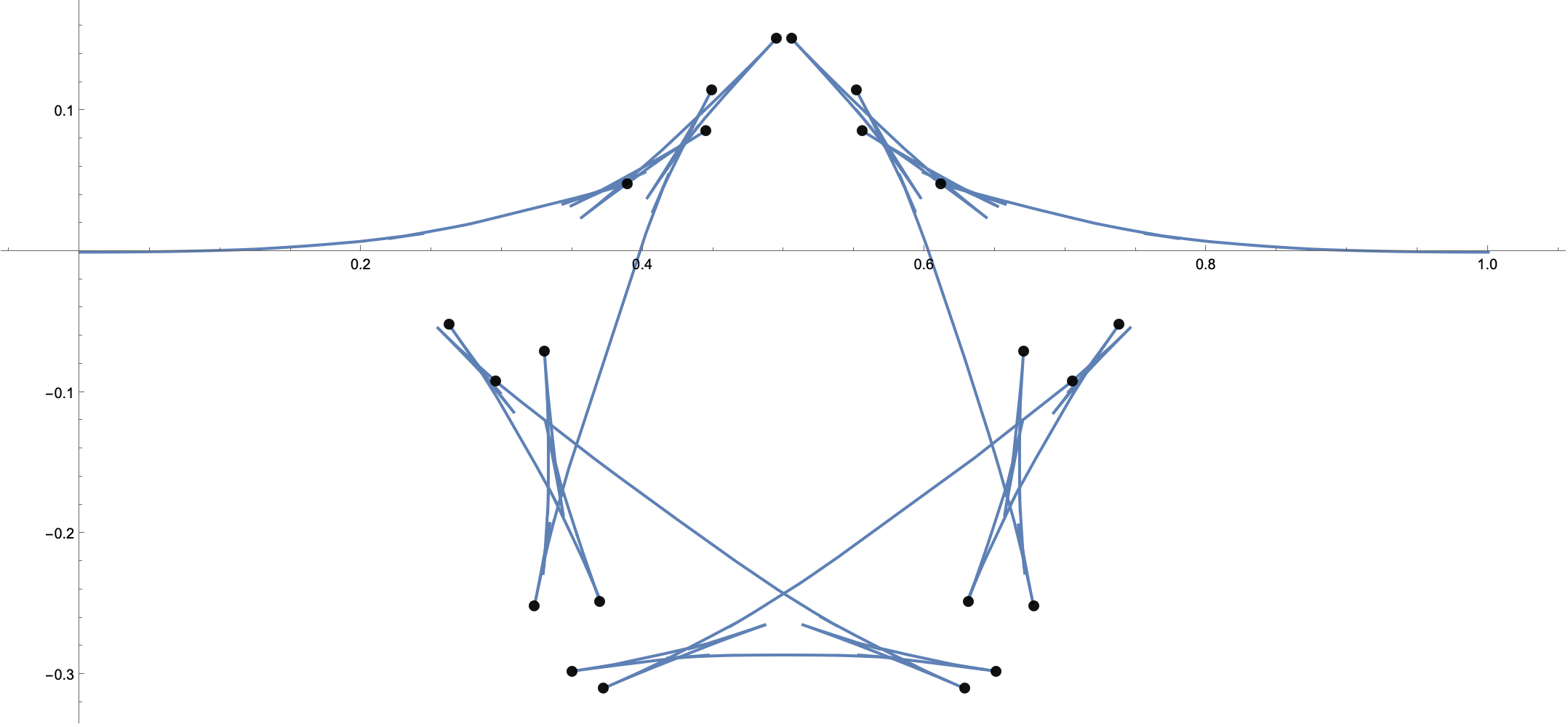}
   \caption{The deterministic path $G^{\sharp}_{\bm{\epsilon}_5}$ for $(\epsilon_2,\epsilon_3,\epsilon_5)=(1,1,-1)$, with marked points at $G^{\sharp}_{\bm{\epsilon}_5}(i/23)$, $1\leqslant i\leqslant 23$; see Proposition~\ref{main-cor}.}
    \label{fig: gsharp-eps}
\end{figure}

\begin{figure}[ht]
    \centering
    \includegraphics[width=0.7\textwidth]{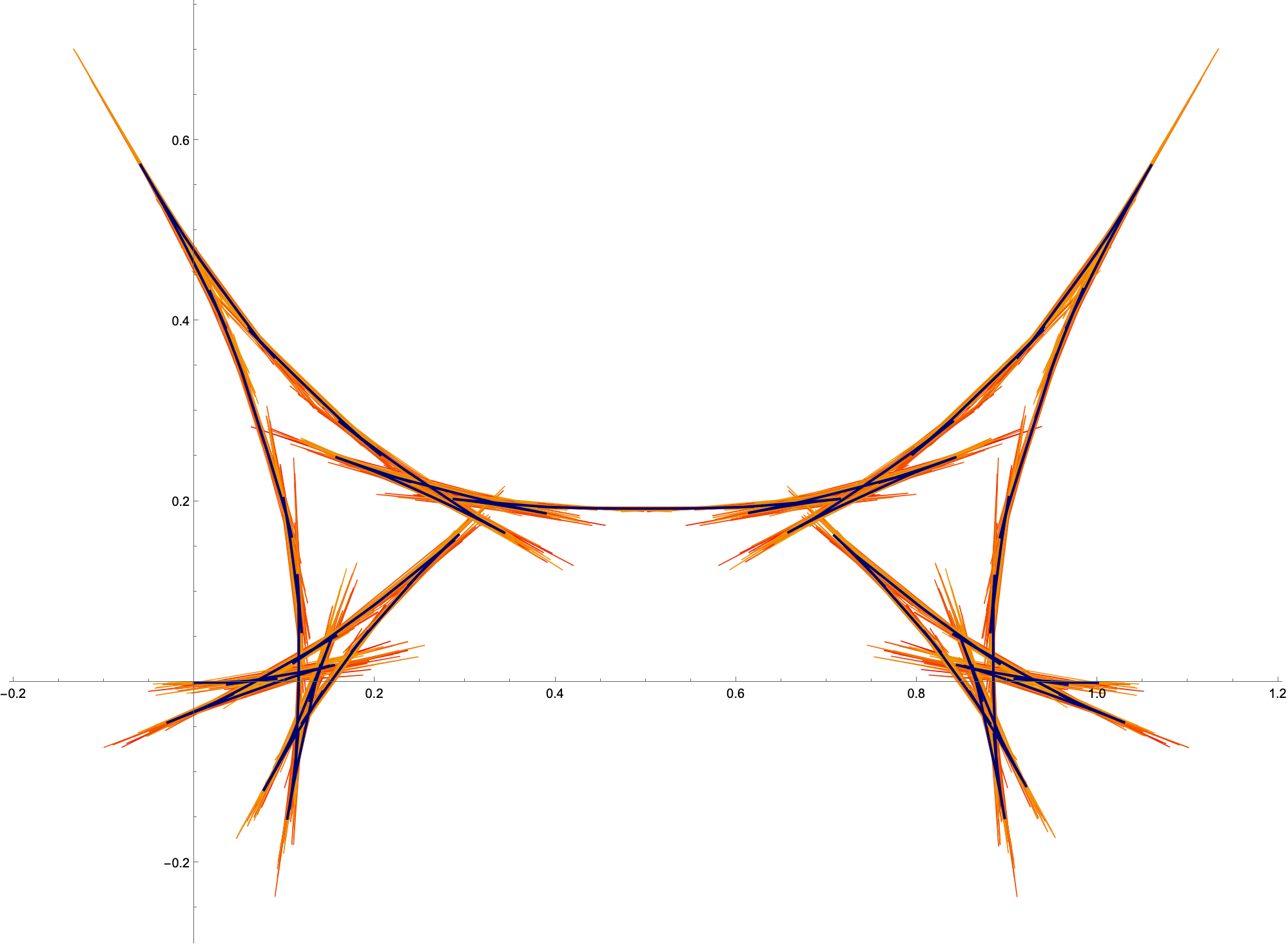}
   \caption{The deterministic path $G^{\sharp}_{\bm{\epsilon}_{11}}$ for $(\epsilon_2,\epsilon_3,\epsilon_5,\epsilon_7,\epsilon_{11})=(-1,1,-1,1,1)$ and 20 randomly chosen Gauss paths from the ensemble $G_{100000,\bm{\epsilon}_{11}}$.}
    \label{fig: m11m111}
\end{figure}

\subsection{Acknowledgement}
This paper grew out of the first author's 2022 thesis, advised by the second author. We were both inspired by and indebted to the existing literature on Kloosterman paths \cite{KowalskiSawin2016,MilicevicZhang2023} and on character paths in the unitary ensemble~\cite{Hussain2022}.
Some of the ingredients in establishing the convergence in law also appear in the work of Hussain and Lamzouri on Legendre paths~\cite{HussainLamzouri2024}, although our main focus on the striking sharp reversals and congruence-guided classification of Gauss paths is very different.

\subsection{Organization of the paper}
We collect some preliminaries on probability theory and estimates on character sums in section~\ref{sec-preliminaries}. The convergence and properties of the limiting random variable, including the proof of Theorem~\ref{thm1}~\eqref{thm1-item1} is studied in section~\ref{sec-rv}. We establish convergence in the sense of finite distributions of $(G_Q)\to G^{\ast}$ using the method of moments in section~\ref{computing-moments-section}, and then prove the convergence in law claim of Theorem~\ref{thm1}\eqref{thm1-item2} in section~\ref{sec-inlaw}. In section~\ref{sec-atlas}, we describe the local asymptotics of the limiting shapes $G^{\sharp}_{\bm{\epsilon}_Z}(t)$ at rational points $t_0\in[0,1]\cap\mathbb{Q}$ and then produce a collection of rational points at which $G^{\sharp}_{\bm{\epsilon}_Z}(t)$ exhibits a cusp. Finally, we prove Theorem~\ref{thm2} in section~\ref{final-section}.

\subsection{Notation}
As is common in analytic number theory, for $z\in\mathbb{C}$ we write $e(z)=e^{2\pi iz}$. We write $f=\mathrm{O}(g)$ or $f\ll g$ (using the notations interchangeably) if there exists a constant $C$, independent of all parameters not explicitly indicated with a subscript, such that $|f|\leqslant Cg$. We also write $f\asymp g$ if $f\ll g$ and $g\ll f$, and $f\sim g$ if $\lim(f/g)=1$, where the direction of the limit is either indicated or clear from the context. We denote by $\epsilon>0$ a positive value, which may differ from line to line but may in each case be taken to be as small as desired.

\section{Preliminaries}
\label{sec-preliminaries}

\subsection{Probability in Banach Spaces}
\label{prob-Banach}
In this section, we collect some definitions and facts pertaining to probability in Banach spaces.

For general facts about Banach spaces and probability, we refer to \cite[Preliminary Chapter, Chapters 1 and 4]{LiQueffelec2018}. In particular, for an arbitrary probability space $(\Omega, \mathcal{A}, \mathbb{P})$, a separable Banach space $E$, and the Borel $\sigma$-algebra $\mathcal{B}$ on $E$, an \emph{$E$-valued random variable} is a map $X: \Omega \to E$ that is $(\mathcal{A}$--$\mathcal{B})$-measurable (in other words, for any $B \in \mathcal{B}$, $X^{-1}(B) \in \mathcal{A}$).

We will be particularly interested in the separable Banach space $C^0([0, 1], \C)$ of complex-valued continuous functions on $[0, 1]$ equipped with the sup-norm. We can define several notions of convergence of random variables on this space. We closely follow the exposition in \cite[Appendix~A]{RicottaRoyer2018} and \cite[\S2.1]{MilicevicZhang2023}, and refer to \cite[\S{}B.11]{Kowalski2021} for proofs.

\begin{defn}[Convergence of Random Variables] \label{def: convergence}
Let $E$ be a Banach space and $(X_n)$ be a sequence of $E$-valued random variables on the probability spaces $(\Omega_n, \mathcal{A}_n, \mathbb{P}_n)$. Let $X$ be an $E$-valued random variable on the probability space $(\Omega, \mathcal{A}, \mathbb{P})$.
\begin{enumerate}
    \item If each $(\Omega_n, \mathcal{A}_n, \mathbb{P}_n) = (\Omega, \mathcal{A}, \mathbb{P})$, we say that $(X_n)$ converges to $X$ \emph{almost surely} if $\mathbb{P}(\{ \omega \in \Omega : \lim_{n \to \infty} X_n(\omega) = X(\omega) \}) = 1$.
    \item We say that $(X_n)$ converges \emph{in law} to $X$ if for every continuous and bounded map $\varphi: E \to \C$, the sequence $\left(\E(\varphi(X_n))\right)$ converges to $\E(\varphi(X))$.
    \item If $E = C^0([0, 1], \C)$, we say that $(X_n)$ converges to $X$ in the sense of \emph{finite distributions} if, for all $k \geq 1$ and all $k$-tuples $(t_1, t_2, \dots, t_k)$ where $0 \leq t_1< \dots < t_k \leq 1$, the sequence of $\C^k$-valued random vectors $(X_n(t_1), \dots, X_n(t_k))$ converges in law to the random vector $(X(t_1), \dots, X(t_k))$.
    \item If $E$ is separable, we say that the sequence $(X_n)$ is \emph{tight} if, for all $\epsilon>0$, there exists a compact subset $K\subseteq E$ such that, for all $n\geqslant 1$, $\mathbb{P}_n(\{X_n\in K\})\geqslant 1-\epsilon$.
\end{enumerate}
\end{defn}
Here, when $E=C^0([0,1],\C)$ and $t \in [0, 1]$, we denote by $X(t)$ the complex-valued random variable which is the evaluation of the random function $X$ at the point $t$, that is, $X(t) = e_t \circ X$, where $e_t: C^0([0, 1], \C) \to \C$ is the evaluation map.

In order to prove convergence in the sense of finite distributions, we can use the \emph{method of moments}. Recall that, for a $\C^k$-valued random vector $X=(X_1, X_2, \dots, X_k)$ and any $k$-tuples $\bm{m} = (m_1, \dots, m_k)$ and $\bm{n} = (n_1, \dots, n_k)$ in $\Z_{\geqslant 0}^k$, the complex moments  of $(X_1, X_2, \dots, X_k)$ are defined as
\[ \mathcal{M}_{\bm{m}, \bm{n}}(X) = \E\bigg( \prod_{i=1}^k \overline{X_i}^{m_i}X_i^{n_i}\bigg). \]
The random variable $X$ is said to be \emph{mild} if there exists a $\delta>0$ such that the power series
\[ \mathop{\sum\sum}_{\bm{m},\bm{n}\in\mathbb{Z}_{\geqslant 0}^k}\mathcal{M}_{\bm{m},\bm{n}}(X)\frac{z_1^{m_1}z_1'{}^{n_1}\cdots z_k^{m_k}z_k'{}^{n_k}}{m_1!n_1!\cdots m_k!n_k!} \]
converges in the disk $\{(z_1,z_1',\dots,z_k,z_k')\in\C^{2k}:|z_i|,|z_i'|\leqslant\delta\}$. A real-valued random variable $X$ is said to be $\sigma^2/$-sub-Gaussian if, for every $t\in\mathbb{R}$, $\mathbb{E}(e^{tX})\leqslant e^{\sigma^2t^2/2}$, and a complex-valued $X=Y+iZ$ is said to be $\sigma^2/2$-sub-Gaussian if $Y$ and $Z$ are. The sum $X=X_1+X_2$ of two independent $\sigma_i^2/2$-sub-Gaussian complex-valued random variables is $(\sigma_1^2+\sigma_2^2)/2$-sub-Gaussian. Moreover, a $\mathbb{C}^k$-valued random variable $X=(X_1,\dots,X_k)$ whose components are all sub-Gaussian is automatically mild. See \cite[\S{}B.5,\S{}B.8]{Kowalski2021} for details.

\begin{prop}[Method of moments, \protect{\cite[Theorem B.5.5(2)]{Kowalski2021}}]
\label{method-of-moments-prop}
Let $(X_n)$ be a sequence of $\cc^k$-valued random vectors, and let $X$ be a mild $\cc^k$-valued random vector. If, for any two $k$-tuples $\bm{m},\bm{n}\in\mathbb{Z}_{\geqslant 0}^k$, their corresponding complex moments $\mathcal{M}_n(\bm{m}, \bm{n})$ and $\mathcal{M}(\bm{m}, \bm{n})$ satisfy 
\[ \mathcal{M}_n(\bm{m}, \bm{n}) \to \mathcal{M}(\bm{m}, \bm{n})\quad (n\to\infty), \]
then $(X_n)$ converges to $X$ in law.
\end{prop}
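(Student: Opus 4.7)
The plan is to combine tightness of the sequence $(X_n)$ with moment-determinacy of $X$ via Prohorov's theorem, exploiting the mildness hypothesis as the input that makes the moment method work. Concretely, I would argue that: (i)~since $X$ is mild, the joint distribution of $(X_1,\dots,X_k)$ is uniquely determined by its moments; (ii)~pointwise convergence $\mathcal{M}_n(\bm{m},\bm{n})\to\mathcal{M}(\bm{m},\bm{n})$ for every fixed pair of multi-indices forces uniform control on low-order moments of $X_n$, which in turn yields tightness; (iii)~by Prohorov, every subsequence of $(X_n)$ has a further subsequence converging in law to some $\mathbb{C}^k$-valued random vector $Y$, whose moments by Fatou/uniform integrability coincide with those of $X$; hence $Y\stackrel{d}{=}X$, and since every subsequential limit is $X$, the full sequence converges in law to $X$.

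For step (i), the key observation is that the mildness condition provides a polydisk of radius $\delta>0$ on which the formal joint moment generating function
\[
\varphi_X(\bm{z},\bm{z}')=\sum_{\bm{m},\bm{n}}\mathcal{M}_{\bm{m},\bm{n}}(X)\frac{z_1^{m_1}z_1'^{n_1}\cdots z_k^{m_k}z_k'^{n_k}}{m_1!n_1!\cdots m_k!n_k!}
\]
converges absolutely. This identifies $\varphi_X$ as a genuine holomorphic function on that polydisk, so its restriction to purely imaginary arguments recovers the characteristic function of $(X,\bar{X})$. In particular, the moments determine the characteristic function and therefore the law of $X$. For step (iii), once a subsequential limit $Y$ exists, convergence of moments of order $2$ (contained in the hypothesis) combined with the continuous mapping theorem gives $\mathcal{M}_{\bm{m},\bm{n}}(Y)=\mathcal{M}_{\bm{m},\bm{n}}(X)$ for every pair of multi-indices, and the moment-determinacy from step (i) closes the argument.

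The main obstacle is step (ii), the passage from pointwise moment convergence to tightness. Here the plan is to show that the dominated series for $\varphi_X$ on a slightly smaller polydisk of radius $\delta'<\delta$ is inherited in the limit superior by the $X_n$: because each coefficient $\mathcal{M}_n(\bm{m},\bm{n})$ converges to $\mathcal{M}(\bm{m},\bm{n})$, one can truncate the series, use monotone rearrangement of the positive tail together with the known convergent majorant for $X$, and conclude that $\sup_n\mathbb{E}(e^{\delta'|X_n^{(j)}|})<\infty$ for each coordinate $j$. Markov's inequality then gives tightness of each marginal, and a standard product argument upgrades this to tightness of the $\mathbb{C}^k$-valued sequence; Prohorov's theorem supplies the subsequential weak limits needed to finish. (In the statement as cited, this technical step is what the ``mild'' hypothesis is designed to bypass cleanly via the analyticity of the MGF.)
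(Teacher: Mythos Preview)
The paper does not supply its own proof of this proposition; it is quoted as a black-box result from \cite[Theorem~B.5.5(2)]{Kowalski2021}. So there is nothing in the paper to compare your argument against. Your outline is the standard one (tightness $+$ Prokhorov $+$ moment-determinacy of the limit), and its architecture is correct.

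That said, step~(ii) as written contains a genuine gap. You claim that pointwise convergence $\mathcal{M}_n(\bm{m},\bm{n})\to\mathcal{M}(\bm{m},\bm{n})$ together with the convergent majorant for $X$ lets you ``truncate the series, use monotone rearrangement of the positive tail \dots\ and conclude that $\sup_n\mathbb{E}(e^{\delta'|X_n^{(j)}|})<\infty$''. This does not follow: pointwise convergence of the coefficients gives no uniform control on the tail of the series for the $X_n$, and nothing in the hypotheses rules out $\mathbb{E}(e^{\delta'|X_n^{(j)}|})=\infty$ for every $n$ and every $\delta'>0$. Fortunately the exponential bound is overkill. Tightness of $(X_n)$ in $\mathbb{C}^k$ needs only $\sup_n\mathbb{E}|X_n^{(j)}|^2<\infty$ for each coordinate, and this is immediate: a convergent real sequence (here the relevant second moment) is bounded.

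Step~(iii) also needs a sharper justification. To pass from $X_{n'}\Rightarrow Y$ along a subsequence to $\mathcal{M}_{\bm{m},\bm{n}}(Y)=\lim\mathcal{M}_{n'}(\bm{m},\bm{n})$ you need uniform integrability of the relevant monomials in $X_{n'}$; the continuous mapping theorem alone is not enough, since the monomials are unbounded. The clean way is to note that $\sup_n\mathbb{E}|X_n^{(j)}|^{p}<\infty$ for \emph{every} $p$ (again because each fixed moment converges, hence is bounded), which furnishes the required uniform integrability for every monomial. With these two fixes your argument goes through.
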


The notion of tightness provides an effective way to upgrade the convergence in the sense of finite distributions of a sequence of $C^0([0,1],\mathbb{C})$-valued random variables to convergence in law via the following two propositions.

\begin{prop}[Prokhorov's Criterion] \label{prop: Prokhorov}
Let $(X_n)_{n=1}^\infty$ and $X$ be $C^0([0,1], \C)$-valued random variables. If the sequence $(X_n)$ is tight and converges to $X$ in the sense of finite distributions, then $(X_n)$ converges to $X$ in law.
\end{prop}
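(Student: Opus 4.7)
The plan is to combine two classical ingredients: Prokhorov's theorem on the relative compactness of tight families of probability measures, and the fact that the Borel law of a $C^0([0,1],\mathbb{C})$-valued random variable is determined by its finite-dimensional distributions. Given both hypotheses (tightness of $(X_n)$ and finite-dimensional convergence to $X$), the argument is a standard subsequence extraction.

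First I would invoke Prokhorov's theorem in the Polish-space setting of $C^0([0,1],\mathbb{C})$ (separable and complete for the sup-norm): a family of Borel probability measures is relatively compact for the topology of weak convergence if and only if it is tight. Applied to the laws $\mu_n$ of $X_n$, this shows that any subsequence $(X_{n_k})$ admits a further subsequence $(X_{n_{k_j}})$ whose laws converge weakly, i.e., $X_{n_{k_j}}$ converges in law to some $C^0([0,1],\mathbb{C})$-valued random variable $Y$.

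Next I would identify $Y$ with $X$ on the level of laws. Since the evaluation maps $e_t$ and the finite-dimensional projections $(e_{t_1},\dots,e_{t_k}):C^0([0,1],\mathbb{C})\to\mathbb{C}^k$ are continuous, convergence in law is preserved under them, so the finite-dimensional distributions of $X_{n_{k_j}}$ converge in law to those of $Y$. By hypothesis they also converge to those of $X$, and therefore $Y$ and $X$ share all finite-dimensional distributions. Because $C^0([0,1],\mathbb{C})$ is separable, its Borel $\sigma$-algebra is generated by the countable family $\{e_t:t\in[0,1]\cap\mathbb{Q}\}$, and a standard Dynkin $\pi$--$\lambda$ argument shows that the finite-dimensional distributions determine the law; hence $Y$ and $X$ are equal in law.

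Having shown that every subsequence of $(X_n)$ has a further subsequence converging in law to $X$, the subsequence principle for the (metrizable) topology of weak convergence on probability measures on a Polish space implies that $(X_n)\to X$ in law. The only nontrivial point is the identification of $Y$ with $X$ through finite-dimensional distributions; this is precisely where the separability of $C^0([0,1],\mathbb{C})$ is used, and it is what converts the finite-dimensional hypothesis into a genuine constraint on the full limiting Borel law.
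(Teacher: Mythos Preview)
Your argument is correct and is the standard route to this result: Prokhorov's theorem gives relative compactness from tightness, finite-dimensional distributions determine Borel laws on $C^0([0,1],\mathbb{C})$, and the subsequence principle finishes. The paper does not supply its own proof of this proposition; it is recorded as background in \S\ref{prob-Banach} with a reference to \cite[\S{}B.11]{Kowalski2021}, so there is nothing to compare against.
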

\begin{prop}[Kolmogorov's Tightness Criterion] \label{prop: Kolmogorov-tightness}
Let $(X_n)$ be a sequence of $C^0([0,1], \allowbreak\C)$-valued random variables. If there exist $\alpha, \delta > 0$ such that for all $0 \leq s, t \leq 1$ and $n \geq 1$, we have 
\[ \E\left( \left| X_n(t) - X_n(s) \right|^\alpha \right) \ll |s-t|^{1+\delta}, \]
then $(X_n)$ is tight.
\end{prop}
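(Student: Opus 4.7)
The plan is to reduce tightness to a uniform modulus-of-continuity estimate via the Arzel\`a--Ascoli theorem, and to establish that estimate through a classical dyadic chaining argument powered by the moment hypothesis.

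By Arzel\`a--Ascoli, a subset of $C^0([0,1],\mathbb{C})$ is relatively compact precisely when it is uniformly bounded and equicontinuous. Consequently, for any $\gamma\in(0,\delta/\alpha)$ and any $M,C,h_0>0$, the set
\[ K_{M,C,h_0}=\Big\{f\in C^0([0,1],\mathbb{C}):|f(0)|\leqslant M,\ \sup_{0<|t-s|\leqslant h_0}\frac{|f(t)-f(s)|}{|t-s|^\gamma}\leqslant C\Big\} \]
is compact, as it is closed, equicontinuous (with common modulus $\omega(h)=Ch^\gamma$ for $h\leqslant h_0$), and uniformly bounded (since $|f(t)|\leqslant|f(0)|+\lceil t/h_0\rceil\cdot Ch_0^\gamma$). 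It therefore suffices, given $\eta>0$, to produce such constants for which $\mathbb{P}(X_n\in K_{M,C,h_0})\geqslant 1-\eta$ uniformly in $n$. (In the paper's applications $X_n(0)=0$ deterministically, so the first constraint is automatic; in general one supplements the criterion with the tightness of $(X_n(0))$.)

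Fix such a $\gamma$, and for each $k\geqslant 0$ define the maximal dyadic increment
\[ M_{n,k}=\max_{0\leqslant j<2^k}\bigl|X_n\bigl((j+1)2^{-k}\bigr)-X_n\bigl(j\cdot 2^{-k}\bigr)\bigr|. \]
Markov's inequality applied to the moment hypothesis at each of the $2^k$ increments of length $2^{-k}$, together with a union bound, yields
\[ \mathbb{P}\bigl(M_{n,k}>2^{-k\gamma}\bigr)\ll 2^k\cdot 2^{k\gamma\alpha}\cdot 2^{-k(1+\delta)}=2^{-k(\delta-\gamma\alpha)}, \]
which is geometrically summable in $k$ since $\gamma\alpha<\delta$. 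On the complement of the event $E_n(k_0):=\bigcup_{k\geqslant k_0}\{M_{n,k}>2^{-k\gamma}\}$, a standard dyadic telescoping approximation (writing the $2$-adic expansions of dyadic rationals $s<t$ with $2^{-(K+1)}<t-s\leqslant 2^{-K}$ and bounding each level's contribution by the corresponding $M_{n,k}$) gives $|X_n(t)-X_n(s)|\leqslant 2\sum_{k\geqslant K}M_{n,k}\ll 2^{-K\gamma}\ll|t-s|^\gamma$ for all such $s,t$ with $|t-s|\leqslant 2^{-k_0}$, with an implied constant depending only on $\gamma$; continuity of $X_n$ extends the bound from dyadic rationals to all of $[0,1]$.

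Finally, given $\eta>0$, choose $k_0$ large enough that $\mathbb{P}(E_n(k_0))\leqslant\eta$ uniformly in $n$, which the tail summability above permits. Combined with the known value (or tight distribution) of $X_n(0)$, this confines $X_n$ to a compact set $K_{M,C,h_0}$ of the above form with probability at least $1-\eta$ for every $n$, establishing tightness. The only substantive point is the dyadic chaining step, and it is driven entirely by the positivity of the exponent gap $\delta-\gamma\alpha$ built into the hypothesis.
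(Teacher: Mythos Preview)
The paper does not supply its own proof of this proposition; it is quoted as a background result, with a pointer to \cite[\S{}B.11]{Kowalski2021} for details. Your argument is the standard one and is correct: reduce tightness to an Arzel\`a--Ascoli compactness statement and control the modulus of continuity via a dyadic chaining bound driven by Markov's inequality and a union bound over the $2^k$ increments at each scale. Your observation that the statement as phrased tacitly assumes control of $X_n(0)$ (which holds in the paper's application since $G_Q(0)=0$ identically) is also well taken; the usual formulation of Kolmogorov's criterion includes either this or tightness of $(X_n(t_0))$ at some fixed point.
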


\subsection{Character sums and quadratic large sieve}
In this section, we recall several ingredients from classical analytic number theory. One of them is Heath-Brown's quadratic large-sieve inequality.

\begin{prop}[\protect{\cite[Theorem 1]{Heath-Brown1995}}]
\label{HBQLS}
For any $M,N\in\mathbb{N}$ and $a_1,\dots,a_N\in\C$,
\[ \sumast_{m\leqslant M}\bigg|\sumast_{n\leqslant N}a_n\bigg(\frac nm\bigg)\bigg|^2\ll_{\epsilon}(MN)^{\epsilon}(M+N)\sumast_{n\leqslant N}|a_n|^2, \]
with $\sumast$ denoting the sum over positive odd square-free values.
\end{prop}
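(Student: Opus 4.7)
The plan is to follow the classical strategy for quadratic large-sieve inequalities, combining the duality principle for bilinear forms with quadratic reciprocity and a completion-via-Poisson step. First, by duality the stated inequality is equivalent to its dual form
\[ \sumast_{n\leqslant N}\bigg|\sumast_{m\leqslant M}b_m\Big(\frac nm\Big)\bigg|^2\ll_{\epsilon}(MN)^{\epsilon}(M+N)\sumast_{m\leqslant M}|b_m|^2, \]
and it is convenient to treat whichever of the two makes the outer sum the longer. Opening the square and swapping summations, the problem reduces to controlling
\[ S(m_1,m_2;N):=\sumast_{n\leqslant N}\Big(\frac n{m_1 m_2}\Big) \]
on average over odd squarefree $m_1,m_2\leqslant M$. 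The diagonal contribution $m_1=m_2$ is handled by Möbius inversion: writing $\sum_{d^2\mid n}\mu(d)$ to detect the squarefreeness of $n$, the sum reduces to simple counting and produces precisely the $N\sumast_m|b_m|^2$ term claimed in the conclusion.

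The off-diagonal terms form the heart of the argument. When $m_1\neq m_2$ are distinct odd squarefree integers, $m_1 m_2$ is not a square, so $\chi:=(\cdot/m_1 m_2)$ is a non-principal real Dirichlet character of conductor dividing $4m_1 m_2$. I would apply quadratic reciprocity to write $(n/m_1 m_2)=\pm(m_1 m_2/n)$ with a sign depending only on $m_i,n\bmod 4$, and then complete the sum in $n$ by Poisson summation modulo $4m_1 m_2$. This converts $S(m_1,m_2;N)$ into a short dual sum of length $\ll m_1 m_2/N$ twisted by Gauss sums, which after a Cauchy--Schwarz over $m_1,m_2$ yields a bilinear form of the same large-sieve shape but with the roles of the parameters $M$ and $N$ effectively exchanged.

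The main obstacle, and the key to obtaining the sharp additive $(M+N)$ factor rather than a multiplicative $\sqrt{MN}$-type saving, is closing the loop on this self-dual bilinear form. I would do this by setting up a recursive inequality that bounds the left-hand side in the $(M,N)$-aspect in terms of its square root in the $(N,M)$-aspect, and then solving it to obtain the bound stated in Proposition~\ref{HBQLS}. Tracking the accumulated loss of $(MN)^{\epsilon}$ through the bootstrap, carefully evaluating the Gauss sums that arise from completion, and correctly handling the sign subtleties from quadratic reciprocity (including the $2$-adic contributions, given that the Jacobi symbol $(n/m)$ is defined only for odd $m$) are the technical points that would require the most care.
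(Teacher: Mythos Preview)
The paper does not give a proof of this proposition at all: it is quoted verbatim from Heath-Brown's paper \cite{Heath-Brown1995} and used as a black-box input in \S\ref{main-error-subsec} and in the proof of Lemma~\ref{HB-var}. So there is nothing in the paper to compare your proposal against.

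That said, your outline is a fair high-level summary of Heath-Brown's own argument: duality, opening the square, reciprocity followed by Poisson completion to swap the roles of $M$ and $N$, and then a recursive bootstrap to obtain the additive $(M+N)$ factor. If you actually intend to reprove the result rather than cite it, be aware that the recursion is the genuinely delicate part --- Heath-Brown works with two related quantities (one for $(n/m)$, one for the primitive character $\chi_n(m)$) and iterates an inequality relating them, and the bookkeeping of the $(MN)^{\epsilon}$ losses through the iteration is where most of the work lies. For the purposes of the present paper, however, a citation is all that is required.
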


For our proof of tightness, we will also require the $r=2$ and $r=4$ cases of Burgess' classical bound for short character sums. Here we note that what we really require of the second bound is that it is nontrivial in the range $N\ll c^{1/3-\delta}$ for some small $\delta>0$.

\begin{prop}[\protect{\cite[Theorem 12.6]{IwaniecKowalski2004}}]
\label{Burgess}
For every primitive character of any conductor $c>1$ and any $M$ and $N\geqslant 1$,
\[ \sum_{N<n\leqslant M+N}\chi(n)\ll_{\varepsilon} N^{1/2}c^{3/16+\varepsilon}. \]
\end{prop}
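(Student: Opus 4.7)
The plan is to follow Burgess' classical shift-and-average argument combined with Weil's bound on complete character sums. Set $S=\sum_{N<n\leqslant M+N}\chi(n)$, and introduce parameters $A,P\geqslant 1$. For each integer $a\in[1,A]$ and each prime $p\in[P,2P]$ coprime to $c$, translating $n\mapsto n+ap$ shifts the interval by only $O(AP)$ at each end, giving
\[ S=\sum_{N<n\leqslant M+N}\chi(n+ap)+\OO(AP). \]
Averaging over $a$ and $p$ and exploiting the complete multiplicativity $\chi(n+ap)=\chi(p)\chi(\bar p n+a)$ (where $\bar p p\equiv 1\pmod c$), one rearranges the bound in terms of the new variable $m\equiv\bar p n\pmod c$:
\[ |S|\ll AP+\frac{1}{AP}\sum_{m\bmod c}\nu(m)\,\Bigl|\sum_{a\leqslant A}\chi(a+m)\Bigr|, \]
where $\nu(m)$ counts pairs $(p,n)$ in the allowed ranges with $\bar p n\equiv m\pmod c$.

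Next, I would apply H\"older's inequality with exponent $2r$ (eventually specializing to $r=2$) to decouple the counting factor from the short character sums. This reduces the central estimate to the $2r$-th moment $\sum_{m\bmod c}|\sum_{a\leqslant A}\chi(a+m)|^{2r}$, which upon expansion becomes a sum over $2r$-tuples $(a_1,\dots,a_{2r})$ of complete character sums of $\chi$ evaluated on the rational function $\prod_{i\leqslant r}(X+a_i)\prod_{i>r}(X+a_i)^{-1}$. Because $\chi$ is primitive, Weil's bound yields $\OO(\sqrt c)$ for each such sum unless the argument is a perfect $\ord(\chi)$-th power---an algebraic degeneracy confined to pairings of the $a_i$ and contributing at most $\OO(A^rc)$. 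Altogether this produces a moment bound of the form $\OO_\varepsilon(c^\varepsilon(A^{2r}\sqrt c+A^rc))$; the combinatorial factor $\sum_m\nu(m)^{2r/(2r-1)}$ is handled by elementary estimates based on prime counts.

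Combining these bounds and optimizing $A$ and $P$ at $r=2$ produces the claimed exponents. The main obstacle is the careful bookkeeping of boundary contributions from the shift, the handling of primes dividing $c$, and the tallying of degenerate tuples in the moment sum. For general (not necessarily prime) $c$, a further reduction to prime-power moduli via the Chinese remainder theorem is needed before Weil's bound can be applied. These steps are standard and carried out in detail in \cite[Chapter~12]{IwaniecKowalski2004}, of which Theorem~12.6 is precisely the bound quoted here.
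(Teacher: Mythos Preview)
The paper does not give its own proof of this proposition: it is stated as a quotation of \cite[Theorem~12.6]{IwaniecKowalski2004} and used as a black box in the preliminaries, so there is no argument in the paper to compare against beyond the citation itself. Your outline is precisely the classical Burgess shift-and-average argument that underlies the cited theorem---shift by $ap$, average, apply H\"older with exponent $2r$, bound the resulting $2r$-th moment via Weil, and optimize at $r=2$---so your proposal is fully consistent with what the paper is invoking. The caveat you flag about non-prime conductors (reduction to prime powers before applying Weil) is exactly the point where the general-modulus version of Burgess requires extra care, and it is handled in the reference you cite.
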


\begin{prop}[\protect{\cite[Theorem 1', special case]{Chang2014}}]
\label{Chang-Burgess}
For every $\kappa>0$, there exists a constant $\delta=\delta_{\kappa}>0$ such that, for every $\alpha\in\mathbb{R}$, every multiplicative character $\chi$ to any square-free modulus $q$, and every interval $I\subset [1,q]$ of size $|I|>q^{1/4+\kappa}$,
\[ \sum_{n\in I}\chi(n)e^{2\pi i\alpha n}\ll_{\kappa} |I|q^{-\delta}. \]
\end{prop}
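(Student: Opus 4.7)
The plan is to adapt the classical Burgess argument for short multiplicative character sums to accommodate the additive twist $e(2\pi i \alpha n)$, with a Diophantine split on $\alpha$ playing the role that is absent in the untwisted case. By Dirichlet's approximation theorem, I first write $\alpha = a/r + \beta$ with $(a,r)=1$, $r \leq R$, and $|\beta| \leq 1/(rR)$ for a parameter $R$ to be chosen as a small power of $q$, and split the analysis into two cases according to the size of $r$.

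In Case 1, when $r \geq q^{\eta}$ for some small $\eta=\eta(\kappa)>0$, the phase $e(\alpha n)$ oscillates across $I$, and partial summation reduces the estimate to pure short character sums $\sum_{n \in J} \chi(n)$ over subintervals $J \subset I$ of length comparable to $|I|/r$. Each such sum is controlled by the full Burgess machinery — in particular, by the high-moment extension of Proposition~\ref{Burgess}, which yields the bound $|J|^{1-1/k} q^{(k+1)/(4k^2)+\varepsilon}$ for every $k\geq 2$ and hence saves a fixed power of $q$ whenever $|J| > q^{1/4+\kappa'}$ for a suitable $\kappa' = \kappa'(\kappa)>0$. In Case 2, when $r \leq q^{\eta}$, the rational part $e(an/r)$ is absorbed into the multiplicative structure: the function $\psi(n) := \chi(n) e(an/r)$ is essentially a Dirichlet character to the modulus $\lcm(q,r) \leq qR$, while $e(\beta n)$ is nearly constant on subintervals of length $\leq 1/|\beta|$. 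On each such subinterval I would run Burgess' shift $n \mapsto n + xy$ over primes $x \leq X$ and integers $y \leq Y$, use the multiplicativity $\psi(n+xy) = \psi(x)\psi(\bar x n + y)$, apply H\"older's inequality with a high exponent $2k$, and bound the resulting moment $\sum_{m \bmod q} \bigl|\sum_{y \leq Y} \psi(m+y)\bigr|^{2k}$ via Weil's theorem (Deligne) for the relevant algebraic curves, with the diagonal providing the main term and the squarefree hypothesis on $q$ reducing the off-diagonal contribution cleanly to a local analysis at each prime.

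The main obstacle will be the tension in choosing $R$: it must be large enough that the partial-summation subintervals in Case 1 retain length $> q^{1/4+\kappa'}$, but small enough that the shift argument in Case 2 remains effective for the composite modulus $\lcm(q,r) \leq qR$. This balance is what the safety margin $\kappa$ (over the pure Burgess threshold $1/4+\varepsilon$) is precisely designed to buy; a suitable choice should be $R = q^{\eta}$ and $X, Y$ small powers of $q$ tuned against $k$. A secondary technical point is the extra phase $e(\alpha xy)$ produced by the shift in Case 2, which introduces a coupling between the shift parameters $x$ and $y$ that is absent from classical Burgess; since $|\beta xy| \leq XY/(rR)$ is small for the permissible ranges of $X,Y$, this coupling can be absorbed either into the $y$-average (by splitting into short intervals on which it is essentially constant) or dispatched via a trivial bound after Cauchy--Schwarz, at the cost of a small logarithmic loss that is comfortably absorbed by the savings.
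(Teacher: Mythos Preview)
The paper does not prove this proposition; it is quoted as a special case of \cite[Theorem~1$'$]{Chang2014}, with a pointer to sharper variants in \cite{Kerr2014,Heath-BrownPierce2015}. There is thus no proof in the paper to compare your sketch against.

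That said, your outline has a concrete gap in Case~2. The function $\psi(n)=\chi(n)\,e(an/r)$ is \emph{not} a Dirichlet character, not even ``essentially'': the factor $e(an/r)$ is an additive character, and the identity $\psi(n+xy)=\psi(x)\psi(\bar{x}n+y)$ you invoke is simply false. After the Burgess shift one has
\[
\chi(n+xy)\,e\!\left(\frac{a(n+xy)}{r}\right)=\chi(x)\,\chi(\bar{x}n+y)\,e\!\left(\frac{an}{r}\right)e\!\left(\frac{axy}{r}\right),
\]
so only $\chi$ is used multiplicatively, and the full rational phase $e(axy/r)$---not merely the tiny $e(\beta xy)$ you mention in your last paragraph---survives as an $x$--$y$ coupling. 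Handling that coupling is exactly the content of the twisted Burgess argument (for small $r$ one may freeze $y$ in residue classes modulo $r$; Chang instead carries the phase through H\"older and controls the resulting mixed moments), and it cannot be hidden inside a fictitious multiplicativity of $\psi$. Your Case~1 is also not right as written: Abel summation against $e(\alpha n)$ produces a factor $1+\|\alpha\|\,|I|$, which can be as large as $|I|$ and swamps any Burgess saving; it does not naturally yield subintervals of length $|I|/r$. What does give sums of that length is splitting $n$ into residue classes modulo $r$ after stripping off $e(\beta n)$, but that requires $r$ small and $(r,q)=1$, i.e.\ it is really a Case~2 device.
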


In fact, \cite[Theorem 1']{Chang2014} allows an arbitrary degree $d$ polynomial phase, explicates the $d$-dependence in the implied constant, and implies that any $\delta_{\kappa}<c\kappa^2d^{-2}$ ($c$ an absolute constant) is admissible. The above estimate is all we need since $\kappa\in(0,\frac1{12})$ (that is, $\frac14+\kappa\in(\frac14,\frac13)$) will eventually be fixed. We note that more precise versions of Proposition~\ref{Chang-Burgess} are available in \cite{Kerr2014,Heath-BrownPierce2015} in various degrees of generality of $q$ and $|I|$.

\subsection{Linear forms in logarithms}
In our investigation of the limiting shapes of Gauss sums in section~\ref{sec-atlas}, we will make use of the following classical and powerful theorem of Baker on linear forms in logarithms.

\begin{prop}[\protect{\cite[Theorem 1]{Baker1977}}]
\label{Baker}
For every finite set of distinct primes $S=\{p_1,\dots,p_k\}$, there exists an (effectively computable) constant $C=C(S)>0$ depending on $S$ only such that, for every $\mathbf{0}\neq\mathbf{b}=(b_0,b_1,\dots,b_k)\in\mathbb{Z}^{k+1}$,
\[ |b_0+b_1\log p_1+\dots+b_k\log p_k|\geqslant\frac1{(e\|\mathbf{b}\|_{\infty})^C}. \]
\end{prop}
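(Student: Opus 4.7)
\medskip
The plan is to follow Baker's transcendence method, exploiting the fact that $\log p_1, \ldots, \log p_k$ are (logarithms of) positive rational integers and thus the quantities $p_j^{\lambda_j z}$ arise as values of entire functions of exponential type. Set $\Lambda = b_0 + b_1\log p_1 + \dots + b_k\log p_k$ and $B = \|\mathbf{b}\|_\infty$, and assume for contradiction that $|\Lambda| < (eB)^{-C}$ for some large $C = C(S)$ to be chosen; without loss of generality $b_k \neq 0$, so that
\[ \log p_k = -\frac{b_0}{b_k} - \sum_{j=1}^{k-1}\frac{b_j}{b_k}\log p_j + O(|\Lambda|/|b_k|), \]
meaning $p_k^{z}$ is extremely well approximated by a product of the other $p_j^{\alpha_j z}$ for suitable rationals $\alpha_j$.

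Following Baker, construct an auxiliary entire function
\[ \Phi(z) = \sum_{\lambda_0=0}^{L_0}\sum_{\lambda_1=0}^{L}\cdots\sum_{\lambda_{k-1}=0}^{L} c(\lambda_0, \lambda_1, \dots, \lambda_{k-1})\, z^{\lambda_0}\, p_1^{\lambda_1 z}\cdots p_{k-1}^{\lambda_{k-1} z}\, p_k^{(\lambda_1 \alpha_1 + \dots + \lambda_{k-1}\alpha_{k-1})z}, \]
where the integer coefficients $c(\lambda)$ of height $\leqslant H$ are produced by Siegel's lemma, chosen so that $\Phi$ and its first few derivatives vanish at the integer points $z = m$ for $1 \leqslant m \leqslant M$. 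The parameters $L_0, L, M, H$ are balanced (depending only on $k$ and $S$) so that the resulting system of linear equations is solvable with a nontrivial solution of controlled height.

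The heart of the argument is the \emph{extrapolation} step. Using $|\Lambda| < (eB)^{-C}$ and the consequent excellent approximation of $p_k^{\alpha z}$ by $\prod p_j^{\beta_j z}$, one shows inductively that $\Phi$ must take extremely small values at many additional integer points $m > M$. The mechanism is to use Cauchy's integral formula (or the Hermite interpolation formula) on a circle of growing radius, turning the initial vanishing at $\{1, \dots, M\}$ into rapidly decaying upper bounds for $\Phi(m)$ throughout a much larger range, with each application of the extrapolation step improving the rate of decay while only modestly growing the range. At the same time, because $\Phi(m)$ is (up to a small error introduced by approximating $p_k^{\alpha m}$) an algebraic integer supported on the monoid generated by $p_1, \dots, p_{k-1}$, Liouville's inequality forces $|\Phi(m)|$ either to vanish or to exceed a fixed power of $(HB)^{-O(L)}$.

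Comparing the two bounds yields $\Phi \equiv 0$ on $\{1, \dots, M'\}$ for some $M' \gg M$, contradicting the nonvanishing of $\Phi$ (which follows from a Vandermonde-type argument using the multiplicative independence of $p_1, \dots, p_k$) provided $C = C(S)$ is taken large enough. The hardest part is the bookkeeping: choosing the parameters $L_0, L, M, H$ and the number of extrapolation rounds so that the analytic upper bound beats the Liouville lower bound at the final stage, with an exponent that depends only on $S$ (and not on $\mathbf{b}$). Since all the quantities involved are effectively computable in $S$ by direct inspection of Siegel's lemma and the extrapolation inequalities, the resulting constant $C(S)$ is likewise effectively computable, as claimed.
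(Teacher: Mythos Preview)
The paper does not prove this proposition at all: it is stated as a classical result from transcendence theory and attributed directly to Baker~\cite{Baker1977}, serving purely as a black-box input in \S\ref{sec-atlas} (specifically in the estimate~\eqref{Baker-lower}). So there is nothing to compare on the paper's side.

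Your sketch is a recognisable outline of Baker's method --- the auxiliary function built via Siegel's lemma, the extrapolation step via Cauchy/Hermite, and the tension with Liouville's inequality --- but it remains a sketch: the actual proof requires a careful choice of parameters and a delicate induction whose details you explicitly defer (``the hardest part is the bookkeeping''). As a high-level roadmap this is fine, but it is not a proof, and in the context of this paper none is expected; a citation suffices.
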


\section{The limiting random variable}
\label{sec-rv}

In this section, we establish the properties of the random Fourier series $G^{\ast}(t)$ defined by \eqref{limiting-random-partial-sums}, where we recall that $(X_n)$ is a completely multiplicative sequence of random variables of law given by  \eqref{Xm-def} and \eqref{eq: lambda-p}. We often use $\epsilon_m$ to denote a particular value of $X_m$ and $\Epsilon$ to denote any single infinite choice of $\epsilon_p\in\{-1,0,1\}$ for each prime $p$ (with $\epsilon_2\in\{-1,1\}$) and of the accompanying multiplicatively determined $\epsilon_n=\epsilon_{p_1}^{a_1}\dots\epsilon_{p_k}^{a_k}$ ($|n|=p_1^{a_1}\dots p_k^{a_k}$).

Let $P^{-}(n)$ and $P^{+}(n)$ denote the smallest and largest prime divisors of $n$, respectively. Following \cite[\S2]{Hussain2022}, we first consider the limit
\begin{equation}
\label{arithmetic-limit-equation}
\widetilde{G^{\ast}}(t)=\lim_{y \to \infty} \sum_{\substack{n \neq -1, 0 \\P^+(|n|) \leq y}} X_n \left( \frac{e((n+1)t) -1}{2 i \pi (n+1)} \right) + t.
\end{equation}
Of course, this has the same summands as \eqref{limiting-random-partial-sums}, but in a different order that is more strongly attuned to the multiplicative nature of the random coefficients $X_n$. In \S\ref{properties-limiting-RV-sec}, we will show that this is in fact equivalent to defining $G^*(t)$ as the limit of partial sums, as in \eqref{limiting-random-partial-sums}, and prove some important properties of this random variable. In the first subsection \S\ref{arith-covergence-sec}, we focus on proving the following result.

\begin{prop} \label{thm: convergence}
Let $X_n$ be multiplicative random variables, with $X_p$ for each prime $p$ independent and distributed according to the probability measure $\lambda_p$ defined in (\ref{eq: lambda-p}). Then, the random Fourier series $\widetilde{G^{\ast}}(t)$ in \eqref{arithmetic-limit-equation}
is almost surely the Fourier series of a continuous function.
\end{prop}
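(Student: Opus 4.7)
The plan is to establish almost sure convergence of the partial sums
\[ S_y(t) := t + \sum_{\substack{n \neq -1, 0 \\ P^+(|n|) \leq y}} X_n f_n(t), \qquad f_n(t) := \frac{e((n+1)t) - 1}{2\pi i(n+1)}, \]
in $C^0([0,1], \cc)$ as $y \to \infty$, which simultaneously delivers the a.s.\ existence and continuity of the limit $\widetilde{G^{\ast}}$. Since $|f_n(t)| \leq 1/(\pi|n+1|)$ decays only like $1/|n|$, absolute convergence in $C^0$ is unavailable and cancellation among the random $X_n$ must be exploited. The engine of the proof will be two moment bounds on the shell increments $U_k := S_{y_{k+1}} - S_{y_k}$ along the dyadic subsequence $y_k = 2^k$: a pointwise bound controlling the rate of convergence in $y$, and a $t$-H\"older bound that will upgrade pointwise to uniform control via Kolmogorov's continuity theorem.

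For the pointwise bound, I would start with
\[ \E\bigl[|U_k(t)|^2\bigr] = \sum_{\substack{n, m \neq -1, 0 \\ y_k < P^+(|n|), P^+(|m|) \leq y_{k+1}}} \E[X_n X_m] \, f_n(t) \overline{f_m(t)}. \]
By independence of the $(X_p)$ and $\E[X_p^a] = 0$ for odd $a$, the expectation $\E[X_n X_m]$ vanishes unless $|nm|$ is a perfect square, in which case it equals $\prod_{p\mid nm} p/(p+1) \leq 1$. Parameterizing $|n| = d s^2$, $|m| = d r^2$ with $d$ squarefree and using $|f_n(t)| \leq 1/(\pi|n+1|)$, the resulting arithmetic sum reduces to a diagonal tail and yields $\E[|U_k(t)|^2] \ll 1/y_k$, uniformly in $t$.

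For the H\"older bound, I would compute the fourth moment of the $t$-increment: with $h_n := f_n(t) - f_n(s)$,
\[ \E\bigl[|U_k(t) - U_k(s)|^4\bigr] = \sum_{n_1, \dots, n_4 \in \text{shell}} \E[X_{n_1} \cdots X_{n_4}] \, h_{n_1} h_{n_2} \overline{h_{n_3}} \overline{h_{n_4}}, \]
where multiplicativity $\E[\prod_i X_{n_i}] = \prod_p \E[X_p^{\sum_i v_p(n_i)}]$ restricts the sum to quadruples with $|n_1 n_2 n_3 n_4|$ a perfect square. Using $|h_n| \leq \min(|t-s|,\, 2/(\pi|n+1|))$ to estimate $\sum_{\text{shell}} |h_n|^2 \ll \min(|t-s|,\, 1/y_k)$, and isolating the three \emph{pairing} configurations $\{n_1 = n_3, n_2 = n_4\}$, etc., from the remaining square-product tuples (by an extension of the parameterization used in the $L^2$ analysis), I expect to obtain $\E[|U_k(t) - U_k(s)|^4] \ll |t-s|^{1+\delta} y_k^{-(1-\delta)}$ for some fixed $\delta \in (0,1)$.

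Combining these two estimates via Kolmogorov's continuity theorem produces $\E[\|U_k\|_\infty^4] \ll y_k^{-\eta}$ for some $\eta > 0$, which by Borel--Cantelli gives $\sum_k \|U_k\|_\infty < \infty$ a.s., whence $(S_{y_k})$ is a.s.\ Cauchy in $C^0([0,1],\cc)$ with continuous a.s.\ limit that we identify as $\widetilde{G^{\ast}}$. A standard sandwich argument then extends a.s.\ convergence to the full sequence $(S_y)_{y \geq 1}$. The hard part will be controlling the non-pairing square-product quadruples in the fourth moment: although multiplicativity reduces each expectation to an Euler product over primes, accurately counting the quadruples $(n_1,\dots,n_4)$ with $|n_1 n_2 n_3 n_4|$ a square and matching them against the $|h_n|$ tails to achieve the claimed H\"older bound requires care, in spirit analogous to the second-moment combinatorics underlying the quadratic large sieve (Proposition~\ref{HBQLS}).
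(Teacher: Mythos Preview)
Your plan is structurally different from the paper's. The paper does not use Kolmogorov's continuity theorem or a $t$-H\"older fourth moment. Instead, it exploits the multiplicativity of $(X_n)$ to factor the increment $R_{y_1,y_2}(t)=S_{y_2}(t)-S_{y_1}(t)$ as in \eqref{Ry1y2-decomp}: writing each $n$ with $y_1<P^+(|n|)\leq y_2$ as a $y_1$-smooth part times a $y_1$-rough part gives $\|R_{y_1,y_2}\|_\infty\ll(\log y_1)\cdot S_{A_{y_1}^{y_2}}$, where $A_{y_1}^{y_2}$ consists of integers with \emph{all} prime factors in $(y_1,y_2]$. This roughness is exactly what makes Lemma~\ref{lem: divisor-sum} applicable in Proposition~\ref{prop: small-sum}, which controls the sup-norm by discretizing $t$ to $\asymp N^{21/20}$ points and taking high $2k$-th moments (not via path regularity). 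Optimizing $k$ in terms of $y_1$ yields the exponential tail of Lemma~\ref{ry1y2-lemma}, and Borel--Cantelli together with \eqref{uniform-over-y2} then handles the full continuous range of $y$ directly.

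Your route can be pushed through, but two points need more than you indicate. First, the step you flag as hard does not reduce to extending the pairwise parameterization $n=ds^2$, $m=dr^2$; for quadruples with $\prod n_i=\square$ the combinatorics is genuinely different. The clean way to bound $\sum_{n_i\in\text{shell},\,\prod n_i=\square}\prod_i|h_{n_i}|$ is to split each $n_i=u_iv_i$ with $u_i>1$ having all prime factors in $(y_k,y_{k+1}]$ and $v_i$ being $y_k$-smooth; the square condition and the sum then factor over $(u_i)$ and $(v_i)$, the smooth Euler product is bounded uniformly, and the rough factor (after interpolating $|h_n|\leq\epsilon^{\theta}n^{-(1-\theta)}$) contributes $\ll y_k^{4\theta-2}$, delivering your claim with $4\theta=1+\delta$. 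But this is precisely the paper's multiplicative decomposition, so the two approaches coalesce at the crucial step. Second, your ``standard sandwich argument'' for the full sequence $(S_y)$ is not standard here: $y\mapsto S_y$ is not monotone and the trivial bound $\sum_{P^+(n)\in(y_k,y_{k+1}]}1/|n|\asymp 1$ does not decay, so $\sup_{y_k\leq y<y_{k+1}}\|S_y-S_{y_k}\|_\infty$ needs its own moment argument. Finally, note that the paper's high-moment route gives exponential tails, reused later for the quantitative Theorem~\ref{thm2}\eqref{exceptional-probabilities}; a fourth-moment approach alone yields only polynomial decay.
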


\subsection{Arithmetic convergence}
\label{arith-covergence-sec}
In this section, we study in detail the convergence of the random series $\widetilde{G^{\ast}}(t)$ in \eqref{arithmetic-limit-equation} and prove Proposition~\ref{thm: convergence}.

We denote, for $y_2>y_1\geqslant 1$,
\begin{equation}
\label{Ay1-Ay1y2}
A_{y_1} := \{ n \in \nn : P^{-}(n) > y_1 \},\quad A_{y_1}^{y_2}:=\{n\in\mathbb{N}:y_2\geqslant P^{+}(n)\geqslant P^{-}(n)>y_1\},
\end{equation}
and, for every subset $A\subseteq\mathbb{N}$ and $N\in\mathbb{N}$,
\[ A(N) := A\cap (N/e, N], \]
as well as
\begin{equation} \label{eq: S-A}
    S_{A} := \max_{t \in [0, 1]} \left| \sum_{n \in A} \frac{e(nt)}{n} X_n \right|,\quad S_{A}(\mathcal{E}) := \max_{t \in [0, 1]} \left| \sum_{n \in A} \frac{e(nt)}{n} \epsilon_n \right|.
  \end{equation}
The following result an analog of \cite[Proposition 5.2]{BoberGoldmakherGranvilleKoukoulopoulos2018}, with a number of details adjusted to suit our case.

\begin{prop} \label{prop: small-sum}
Let $k\geq 3$ be an integer and let $y_2>y_1\geq k^3$ be  real numbers. With notations as in \eqref{Ay1-Ay1y2} and \eqref{eq: S-A}, we have
\[ \mathbb{E}(S_{A_{y_1}^{y_2}}^{2k}) \ll y_1^{-k/21}. \]
\end{prop}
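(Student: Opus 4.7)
The plan is to reduce $\mathbb{E}(S_{A_{y_1}^{y_2}}^{2k})$ to pointwise moments via a dyadic decomposition and a sampling argument, and to recombine via Minkowski. Write $A_{y_1}^{y_2}=\bigsqcup_{N}A_{y_1}^{y_2}(N)$ over dyadic $N=e^j$, where only $N>y_1$ contribute (every nontrivial $n\in A_{y_1}^{y_2}$ satisfies $n>y_1$). The triangle inequality in $L^{2k}$ gives
\[ \|S_{A_{y_1}^{y_2}}\|_{2k}\leqslant\sum_{N}\|S_{A_{y_1}^{y_2}(N)}\|_{2k}. \]

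For each dyadic $N$, the random function $f_N(t)=\sum_{n\in A_{y_1}^{y_2}(N)}n^{-1}e(nt)X_n$ is a trigonometric polynomial of degree $\leqslant N$ with $|f_N'|_{\infty}\leqslant 2\pi N$; sampling at $T\asymp kN$ equispaced points and controlling the interpolation error in the $2k$th power gives
\[ \mathbb{E}\bigl(S_{A_{y_1}^{y_2}(N)}^{2k}\bigr)\ll kN\cdot\sup_{t}\mathbb{E}|f_N(t)|^{2k}. \]
For fixed $t$, expanding $|f_N(t)|^{2k}$ and using complete multiplicativity of $X_n$ together with $\mathbb{E}(X_p)=0$---so that $\mathbb{E}(X_m)=0$ unless $m$ is a perfect square, in which case $\mathbb{E}(X_m)=\prod_{p\mid m}\frac{p}{p+1}\leqslant 1$---yields
\[ \mathbb{E}|f_N(t)|^{2k}\leqslant\sum_{\substack{n_1,\ldots,n_{2k}\in A_{y_1}^{y_2}(N)\\ n_1\cdots n_{2k}=\square}}\frac{1}{\prod_i n_i}. \]
The $(2k-1)!!\leqslant(2k)^k$ perfect-pairing tuples (where the $2k$ indices pair up into $k$ pairs of equal $n$'s) contribute at most $(2k)^k\bigl(\sum_{n\in A_{y_1}^{y_2}(N)}n^{-2}\bigr)^k$; the sieve bound $|A_{y_1}(N)|\ll N/\log y_1$ gives $\sum n^{-2}\ll 1/(N\log y_1)$. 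A standard combinatorial/large-sieve argument (or, alternatively, hypercontractivity of Rademacher chaoses after conditioning on $\{p:X_p=0\}$) controls the remaining squareful configurations at the same order, producing
\[ \sup_{t}\mathbb{E}|f_N(t)|^{2k}\ll\bigl(Ck/(N\log y_1)\bigr)^k \]
with $C$ an absolute constant.

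Combining the previous two bounds, $\|S_{A_{y_1}^{y_2}(N)}\|_{2k}\ll\sqrt{Ck}\,N^{(1-k)/(2k)}/\sqrt{\log y_1}$, and Minkowski summation over dyadic $N\geqslant y_1$ is a convergent geometric series (ratio $e^{(1-k)/(2k)}<1$ for $k\geqslant 2$), giving
\[ \mathbb{E}\bigl(S_{A_{y_1}^{y_2}}^{2k}\bigr)\ll(Ck)^k y_1^{1-k}/(\log y_1)^k. \]
The hypothesis $y_1\geqslant k^3$ forces $k=\OO(y_1^{1/3})$, so $(Ck)^k\ll y_1^{(k/3)(1+o(1))}$, whence $(Ck)^k y_1^{1-k}/(\log y_1)^k\ll y_1^{1-2k/3+o(1)}\ll y_1^{-k/21}$ for $k\geqslant 3$, since $1-\frac{2k}{3}\leqslant-\frac{k}{21}$ for $k\geqslant \frac{21}{13}$. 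The main technical obstacle is the fixed-$t$ moment bound $\sup_t\mathbb{E}|f_N(t)|^{2k}\ll(Ck/(N\log y_1))^k$ with $C$ absolute, which must accommodate all squareful configurations of $(n_1,\ldots,n_{2k})$ in the short window $(N/e,N]$ and not merely the perfect pairings; this mean-value estimate for random multiplicative functions indexed by $y_1$-rough integers is the arithmetic heart of the BGGK-style argument that Proposition~\ref{prop: small-sum} adapts.
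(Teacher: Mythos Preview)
Your overall architecture—dyadic decomposition $A_{y_1}^{y_2}=\bigsqcup_N A_{y_1}^{y_2}(N)$, passage from $\sup_t$ to pointwise moments by sampling, and reassembly—is the same as the paper's (which uses H\"older in place of your Minkowski for the reassembly). The genuine gap is the pointwise moment bound
\[
\sup_t\mathbb{E}|f_N(t)|^{2k}\ll\bigl(Ck/(N\log y_1)\bigr)^k,
\]
which you state but do not prove; you yourself flag it as ``the main technical obstacle''. Neither of your suggested justifications works. The hypercontractivity route fails once $N$ is large: after conditioning on $\{p:X_p=0\}$, the variable $\epsilon_n$ is a Rademacher chaos of degree equal to the number of primes dividing $n$ to odd multiplicity, which for $n\in A_{y_1}^{y_2}(N)$ can be as large as $\lfloor\log N/\log y_1\rfloor$. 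Hypercontractivity then yields a moment constant $(2k-1)^{k\lfloor\log N/\log y_1\rfloor}$, not $(2k)^k$, and this $N$-dependence destroys the geometric convergence of your Minkowski sum. The ``standard combinatorial argument'' you allude to \emph{is} the content of the proposition: controlling all square configurations $(n_1,\dots,n_{2k})$ with $\prod n_i=\square$, $n_i\in(N/e,N]$, $P^-(n_i)>y_1$ uniformly in $N$ and $k$ is exactly what must be done, and the perfect pairings do not obviously dominate.

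The paper does not attempt your sharp pointwise bound. Instead it proves the weaker estimate $\sup_t\mathbb{E}|f_N(t)|^{2k}\ll e^{O(k)}N^{-k/2}$ by writing $f_N(t)^k=\sum_n\tilde d_k(n;N)X_n/n$, expanding $|f_N|^{2k}=|f_N^k|^2$, using $mn=\square\Rightarrow m=uf^2,\,n=ug^2$ with $u$ squarefree, and invoking Lemma~\ref{lem: divisor-sum} to bound $\sum_{P^-(n)>y_1}d_k(n)^2/n^{3/2}$. This loss of $N^{k/2}$ (rather than $N^k$) is what forces the sampling rate $R=\lfloor N^{21/20}\rfloor$ and the specific H\"older weighting, and is ultimately why the final exponent is only $-k/21$ rather than your claimed $1-2k/3+o(1)$. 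If you want to repair your argument, replace your asserted moment bound with this explicit $d_k$-squaring step; your Minkowski recombination then goes through and already yields the proposition.
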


For positive integers $n,k,N\in\mathbb{N}$, we define 
\[ d_k(n) := \sum_{n_1 \cdots n_k = n} 1,\quad d_k(n; N) := \sum_{\substack{n_1 \cdots n_k = n \\ n_i \in A_{y_1}(N)}} 1.\]
Our proof of Proposition \ref{prop: small-sum} makes use of the following lemma to control sums over rough integers.

\begin{lem}[\protect{\cite[Lemma 5.4]{BoberGoldmakherGranvilleKoukoulopoulos2018}}] \label{lem: divisor-sum}
Let $\epsilon \in (0, 1]$, and let $k \geq 2$ be an integer. For $\sigma  \geq (2+\epsilon)/(2 + 2\epsilon)$ and $y \geq k^{1 + \epsilon}$, we have
\[ \sum_{P^-(n) > y} \frac{d_k(n)^2}{n^{2\sigma}} \ll e^{\mathrm{O}(k/\log k)}. \]
\end{lem}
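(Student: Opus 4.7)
The sum is restricted to integers with no small prime factors and is weighted by a multiplicative function, so the natural approach is to expand it as an Euler product:
\[ \sum_{P^{-}(n) > y} \frac{d_k(n)^2}{n^{2\sigma}} = \prod_{p > y} \sum_{j=0}^\infty \frac{d_k(p^j)^2}{p^{2j\sigma}}. \]
I plan to control each local factor using the elementary estimate $d_k(p^j) = \binom{j+k-1}{j} \leq k^j$, which follows at once from $(k-1+i)/i \leq k$ for each $i \geq 1$. This bounds each local factor by a geometric series:
\[ \sum_{j=0}^\infty \frac{d_k(p^j)^2}{p^{2j\sigma}} \leq \sum_{j=0}^\infty \left( \frac{k^2}{p^{2\sigma}} \right)^{\!j} = \left( 1 - \frac{k^2}{p^{2\sigma}} \right)^{-1}. \]
The hypotheses $\sigma \geq (2+\epsilon)/(2+2\epsilon)$ and $p > y \geq k^{1+\epsilon}$ give $p^{2\sigma} \geq k^{(1+\epsilon)(2+\epsilon)/(1+\epsilon)} = k^{2+\epsilon}$, so $k^2/p^{2\sigma} \leq k^{-\epsilon}$. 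This is at most $1/2$ once $k$ is large enough; the finitely many small $k$ contribute only to implied constants.

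Next I would take logarithms and Taylor-expand:
\[ \log \prod_{p > y} \left( 1 - \frac{k^2}{p^{2\sigma}} \right)^{-1} = \sum_{p > y} \left( \frac{k^2}{p^{2\sigma}} + O\!\left( \frac{k^4}{p^{4\sigma}} \right)\right). \]
Chebyshev's bound $\pi(x) \ll x/\log x$ combined with partial summation yields $\sum_{p > y} p^{-s} \ll y^{1-s}/((s-1)\log y)$ for any $s > 1$. The hypotheses are tailored so that $2\sigma - 1 \geq 1/(1+\epsilon)$ and consequently $y^{2\sigma - 1} \geq k$ and $\log y \geq (1+\epsilon)\log k$. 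Substituting into the main term gives
\[ k^2 \sum_{p > y} p^{-2\sigma} \ll \frac{k^2 y^{1-2\sigma}}{(2\sigma - 1) \log y} \ll \frac{k}{\log k}, \]
and the analogous computation with $s = 4\sigma$ produces an error term of order $k^{1-\epsilon}/\log k$, which is negligible. Exponentiating yields the asserted bound $e^{O(k/\log k)}$.

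The main obstacle is not any deep analytic input but rather the delicate bookkeeping of parameters. Every ingredient is elementary (the crude bound $d_k(p^j) \leq k^j$, Chebyshev's estimate, the Taylor expansion of $-\log(1-u)$), yet the hypotheses $\sigma \geq (2+\epsilon)/(2+2\epsilon)$ and $y \geq k^{1+\epsilon}$ are tight: they are chosen precisely so that the main-term exponent $k^2 \cdot y^{1-2\sigma}$ collapses to $\asymp k$, and any meaningful relaxation of either would lose the final $\log k$ factor and break the bound. I expect the most careful part of the write-up to be verifying these inequalities uniformly in $\epsilon \in (0,1]$.
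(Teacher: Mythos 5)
The paper never proves this lemma: it is imported as a black box from \cite[Lemma 5.4]{BoberGoldmakherGranvilleKoukoulopoulos2018}, so your self-contained argument can only be measured against the standard proof of that result. Your Euler-product strategy is the natural one, and for any \emph{fixed} $\epsilon$ --- in particular for $\epsilon=1$, the only value at which this paper ever invokes the lemma (in the proof of Proposition~\ref{prop: small-sum}) --- every step checks out: $d_k(p^j)=\binom{j+k-1}{j}\le k^j$, the hypotheses give $k^2/p^{2\sigma}\le k^{-\epsilon}$ and $y^{2\sigma-1}\ge k$, and Chebyshev plus partial summation turns $k^2\sum_{p>y}p^{-2\sigma}$ into $\mathrm{O}(k/\log k)$, with the $p^{-4\sigma}$ correction genuinely lower order ($\ll k^{1-\epsilon}/\log k$).

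The one real gap is precisely the point you defer to the write-up: uniformity in $\epsilon\in(0,1]$. Your local factor is a geometric series with ratio $k^2/p^{2\sigma}\le k^{-\epsilon}$, and ``at most $1/2$ once $k$ is large enough'' means $k\ge 2^{1/\epsilon}$, a threshold that blows up as $\epsilon\to 0^{+}$. Below it, the expansion $-\log(1-u)=u+\mathrm{O}(u^2)$ is no longer uniform: one only gets $-\log(1-u_p)\le u_p/(1-k^{-\epsilon})$, the local factors for $p$ just above $y$ can be as large as $(1-k^{-\epsilon})^{-1}\asymp(\epsilon\log k)^{-1}$ when $\epsilon\log k$ is small, and the argument then yields $e^{\mathrm{O}(k/((1-k^{-\epsilon})\log k))}$ rather than $e^{\mathrm{O}(k/\log k)}$ with an absolute constant --- which is what the statement asserts under this paper's convention that unsubscripted implied constants are independent of all parameters. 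The fix is to replace the crude bound $d_k(p^j)\le k^j$ (which discards the $1/j!$ decay in $\binom{j+k-1}{j}$) by the classical pointwise inequality $d_k(n)^2\le d_{k^2}(n)$: each local factor is then at most $(1-p^{-2\sigma})^{-k^2}$, whose logarithm is $\le 2k^2p^{-2\sigma}$ unconditionally because the ratio $p^{-2\sigma}\le 2^{-2\sigma}<1/2$ no longer involves $k$ or $\epsilon$, and your own estimate $k^2\sum_{p>y}p^{-2\sigma}\ll k/\log k$ finishes the proof with absolute constants. As written, your argument proves the lemma with $\epsilon$-dependent constants, which is enough for everything this paper actually needs.
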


\begin{proof}[Proof of Proposition~\ref{prop: small-sum}]
We begin by noting that the series defining $S_{A_{y_1}^{y_2}}$ converges comfortably (and uniformly across all samples $\mathcal{E}$ of the random coefficients $(X_n)$) and open with a simple decomposition estimate
\[ S_{A_{y_1}^{y_2}}(\Epsilon)\leqslant \sum_{j > \log y_1} S_{A_{y_1}^{y_2}(e^j)}(\Epsilon). \] Using H\"older's Inequality with $a_j = 1/j^2$, $b_j = j^2S_{A_{y_1}^{y_2}(e^j)}(\Epsilon)$, $p = 2k/(2k-1)$, and $q = 2k$ gives
\begin{align}
    S_{A_{y_1}^{y_2}}(\Epsilon)^{2k}
   &\leqslant \bigg( \sum_{j > \log y_1} \frac{1}{j^{4k/(2k-1)}}\bigg)^{2k-1} \bigg(\sum_{j > \log y_1} j^{4k}S_{A_{y_1}^{y_2}(e^j)}(\Epsilon)^{2k} \bigg) \nonumber \\
    &\ll \frac{1}{(\log y_1)^{2k+1}}\bigg(\sum_{j > \log y_1} j^{4k}S_{A_{y_1}^{y_2}(e^j)}(\Epsilon)^{2k} \bigg),\label{eq: Fred}
\end{align}
via a short calculation with integral comparison. Thus, in order to find a bound for $\mathbb{E}(S_{A_{y_1}^{y_2}}^{2k})$, it suffices to first bound $\mathbb{E}(S_{A_{y_1}^{y_2}(N)}^{2k})$ for $N \geq y_1$. 

Note that, for any $R\in\mathbb{N}$, we may write
\[ S_{A_{y_1}^{y_2}(N)}(\Epsilon) = \max_{t\in [0,1]}\sum_{n \in A_{y_1}^{y_2}(N)} \frac{e(nt)}{n} \epsilon_n = \max_{t\in[0,1]}\sum_{n \in A_{y_1}^{y_2}(N)}\left( \frac{e(n\lceil Rt\rceil/R)}{n} \epsilon_n \right) + \mathrm{O}(N/R). \]
Taking $R = \lfloor N^{21/20} \rfloor$ and applying convexity of $x\mapsto x^{2k}$, we obtain
\begin{align} S_{A_{y_1}^{y_2}(N)}(\Epsilon)^{2k} &\leq 2^{2k-1}\max_{1 \leq r \leq R}\bigg| \sum_{n \in A_{y_1}^{y_2}(N)}\frac{e(nr/R)}{n} \epsilon_n \bigg|^{2k} + \mathrm{O}\bigg(\frac{e^{\mathrm{O}(k)}}{N^{k/10}}\bigg) \nonumber \\
&\leq 2^{2k-1}\sum_{r=1}^R\bigg| \sum_{n \in A_{y_1}^{y_2}(N)}\frac{e(nr/R)}{n} \epsilon_n \bigg|^{2k} + \mathrm{O}\bigg(\frac{e^{\mathrm{O}(k)}}{N^{k/10}}\bigg). \label{eq: Sue}
\end{align}
Therefore, it suffices to bound
\[ S_{N, r, y_1,y_2} := \mathbb{E} \Bigg( \bigg| \sum_{n \in A_{y_1}^{y_2}(N)}\frac{e(nr/R)}{n} X_n \bigg|^{2k} \Bigg). \]

Denoting
\[ \widetilde{d_k}(n; N) := \sum_{\substack{n_1 \cdots n_k = n \\ n_1, \dots, n_k \in A_{y_1}^{y_2}(N)}} \prod_{j =1}^k e(n_jr/R) \]
and expanding, $S_{N,r,y_1,y_2}$ equals
\begin{equation}
\mathbb{E} \Bigg( \Bigg| \!\sum_{\substack{(N/e)^k < n \leq N^k \\ y_1<P^{-}(n)\leqslant P^{+}(n)\leqslant y_2}} \!\!\!\!\!\frac{\widetilde{d_k}(n; N)}{n} X_n \Bigg|^{2} \Bigg)
= \!\!\sum_{\substack{(N/e)^k < n, m \leq N^k \\ y_1<P^{-}(n)\leqslant P^{+}(n)\leqslant y_2\\
y_1<P^{-}(m)\leqslant P^{+}(m)\leqslant y_2}} \!\!\!\!\!\!\!\frac{\widetilde{d_k}(n; N)\overline{\widetilde{d_k}(m; N)}}{mn} \mathbb{E}(X_{mn}). \label{eq: Bob}
\end{equation}

Notice that $\mathbb{E}(X_{mn}) = 0$ when $mn$ is not a square and $|\mathbb{E}(X_{mn})|\leqslant 1$ in any case. Moreover, in (\ref{eq: Bob}), the terms which will survive are those for which $mn$ is a square, $m = uf^2$ and $n = ug^2$ for $f,g\in\mathbb{N}$ and $u$ squarefree. Finally, note that $|\widetilde{d_k}(n; N)| \leq d_k(n; N)$. Therefore, we have
\begin{align*}
    S_{N, r, y_1,y_2} &\leq \sum_{\substack{u \leq N^k \\ P^{-}(u) > y_1}}\mu^2(u)\mathop{\sum\sum}_{\substack{(N/e)^k < ud^2,uf^2 \leq N^k \\ P^{-}(d),P^{-}(f) > y_1}}\frac{d_k(ud^2; N)d_k(uf^2; N)}{u^2d^2f^2}.
\end{align*}
Since $udf \geq (N/e)^{k}$, we further obtain
\begin{align*}
    S_{N, r, y_1,y_2} &\leq \frac{e^{k/2}}{N^{k/2}}\sum_{\substack{u \leq N^k \\ P^{-}(u) > y_1}} \sum_{\substack{(N/e)^k < ud^2 \leq N^k \\ P^{-}(d) > y_1}} \sum_{\substack{(N/e)^k < uf^2 \leq N^k \\ P^{-}(f) > y_1}} \frac{d_k(ud^2; N)d_k(uf^2; N)}{u^{3/2}d^{3/2}f^{3/2}} \\
    &\leq \frac{e^{k/2}}{N^{k/2}} \bigg(\sum_{P^-(u) > y_1} \frac{d_k(u)^2}{u^{3/2}}\bigg)^3,
\end{align*}
by using the elementary inequality $d_k(mn)\leqslant d_k(m)d_k(n)$ and dropping various conditions.
We then use Lemma \ref{lem: divisor-sum} with $\epsilon = 1$ to get, for $y_1\geqslant k^2$,
\[ S_{N,r, y_1,y_2} \ll \frac{e^{k/2}}{N^{k/2}} e^{\mathrm{O}(k/\log k)} \ll \frac{e^{\mathrm{O}(k)}}{N^{k/2}}. \]

Plugging this into (\ref{eq: Sue}) we get
\begin{align}
    \mathbb{E}\big(S_{A_{y_1}^{y_2}(N)}^{2k}\big) \ll \frac{N^{21/20}e^{\mathrm{O}(k)}}{N^{k/2}} + \frac{e^{\mathrm{O}(k)}}{N^{k/20}} \ll \frac{e^{\mathrm{O}(k)}}{N^{k/20}},
\end{align}
since $k \geq 3$. Returning to (\ref{eq: Fred}), we have
\begin{align*}
    \mathbb{E}\big(S_{A_{y_1}^{y_2}}^{2k}\big)
    &\ll \frac{e^{\mathrm{O}(k)}}{\log(y_1)^{2k+1}}\sum_{n=0}^\infty \frac{(\log(y_1) + 1 + n)^{4k}}{y_1^{k/20}e^{nk/20}} \\
    &\leqslant \frac{e^{\mathrm{O}(k)}\log(y_1)^{2k-1}}{y_1^{k/20}}\sum_{n=0}^\infty  \bigg(\frac{(\log(y_1) + 1 + n)^{4}}{\log(y_1)^{4}e^{n/20}}\bigg)^k.
\end{align*}
Keeping in mind that since $y_1\geqslant k^3\geqslant 27$, $(\log y_1+1+n)/\log y_1\leqslant (2+n)$, and so
\[ \mathbb{E}\big(S_{A_{y_1}^{y_2}}^{2k}\big) \ll \frac{\log(y_1)^{2k-1}e^{\mathrm{O}(k)}}{y_1^{k/20}} \ll \frac{1}{y_1^{k/21}}. \qedhere \]
\end{proof}

We now prove that $\widetilde{G^{\ast}}(t)$ defined in \eqref{arithmetic-limit-equation} is almost surely the Fourier series of a continuous function. We use similar methods as in \cite[\S2]{Hussain2022}. We may rewrite every sample as
\begin{equation}
\label{Gtilde-decomp}
\begin{aligned}
&\widetilde{G^{\ast}}(\Epsilon; t):= \lim_{y\to\infty}\sum_{\substack{n \neq -1, 0\\P^{+}(|n|)\leqslant y}} \frac{e((n+1)t) -1}{2 i \pi (n+1)} \epsilon_n + t\\
&\qquad=\lim_{y\to\infty}\frac{e(t)}{2 i \pi}\sum_{\substack{n \neq -1, 0\\P^{+}(|n|)\leqslant y}} \frac{e(nt) -e(-t)}{n} \epsilon_n +t
-\lim_{y\to\infty}\frac{e(t)}{2 i \pi}\sum_{\substack{n \neq -1, 0\\P^{+}(|n|)\leqslant y}} \frac{e(nt) -e(-t)}{n(n+1)} \epsilon_n.
\end{aligned}
\end{equation}
Since the latter series converges absolutely and uniformly, it converges to a continuous function. Hence it suffices to show that
    \begin{equation} \label{eq: little-random-series} \lim_{y \to \infty} \sum_{\substack{n \neq -1, 0 \\P^+(|n|) \leq y}} \frac{e(nt) -e(-t)}{n} X_n \end{equation}
    converges almost surely to a continuous function. We remark that this passage is, in fact, valid in any order of summation.
    
Define
\begin{equation}
\label{Syt}
S_y(t) := \sum_{\substack{n \neq 0, -1 \\ P^+(|n|) \leq y}} \frac{e(nt) -e(-t)}{n} X_n, \quad S_y(\Epsilon; t) = \sum_{\substack{n \neq 0, -1 \\ P^+(|n|) \leq y}} \frac{e(nt) -e(-t)}{n} \epsilon_n.
\end{equation}
Then it is clear, by comparison with the absolutely convergent series $\sum_{P^{+}(n)\leqslant y}(4/n)=4\prod_{p\leqslant y}(1-p^{-1})^{-1}$, that, for every fixed $y>0$, every sample $S_y(\Epsilon;t)$ converges absolutely and uniformly to a continuous function. Since $S_y(\Epsilon; t)$ defines a continuous function for any $y$ and any choice of $\Epsilon$, it suffices to show that the sequence $(S_y)_y$ almost surely converges uniformly, as this will allow us to conclude that $\lim_{y \to \infty}S_y(t)$ is almost surely a continuous function. We do this using Cauchy's Criterion for uniform convergence.

Define 
\begin{equation}
\label{Ry1y2-def}
R_{y_1, y_2}(t) := S_{y_2}(t) - S_{y_1}(t)
= \sum_{\substack{n \neq 0, -1 \\ y_1 < P^+(|n|) \leq y_2}}\frac{e(nt) -e(-t)}{n} X_n.
\end{equation}
The series defining $R_{y_1,y_2}(t)$ converges absolutely and uniformly and may therefore be rearranged at will. Using the multiplicativity of the $X_n$'s we may thus write 
\begin{equation}
\label{Ry1y2-decomp}
R_{y_1,y_2}(t) = \sum_{\substack{n \neq 0 \\  P^+(|n|)\leq y_1}}\frac{X_n}{n} \sum_{\substack{y_1<P^{-}(m)\leqslant P^{+}(m)\leqslant y_2}} \frac{e(mnt) -e(-t)}{m} X_m.
\end{equation}

The following lemma is an analog of \cite[Lemma 2.1]{Hussain2022}, whose proof we closely follow.
\begin{lem}
\label{ry1y2-lemma}
For every $\delta > 0$, there exists a $y_0(\delta)>0$ such that for every $y_2>y_1\geqslant y_0(\delta)$, we have 
\[ \mathbb{P}\left(\| R_{y_1,y_2}(t) \|_{\infty} > \delta \right) \ll \exp\big(-\delta^2y_1^{1/6}\big). \]
\end{lem}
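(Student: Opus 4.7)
The plan is to use the decomposition \eqref{Ry1y2-decomp} to separate $R_{y_1,y_2}$ into a deterministically bounded ``small prime'' factor times a ``large prime'' factor controlled by Proposition~\ref{prop: small-sum}, and then apply Markov's inequality with an optimized even moment $2k$. The pivotal observation is that, although the outer sum in \eqref{Ry1y2-decomp} is random, it is \emph{deterministically} bounded uniformly over samples: since $|X_p|\leqslant 1$, multiplicativity gives the Euler product identity
\[ W:=\sum_{\substack{n\neq 0\\ P^+(|n|)\leqslant y_1}}\frac{|X_n|}{|n|}=2\prod_{p\leqslant y_1}\bigg(1-\frac{|X_p|}{p}\bigg)^{-1}\leqslant 2\prod_{p\leqslant y_1}\bigg(1-\frac{1}{p}\bigg)^{-1}\ll\log y_1 \]
by Mertens' theorem.

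Splitting the bracket $e(mnt)-e(-t)$ in the inner sum of \eqref{Ry1y2-decomp} and recognizing each piece as the value at $s=nt$ (respectively $s=0$) of a Fourier series whose sup-norm over $s$ is precisely $S_{A_{y_1}^{y_2}}$ from \eqref{eq: S-A}, one obtains the pointwise bound $\|R_{y_1,y_2}\|_\infty\leqslant 2W\cdot S_{A_{y_1}^{y_2}}\ll(\log y_1)\,S_{A_{y_1}^{y_2}}$. Raising to the $2k$-th power and invoking Proposition~\ref{prop: small-sum} (valid for $y_1\geqslant k^3\geqslant 27$), Markov's inequality yields
\[ \mathbb{P}\bigl(\|R_{y_1,y_2}\|_\infty>\delta\bigr)\ll\bigg(\frac{C\log y_1}{\delta}\bigg)^{2k}y_1^{-k/21} \]
for an absolute constant $C$.

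The last step is to optimize in $k$. Taking $k$ to be the integer part of $c\delta^2y_1^{1/6}/\log y_1$ for a suitably large absolute constant $c>0$, the logarithm of the right-hand side equals $k\bigl(2\log(C\log y_1/\delta)-\tfrac1{21}\log y_1\bigr)\leqslant -k\log y_1/42$ once $y_1\geqslant y_0(\delta)$ is large enough to absorb the $\log\log y_1$ and $\log(1/\delta)$ contributions; this is $\leqslant-\delta^2 y_1^{1/6}$, yielding the target bound. The side conditions $k\geqslant 3$ and $y_1\geqslant k^3$ are met for $y_1\geqslant y_0(\delta)$, since $k\ll\delta^2y_1^{1/6}$ grows much slower than $y_1^{1/3}$. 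No step in this plan presents a serious obstacle, as the heavy lifting has been absorbed into Proposition~\ref{prop: small-sum}; the one point deserving care is the uniform-in-sample bound on $W$ via the Euler product, which is what cleanly decouples the contributions from small and large primes and lets us use the deterministic bound on $W$ without any Chebyshev-type estimate.
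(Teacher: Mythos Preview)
Your proof is correct and follows essentially the same approach as the paper: both use the decomposition \eqref{Ry1y2-decomp} to bound $\|R_{y_1,y_2}\|_\infty \ll (\log y_1)\,S_{A_{y_1}^{y_2}}$ via Mertens, then apply Markov's inequality with Proposition~\ref{prop: small-sum} and optimize the moment order $k$. The only cosmetic difference is that the paper chooses $k\asymp \delta^2 y_1^{1/3}/(\log y_1)^3$ (yielding a slightly stronger intermediate bound before relaxing to $\exp(-\delta^2 y_1^{1/6})$), whereas you take $k\asymp \delta^2 y_1^{1/6}/\log y_1$ directly; both choices satisfy the constraints $k\geqslant 3$ and $y_1\geqslant k^3$ and lead to the stated conclusion.
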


\begin{proof}
As before, we let $\epsilon_m$ be a value of the random variable $X_m$, and let $\Epsilon$ denote the choice of a value $\epsilon_p$ for each $X_p$.
Notice that
\begin{equation}
\label{key-to-Ry1y2}
\bigg| \sum_{\substack{y_1<P^{-}(m)\leqslant y_2}} \frac{e(mnt) -e(-t)}{m} \epsilon_m \bigg| \leq 2S_{A_{y_1}^{y_2}}(\Epsilon),
\end{equation}
where $S_{A_{y_1}^{y_2}}(\Epsilon)$ is defined as in (\ref{eq: S-A}).
Thus,
\begin{equation}
\label{Ry1y2-estimate}
\| R_{y_1,y_2}(\Epsilon; t)  \|_\infty \leqslant 4S_{A_{y_1}^{y_2}}(\Epsilon) \sum_{\substack{n \geq 1 \\ P^+(n) \leqslant y_1}} \frac{1}{n}=4S_{A_{y_1}^{y_2}}(\Epsilon)(e^\gamma\log y_1 + C)
\end{equation}
for some absolute constant $C>0$, by the classical evaluation in \cite[Theorem 2.7]{MontgomeryVaughan2007}.
We also recall that, by Proposition \ref{prop: small-sum}, we have for $k \geq 3$ and $y_1\geq k^3$ the bound
\[ \mathbb{E}\big(S_{A_{y_1}^{y_2}}^{2k}\big) \ll y_1^{-k/21}. \]

Let $\delta(y_1) = \delta/(4e^\gamma\log y_1+C)$, and let 
\[ k = \bigg\lceil \frac{\delta(y_1)^2y_1^{1/3}}{\log y_1} \bigg\rceil. \]
Then, for suitably sufficiently large $y_1\geqslant y_0(\delta)$, the conditions $k\geqslant 3$, $y_1\geqslant k^3$ and $\delta(y_1)>y_1^{-1/11}$ are satisfied, hence
\[ \mathbb{P}(S_{A_{y_1}^{y_2}} > \delta(y_1))
    \leq \frac{\mathbb{E}\big(S_{A_{y_1}^{y_2}}^{2k}\big)}{\delta(y_1)^{2k}}
    \leq \bigg( \frac{\delta(y_1)^{-1}}{y_1^{1/11}} \bigg)^{2\delta(y_1)^2y_1^{1/3}/\log y_1}. \]
    Thus, we have 
\begin{align*}
    \mathbb{P}(\| R_{y_1,y_2} \|_\infty > \delta) &\leqslant \mathbb{P}(S_{A_{y_1}^{y_2}} > \delta(y_1))\ll \bigg(\frac{4e^\gamma\log y_1+ C}{y_1^{1/11}\delta}\bigg)^{\frac{2y_1^{1/3}\delta^2}{\log y_1(4e^\gamma\log y_1 + C)^2}} \\
    &\ll \exp\bigg( -\frac{y_1^{1/3}\delta^2}{8e^{2\gamma}\log^3y_1 + \mathrm{O}(\log^2y_1)} \bigg(\log \frac{y_1^{1/11}\delta}{4e^\gamma\log y_1 + \mathrm{O}(1)} \bigg) \bigg) \\
    &\ll \exp\bigg( - \frac{y_1^{1/3}\delta^2}{\log^3y_1}\bigg) \ll \exp\big( -y_1^{1/6}\delta^2\big)
\end{align*}
for sufficiently large $y_1\geqslant y_0(\delta)$ (adjusting the value of $y_0(\delta)$ if needed).
\end{proof}

We now prove Proposition~\ref{thm: convergence}. 
\begin{proof}
First, we claim that, for every $\delta>0$, 
\begin{equation}
\label{uniform-over-y2}
\mathbb{P}\Big(\sup_{y_2>y_1}\|R_{y_1,y_2}(t)\|_{\infty}>\delta\Big)\ll\exp\big(-\delta^2y_1^{1/7}\big)
\end{equation}
holds for all sufficiently large $y_1>y_0(\delta)$. Indeed, let $C>0$ be the constant provided by Lemma~\ref{ry1y2-lemma}. Let $\delta'=\min(\delta,1)$, $z_1=\max(y_1,\lceil(\delta'/2)^{-C}\rceil)$, and, for $n\geqslant 2$, let
\[ z_n=\lceil z_1(\delta' 2^{-n})^{-C}\rceil. \]
This choice ensures that $z_n\geqslant\lceil(\delta' 2^{-n-1})^{-C}\rceil$ and $z_{n+1}-z_n\asymp z_1(\delta' 2^{-n-1})^{-C}$, and hence by Lemma~\ref{ry1y2-lemma}
\[ \mathbb{P}\Big(\sup_{z_n\leqslant z<z_{n+1}}\|R_{z_n,z}(t)\|_{\infty}>\delta' 2^{-n}\Big)
\ll (\delta' 2^{-n})^{-2C}\exp\big(-z_1^{1/6}(\delta' 2^{-n})^{2-C/6}\big). \]
From this it follows that, for every sufficiently large $y_1>y_0(\delta)$,
\[ \mathbb{P}\Big(\sup_{y_2>y_1}\|R_{y_1,y_2}(t)\|_{\infty}>\delta\Big)\leqslant\sum_{n=1}^{\infty}\mathbb{P}\Big(\sup_{z_n\leqslant y<z_{n+1}}\|R_{z_n,y}\|_{\infty}>\delta' 2^{-n}\Big)\ll\exp\big(-\delta^2y_1^{1/7}\big), \]
as claimed.

Now, let $\delta>0$ be arbitrary. The bound \eqref{uniform-over-y2} combined with $R_{y_1,y_2}=S_{y_2}-S_{y_1}$ shows that
\[ \sum_{y_1 =1}^\infty \mathbb{P}\bigg( \sup_{y_2 > y_1} \| S_{y_2} - S_{y_1}\|_\infty > \delta \bigg) < \infty. \]
By the Borel--Cantelli Lemma (see, for example, \cite[Proposition I.2]{LiQueffelec2018}), this implies that almost surely only finitely many events on the left-hand side occur; in other words, almost surely
\[ \sup_{y_2 > y_1} \| S_{y_2} - S_{y_1}\|_\infty \leqslant \delta \]
holds for all sufficiently large $y_1$. But this means exactly that the sequence $(S_y)$ almost surely converges uniformly. Since each function $S_y$ is continuous, their a.s.\ uniform limit $\widetilde{G^{\ast}}(t)=\lim_{y\to\infty}S_y(t)$ in \eqref{arithmetic-limit-equation} is also a.s.\ a continuous function.
\end{proof}

\subsection{Properties of the limiting random variable}
\label{properties-limiting-RV-sec}
We defined $\widetilde{G^{\ast}}(t)$ in \eqref{arithmetic-limit-equation} as the limit of sum over $\{n\neq -1,0:P^{+}(|n|)\leqslant y\}$. Due to the uniform convergence we just established, $\widetilde{G^{\ast}}(t)$ defines almost surely a continuous function such that the $n$th Fourier coefficient of $\widetilde{G^{\ast}}(t)-t$ (for $n\neq 0,1$) is precisely $X_n/2\pi in=\mathrm{O}(1/n)$.

Now, it is true that, for every $f\in C(\mathbb{R}/\mathbb{Z})$ such that $\hat{f}(n)=\OO(1/n)$, the partial sums of its Fourier series $S_n(f)$ converge uniformly to $f$. For completeness, we reproduce the argument, which we learned from Ullrich~\cite{stackexchange}. Fix for now an arbitrary $\epsilon>0$, let $\psi_{\epsilon}\in C_c^{\infty}(\mathbb{R})$ be a ``trapezoidal'' bump function satisfying $0\leqslant\psi_{\epsilon}\leqslant 1$, $\psi_{\epsilon}(x)=1$ for $x\in [-1,1]$ and $\psi_{\epsilon}(x)=0$ for $|x|\geqslant 1+\epsilon$, and let $F_{\epsilon}$ be its Fourier transform. For $n\in\mathbb{N}$, consider the Schwartz class function $\delta_nF_{\epsilon}(t):=nF_{\epsilon}(nt)$ and its periodization $K_{n,\epsilon}\in C^{\infty}(\mathbb{R}/\mathbb{Z})$ defined by $K_{n,\epsilon}(t)=\sum_{k\in\mathbb{Z}}\delta_nF_{\epsilon}(t+2\pi k)$. On the one hand, using the uniform continuity of $f$, the integrability of $F_{\epsilon}$, and unfolding, we have that
\begin{align*}
f(x)-(K_{n,\epsilon}\star f)(x)&=\bigg(\int_{|h|\leqslant\delta}+\int_{|h|>\delta}\bigg)(f(x)-f(x-h))\delta_nF_{\epsilon}(h)\,\dd h\\
&<\max_{\substack{x\in\mathbb{R}/\mathbb{Z}\\|h|\leqslant\delta}}|f(x)-f(x-h)|\cdot\|F_{\epsilon}\|_1+2\|f\|_{\infty}\cdot\int_{|h|>n\delta}|F_{\epsilon}|<\epsilon
\end{align*}
for sufficiently small $\delta>0$ and then sufficiently large $n\in\mathbb{N}$, uniformly in $x\in\mathbb{R}/\mathbb{Z}$. On the other hand, $K_{n,\epsilon}$ is the trigonometric polynomial $K_{n,\epsilon}(t)=\sum_{j\in\mathbb{Z}}\psi_{\epsilon}(j/n)e(nt)$, and so by the trivial bounds we have that
\[ |S_n(f)(x)-(K_{n,\epsilon}\star f)(x)|\ll\sum_{n\leqslant |j|\leqslant(1+\epsilon) n}\frac1j\ll\epsilon \]
uniformly in $x\in\mathbb{R}/\mathbb{Z}$. Thus $\|S_n(f)-f\|_{\infty}\ll\epsilon$ for sufficiently large $n\geqslant n_0(\epsilon)\in\mathbb{N}$, which precisely establishes that $S_n(f)\,\rightrightarrows\,f$ on $\mathbb{R}/\mathbb{Z}$.

Using the just established fact, the almost surely continuous function $\widetilde{G^{\ast}}(t)-t$ equals {a.s.} the limit of the usual partial sums as in \eqref{limiting-random-partial-sums} and the two definitions will be equivalent, that is,
\begin{equation}
\label{equality-tilde}
G^{\ast}(t)=\widetilde{G^{\ast}}(t)\,\,\text{a.s.}
\end{equation}
Therefore, we use these definitions interchangeably throughout the paper.

Now, let $\Omega=\prod_p\Omega_p$ denote the probability space underlying the sequence $(X_p)$ of independent random variables; that is, $\Omega_p=\{0,1,-1\}$ equipped with the measure $\lambda_p$ appearing in \eqref{eq: lambda-p} for odd prime $p$, and $\Omega_2=\{-1,1\}$ with $\lambda_2$ as in \eqref{eq: lambda-p}. Then, for every $h\in\mathbb{Z}\setminus\{0\}$ we have
\begin{equation}
\label{expectations-eta-def}
\mathbb{E}(X_h)=\begin{cases}\eta(h),&|h|=\square;\\0,&\text{otherwise},\end{cases}\qquad \eta(h)=\prod_{p\mid |h|,\,p>2}\frac{p}{p+1}.
\end{equation}
We also formally set $\eta(0)=0$.

The a.s.\ convergent series $G^{\ast}(t)$ defines a $C^0([0,1],\mathbb{C})$-valued random variable on $\Omega$. The main result of this section, the following Lemma~\ref{evaluation-of-limit-moment}, shows that, for arbitrary fixed $k\in\mathbb{Z}_{\geqslant 0}$ and $\bm{t}=(t_1,\dots,t_k)\in[0,1]^k$,  the $\mathbb{C}^k$-valued random variable $(G^{\ast}(t_1),\dots,G^{\ast}(t_k))$ has complex moments of all orders $\bm{m},\bm{n}\in\mathbb{Z}_{\geqslant 0}^k$:
\begin{equation}
\label{M-ast-moment-def}
\mathcal{M}^{\ast}(\bm{t};\bm{m},\bm{n})=\mathbb{E}\bigg(\prod_{i=1}^k\overline{G^{\ast}(t_i)}^{m_i}G^{\ast}(t_i)^{n_i}\bigg),
\end{equation}
and provides an exact evaluation for these orders. To state the result precisely, the following notations are convenient. Let $\mathcal{H}^{\ast}_{\bm{m},\bm{n}}$ denote the set of all $k$-tuples of vectors $\vec{\bm{h}}=(\vec{h}_1,\dots,\vec{h}_k)$, with each individual $\vec{h}_j=(h_{j, 1}, \dots, h_{j, n_j}, h_{j, n_j+1}, \dots, h_{j, n_j+m_j})\in\mathbb{Z}^{n_j+m_j}$. For every $\vec{\bm{h}}\in\mathcal{H}^{\ast}_{\bm{m},\bm{n}}$ and $\bm{t}\in[0,1]^k$, define
\begin{equation}
\label{H-beta-def}
\begin{aligned}
H(\vec{\bm{h}})&=\prod_{j=1}^k\prod_{\ell=1}^{n_j+m_j}(1-h_{j,\ell}), \\
\beta(\vec{\bm{h}};\bm{t})&=\prod_{j=1}^k\beta(\vec{h}_j;t_j),\quad \beta(\vec{h}_j;t_j)=\prod_{\ell=1}^{n_j}\beta(h_{j,\ell};t_j)\prod_{\ell=1}^{m_j}\overline{\beta(h_{j,n_j+\ell};t_j)}, 
\end{aligned}
\end{equation}
where, for every $h\in\mathbb{Z}$ and $t\in [0,1]$,
\[ \beta(h; t) = \begin{cases}\frac{e(ht) -1}{2 i \pi h}, &h\neq 0,\\ t,&h=0.\end{cases} \]
Then we have the following result.

\begin{lem}
\label{evaluation-of-limit-moment}
For every $t\in [0,1]$, the random variable $G^{\ast}(t)\in\bigcap_{p<\infty}L^p(\Omega)$. Moreover, for every $k\in\mathbb{Z}_{\geqslant 0}$ and every $\bm{t}=(t_1,\dots,t_k)\in [0,1]^k$, the $\mathbb{C}^k$-valued random variable $G^{\ast}(\bm{t}):=((G^{\ast}(t_1),\dots,G^{\ast}(t_k))$ has complex moments $\mathcal{M}^{\ast}(\bm{t};\bm{m},\bm{n})$ as in \eqref{M-ast-moment-def} of all orders $\bm{m},\bm{n}\in\mathbb{Z}_{\geqslant 0}^k$, given by the absolutely convergent sum
\[ \mathcal{M}^{\ast}(\bm{t};\bm{m},\bm{n})=\sum_{\substack{\vec{\bm{h}}\in\mathcal{H}^{\ast}_{\bm{m},\bm{n}}\\|H(\vec{\bm{h}})|=\square}}\beta(\vec{\bm{h}};\bm{t})\eta(H(\bm{h})), \]
with $\beta(\vec{\bm{h}};\bm{t})$ and $\eta(H(\vec{\bm{h}}))$ as in \eqref{expectations-eta-def} and \eqref{H-beta-def}. These moments satisfy $\mathcal{M}^{\ast}(\bm{t};\bm{m},\bm{n})\leqslant C^{m+n}$ for a suitable absolute $C>0$, and so $G^{\ast}(\bm{t})$ is a mild random variable.
\end{lem}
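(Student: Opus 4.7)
The plan is to upgrade the almost-sure uniform convergence of $S_y$ from Proposition~\ref{thm: convergence} to $L^p$-convergence for every finite $p$, then to compute the moments at the level of truncated partial sums (which are polynomials in finitely many $X_p$), and finally to pass to the limit $y\to\infty$ via an absolute-convergence argument. The hard step will be the absolute convergence of the formal expansion, which will hinge on the square-testing condition $|H(\vec{\bm{h}})|=\square$.

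For the $L^p$ integrability, I would first observe that the uniform tail bound \eqref{uniform-over-y2} gives $\sup_{y_2>y_1}\|R_{y_1,y_2}\|_{\infty}$ a sub-Gaussian tail $\ll\exp(-\delta^2y_1^{1/7})$. Combined with the deterministic bound on $\|S_{y_0}\|_{\infty}$ for any fixed $y_0$, this yields sub-Gaussian control on $\|G^{\ast}\|_{\infty}$ itself, hence $G^{\ast}(t)\in\bigcap_{p<\infty}L^p(\Omega)$ and $S_y\to G^{\ast}$ in $L^p$ for every finite $p$. The resulting moment bound $\|G^{\ast}(t)\|_{L^r}\ll r^{1/2}$ will already suffice to establish mildness of $G^{\ast}(\bm{t})$ via Stirling, independently of the explicit moment formula.

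Next, for the moment expansion at the truncated level, I would absorb the $+t$ in \eqref{limiting-random-partial-sums} via $\beta(0;t)=t$ and the convention $X_{-1}=1$ (the empty product in \eqref{Xm-def}), to rewrite
\[ S_y(t)=\sum_{\substack{h\neq 1\\ P^+(|h-1|)\leqslant y}}X_{h-1}\beta(h;t). \]
Since $\overline{X_n}=X_n$ (as $X_n\in\mathbb{R}$), only the $\beta$-factors conjugate in the expansion of $\overline{S_y(t_j)}$. Expanding the finite product $\prod_j S_y(t_j)^{n_j}\overline{S_y(t_j)}^{m_j}$, applying complete multiplicativity $\prod_{j,\ell}X_{h_{j,\ell}-1}=X_{|H(\vec{\bm{h}})|}$ (using $X_{-n}=X_n$), and evaluating via \eqref{expectations-eta-def} gives
\[ \mathbb{E}\Big[\prod_j S_y(t_j)^{n_j}\overline{S_y(t_j)}^{m_j}\Big]=\sum_{\substack{\vec{\bm{h}}\in\mathcal{H}^{\ast}_{\bm{m},\bm{n}}\\ P^+(|h_{j,\ell}-1|)\leqslant y\\ |H(\vec{\bm{h}})|=\square}}\beta(\vec{\bm{h}};\bm{t})\,\eta(H(\vec{\bm{h}})). \]
The left-hand side will converge to $\mathcal{M}^{\ast}(\bm{t};\bm{m},\bm{n})$ by the $L^p$-convergence from the previous step and H\"older's inequality.

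Finally, to conclude that the restricted sum on the right converges to the unrestricted one, it suffices to establish absolute convergence of the latter. Using $|\beta(h;t)|\leqslant\min(1,2/|h-1|)$ for $h\neq 1$ (which also subsumes $|\beta(0;t)|=t\leqslant 1$) and substituting $n_i=h_i-1$, I would dominate the absolute value of the sum by a constant times
\[ \sum_{\substack{\bm{n}\in\mathbb{Z}_{\neq 0}^{N}\\ \prod|n_i|=\square}}\prod_{i=1}^{N}\min\!\big(1,1/|n_i|\big),\qquad N=m+n. \]
Grouping by $\prod|n_i|=s^2$, each tuple contributes exactly $1/s^2$; the number of such tuples is at most $2^N d_N(s^2)$; and since $d_N(s^2)\ll_{N,\epsilon}s^{\epsilon}$, the resulting series $2^N\sum_{s\geqslant 1}d_N(s^2)/s^2$ converges. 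This is precisely where the square-testing condition inherited from $\mathbb{E}(X_H)=0$ over non-squares is essential, forcing the decisive decay $1/s^2$. Dominated convergence over the index set then gives the equality of restricted and unrestricted sums in the limit.
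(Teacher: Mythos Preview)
Your proof is correct and follows essentially the same overall architecture as the paper's: expand the moments of truncated sums, pass to the limit via $L^p$-convergence, and identify the limit with the absolutely convergent series using the square-testing condition and the divisor-sum bound $\sum_s d_N(s^2)/s^2<\infty$.

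The genuine difference lies in how you reach $L^p$-integrability and $L^p$-convergence. You argue directly from the sub-Gaussian tail bound \eqref{uniform-over-y2}: a deterministic bound on $\|S_{y_0}\|_\infty$ plus $\exp(-\delta^2 y_1^{1/7})$ tails for $\sup_{y_2>y_1}\|R_{y_1,y_2}\|_\infty$ immediately yields sub-Gaussian control on $\|G^\ast\|_\infty$, and hence $L^p$-membership, $L^p$-convergence of the truncations, and mildness of $G^\ast(\bm t)$ all in one stroke. The paper instead proceeds via monotone convergence and a direct computation of $\mathbb{E}(|S_n^\ast(t)|^{2p})$ (equations \eqref{upperbound-gtilde-p}--\eqref{d2-estimate}), then separately verifies $L^p$-convergence via \eqref{ESmSn2p-estimate}, and obtains the $C^{m+n}$ moment bound (hence mildness) from the explicit estimate \eqref{d2-estimate}. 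Your route is cleaner and more conceptual; the paper's route gives the explicit constant $C^{m+n}$ by an independent computation.

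One point you should not skip: the tail estimate \eqref{uniform-over-y2} is proved for the sums $S_y$ of \eqref{Syt} with denominators $1/n$, whereas the truncated sums you expand in terms of $\beta(h;t)$ are the paper's $S_y^\ast$ with denominators $1/(n+1)$. The paper devotes the first paragraph of its proof to verifying that Proposition~\ref{prop: small-sum} and Lemma~\ref{ry1y2-lemma} remain valid for these shifted sums (by redoing the argument with perturbed denominators $1/(n+\delta_n)$). You either need that same verification, or else route through the decomposition \eqref{Gtilde-decomp} to relate $\|G^\ast\|_\infty$ to $\|\lim_y S_y\|_\infty$ plus a deterministically bounded remainder; the latter suffices for integrability and mildness, but for the $L^p$-convergence of your actual truncations $S_y^\ast\to G^\ast$ you still need the shifted-denominator version.
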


\begin{proof}
Returning to \eqref{Gtilde-decomp}, we denote
\[ S^{\ast}_y(t) := \sum_{\substack{n \neq 0 \\ P^+(|n|) \leq y}} \frac{e((n+1)t) -1}{2\pi i(n+1)} X_n, \quad S^{\ast}_y(\Epsilon; t) = \sum_{\substack{n \neq 0 \\ P^+(|n|) \leq y}} \frac{e((n+1)t)-1}{2\pi i(n+1)} \epsilon_n, \]
with the terms corresponding to $n=-1$ formally interpreted as $t$. In this paragraph, we verify that all results of the previous section remain valid with $S^{\ast}_y(t)$, $S^{\ast}_y(\Epsilon;t)$, and analogously defined $R^{\ast}_{y_1,y_2}(t)$ in place of $S(t)$, $S(\Epsilon;t)$ and $R_{y_1,y_2}(t)$. Indeed, for any choice of $|\delta_n|\leqslant 1$, we may repeat the full proof of Proposition~\ref{prop: small-sum} with the quantities
\[ S_{A,\bm{\delta}}^{\ast}:=\max_{t\in [0,1]}\left|\sum_{n\in A}\frac{e(nt)}{n+\delta_n}X_n\right|,\quad S_{A,\bm{\delta}}^{\ast}(\Epsilon):=\max_{t\in [0,1]}\left|\sum_{n\in A}\frac{e(nt)}{n+\delta_n}\epsilon_n\right|, \]
instead of \eqref{eq: S-A}, with the only substantive change in the proof being that $\tilde{d}_k(n;N)$ needs to be replaced with
\[ \tilde{d}_k^{\ast}(n)=\sum_{\substack{n_1\cdots n_k=n\\n_1,\dots,n_k\in A_{y_1}^{y_2}(N)}}\prod_{j=1}^k\Big(1+\frac{\delta_{n_j}}{n_j}\Big)^{-1}\prod_{j=1}^ke(n_jr/R). \]
This satisfies the estimate $|\tilde{d}_k^{\ast}(n;N)|\leqslant e^{\OO(k)}d_k(n;N)$, which is all that is needed for the proof. As in \eqref{Syt}, $S^{\ast}_y(\Epsilon;t)$ converges absolutely and uniformly to a continuous function. In place of \eqref{Ry1y2-decomp}, we have the decomposition
\[ R^{\ast}_{y_1,y_2}(t)=\sum_{\substack{n\neq 0\\ P^{+}(|n|)\leqslant y_1}}\frac{X_n}n\sum_{\substack{y_1<P^{-}(m)\leqslant P^{+}(m)\leqslant y_2\\mn\neq -1}}\frac{e(mnt)-e(-t)}{m+1/n}X_m. \]
Therefore, denoting $\delta'(m)=1/n$, Lemma~\ref{ry1y2-lemma} remains valid for $R^{\ast}_{y_1,y_2}$ as stated, with the key estimate \eqref{key-to-Ry1y2} in the proof replaced by
\[ \bigg|\sum_{y_1<P^{-}(m)\leqslant P^{+}(m)\leqslant y_2}\frac{e(mnt)-e(-t)}{m+1/n}\epsilon_m\bigg|\leqslant 2S_{A_{y_1}^{y_2},\bm{\delta}'}^{\ast}(\Epsilon), \]
for which the newly adjusted Proposition~\ref{prop: small-sum} provides $\mathbb{E}(S_{A_{y_1}^{y_2},\bm{\delta}'}^{\ast})\ll y_1^{-k/21}$, and the rest of the proof is unchanged.

Using the estimate \eqref{uniform-over-y2} and the fact that $S_z^{\ast}\,\rightrightarrows\,\widetilde{G^{\ast}}$ (uniformly in $t$, as $z\to\infty$) almost surely, we conclude that
\[ \mathbb{P}\Big(\sup_{z_2>z_1\geqslant y}\|S^{\ast}_{z_2}-S^{\ast}_{z_1}\|_{\infty}>1\Big),\,\,
\mathbb{P}\Big(\sup_{z\geqslant y}\|\widetilde{G^{\ast}}-S^{\ast}_z\|_{\infty}>1\Big)\ll\exp(-y^{1/7}/4) \]
holds for all sufficiently large $y$. Letting
\[ E_n=\Big\{\mathcal{E}\in\Omega:\sup_{z\geqslant n}\|(\widetilde{G^{\ast}}-S^{\ast}_z)(\mathcal{E})\|_{\infty}\leqslant 1\Big\}, \]
we have that $(E_n)$ form a (non-strictly) increasing sequence of events with $\mathbb{P}(E_n)\geqslant 1-C\exp(-n^{1/7}/4)$, so that $\chi_{E_n}\,\nearrow\,1$ a.s. In particular, for every fixed $t\in [0,1]$, we have by the Monotone Convergence Theorem for every $p\geqslant 0$
\begin{equation}
\label{upperbound-gtilde-p}
\mathbb{E}\big(\big((|\widetilde{G^{\ast}}(t)|+1)^{2p}\big)=\lim_{n\to\infty}\mathbb{E}\big(\big(|\widetilde{G^{\ast}}(t)|+1)^{2p}\chi_{E_n}\big)\leqslant 3^{2p}\big(\liminf_{n\to\infty}\mathbb{E}(|S^{\ast}_n(t)|^{2p})+1\big).
\end{equation}
Now, for every $n,p\in\mathbb{N}$, we have by rapid convergence
\begin{equation}
\label{upperbound-sn-p}
\begin{aligned}
\mathbb{E}(|S^{\ast}_n(t)|^{2p})&=\mathop{\sum\dots\sum}_{\substack{h_1,\dots,h_{2p}\neq 0\\P^{+}(|h_1\cdots h_{2p}|)\leqslant n\\h_1\cdots h_{2p}=\square}}\prod_{j=1}^p\frac{e((h_j+1)t)-1}{2\pi i(h_j+1)}\frac{e(-(h_{j+p}+1)t)-1}{2\pi i(h_{j+p}+1)}\eta(h_1\cdots h_{2p})\\
&\leqslant 8^p\sum_{m=1}^{\infty}\frac{d_{2p}(m^2)}{m^2}.
\end{aligned}
\end{equation}
Now, for every prime $q$, we have by a simple combinatorial argument that $d_{2p}(q^k)=\binom{2p+k-1}k$, and so
\[ \sum_{k=0}^{\infty}\frac{d_{2p}(q^{2k})}{q^{2k}}=\sum_{k=0}^{\infty}\binom{-1-2p}{2k}\frac1{q^{2k}}=\frac12\Big[\Big(1-\frac1{q^2}\Big)^{-1-2p}+\Big(1+\frac1{q^2}\Big)^{-1-2p}\Big]=e^{\mathrm{O}(p/q^2)}, \]
whence
\begin{equation}
\label{d2-estimate}
\sum_{n=1}^{\infty}\frac{d_{2p}(m^2)}{m^2}=\exp\Big(\sum_q\mathrm{O}\Big(\frac{p}{q^2}\Big)\Big)=e^{\mathrm{O}(p)}.
\end{equation}
Inserting this into \eqref{upperbound-sn-p} and \eqref{upperbound-gtilde-p} completes the proof of
\begin{equation}
\label{Lpnorm-estimate}
\widetilde{G^{\ast}}(t)\in\bigcap_{p<\infty}L^p(\Omega),\quad \|\widetilde{G^{\ast}}(t)\|_p^p=e^{\mathrm{O}(p)},
\end{equation}
where the values of $p\not\in 2\mathbb{Z}_{\geqslant 0}$ are covered by an interpolation argument.

We also claim that $S^{\ast}_n(t)\to\widetilde{G^{\ast}}(t)$ in $L^p(\Omega)$. We proceed by a similar argument. Let $\varepsilon>0$ and $n\in\mathbb{N}$ be arbitrary, and denote
\[ E_{m,\varepsilon}=\Big\{\mathcal{E}\in\Omega:\sup_{z\geqslant m}\|(\widetilde{G^{\ast}}-S^{\ast}_z)(\mathcal{E})\|_{\infty}\leqslant\varepsilon\Big\}. \]
Then the same argument using \eqref{uniform-over-y2} as above shows that $(E_{m,\varepsilon})_m$ form a (non-strictly) increasing sequence of events with $\mathbb{P}(E_{m,\varepsilon})\geqslant 1-C\exp(-\varepsilon^2m^{1/7})$, so that $\chi_{E_{m,\varepsilon}}\,\nearrow\,1$ a.s. as $m\to\infty$, and thus for every $p\geqslant 1$
\begin{align*}
\mathbb{E}\big(|\widetilde{G^{\ast}}(t)-S^{\ast}_n(t)|^{2p}\big)
&=\lim_{m\to\infty}\mathbb{E}\big(|\widetilde{G^{\ast}}(t)-S^{\ast}_n(t)|^{2p}\chi_{E_{m,\varepsilon}}\big)\\
&\leqslant 2^{2p}\Big(\varepsilon^{2p}+\liminf_{m\to\infty}\mathbb{E}\big(|S^{\ast}_m(t)-S^{\ast}_n(t)|^{2p}\big)\Big).
\end{align*}
Now, for $m>n$ we have, arguing as in \eqref{upperbound-sn-p} and below, that
\begin{equation}
\label{ESmSn2p-estimate}
\mathbb{E}(|S^{\ast}_m(t)-S^{\ast}_n(t)|^{2p})\leqslant 8^p\sum_{P^{+}(k)>n}\frac{\tau_{2p}(k^2)}{k^2}\ll_{p,\epsilon}\sum_{q>n}\sum_{k=1}^{\infty}\frac1{(q^2k^2)^{1-\epsilon}}\ll_{p,\epsilon}\frac1{n^{1-\epsilon}}.
\end{equation}
Inserting this into the previous estimate, we conclude that
\[ \mathbb{E}\big(|\widetilde{G^{\ast}}(t)-S^{\ast}_n(t)|^{2p}\big)\ll_p\Big(\varepsilon^{2p}+\frac1{n^{0.99}}\Big). \]
Executing the limits as $\varepsilon\to 0$ and $n\to\infty$ (in either order), we conclude that indeed
\begin{equation}
\label{convergence-in-lp}
\lim_{n\to\infty}\mathbb{E}\big(|\widetilde{G^{\ast}}(t)-S^{\ast}_n(t)|^{2p}\big)=0.
\end{equation}
The same claim is true for all real values $p\geqslant 1$ by interpolation.

Finally, we turn to the complex moments $\mathcal{M}^{\ast}(\bm{t};\bm{m},\bm{n})$, which are all finite by H\"older's inequality. By writing $\widetilde{G^{\ast}}(t_i)^k=(\widetilde{G^{\ast}}(t_i)^k-S^{\ast}_n(t_i)^k)+S^{\ast}_n(t_i)^k$ ($k\in\{m_i,n_i\}$) and expanding the products and complex conjugates, we may write
\begin{equation}
\label{differences}
\begin{aligned}
&\prod_{i=1}^k\overline{\widetilde{G^{\ast}}(t_i)}^{m_i}\widetilde{G^{\ast}}(t_i)^{n_i}-\prod_{i=1}^k\overline{S^{\ast}_n(t_i)}^{m_i}S^{\ast}_n(t_i)^{n_i}=\sum_{\substack{\Sigma_1,\Sigma_2\subseteq\{1,\dots,k\}\\\Sigma_1\cup\Sigma_2\neq\emptyset}}\\
&\quad \prod_{i\in\Sigma_1}\big(\overline{\widetilde{G^{\ast}}(t_i)}^{m_i}-\overline{S^{\ast}_n(t_i)}^{m_i}\big)\prod_{i\not\in\Sigma_1}\overline{S^{\ast}_n(t_i)}^{m_i}\prod_{i\in\Sigma_2}\big(\widetilde{G^{\ast}}(t_i)^{n_i}-S^{\ast}_n(t_i)^{n_i}\big)\prod_{i\not\in\Sigma_2}S^{\ast}_n(t_i)^{n_i}.
\end{aligned}
\end{equation}
Since $\widetilde{G^{\ast}},S^{\ast}_n\in\bigcap_{p<\infty}L^p(\Omega)$, in fact with $\|S^{\ast}_n\|_p=\mathrm{O}_p(1)$ uniformly in $n$ in light of \eqref{upperbound-sn-p}, taking expectations on both sides, factoring the differences of powers, and applying H\"older's inequality and \eqref{convergence-in-lp}, we conclude that
\[ \mathcal{M}^{\ast}(\bm{t};\bm{m},\bm{n})=\mathbb{E}\bigg(\prod_{i=1}^k\overline{\widetilde{G^{\ast}}(t_i)}^{m_i}\widetilde{G^{\ast}}(t_i)^{n_i}\bigg)=\lim_{n\to\infty}\mathbb{E}\bigg(\prod_{i=1}^k\overline{S^{\ast}_n(t_i)}^{m_i}S^{\ast}_n(t_i)^{n_i}\bigg). \]
But this final expectation is straightforward to evaluate; indeed, denoting by $\mathcal{H}^{\ast\ast}_{\bm{m},\bm{n}}$ the set of all tuples $\vec{\bm{h}}=(\vec{h}_1,\dots,\vec{h}_k)$ with $\vec{h}_j=(h_{j,1},\dots,h_{j,n_j},h_{j,n_j+1},\dots,h_{j,n_j+m_j})$
and each $h_{j,\ell}\in\mathbb{Z}\setminus\{0\}$, and $\Pi(\vec{\bm{h}})=\prod_{j=1}^k\prod_{\ell=1}^{n_j+m_j}h_{j,\ell}$,
\begin{align*}
&\mathbb{E}\bigg(\prod_{i=1}^k\overline{S^{\ast}_n(t_i)}^{m_i}S^{\ast}_n(t_i)^{n_i}\bigg)\\
&\qquad=\sum_{\substack{\vec{\bm{h}}\in\mathcal{H}^{\ast\ast}_{\bm{m},\bm{n}}\\ P^{+}(|\Pi(\vec{\bm{h}})|)\leqslant n,\,\,|\Pi(\vec{\bm{h}})|=\square}}\prod_{j=1}^k\prod_{\ell=1}^{n_j}\frac{e((h_{j,\ell}+1)t_j)-1}{2\pi i(h_{j,\ell}+1)}\prod_{\ell=n_j+1}^{n_j+m_j}\frac{e(-(h_{j,\ell}+1)t_j)-1}{2\pi i(h_{j,\ell}+1)}\eta(\Pi(\vec{\bm{h}}))\\
&\qquad =\sum_{\substack{\vec{\bm{h}}\in\mathcal{H}^{\ast}_{\bm{m},\bm{n}}\\ P^{+}(|H(\vec{\bm{h}})|)\leqslant n,\,\,|H(\vec{\bm{h}})|=\square}}\beta(\vec{\bm{h}};\bm{t})\eta(H(\bm{\vec{h}})).
\end{align*}
Since, denoting $u=\sum_{i=1}^km_i+\sum_{i=1}^kn_i$, we comfortably have absolute convergence
\begin{equation}
\label{absolute-convergence-eq}
\sum_{\substack{\vec{\bm{h}}\in\mathcal{H}^{\ast}_{\bm{m},\bm{n}}\\|H(\vec{\bm{h}})|=\square}}|\beta(\vec{\bm{h}};\bm{t})\eta(H(\vec{\bm{h}}))|\leqslant 8^{u}\sum_{k=1}^{\infty}\frac{\tau_{u}(k^2)}{k^2}=\mathrm{O}_{\bm{m},\bm{n}}(1),
\end{equation}
this implies the announced evaluation
\[ \mathcal{M}^{\ast}(\bm{t};\bm{m},\bm{n})=\lim_{n\to\infty}\mathbb{E}\bigg(\prod_{i=1}^k\overline{S^{\ast}_n(t_i)}^{m_i}S^{\ast}_n(t_i)^{n_i}\bigg)=
\sum_{\substack{\vec{\bm{h}}\in\mathcal{H}^{\ast}_{\bm{m},\bm{n}}\\|H(\vec{\bm{h}})|=\square}}\beta(\vec{\bm{h}};\bm{t})\eta(H(\vec{\bm{h}})). \qedhere \]
\end{proof}

\section{Computing the moments}
\label{computing-moments-section}

In this section, we compute asymptotically the complex moments of $G_Q(\bm{t})$, which are given by
\begin{equation}
\label{MQ-moment-def}
\mathcal{M}_Q(\bm{t}; \bm{m}, \bm{n}) = \E \bigg( \prod_{i=1}^k \overline{G_Q(t_i)}^{m_i}G_Q(t_i)^{n_i} \!\bigg)= \!\!\sum_{c \in [Q, 2Q]\cap \cd} \!\!m_Q(c) \prod_{i=1}^k \overline{G(t_i; c)}^{m_i} G(t_i; c)^{n_i},
\end{equation}
where $k$ is a positive integer, $\bm{t} = (t_1, \dots, t_k)$ is a $k$-tuple in $[0, 1]^k$, and $\bm{n} = (n_1, \dots, n_k)$ and $\bm{m} = (m_1, \dots, m_k)$ are $k$-tuples of non-negative integers. Additionally, we denote $m = \sum_{i=1}^k m_i$ and $n = \sum_{i=1}^k n_i$. Specifically we will prove the following evaluation.

\begin{prop}
\label{moments-eval-prop}
For every positive integer $k$ and all $k$-tuples $\bm{t}\in [0,1]^k$ and $\bm{m},\bm{n}\in\mathbb{Z}_{\geqslant 0}^k$, the complex moments $\mathcal{M}_Q(\bm{t};\bm{m},\bm{n})$ of $G_Q(\bm{t})$ in \eqref{MQ-moment-def} satisfy
\[ \mathcal{M}_Q(\bm{t};\bm{m},\bm{n})=\mathcal{M}^{\ast}(\bm{t};\bm{m},\bm{n})+\mathrm{O}_{\epsilon}(Q^{-1/3+\epsilon}), \]
where $\mathcal{M}^{\ast}(\bm{t};\bm{m},\bm{n})$ are the corresponding complex moments of $G^{\ast}(\bm{t})$ in \eqref{M-ast-moment-def}.
\end{prop}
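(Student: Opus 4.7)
The plan is to give each factor $G(t_i;c)$ an exact Fourier-style expansion via the standard completion technique, expand the product, and analyze the resulting averages of Legendre symbols over $c\in\cd_Q$ using Heath-Brown's quadratic large sieve and Burgess' estimates. Concretely, by Fourier-expanding the indicator of $[1,j_i]$ on $\ZpZ$ with $j_i=\lfloor t_i(c-1)\rfloor$ and using the Gauss-sum identity $\sum_{m\pmod c}\Leg{m}{c}e_c(am)=\Leg{a}{c}\sqrt{c}$ (valid for $c\in\cd$) together with $\Leg{-1}{c}=1$, one obtains the exact expansion
\[ G(t_i;c)=\sum_{\substack{h\in(-c/2,c/2]\\ h\neq 1}}\gamma_c(h;t_i)\Leg{h-1}{c}, \]
where $\gamma_c(0;t_i)=j_i/c=t_i+\OO(1/c)$ and, for every $h\neq 0,1$ with $|h|\ll c^{1/2}$, $\gamma_c(h;t_i)$ matches $\beta(h;t_i):=(e(ht_i)-1)/(2\pi ih)$ up to an error of size $\OO(|h|/c)$. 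Thus $G(t_i;c)$ is placed in direct correspondence with $G^{\ast}(t_i)$ from Lemma~\ref{evaluation-of-limit-moment}, with $\Leg{h-1}{c}$ playing the role of the random variable $X_{h-1}$; the excluded index $h=1$ drops out automatically from $\Leg{0}{c}=0$, matching the term with $H(\vec{\bm{h}})=0$ being killed by $\eta(0)=0$.

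Substituting this into \eqref{MQ-moment-def} and expanding the product, one obtains, after truncating each index to $|h_{j,\ell}|\leqslant Y$ for a parameter $Y$ to be optimized and bounding the tail via the absolute-convergence argument \eqref{absolute-convergence-eq} together with the uniform bound $|\gamma_c(h;t)|\ll 1/\max(|h|,1)$, a finite expression
\[ \mathcal{M}_Q(\bm{t};\bm{m},\bm{n})=\!\!\!\sum_{\substack{\vec{\bm{h}}\in\mathcal{H}^{\ast}_{\bm{m},\bm{n}}\\ |h_{j,\ell}|\leqslant Y}}\!\!\!\beta(\vec{\bm{h}};\bm{t})\cdot\Bigg(\frac{1}{|\cd_Q|}\!\!\sum_{c\in\cd_Q}\!\Leg{H(\vec{\bm{h}})}{c}\Bigg)+\text{error}, \]
where by complete multiplicativity of the Legendre symbol $\prod_{j,\ell}\Leg{h_{j,\ell}-1}{c}=\Leg{H(\vec{\bm{h}})}{c}$, with $H(\vec{\bm{h}})=\prod_{j,\ell}(h_{j,\ell}-1)$, and $|H(\vec{\bm{h}})|$ has the same squareness profile as in \eqref{H-beta-def}.

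The final step is to analyze the inner average over $c$, splitting according to whether $|H(\vec{\bm{h}})|$ is a perfect square. In the square case, $\Leg{H}{c}=\mathbf{1}_{\gcd(c,H)=1}$, and a standard sieve computation yields
\[ \frac{1}{|\cd_Q|}\sum_{c\in\cd_Q}\mathbf{1}_{\gcd(c,H)=1}=\prod_{p\mid H}\frac{p}{p+1}+\OO_{\epsilon}\big(Q^{-1/2+\epsilon}\big)=\eta(H)+\text{small}, \]
which exactly reproduces the main-term formula from Lemma~\ref{evaluation-of-limit-moment}. In the non-square case, by quadratic reciprocity $c\mapsto\Leg{H}{c}$ corresponds (up to harmless factors at $2$) to a non-principal real character modulo a divisor of $4|H|$, and the average is bounded by a power $Q^{-\eta}$ via Heath-Brown's quadratic large sieve (Proposition~\ref{HBQLS}) applied with $a_n=\delta_{n=|H|}$ for $|H|\leqslant Q$, and via Burgess' bound (Proposition~\ref{Burgess}) for $|H|$ in a longer range; after summation over the $\OO(Y^{m+n})$ admissible tuples, this contribution is subordinate.

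The main obstacle is to calibrate the truncation parameter $Y$ so that three competing sources of error balance correctly: the tail contribution from $|h_{j,\ell}|>Y$ (of size $\OO(Y^{-1})$ via \eqref{absolute-convergence-eq}), the aggregated approximation error $\gamma_c(h;t)-\beta(h;t)=\OO(|h|/c)$ accumulating to $\OO(Y^{m+n+1}/Q)$ when multiplied across all factors, and the character-sum savings $\OO(Y^{m+n}Q^{-\eta})$ summed over truncated tuples. A choice $Y\asymp Q^{1/3}$ matched with the Burgess exponent $3/16$ should deliver the announced $\OO_{\epsilon}(Q^{-1/3+\epsilon})$ rate, with all implicit constants allowed to depend on $\bm{m},\bm{n},\bm{t}$, and with the combinatorial bookkeeping of the $h_{j,\ell}=0$ indices (which contribute the deterministic factors $t_j$) handled by simply incorporating them into $\mathcal{H}^{\ast}_{\bm{m},\bm{n}}$ as in the statement of Lemma~\ref{evaluation-of-limit-moment}.
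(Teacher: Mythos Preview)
Your outline has the right overall shape (completion, expand the product, split square/non-square, sieve the square part), but there are two genuine gaps that prevent the argument from closing, and one piece of sloppy bookkeeping.

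\textbf{Truncation and the tail.} The appeal to \eqref{absolute-convergence-eq} to bound the tail $\max_{j,\ell}|h_{j,\ell}|>Y$ only controls tuples with $|H(\vec{\bm{h}})|$ a perfect square. The non-square part of the tail is not absolutely convergent and requires exactly the same character-sum cancellation you intend to use on the truncated part; truncation therefore buys nothing for it. If instead you try to truncate each factor $G(t_i;c)$ individually, the pointwise tail $\sum_{Y<|h|<c/2}\gamma_c(h;t_i)\Leg{h-1}{c}$ is only $\ll\sqrt{c}\log c/Y$ via partial summation and P\'olya--Vinogradov, which for $Y=Q^{1/3}$ is of size $Q^{1/6}$ --- far too large. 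The paper sidesteps this by never truncating: it keeps the full range $|h|<c/2$, replaces $\alpha_c(h;t)/\sqrt c$ by $\beta(h;t)$ by first summing the \emph{difference} against $\Leg{1-h}{c}$ and invoking P\'olya--Vinogradov (Lemma~\ref{alpha-beta-sum}) to get a clean $\OO((\log c)^2/\sqrt c)$ per factor, and only then passes to the $c$-independent range $|h|<Q/2$ (Lemma~\ref{moments-order-switched}), again via P\'olya--Vinogradov.

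\textbf{Use of Heath-Brown.} Applying Proposition~\ref{HBQLS} with $a_n=\delta_{n=|H|}$ yields nothing beyond the trivial bound: the large sieve needs a genuine average in $n$. The paper groups the non-square tuples by the squarefree kernel $d$ of $|H(\vec{\bm{h}})|$, uses P\'olya--Vinogradov for $d\leqslant D$ (giving $\OO(Q^{-1/2+\epsilon}d^{1/4})$), and applies Heath-Brown with the average over $d$ for $d>D$ (giving $\OO(Q^{-1/2}+D^{-1/2})$ after Cauchy--Schwarz); the balance point $D=Q^{2/3}$ produces the $Q^{-1/3+\epsilon}$ rate. Burgess (Proposition~\ref{Burgess}) is not used here at all --- it appears only in the tightness argument of Section~\ref{sec-inlaw}.

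\textbf{Error accounting.} Your stated cumulative error $\OO(Y^{m+n+1}/Q)$ for the $\gamma_c\to\beta$ replacement uses the trivial bound $|\gamma_c|,|\beta|\leqslant 1$ on the unchanged factors; for $Y=Q^{1/3}$ this is $Q^{(m+n-2)/3}$, which diverges once $m+n\geqslant 3$. Using $|\gamma_c(h;t)|,|\beta(h;t)|\ll 1/\max(1,|h|)$ on the unchanged factors (and in fact the sharper $|\gamma_c-\beta|\ll 1/c$ of Lemma~\ref{diff-coeff-small} rather than your $\OO(|h|/c)$) rescues this step, but the previous two issues remain.
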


\begin{cor}
\label{finite-distributions-cor}
The sequence of $C^0([0,1],\mathbb{C})$-valued random variables $(G_Q)$ converges in the sense of finite distributions to $G^{\ast}$ as $Q\to\infty$.
\end{cor}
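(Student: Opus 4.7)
The plan is to deduce Corollary 4.2 directly from Proposition 4.1 by invoking the method of moments. Fix $k \geq 1$ and any $k$-tuple $0 \leq t_1 < \cdots < t_k \leq 1$, and consider the $\cc^k$-valued random vectors $G_Q(\bm{t}) = (G_Q(t_1), \ldots, G_Q(t_k))$ and $G^{\ast}(\bm{t}) = (G^{\ast}(t_1), \ldots, G^{\ast}(t_k))$. By Definition 2.1(3), it suffices to show that $G_Q(\bm{t})$ converges in law to $G^{\ast}(\bm{t})$ as $Q \to \infty$, and then let $k$ and $\bm{t}$ vary.

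To establish this convergence in law, I would invoke the method of moments (Proposition 2.5). Its two hypotheses are already in hand. First, by Lemma 3.4, the limiting vector $G^{\ast}(\bm{t})$ is mild, with complex moments $\mathcal{M}^{\ast}(\bm{t}; \bm{m}, \bm{n})$ finite and satisfying the growth bound $\mathcal{M}^{\ast}(\bm{t}; \bm{m}, \bm{n}) \leq C^{m+n}$. Second, by Proposition 4.1, for every pair of tuples $\bm{m}, \bm{n} \in \mathbb{Z}_{\geq 0}^k$ the moments $\mathcal{M}_Q(\bm{t}; \bm{m}, \bm{n})$ of $G_Q(\bm{t})$ satisfy
\[ \mathcal{M}_Q(\bm{t}; \bm{m}, \bm{n}) = \mathcal{M}^{\ast}(\bm{t}; \bm{m}, \bm{n}) + \mathrm{O}_{\epsilon}(Q^{-1/3+\epsilon}), \]
so the pointwise convergence hypothesis of Proposition 2.5 holds. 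Therefore $G_Q(\bm{t}) \to G^{\ast}(\bm{t})$ in law, and since $k$ and $\bm{t}$ are arbitrary, $(G_Q) \to G^{\ast}$ in the sense of finite distributions.

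There is in fact no obstacle of substance at this stage of the argument: the corollary is a formal consequence of two already established ingredients. The genuine labor lies upstream, in Proposition 4.1 (where one needs to evaluate the moments $\mathcal{M}_Q(\bm{t}; \bm{m}, \bm{n})$ by expanding the product, interchanging sums, and bounding off-diagonal contributions using the quadratic large sieve of Proposition 2.6) and in Lemma 3.4 (where the moments of $G^{\ast}(\bm{t})$ are computed and mildness is verified through the $L^p$-bounds in \eqref{Lpnorm-estimate}). Once those are in place, the passage to finite-distributional convergence is merely bookkeeping: match the moments, cite Proposition 2.5, and vary $\bm{t}$.
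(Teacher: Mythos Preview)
Your proposal is correct and follows essentially the same approach as the paper's own proof: cite the mildness of $G^{\ast}(\bm{t})$ from Lemma~\ref{evaluation-of-limit-moment}, invoke the method of moments (Proposition~\ref{method-of-moments-prop}), and supply the required moment convergence from Proposition~\ref{moments-eval-prop}. The paper's proof is even terser than yours, but the logical structure is identical.
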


As a preliminary step, we compute by simple sieving that
\begin{equation}
\label{DQ-comput}
|\mathcal{D}_Q|=\sum_{\substack{\alpha^2\leqslant 2Q\\2\nmid\alpha}}\mu(\alpha)\sum_{\substack{c\in [Q,2Q]\\\alpha^2\mid c,\,c\equiv 1\bmod 4}}1=\frac{Q}4\sum_{\substack{\alpha^2\leqslant 2Q\\2\nmid\alpha}}\frac{\mu(\alpha)}{\alpha^2}+\mathrm{O}(Q^{1/2})=\frac{Q}{3\zeta(2)}+\mathrm{O}(Q^{1/2}),
\end{equation}
so that the uniform probability measure $m_Q$ on $\mathcal{D}_Q$ satisfies
\[ m_Q(c)=\frac{3\zeta(2)}{Q}+\mathrm{O}\Big(\frac1{Q^{3/2}}\Big). \]

\subsection{Reduction steps}
\label{reduction-subsec}
We also consider slightly different functions $\widetilde{G}(\cdot,c):[0,1]\to\mathbb{C}$ defined by
\[\widetilde{G}(t; c)= \frac{1}{\sqrt{c}} \sum_{1 \leq x \leq (c-1)t} \Leg{x}{c}e_c(x). \]
The functions $\widetilde{G}(\cdot,c)$ are discontinuous but they agree with the Gauss paths $G(\cdot,c)$ at the points $t=j/(c-1)$ $(0\leq j\leq c-1)$ and (as we will quickly see) stay very close to them, while being technically easier to work with. Indeed, we have the following expansion.

\begin{lem} \label{lem: g-squiggle-val} We have 
\[ \widetilde{G}(t; c) =  \frac1{c^{3/2}} \sum_{h \bmod{c}} G(1-h, c) \sum_{1 \leq x \leq (c-1)t} e_c(hx) \]
and
\[ \widetilde{G}(t;c)\ll\log c. \]
\end{lem}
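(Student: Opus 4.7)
The plan is to derive the identity by discrete Fourier inversion on $\mathbb{Z}/c\mathbb{Z}$, and then extract the $\log c$ bound by combining the square-root cancellation in the Gauss sum (via primitivity of the Kronecker symbol for $c \in \cd$) with the standard geometric-sum estimate.

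For the identity, I would first write $G(a, c) := \sum_{m=1}^{c} \Leg{m}{c} e_c(am)$ for the unnormalized twisted Gauss sum, which is consistent with the $c^{-3/2}$ prefactor in the claim (since $G(1, c) = \sqrt{c}\, G(c)$ in the normalization of \eqref{gausssum-def}). Finite Fourier inversion on $\mathbb{Z}/c\mathbb{Z}$ gives
\[
\Leg{x}{c} = \frac{1}{c} \sum_{h \bmod c} G(h, c)\, e_c(-hx).
\]
Multiplying through by $e_c(x)$ and reindexing $h \mapsto 1-h$ yields $\Leg{x}{c}\, e_c(x) = c^{-1} \sum_{h \bmod c} G(1-h, c)\, e_c(hx)$. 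Inserting this into the definition of $\widetilde{G}(t; c)$ and swapping the two finite summations produces the claimed expression with the $c^{-3/2}$ prefactor.

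For the bound, the key observation is that, since $c \in \cd$ is squarefree with $c \equiv 1 \bmod 4$, the Kronecker symbol $\Leg{\cdot}{c}$ is a primitive real Dirichlet character mod $c$. Consequently $G(a, c) = \overline{\Leg{a}{c}}\, G(1, c)$ with $|G(1, c)| = \sqrt{c}$, so $|G(1-h, c)| \leq \sqrt{c}$ uniformly in $h$ (in fact, the summand vanishes when $\gcd(1-h, c) > 1$). Combining this with the standard geometric-sum estimate $\bigl|\sum_{1 \leq x \leq N} e_c(hx)\bigr| \leq \min\bigl(N,\, 1/(2\|h/c\|)\bigr)$ for $h \not\equiv 0 \bmod c$ (with $\|\cdot\|$ the distance to $\mathbb{Z}$), trivially estimating the $h \equiv 0$ term by $N \leq c$, and using the elementary bound $\sum_{1 \leq h \leq c-1} 1/\|h/c\| \ll c \log c$ gives $|\widetilde{G}(t; c)| \ll 1 + \log c \ll \log c$, as claimed.

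There is no genuine obstacle here: the identity is finite-dimensional linear algebra, and the bound is routine. The only point worth flagging is the primitivity of $\Leg{\cdot}{c}$ for $c \in \cd$, which is exactly what provides the uniform-in-$h$ estimate $|G(1-h, c)| \leq \sqrt{c}$ and allows one to bypass any case analysis on $\gcd(1-h, c)$.
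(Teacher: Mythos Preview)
Your proposal is correct and matches the paper's argument essentially step for step: the identity is obtained by discrete Fourier analysis on $\mathbb{Z}/c\mathbb{Z}$ (the paper phrases it as Parseval for $f(x)=\Leg{x}{c}e_c(x)$ and $g=\mathbf{1}_{[1,j]}$, which is equivalent to your Fourier inversion plus reindexing), and the $\log c$ bound follows from $|G(1-h,c)|\leq\sqrt{c}$ together with the geometric-sum estimate $|\sum_{x=1}^j e_c(hx)|\leq 1/(2\|h/c\|)$. Your explicit remark that primitivity of $\Leg{\cdot}{c}$ for $c\in\cd$ is what delivers the uniform bound $|G(1-h,c)|\leq\sqrt{c}$ is a nice touch that the paper leaves implicit under ``bounding Gauss sums individually''.
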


\begin{proof}
This follows by the completion method, analogously to the case of Kloosterman paths \cite{KowalskiSawin2016,RicottaRoyer2018,MilicevicZhang2023} and discussed in some detail in \cite[Chapter 6]{Kowalski2021}. Indeed, the Parseval identity for the discrete Fourier transform  $f(x) = \Leg{x}{c}e_c(x)$ and $g(x) = 1_{1\leq x\leq j}(x)$ (where $j=\lfloor (c-1)t\rfloor$) gives
\begin{equation}
\label{eq: g-squiggle}
    \widetilde{G}(t; c)= \frac{1}{\sqrt{c}} \sum_{1 \leq x \leq (c-1)} f(x) \overline{g(x)} = \sqrt{c} \sum_{h=0}^{c-1} \widehat{f}(h) \overline{\widehat{g}(h)},
\end{equation}
and we compute
\[     \widehat{f}(h)= \frac{1}{c}\sum_{x=0}^{c-1}\Leg{x}{c}e_c(x)e_c(-hx)= \frac1c G(1-h, c), \quad
\overline{\widehat{g}(h)}= \frac{1}{c}\sum_{x=1}^j e_c(hx). \]
This proves the first identity. Now, inserting the classical bound for $h\neq 0$ (see, for example, \cite[Chapter 3]{Montgomery1994})
\begin{equation}
\label{sharp-cutoff-estimate}
\bigg|\sum_{x=1}^{j} e_c(hx)\bigg| = \bigg|e_c((j+1)h/2) \frac{\sin(\pi jh/c)}{\sin(\pi h/c)}\bigg|\leq\frac{1}{2||h/c||},
\end{equation}
where $|| h/c||$ is the distance between $h/c$ and the nearest integer, and bounding Gauss sums individually, we get
\[ \abs{\widetilde{G}(t; c)}= \bigg| \frac1{c^{3/2}}\sum_{h=0}^{c-1}G(1-h, c)\sum_{x=1}^j e_c(hx) \bigg|
    \leq \frac1c\bigg( j+ \sum_{0 < |h|< c/2} \frac{2c}{|h|}\bigg)\ll \log(c). \qedhere \]
\end{proof}

For every $\bm{t}\in[0,1]^k$ we may consider the complex moments of $\widetilde{G}(\cdot,c)$ given by
\begin{equation}
\label{modified-moments-equation}
\widetilde{\mathcal{M}_Q}(\bm{t}; \bm{m}, \bm{n}) := \sum_{c \in [Q, 2Q]\cap \cd} m_Q(c) \prod_{i=1}^k \overline{\widetilde{G}(t_i; c)}^{m_i} \widetilde{G}(t_i; c)^{n_i}.
\end{equation}

That the moments $\mathcal{M}_Q(\bm{t}; \bm{m}, \bm{n})$ and $\widetilde{\mathcal{M}_Q}(\bm{t}; \bm{m}, \bm{n})$ are very close will follow directly from the following lemma.

\begin{lem} \label{lem: moment-diff-bound} We have
\[\left| \prod_{i=1}^k \overline{G(t_i; c)}^{m_i} G(t_i; c)^{n_i} - \prod_{i=1}^k \overline{\widetilde{G}(t_i; c)}^{m_i} \widetilde{G}(t_i; c)^{n_i} \right| \ll \frac{\log^{(m+n)}(c)}{c^{1/2}}. \]
\end{lem}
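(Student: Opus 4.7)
The plan is to reduce this to two elementary ingredients: a pointwise comparison bound $|G(t;c) - \widetilde{G}(t;c)| \leqslant c^{-1/2}$, together with the uniform bound $|G(t;c)|, |\widetilde{G}(t;c)| \ll \log c$, and then linearize the difference of two products by a telescoping identity.

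First I would establish the pointwise comparison. Recall that $\widetilde{G}(\cdot;c)$ is the right-continuous step function taking the value $g_j = c^{-1/2}\sum_{m=1}^j (m/c)e_c(m)$ on each interval $[j/(c-1),(j+1)/(c-1))$, while $G(\cdot;c)$ is the piecewise linear interpolation through the same values $g_j$ at the breakpoints. Hence on each interval $t \in [j/(c-1),(j+1)/(c-1)]$, a direct computation gives
\[ G(t;c) - \widetilde{G}(t;c) = \bigl(t-j/(c-1)\bigr)(c-1)(g_{j+1}-g_j), \]
and since $|g_{j+1}-g_j|= c^{-1/2}|(\tfrac{j+1}{c})e_c(j+1)| \leqslant c^{-1/2}$, we obtain $|G(t;c)-\widetilde{G}(t;c)|\leqslant c^{-1/2}$ uniformly in $t$ and $c$.

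Second, the uniform bound $|\widetilde{G}(t;c)|\ll\log c$ is the second assertion of Lemma~\ref{lem: g-squiggle-val}. Since $G(t;c)$ is a convex combination of consecutive values $g_j=\widetilde{G}(j/(c-1);c)$, the same bound $|G(t;c)|\ll\log c$ follows at once.

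Finally I would apply the telescoping identity
\[ a_1\cdots a_N - b_1\cdots b_N = \sum_{\ell=1}^{N} b_1\cdots b_{\ell-1}(a_\ell-b_\ell)a_{\ell+1}\cdots a_N \]
with $N=m+n$, treating the products in the lemma as products of $N$ factors (with appropriate complex conjugates), assigning $a_\ell$ to the factors built from $G$ and $b_\ell$ to the corresponding factors built from $\widetilde{G}$. Using $|a_\ell|,|b_\ell|\ll\log c$ and $|a_\ell-b_\ell|\leqslant c^{-1/2}$, each of the $N$ summands is $\ll (\log c)^{N-1}c^{-1/2}$, and summing yields $\ll (m+n)(\log c)^{m+n-1}c^{-1/2}\ll(\log c)^{m+n}c^{-1/2}$ for $c$ large, as claimed. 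There is no real obstacle; the only bit of care needed is clean bookkeeping for the $2k$ types of factors (with their multiplicities $m_i$ and $n_i$, and the presence of complex conjugates), but the telescoping argument applies identically.
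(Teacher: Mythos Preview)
Your proof is correct and rests on the same two ingredients as the paper's own proof: the pointwise bound $|G(t;c)-\widetilde{G}(t;c)|\leqslant c^{-1/2}$ and the uniform bound $|G(t;c)|,|\widetilde{G}(t;c)|\ll\log c$ from Lemma~\ref{lem: g-squiggle-val}. The only difference is in how the product is linearized: the paper expands $\prod a_i-\prod b_i$ as a sum over nonempty subsets $\Sigma_1,\Sigma_2\subseteq\{1,\dots,k\}$ and then separately bounds each $G(t_i;c)^{m_i}-\widetilde{G}(t_i;c)^{m_i}$ via the factorization of $a^k-b^k$, whereas you flatten the product into $m+n$ individual factors and apply a single telescoping sum. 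Your route is a bit more direct (one identity instead of two nested ones) and in fact yields the marginally sharper $\ll (m+n)(\log c)^{m+n-1}c^{-1/2}$; the paper's subset expansion, on the other hand, is the same device it reuses elsewhere (e.g.\ in \eqref{differences} and in the proof of Lemma~\ref{moments-with-betas-lemma}), so its choice is one of consistency rather than necessity.
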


\begin{proof}
We first note that
\begin{align*}
    &\prod_{i=1}^k \overline{G(t_i; c)}^{m_i} G(t_i; c)^{n_i} - \prod_{i=1}^k \overline{\widetilde{G}(t_i; c)}^{m_i} \widetilde{G}(t_i; c)^{n_i}=\sum_{\substack{\Sigma_1,\Sigma_2\subseteq\{1,\dots,k\}\\\Sigma_1\cup\Sigma_2\neq\emptyset}}\\
    &\qquad\prod_{i\in\Sigma_1}\big(\overline{G(t_i; c)}^{m_i} - \overline{\widetilde{G}(t_i; c)}^{m_i}\big)\prod_{i\not\in\Sigma_1}\overline{\widetilde{G}(t_i; c)}^{m_i}\prod_{i\in\Sigma_2}\big(G(t_i; c)^{n_i} - \widetilde{G}(t_i; c)^{n_i} \big)\prod_{i\not\in\Sigma_2}\widetilde{G}(t_i; c)^{n_i}.
\end{align*}

We now use the bounds
\begin{equation}
\label{simplebounds}
| G(t; c) - \widetilde{G}(t; c)| \leq \frac{1}{\sqrt{c}},\quad | G(t;c)|,\,|\widetilde{G}(t; c) | \ll \log(c).
\end{equation}
The first of these follows immediately from the definitions of $G(t; c)$ and $\widetilde{G}(t; c)$, and the second one follows from the first one and Lemma~\ref{lem: g-squiggle-val}. Using these bounds we conclude that
\[    \big| G(t_i; c)^{k} - \widetilde{G}(t_i; c)^{k} \big| = \Big| \left( G(t_i; c)- \widetilde{G}(t_i; c) \right) \sum_{\ell=1}^{k} \big( G(t_i; c)^{k-\ell} \widetilde{G}(t_i; c)^{\ell-1} \big) \Big|
    \ll \frac{1}{\sqrt{c}} \log(c)^{k-1}. \]
Consequently, we have for every choice of $\Sigma_1,\Sigma_2\subseteq\{1,\dots,k\}$ with $\Sigma_1\cup\Sigma_2\neq\emptyset$
\begin{align*}
    &\prod_{i\in\Sigma_1}\big(\overline{G(t_i; c)}^{m_i} - \overline{\widetilde{G}(t_i; c)}^{m_i}\big)\prod_{i\not\in\Sigma_1}\overline{\widetilde{G}(t_i; c)}^{m_i}\prod_{i\in\Sigma_2}\big(G(t_i; c)^{n_i} - \widetilde{G}(t_i; c)^{n_i} \big)\prod_{i\not\in\Sigma_2}\widetilde{G}(t_i; c)^{n_i}\\
    &\qquad\ll\prod_{i\in\Sigma_1}\Big(\frac1{\sqrt{c}}\log(c)^{m_i-1}\Big)\prod_{i\not\in\Sigma_1}\log(c)^{m_i}\prod_{i\in\Sigma_2}\Big(\frac1{\sqrt{c}}\log(c)^{n_i-1}\Big)\prod_{i\not\in\Sigma_2}\log(c)^{n_i}\\
    &\qquad\ll \frac{1}{\sqrt{c}}\log(c)^{m+n},
\end{align*}
and the claim follows.
\end{proof}

Having proven Lemma \ref{lem: moment-diff-bound}, it is clear that
\begin{equation}
\label{tilde-non-tilde-moments-eq}
   \left| \widetilde{\mathcal{M}_Q}(\bm{t}; \bm{m}, \bm{n}) - \mathcal{M}_Q(\bm{t}; \bm{m}, \bm{n}) \right| \ll \sum_{c \in [Q, 2Q] \cap \cd} m_Q(c) c^{-1/2}\log(c)^{m+n} \\
    \ll_{\epsilon} Q^{-1/2+\epsilon},
    \end{equation}
so that we may from now on focus on an asymptotic evaluation of $\widetilde{\mathcal{M}_Q}(\bm{t}; \bm{m}, \bm{n})$.

We first rewrite the first conclusion of Lemma \ref{lem: g-squiggle-val} as
\begin{equation}
\label{eq: g-squiggle-alpha}
    \widetilde{G}(t; c)= \frac1c \sum_{h \bmod{c}} G(1-h, c) \alpha_c(h; t), \quad \alpha_c(h; t) := \frac{1}{\sqrt{c}}
    \sum_{1 \leq x \leq (c-1)t} e_c(hx).
\end{equation}
Inserting the expansion \eqref{eq: g-squiggle-alpha} into the definition \eqref{modified-moments-equation} and expanding, we write
\begin{align*}
    \widetilde{\mathcal{M}_Q}(\bm{t}; \bm{m}, \bm{n})
&= \sum_{c \in [Q, 2Q]\cap \cd} m_Q(c)\frac{1}{c^{m+n}} \prod_{j=1}^k \bigg( \overline{\sum_{h_j \bmod{c}}  G(1-h_j, c)\alpha_c(h_j; t_j)} \bigg)^{m_j} \\ &\qquad\times \bigg(\sum_{h_j \bmod{c}} G(1-h_j, c)\alpha_c(h_j; t_j)\bigg)^{n_j} \\
    &= \sum_{c \in [Q, 2Q]\cap \cd} m_Q(c)\frac{1}{c^{m+n}} \sum_{\Vec{h}_1} \dots \sum_{\Vec{h}_k} \alpha_c(\Vec{h}_1; t_1) \cdots \alpha_c(\Vec{h}_k; t_k)\\ &\qquad\times \prod_{\ell =1}^{n_1+m_1} G(1-h_{1, \ell}, c) \cdots \prod_{\ell=1}^{n_k+m_k}G(1-h_{k, \ell}, c).
\end{align*}
Here, $\Vec{h}_j$ ranges over all $(n_j+m_j)$-tuples $\vec{h}_j=(h_{j, 1}, \dots, h_{j, n_j}, h_{j, n_j+1}, \dots, h_{j, n_j+m_j})$, where each $h_{j, \ell}\in(-c/2, c/2)$, and  
\[ \alpha_c(\Vec{h}_j; t_j) = \prod_{\ell=1}^{n_j}\alpha_c(h_{j, \ell};t_j) \prod_{\ell =1}^{m_j} \overline{\alpha_c(h_{j, n_j+\ell};t_j)}.\]

Now, write $\mathcal{H}_c$ for the set of all such $k$-tuples $\vec{\bm{h}}=(\vec{h}_1,\dots,\vec{h}_k)$ and denote
\[ \alpha_c(\vec{\bm{h}};\bm{t})=\prod_{j=1}^k\alpha_c(\vec{h}_j;t_j), \]
recalling also the notations $H(\vec{\bm{h}})$ and $\beta(\vec{\bm{h}};\bm{t})$ for every $\vec{\bm{h}}\in\mathcal{H}\supseteq\mathcal{H}_c$ from \eqref{H-beta-def}.
Recalling that $G(1-h_{j, \ell}, c) = \big(\frac{1-h_{j, \ell}}{c}\big)\sqrt{c}$ and using multiplicativity of Jacobi symbols, we get
\begin{align}
 \label{eq:moment-alpha}
    \widetilde{\mathcal{M}_Q}(\bm{t}; \bm{m}, \bm{n})=  \sum_{c \in [Q, 2Q]\cap \cd} m_Q(c) \frac{1}{c^{(m+n)/2}} \sum_{\vec{\bm{h}}\in\mathcal{H}_c} \alpha_c(\vec{\bm{h}};\bm{t}) \bigg(\frac{H(\vec{\bm{h}})}{c}\bigg).
\end{align}

Next, we show that $\alpha_c(\vec{\bm{h}}; \bm{t})$ may be replaced with $\beta(\vec{\bm{h}};\bm{t})$, an expression independent of $c$, at the cost of a negligible error.  We begin with the following elementary lemma, for an independent proof of which we refer to \cite[Section 2]{KowalskiSawin2016} (where the condition that $c=p$ is a prime is clearly immaterial). We will provide a different argument, which also sets the stage for the proof of the next Lemma~\ref{alpha-beta-sum}.

\begin{lem}
\label{diff-coeff-small}
For $|h| < c/2$, we have
\[ \left| \frac{\alpha_c(h; t)}{\sqrt{c}} - \beta(h; t) \right| \ll \frac{1}{c}. \]
\end{lem}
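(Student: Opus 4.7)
The plan is to split the analysis into the cases $h = 0$ and $0 < |h| < c/2$. For $h = 0$, setting $J := \lfloor (c-1)t\rfloor$, the definition in \eqref{eq: g-squiggle-alpha} immediately gives $\alpha_c(0;t)/\sqrt{c} = J/c = t + \mathrm{O}(1/c)$, which agrees with $\beta(0;t) = t$ to the required precision.

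For $0 < |h| < c/2$ the idea is to use the closed form of the geometric progression,
\[ \sum_{x=1}^{J} e_c(hx) = e_c(h)\,\frac{e_c(hJ) - 1}{e_c(h) - 1}, \]
and then to compare numerator and denominator to their ``continuous'' analogues $e(ht) - 1$ and $2\pi i h/c$. The key analytic input is the elementary inequality $|\sin x| \geq 2|x|/\pi$ on $[-\pi/2, \pi/2]$, which yields the lower bound $|e_c(h) - 1| = 2|\sin(\pi h/c)| \geq 4|h|/c$ throughout the range $|h| < c/2$. Combined with the Taylor expansion $e^z - 1 = z(1 + \mathrm{O}(z))$ on a bounded disk, this shows $1/(e_c(h) - 1) = (c/2\pi i h)(1 + \mathrm{O}(|h|/c))$.

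Since $J/c = t + \mathrm{O}(1/c)$, we also have $e_c(hJ) - e(ht) = \mathrm{O}(|h|/c)$ and $e_c(h) = 1 + \mathrm{O}(|h|/c)$. Substituting into the geometric sum formula and expanding yields
\[ \frac{\alpha_c(h;t)}{\sqrt{c}} = \beta(h;t)\bigl(1 + \mathrm{O}(|h|/c)\bigr) + \mathrm{O}(1/c), \]
and the relative error is absorbed using the trivial bound $|\beta(h;t)| \leq 1/(\pi|h|)$ to yield the claimed $\mathrm{O}(1/c)$.

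The main subtlety to keep an eye on is that when $|h|$ is comparable to $c/2$, none of the relative errors $\mathrm{O}(|h|/c)$ are individually small, so one cannot simply Taylor expand and discard higher-order terms. The desired bound emerges only because the extra factor of $|h|$ in each numerator error is exactly balanced by the factor of $|h|$ arising in the denominator $2\pi i h$ (equivalently, in the lower bound $|e_c(h) - 1| \geq 4|h|/c$). A more naive Riemann-sum comparison of $\frac{1}{c}\sum_{x=1}^J e_c(hx)$ with $\int_0^t e(hu)\,\dd u$ would only produce an error of order $\mathrm{O}(|h|/c)$, which is much too weak when $|h| \asymp c$; it is exactly this delicate balance that will need to be exploited again, in a more refined fashion, in the subsequent Lemma~\ref{alpha-beta-sum}.
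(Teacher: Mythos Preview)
Your proof is correct and complete. The approach differs from the paper's: rather than using the closed geometric-sum formula and algebraic error control, the paper writes the difference as a structured Riemann-sum remainder
\[
\frac{\alpha_c(h;t)}{\sqrt{c}}-\beta(h;t)=\Big(\sum_{1\leqslant x\leqslant (c-1)t}e_c(hx)\Big)F_t(h,c)+G_t(h,c),
\]
where $F_t(h,c)=\int_{-1/c}^0(1-e(h\eta))\,\dd\eta\ll |h|/c^2$ is the per-step error (crucially independent of $x$) and $G_t(h,c)\ll 1/c$ collects the boundary discrepancy; the bound then follows from $\big|\sum_x e_c(hx)\big|\ll c/|h|$. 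So, contrary to your closing remark, the paper \emph{does} use a Riemann-sum comparison---just not a naive termwise one: it factors the common per-step error out before summing, which is exactly how the $|h|$ cancellation you highlight is realized in that framework. Your direct algebraic route is arguably cleaner for this lemma in isolation; the paper's decomposition buys something different, namely explicit smooth functions $F_t(\cdot,c)$ and $G_t(\cdot,c)$ of a real variable with controlled derivatives, which are then fed directly into a summation-by-parts argument in the proof of Lemma~\ref{alpha-beta-sum}. Your closed-form approach would need to be reorganized to play the same role there.
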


\begin{proof}
We may write
\[ f(h)=\frac1c\sum_{1\leqslant x\leqslant (c-1)t}e_c(hx)-\int_0^te(h\xi)\,\dd\xi=\sum_{1\leqslant x\leqslant (c-1)t}e_c(hx)F_t(h,c)+G_t(h,c), \]
where
\[ F_t(h,c)=\int_{-1/c}^0(1-e(h\eta))\,\dd\eta,\quad F_t(h,c)\ll\frac h{c^2},\quad G_t(h,c)\ll\frac1c. \]

Now, if $h=0$, the statement of the lemma is trivially true. Otherwise, noting that $F_t(h,c)$ does not depend on $x$, and inserting the classical bound \eqref{sharp-cutoff-estimate}, we conclude that, for $|h|<c/2$,
\[ f(h)=\bigg(\sum_{1\leqslant x\leqslant (c-1)t}e_c(hx)\bigg)F_t(h,c)+G_t(h,c)\ll\frac1{|h|/c}\frac{|h|}{c^2}+\frac1c\ll\frac1c. \qedhere \]
\end{proof}

\begin{lem}
\label{alpha-beta-sum}
For $|h|<c/2$ and $t\in [0,1]$,
\begin{equation}
     \bigg| \sum_{|h| <c/2} \bigg(\frac{1-h}c\bigg) \left(\frac{\alpha_c(h; t)}{\sqrt{c}} - \beta(h; t)\right)\bigg| \ll \frac{\log^2c}{\sqrt{c}}.
\end{equation}
\end{lem}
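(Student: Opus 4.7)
Write $\chi(h):=\Leg{1-h}{c}$, $J:=\lfloor(c-1)t\rfloor$, and introduce the trigonometric polynomial $S(\xi):=\sum_{|h|<c/2}\chi(h)e(h\xi)$. The plan is to apply the first-order Euler--Maclaurin formula to each phase $e(h\xi)$, sum the resulting pointwise identity against $\chi(h)$ so that the $h$-sum collapses to a single global quantity involving $S$, and then extract character cancellation in the Euler--Maclaurin remainder. Euler--Maclaurin yields
\begin{equation*}
\tfrac{\alpha_c(h;t)}{\sqrt c}-\beta(h;t)=-\!\!\int_{J/c}^{t}\!\!e(h\xi)\,d\xi+\tfrac{e(hJ/c)-1}{2c}+\tfrac{2\pi ih}{c}\!\int_0^{J/c}\!(\{c\xi\}-\tfrac12)e(h\xi)\,d\xi,
\end{equation*}
and summing against $\chi(h)$ (interchanging the finite sum with the integrals) produces respectively $-\!\int_{J/c}^{t}\!S(\xi)\,d\xi$, $(S(J/c)-S(0))/(2c)$, and the Euler--Maclaurin remainder $R:=c^{-1}\!\int_0^{J/c}(\{c\xi\}-\tfrac12)S'(\xi)\,d\xi$.

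The two boundary-type terms are handled via the bound $\|S\|_\infty\ll\sqrt c\log c$. This follows from the Gauss sum computation $S(x/c)=\sqrt c\,\Leg{x}{c}e_c(x)$ at the $c$ equispaced nodes $x/c$ (the same calculation underlying Lemma~\ref{lem: g-squiggle-val}, via the substitution $k=1-h$), which gives $|S(x/c)|\leq\sqrt c$, combined with the $O(\log c)$ Lebesgue constant for trigonometric Lagrange interpolation at $c$ equispaced nodes. Consequently the first two contributions are $\ll\log c/\sqrt c$ and $\ll 1/\sqrt c$.

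The main obstacle is controlling $R$: the naive approach via Bernstein gives only $\|S'\|_\infty\ll c^{3/2}\log c$, hence a useless $O(\sqrt c\log c)$. My plan is to expand $\{c\xi\}-\tfrac12=-(2\pi i)^{-1}\sum_{k\neq 0}e(kc\xi)/k$, substitute into $R$, evaluate the $\xi$-integrals as $(e(hJ/c)-1)/(2\pi i(h+kc))$ using $e(kJ)=1$, and collapse the $k$-sum by partial fractions together with the Mittag--Leffler expansion $\sum_{k\neq 0}(h+kc)^{-1}=\pi c^{-1}\cot(\pi h/c)-h^{-1}$. This produces the closed form
\begin{equation*}
R=\tfrac{1}{2\pi ic}\!\!\sum_{0<|h|<c/2}\!\!\chi(h)\bigl(e(hJ/c)-1\bigr)\Bigl[\pi\cot\tfrac{\pi h}{c}-\tfrac{c}{h}\Bigr].
\end{equation*}
The key cancellation is now visible: the Taylor expansion $\cot\theta=\theta^{-1}-\theta/3+O(\theta^3)$ kills the singular $c/h$ piece, so the bracketed symbol is $O(|h|/c)$ uniformly for $|h|<c/2$.

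The endgame is Abel summation. Partial sums $\sum_{h\leq H}\chi(h)$ and $\sum_{h\leq H}\chi(h)e_c(hJ)$ are both $\ll\sqrt c\log c$: the first by P\'olya--Vinogradov for the primitive Dirichlet character $\Leg{\cdot}{c}$ (shifted by one), and the second by completing the additive twist via the Gauss sum decomposition of $\Leg{\cdot}{c}$, giving a bound uniform in $J$. Abel summation against the weight $h$ then yields $|\sum_{|h|<c/2}h\chi(h)(e_c(hJ)-1)|\ll c^{3/2}\log c$, so $|R|\ll c^{-2}\cdot c^{3/2}\log c=\log c/\sqrt c$; the $O(|h|^3/c^3)$ higher-order contribution to the cotangent expansion is dominated by the same mechanism. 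Adding the three pieces gives the stated bound (with some slack --- one in fact obtains $O(\log c/\sqrt c)$).
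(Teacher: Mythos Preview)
Your proof is correct and takes a genuinely different route from the paper's. The paper argues directly from the representation of $f(h)=\alpha_c(h;t)/\sqrt c-\beta(h;t)$ established in Lemma~\ref{diff-coeff-small}, namely $f(h)=\big(\sum_{x\le(c-1)t}e_c(hx)\big)F_t(h,c)+G_t(h,c)$, and applies summation by parts in the $h$-variable against the partial sums $A_x(\xi)=\sum_{h\le\xi}\chi(h)e_c(hx)$, invoking the P\'olya--Vinogradov bound $A_x(\xi)\ll\sqrt c\log c$; the extra logarithm in the final $\log^2 c/\sqrt c$ arises from the integral $\int_1^{c/2}\sqrt c\log c\cdot\frac{d\xi}{c\xi}$.

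Your approach instead packages the $h$-sum into the trigonometric polynomial $S(\xi)$, handles the boundary terms via the Lebesgue-constant bound $\|S\|_\infty\ll\sqrt c\log c$, and for the Euler--Maclaurin remainder passes through the Fourier expansion of the sawtooth and the cotangent partial-fraction identity to obtain a closed form with weight $\pi\cot(\pi h/c)-c/h=O(|h|/c)$, whose smoothness then makes a single Abel summation (against the P\'olya--Vinogradov partial sums for $\chi(h)$ and $\chi(h)e_c(hJ)$) sufficient. This is more elaborate but yields the sharper bound $O(\log c/\sqrt c)$. One small point worth making explicit in a final write-up: the phrase ``dominated by the same mechanism'' for the $O(|h|^3/c^3)$ tail must mean a second Abel summation with the smooth weight $g(h/c):=\pi\cot(\pi h/c)-c/h+\pi^2 h/(3c)$ (or simply treating the full bracket $g(h/c)$ at once), since a trivial estimate $\sum_{|h|<c/2}|h|^3/c^3\asymp c$ would only give $R=O(1)$; with Abel summation the bounded variation of $g$ on $[-1/2,1/2]$ indeed gives $\ll\sqrt c\log c$ as you intend.
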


\begin{proof}
On the one hand, by completion (that is essentially by the P\'olya--Vinogradov inequality), we have as in the proof of Lemma~\ref{lem: g-squiggle-val} for every $x\in\mathbb{Z}$, $\xi\geqslant 0$ the bound
\begin{equation}
\label{Ax-xi-eq}
A_x(\xi):=\sum_{1\leqslant h\leqslant\xi}\bigg(\frac{1-h}c\bigg)e_c(hx)\ll c^{1/2}\log c.
\end{equation}

On the other hand, we may insert the representation for $f(h)$ from the proof of Lemma \ref{diff-coeff-small}, noting that, moreover, $F_t(\cdot,c)$ and $G_t(\cdot,c)$ define functions of a continuous real variable that satisfy
\[ \frac{\dd}{\dd\xi}F_t(\xi,c)\ll\frac1{c^2},\quad \frac{\dd}{\dd\xi}G_t(\xi,c)\ll\frac1c. \]
Moreover, denoting $x_t^{+}=\lfloor (c-1)t\rfloor +1$, we may write
\[ \sum_{1\leqslant x\leqslant (c-1)t}e_c(hx)=\frac{e_c(hx_t^{+})-e_c(h)}{e_c(h)-1}. \]

Using summation by parts, this leads to
\begin{align*}
&\sum_{1\leqslant h<c/2} \bigg(\frac{1-h}c\bigg) \left(\frac{\alpha_c(h; t)}{\sqrt{c}} - \beta(h; t)\right)\\
&\qquad=\sum_{1\leqslant h<c/2}\bigg(\frac{1-h}c\bigg)\big(e_c(hx_t^{+})-e_c(h)\big)\frac{F_t(h,c)}{e_c(h)-1}+\sum_{1\leqslant h<c/2}\bigg(\frac{1-h}c\bigg)G_t(h,c)\\
&\qquad=\sum_{1\leqslant x\leqslant (c-1)t}\int_{1-}^{c/2}F_t(\xi,c)\,\dd\big(A_{x_t^{+}}(\xi)-A_1(\xi)\big)+\int_{1-}^{c/2}G_t(\xi,c)\,\dd A_0(\xi)\\
&\qquad=\bigg(\frac{F_t(\xi,c)}{e_c(\xi)-1}\big(A_{x_t^{+}}(\xi)-A_1(\xi)\big)+G_t(\xi,c)A_0(\xi)\bigg)\bigg|_{1-}^{c/2}\\
&\qquad\qquad-\int_1^{c/2}\big(A_{x_t^{+}}(\xi)-A_1(\xi))\frac{\dd}{\dd\xi}\frac{F_t(\xi,c)}{e_c(\xi)-1}\,\dd\xi-\int_1^{c/2}A_0(\xi)\frac{\dd}{\dd\xi}G_t(\xi,c)\,\dd\xi\\
&\qquad\ll\frac{\log c}{\sqrt{c}}+\sqrt{c}\log c\int_1^{c/2}\frac{\dd\xi}{c\xi}\ll\frac{\log^2c}{\sqrt{c}}.
\end{align*}
The contributions from $-c/2<h<0$ are estimated analogously, and the $h=0$ term is trivially admissible.
\end{proof}

This leads to the following.
\begin{lem}
\label{moments-with-betas-lemma}
We have
\begin{align} \label{eq: new-moments}
  \widetilde{\mathcal{M}_Q}(\bm{t}; \bm{m}, \bm{n}) &=  \sum_{c \in [Q, 2Q] \cap \cd}  m_Q(c) \sum_{\vec{\bm{h}}\in\mathcal{H}_c} 
   \beta(\vec{\bm{h}}; \bm{t}) \bigg(\frac{H(\vec{\bm{h}})}c\bigg)+ O\bigg(\frac{\log^{m+n+1}Q}{Q^{1/2}}\bigg).
\end{align}
\end{lem}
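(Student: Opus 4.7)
The plan is to start from the expression \eqref{eq:moment-alpha} and exploit the crucial observation that the Jacobi symbol factors multiplicatively across the indices. Since $H(\vec{\bm h})=\prod_{j,\ell}(1-h_{j,\ell})$ and $c$ is odd and square-free, we have
\[ \bigg(\frac{H(\vec{\bm h})}{c}\bigg)=\prod_{j=1}^k\prod_{\ell=1}^{n_j+m_j}\bigg(\frac{1-h_{j,\ell}}{c}\bigg). \]
Combined with the matching product structure of $\alpha_c(\vec{\bm h};\bm t)$ and $\beta(\vec{\bm h};\bm t)$ (both defined as products of their single-variable components with appropriate conjugation), the sum over $\vec{\bm h}\in\mathcal H_c$ in \eqref{eq:moment-alpha} factorizes as a product of $m+n$ single-variable sums. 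Indexing these by $i=(j,\ell)$, let
\[ A_i(c)=\sum_{|h|<c/2}\bigg(\frac{1-h}{c}\bigg)\frac{a_c^{(i)}(h)}{\sqrt{c}},\qquad B_i(c)=\sum_{|h|<c/2}\bigg(\frac{1-h}{c}\bigg)b^{(i)}(h), \]
where $a_c^{(i)}(h)=\alpha_c(h;t_j)$ or $\overline{\alpha_c(h;t_j)}$, and $b^{(i)}(h)=\beta(h;t_j)$ or $\overline{\beta(h;t_j)}$, according to whether $\ell\leqslant n_j$ or $\ell>n_j$.

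The next step is to collect three bounds on these building blocks. First, the computation leading to Lemma~\ref{lem: g-squiggle-val} expresses $\widetilde{G}(t_j;c)=\sum_{|h|<c/2}(\frac{1-h}{c})\alpha_c(h;t_j)/\sqrt{c}$, so that $A_i(c)$ is either $\widetilde{G}(t_j;c)$ or its conjugate, and the second conclusion of Lemma~\ref{lem: g-squiggle-val} gives $|A_i(c)|\ll\log c$. Second, the trivial bound $|\beta(h;t)|\leqslant\min(1,1/(\pi|h|))$ yields $|B_i(c)|\ll\log c$. Third, Lemma~\ref{alpha-beta-sum} provides precisely the difference estimate
\[ \big|A_i(c)-B_i(c)\big|\ll\frac{\log^2 c}{\sqrt{c}}, \]
which is the decisive gain (the Jacobi symbol oscillation and partial summation inside Lemma~\ref{alpha-beta-sum} are what make this nontrivially better than the individual $O(\log c)$ bounds).

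The third step is the telescoping identity
\[ \prod_{i=1}^{m+n}A_i(c)-\prod_{i=1}^{m+n}B_i(c)=\sum_{i=1}^{m+n}\big(A_i(c)-B_i(c)\big)\prod_{k<i}A_k(c)\prod_{k>i}B_k(c), \]
which together with the three estimates above gives, uniformly in $c\in [Q,2Q]\cap\mathcal D$,
\[ \bigg|\frac{1}{c^{(m+n)/2}}\sum_{\vec{\bm h}\in\mathcal H_c}\alpha_c(\vec{\bm h};\bm t)\bigg(\frac{H(\vec{\bm h})}{c}\bigg)-\sum_{\vec{\bm h}\in\mathcal H_c}\beta(\vec{\bm h};\bm t)\bigg(\frac{H(\vec{\bm h})}{c}\bigg)\bigg|\ll (m+n)\frac{\log^{m+n+1}c}{\sqrt{c}}. \]
Averaging this pointwise bound against the probability measure $m_Q$ and inserting into \eqref{eq:moment-alpha} produces the stated error term $O(\log^{m+n+1}Q/Q^{1/2})$ and yields \eqref{eq: new-moments}.

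There is no genuine obstacle: all the analytic content is packaged in Lemmas~\ref{lem: g-squiggle-val} and \ref{alpha-beta-sum}. The only thing to keep track of is the complex conjugation on the $\ell>n_j$ factors, but since the Jacobi symbol $(\frac{1-h}{c})$ is real, both the individual bounds and the difference estimate pass through to the conjugated sums unchanged.
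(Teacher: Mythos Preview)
Your proposal is correct and follows essentially the same approach as the paper: both exploit the multiplicativity of the Jacobi symbol to factorize the $\vec{\bm h}$-sum into $m+n$ single-variable sums, then bound the individual factors by $\ll\log c$ (trivially for $B_i$, via Lemma~\ref{lem: g-squiggle-val} for $A_i$) and the differences by $\ll\log^2 c/\sqrt{c}$ via Lemma~\ref{alpha-beta-sum}. The only cosmetic difference is that the paper expands $\prod A_i-\prod B_i$ over all nonempty subsets $\Sigma\subseteq\Sigma_{k,\bm m,\bm n}$ (writing each $A_i=(A_i-B_i)+B_i$ and multiplying out), whereas you use the tighter telescoping identity; both yield the same $O(\log^{m+n+1}Q/Q^{1/2})$ error.
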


\begin{proof}
Let $\Sigma_{k,\bm{m},\bm{n}}$ be the set of all pairs $(j,\ell)$ such that $1\leqslant j\leqslant k$ and $1\leqslant\ell\leqslant n_j+m_j$. By writing $\alpha_c(h_{j,\ell};t_j)/\sqrt{c}=(\alpha_c(h_{j,\ell},t_j)/\sqrt{c}-\beta(h_{j,\ell};t_j))+\beta(h_{j,\ell};t_j)$ and expanding the product, may write
\[ \alpha_c(\vec{\bm{h}};\bm{t})-\beta(\vec{\bm{h}};\bm{t})=\sum_{\emptyset\neq\Sigma\subseteq\Sigma_{k,\bm{m},\bm{n}}}\prod_{(j,\ell)\in\Sigma}\left(\frac{\alpha_c(h_{j,\ell};t_j)}{\sqrt{c}}-\beta(h_{j,\ell};t_j)\right)^{\ast}\prod_{(j,\ell)\in\Sigma_{k,\bm{m},\bm{n}}\setminus\Sigma}\beta(h_{j,\ell};t_j)^{\ast}, \]
where
\[ \beta(h_{j,\ell},t_j)^{\ast}=\begin{cases} \beta(h_{j,\ell};t_j),&1\leqslant\ell\leqslant n_j,\\ \overline{\beta(h_{j,\ell};t_j)},&n_j+1\leqslant\ell\leqslant m_j,\end{cases} \] 
and similarly for $(\alpha_c(h_{j,\ell},t_j)/\sqrt{c}-\beta(h_{j,\ell};t_j)^{\ast})$.

Using this expansion, we have that
\begin{align*}
&\widetilde{\mathcal{M}_Q}(\bm{t};\bm{m},\bm{n})-\sum_{c \in [Q, 2Q] \cap \cd}  m_Q(c) \sum_{\vec{\bm{h}}\in\mathcal{H}_c} 
   \beta(\vec{\bm{h}}; \bm{t}) \bigg(\frac{H(\vec{\bm{h}})}c\bigg)\\
&\qquad=\sum_{\emptyset\neq\Sigma\subseteq\Sigma_{k,\bm{m},\bm{n}}}
\sum_{c \in [Q, 2Q] \cap \cd}  m_Q(c)
\prod_{(j,\ell)\in\Sigma_{k,\bm{m},\bm{n}}\setminus\Sigma}\sum_{|h_{j,\ell}|<c/2}\left(\frac{1-h_{j,\ell}}{c}\right)\beta(h_{j,\ell};t_j)^{\ast}\\
&\qquad\qquad\times\prod_{(j,\ell)\in\Sigma}\sum_{|h_{j,\ell}|<c/2}\left(\frac{1-h_{j,\ell}}{c}\right)\left(\frac{\alpha_c(h_{j,\ell};t_j)}{\sqrt{c}}-\beta(h_{j,\ell};t_j)\right)^{\ast}.
\end{align*}
In this expression, we bound the sums over $h_{j,\ell}$ corresponding to $(j,\ell)\in\Sigma_{k,\bm{m},\bm{n}}\setminus\Sigma$ trivially, using $\beta(h;t)\ll 1/(1+|h|)$, and those corresponding to $(j,\ell)\in\Sigma$ using Lemma~\ref{alpha-beta-sum}. This gives the announced estimate.
\end{proof}

We would now like to change the order of summation, so that the $c$-sum is inside of the $\vec{\bm{h}}$ sums. We now show that the limits of summation can indeed be changed from $|h_{j, \ell}| < c/2$ to the range $|h_{j, \ell}| < Q/2$ independent of $c$, while only introducing negligible error.

\begin{lem}
\label{moments-order-switched}
Let $\mathcal{H}'$ be the set of all $k$-tuples $\vec{\bm{h}}\!=\!(\vec{h}_1,\dots,\vec{h}_k)$ of vectors $\vec{h}_j\!=\!(h_{j,\ell})_{1\leqslant\ell\leqslant n_j+m_j}$ satisfying $|h_{j,\ell}|<Q/2$. Then,
\begin{align}
    \widetilde{\mathcal{M}_Q}(\bm{t}; \bm{m}, \bm{n}) =   \sum_{\Vec{\bm{h}}\in\mathcal{H}'} \beta(\vec{\bm{h}};\bm{t}) \sum_{c \in [Q, 2Q] \cap \cd}  m_Q(c)\bigg(\frac{H(\Vec{\bm{h}})}{c}\bigg) + \mathrm{O}\bigg(\frac{\log^{m+n+1}Q}{Q^{1/2}}\bigg).\end{align}
\end{lem}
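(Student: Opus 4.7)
Since $Q/2\leqslant c/2$ for every $c\in[Q,2Q]$, the inclusion $\mathcal{H}'\subseteq\mathcal{H}_c$ holds, and the difference between the two expressions equals
\[ E_Q:=\sum_{c\in[Q,2Q]\cap\cd}m_Q(c)\sum_{\vec{\bm{h}}\in\mathcal{H}_c\setminus\mathcal{H}'}\beta(\vec{\bm{h}};\bm{t})\bigg(\frac{H(\vec{\bm{h}})}{c}\bigg). \]
The plan is to bound this by $\mathrm{O}((\log Q)^{m+n+1}/Q^{1/2})$ by exploiting the factorization of both $\beta(\vec{\bm{h}};\bm{t})$ and $(H(\vec{\bm{h}})/c)$ across the $m+n$ coordinates.

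For fixed $c$, I would rewrite the inner sum as the difference of products $\prod_{j,\ell}A_{j,\ell}(c)-\prod_{j,\ell}A'_{j,\ell}(c)$, where
\[ A_{j,\ell}(c)=\sum_{|h|<c/2}\beta(h;t_j)^{\ast}\bigg(\frac{1-h}{c}\bigg),\quad A'_{j,\ell}(c)=\sum_{|h|<Q/2}\beta(h;t_j)^{\ast}\bigg(\frac{1-h}{c}\bigg), \]
with $(\cdot)^{\ast}$ denoting complex conjugation for the $m_j$ conjugated indices $\ell>n_j$. By Lemmas~\ref{lem: g-squiggle-val} and \ref{alpha-beta-sum}, $|A_{j,\ell}(c)|\ll\log Q$; trivially $|A'_{j,\ell}(c)|\ll\log Q$ as well, from $|\beta(h;t)|\ll 1/(1+|h|)$. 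Telescoping the product difference yields
\[ \bigg|\prod_{j,\ell}A_{j,\ell}(c)-\prod_{j,\ell}A'_{j,\ell}(c)\bigg|\ll(\log Q)^{m+n-1}\sum_{(j_0,\ell_0)}|A_{j_0,\ell_0}(c)-A'_{j_0,\ell_0}(c)|, \]
which reduces the problem to bounding, in $L^1(m_Q)$, the single-coordinate truncation error $A-A'=\sum_{Q/2\leqslant|h|<c/2}\beta(h;t_{j_0})^{\ast}((1-h)/c)$.

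The core analytic input I would establish is the $L^2$-average bound
\[ \sum_{c\in[Q,2Q]\cap\cd}m_Q(c)|A_{j_0,\ell_0}(c)-A'_{j_0,\ell_0}(c)|^2\ll_{\varepsilon}Q^{-1+\varepsilon}. \]
Expanding the square and interchanging sums,
\[ \sum_cm_Q(c)|A-A'|^2=\sum_{h_1,h_2}\beta(h_1;t_{j_0})^{\ast}\overline{\beta(h_2;t_{j_0})^{\ast}}\sum_cm_Q(c)\bigg(\frac{(1-h_1)(1-h_2)}{c}\bigg), \]
with $h_1,h_2$ bounded and at least one in the dyadic window $[Q/2,Q)$. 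The diagonal $h_1=h_2$ yields $\mathrm{O}(1/Q)$ from $\sum|\beta(h)|^2\ll 1/Q$ on that window. The off-diagonal is handled by writing $(1-h_1)(1-h_2)=rs^2$ with $r$ squarefree, invoking quadratic reciprocity (using $c\equiv 1\bmod 4$) to interpret $(r/c)$ as a Dirichlet character in $c$, and applying Heath--Brown's quadratic large sieve (Proposition~\ref{HBQLS}), exploiting $|\beta(h)|^2\ll 1/Q^2$ on the window. Cauchy--Schwarz in $c$ converts the $L^2$-bound to $\sum_cm_Q(c)|A-A'|\ll Q^{-1/2+\varepsilon}$, and combining with telescoping yields the claimed estimate (up to absorbing $Q^{\varepsilon}$ into the error exponent).

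The main technical obstacle is the clean execution of Proposition~\ref{HBQLS} on this specific double sum, which requires careful handling of the squarefree-part decomposition of $(1-h_1)(1-h_2)$, the coprimality conditions implicit in the Jacobi symbol, and the sign and $2$-part contributions arising from quadratic reciprocity. This is standard but intricate bookkeeping, analogous to the techniques used in the treatment of Kloosterman paths in \cite{KowalskiSawin2016,MilicevicZhang2023}.
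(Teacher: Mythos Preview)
Your overall structure---telescoping the product difference and reducing to a single-coordinate tail
\[
T(c):=\sum_{Q/2\leqslant|h|<c/2}\beta(h;t)\bigg(\frac{1-h}{c}\bigg)
\]
---matches the paper's. The divergence is in how you bound $T(c)$. You propose to control it on average over $c$ via an $L^2$ estimate and Heath--Brown's large sieve; the paper instead bounds $T(c)$ \emph{pointwise} for each fixed $c$ by $\ll(\log Q)/Q^{1/2}$, using the same device as in Lemma~\ref{alpha-beta-sum}: write $\beta(h;t)=(e(ht)-1)/(2\pi i h)$, separate the oscillation as $e_c(hx_t^{\ast})$ with $x_t^{\ast}=\lfloor ct\rfloor$ plus a smooth remainder, and apply summation by parts against the P\'olya--Vinogradov-type bound $A_x(\xi)\ll c^{1/2}\log c$ from \eqref{Ax-xi-eq}. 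This is considerably more elementary and avoids any averaging over $c$ whatsoever.

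Your large-sieve route has a technical gap as written: the summation range $Q/2\leqslant|h|<c/2$ depends on $c$, so after expanding the square you cannot simply interchange sums and invoke Proposition~\ref{HBQLS}, which requires $c$-independent coefficients. This is fixable (for instance by the variable-endpoint device used later in Lemma~\ref{HB-var}, or by a maximal large sieve), but the bookkeeping is genuine and the outcome is only $Q^{-1/2+\varepsilon}$, slightly weaker than the stated $(\log Q)^{m+n+1}/Q^{1/2}$. The pointwise P\'olya--Vinogradov argument is both tighter and shorter; it is the natural tool here, and you should reserve the large sieve for Lemma~\ref{error-terms-lemma}, where no pointwise bound of the required strength exists.
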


\begin{proof}
Arguing as in the proof of Lemma~\ref{moments-with-betas-lemma}, we find that, for every $c\in [Q,2Q]\cap\mathcal{D}$,
\begin{align*}
\bigg(\sum_{\vec{\bm{h}}\in\mathcal{H}_c}
-\sum_{\vec{\bm{h}}\in\mathcal{H}'}\bigg)\beta(\vec{\bm{h}};\bm{t})\bigg(\frac{H(\Vec{\bm{h}})}{c}\bigg)
&=\sum_{\emptyset\neq\Sigma\subseteq\Sigma_{\bm{k},\bm{m},\bm{n}}}\prod_{(j,\ell)\in\Sigma_{\bm{k},\bm{m},\bm{n}}}\sum_{|h_{j,\ell}|<Q/2}\left(\frac{1-h_{j,\ell}}{c}\right)\beta(h_{j,\ell};t_j)\\
&\qquad\qquad\times\prod_{(j,\ell)\in\Sigma}\sum_{c/2<|h_{j,\ell}|<Q/2}\left(\frac{1-h_{j,\ell}}{c}\right)\beta(h_{j,\ell};t_j).
\end{align*}
We claim that
\[ \sum_{|h|<Q/2}\left(\frac{1-h}c\right)\beta(h;t)\ll\log Q,\quad \sum_{c/2<|h|<Q/2}\left(\frac{1-h}c\right)\beta(h;t)\ll\frac{\log Q}{Q^{1/2}}. \]
The first of these bounds is trivial from $\beta(h;t)\ll 1/(1+|h|)$.

For the second bound, we argue analogously as in the proof of Lemma~\ref{alpha-beta-sum}. Recall the notation and the P\'olya--Vinogradov bound for $A_x(\xi)$ from \eqref{Ax-xi-eq}, and denote $x_t^{\ast}=\lfloor ct\rfloor$. Then, we may rewrite the contribution of $c/2<h<Q/2$ to the second sum as
\[ \sum_{c/2<h<Q/2}\left(\frac{1-h}c\right)\big(e_c(hx_t^{\ast})-1\big)\frac1h+\sum_{c/2<h<Q/2}\left(\frac{1-h}c\right)e(hx_t^{\ast})f(h), \]
where
\[ f(\xi)=\frac{e(\xi(t-x_t^{\ast}/c))}{\xi},\quad \frac{\dd}{\dd\xi}f(\xi)\ll\frac1{Q^2}\quad (\xi\asymp Q). \]
Then summation by parts indeed yields
\begin{align*}
&\sum_{c/2<h<Q/2}\left(\frac{1-h}c\right)\big(e_c(hx_t^{\ast})-1\big)\frac1h=\int_{c/2}^{(Q/2)+}\frac1{\xi}\,\dd\big(A_{x_t^{\ast}}(\xi)-A_0(\xi)\big)\\
&\qquad\qquad=\frac{A_{x_t^{\ast}}(\xi)-A_0(\xi)}{\xi}\bigg|_{c/2}^{(Q/2)+}+\int_{c/2}^{Q/2}\frac{A_{x_t^{\ast}}(\xi)-A_0(\xi)}{\xi^2}\,\dd\xi\ll\frac{\log Q}{Q^{1/2}},\\
&\sum_{c/2<h<Q/2}\left(\frac{1-h}c\right)e(hx_t^{\ast})f(h)=\int_{c/2}^{(Q/2)+}f(h)\,\dd A_{x_t^{\ast}}(\xi)\\
&\qquad\qquad =f(h)A_{x_t^{\ast}}(\xi)\bigg|_{c/2}^{(Q/2)+}-\int_{c/2}^{Q/2}A_{x_t^{\ast}}(\xi)f'(\xi)\,\dd\xi\ll\frac{\log Q}{Q^{1/2}}.
\end{align*}

The terms with $-Q/2<h<-c/2$ are estimated analogously. Putting everything together, summing over all $c\in [Q,2Q]\cap\cd$ with weights $m_Q(c)$, and invoking Lemma~\ref{moments-with-betas-lemma}, we obtain
\[ \widetilde{\mathcal{M}_Q}(\bm{t};\bm{m},\bm{n})=\sum_{c\in [Q,2Q]\cap\cd}m_Q(c)\sum_{\vec{\bm{h}}\in\mathcal{H}'}\beta(\vec{\bm{h}};\bm{t})\bigg(\frac{H(\vec{\bm{h}})}{c}\bigg)+\mathrm{O}\bigg(\frac{\log^{m+n+1}Q}{Q^{1/2}}\bigg). \]
Since the range of summation in $\mathcal{H}'$ does not depend on $c$, we may exchange the order of summation, and the lemma follows.
\end{proof}

\subsection{Isolating the main term and proofs of main results}
In view of Lemma~\ref{moments-order-switched}, we may write
\begin{equation}
\label{moments-main-error-decomp}
\widetilde{\mathcal{M}_Q}(\bm{t};\bm{m},\bm{n})=\widetilde{\mathcal{M}_Q^0}(\bm{t};\bm{m},\bm{n})+\widetilde{\mathcal{M}_Q'}(\bm{t};\bm{m},\bm{n})+\mathrm{O}\bigg(\frac{\log^{m+n+1}Q}{Q^{1/2}}\bigg),
\end{equation}
where
\begin{align*}
\widetilde{\mathcal{M}_Q^0}(\bm{t};\bm{m},\bm{n})&=\sum_{\substack{\Vec{\bm{h}}\in\mathcal{H}'\\|H(\vec{\bm{h}})|=\square}} \beta(\vec{\bm{h}};\bm{t}) \sum_{\substack{c \in [Q, 2Q] \cap \cd\\(c,H(\vec{\bm{h}}))=1}}  m_Q(c),\\
\widetilde{\mathcal{M}_Q'}(\bm{t}; \bm{m}, \bm{n}) &=   \sum_{\substack{\Vec{\bm{h}}\in\mathcal{H}'\\|H(\vec{\bm{h}})|\neq\square}} \beta(\vec{\bm{h}};\bm{t}) \sum_{c \in [Q, 2Q] \cap \cd}  m_Q(c)\bigg(\frac{H(\Vec{\bm{h}})}{c}\bigg).
\end{align*}

The following subsection will be devoted to the proofs of the following two lemmata.

\begin{lem}
\label{main-term-lemma}
\[ \widetilde{\mathcal{M}_Q^0}(\bm{t};\bm{m},\bm{n})=\mathcal{M}^{\ast}(\bm{t};\bm{m},\bm{n})+\mathrm{O}_{\epsilon}(Q^{-1/2+\epsilon}). \]
\end{lem}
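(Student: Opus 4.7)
The plan is to handle Lemma~\ref{main-term-lemma} in three steps: first, evaluate the inner $c$-sum for fixed $\vec{\bm{h}}\in\mathcal{H}'$; second, sum over $\vec{\bm{h}}$ using absolute convergence; third, extend the summation to the full index set $\mathcal{H}^{\ast}_{\bm{m},\bm{n}}$.

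For the first step, fix $\vec{\bm{h}}\in\mathcal{H}'$ with $|H(\vec{\bm{h}})|$ a nonzero perfect square. Since $c\equiv 1\bmod 4$ forces $\bigl(\tfrac{-1}{c}\bigr)=1$, the Jacobi symbol $\bigl(\tfrac{H(\vec{\bm{h}})}{c}\bigr)$ equals $\mathbf{1}_{(c,H)=1}$ on $\mathcal{D}_Q$, so the inner sum becomes $\sum_{c\in\mathcal{D}_Q,\,(c,H)=1}m_Q(c)$. I would evaluate this by M\"obius inversion over squarefree divisors $d$ of the odd radical of $|H|$. Writing $c=dc'$ and repeating the sieve argument from \eqref{DQ-comput} with the added coprimality constraint $(c',d)=1$ will give, for $d$ odd squarefree,
$$\#\{c\in\mathcal{D}_Q:d\mid c\} = \frac{Q}{3d\,\zeta(2)}\prod_{\substack{p\mid d\\p>2}}\frac{p}{p+1} + O_{\varepsilon}\bigl((Q/d)^{1/2+\varepsilon}\bigr).$$
Summing with M\"obius weights and dividing by $|\mathcal{D}_Q|$ in \eqref{DQ-comput}, the main terms combine through the identity $\prod_{p\mid|H|,\,p>2}(1-\tfrac{1}{p+1})=\eta(|H|)$; since $|H|<Q^{m+n}$ on $\mathcal{H}'$, the divisor bound yields $d(|H|)\ll_{\bm{m},\bm{n},\varepsilon}Q^{\varepsilon}$ and thus
$$\sum_{\substack{c\in\mathcal{D}_Q\\(c,H)=1}}m_Q(c) = \eta(H(\vec{\bm{h}})) + O_{\bm{m},\bm{n},\varepsilon}\bigl(Q^{-1/2+\varepsilon}\bigr).$$

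Inserting this uniform asymptotic into the definition of $\widetilde{\mathcal{M}_Q^0}$, the accumulated pointwise error will amount to
$$Q^{-1/2+\varepsilon}\sum_{\substack{\vec{\bm{h}}\in\mathcal{H}'\\|H|=\square}}|\beta(\vec{\bm{h}};\bm{t})| \ll Q^{-1/2+\varepsilon}\sum_{\substack{\vec{\bm{h}}\in\mathcal{H}^{\ast}\\|H|=\square}}|\beta(\vec{\bm{h}};\bm{t})\eta(H(\vec{\bm{h}}))| \ll_{\bm{m},\bm{n},\varepsilon}Q^{-1/2+\varepsilon},$$
using $\eta\leq 1$ in the middle step and the absolute-convergence bound \eqref{absolute-convergence-eq} at the end. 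The final step extends the summation from $\mathcal{H}'$ to $\mathcal{H}^{\ast}_{\bm{m},\bm{n}}$. For $\vec{\bm{h}}\in\mathcal{H}^{\ast}_{\bm{m},\bm{n}}\setminus\mathcal{H}'$, some coordinate has $|h_{j,\ell}|\geq Q/2$; since $|H|=\square\neq 0$ also requires every $h_{j',\ell'}\neq 1$, the offending factor satisfies $|1-h_{j,\ell}|\geq Q/2-1$, forcing $|H(\vec{\bm{h}})|\gg Q$ and thus $|H|=k^2$ with $k\gg Q^{1/2}$. Repeating the majorant bound of \eqref{absolute-convergence-eq} restricted to $k\gg Q^{1/2}$ will then yield
$$\bigg|\sum_{\substack{\vec{\bm{h}}\in\mathcal{H}^{\ast}\setminus\mathcal{H}'\\|H|=\square}}\beta(\vec{\bm{h}};\bm{t})\,\eta(H(\vec{\bm{h}}))\bigg|\ll_{\bm{m},\bm{n}}\sum_{k\gg Q^{1/2}}\frac{\tau_{m+n}(k^2)}{k^2}\ll_{\bm{m},\bm{n},\varepsilon}Q^{-1/2+\varepsilon}.$$
Combining the three errors will complete the proof. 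I expect no conceptual obstacle; the main technical points are the uniformity of the divisor bound in $|H|$, guaranteed by the $Q$-polynomial range of $\mathcal{H}'$, and the absolute convergence of \eqref{absolute-convergence-eq}, which collapses the accumulated pointwise errors and simultaneously controls the high-frequency tail into a single $O(Q^{-1/2+\varepsilon})$ term.
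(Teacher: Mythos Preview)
Your approach is the same as the paper's: sieve the inner $c$-sum to extract $\eta(H)$, control the accumulated pointwise error by the absolute convergence \eqref{absolute-convergence-eq}, and bound the tail $\mathcal{H}^{\ast}\setminus\mathcal{H}'$ by noting that $|H|\gg Q$ there. One slip: in your second display the middle inequality $\sum|\beta|\ll\sum|\beta\,\eta|$ is backwards since $\eta\leqslant 1$; simply drop the $\eta$ and invoke \eqref{absolute-convergence-eq} (whose proof works identically without it), as the paper does. Your tail argument via $k\gg Q^{1/2}$ is in fact slightly cleaner than the paper's $df^2$ decomposition in \eqref{tails-estimate}.
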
 

\begin{lem}
\label{error-terms-lemma}
\[ \widetilde{\mathcal{M}_Q'}(\bm{t}; \bm{m}, \bm{n})\ll_{\epsilon} Q^{-1/3+\epsilon}. \]
\end{lem}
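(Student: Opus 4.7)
The plan is to estimate the inner character sum $S_H:=\sum_{c\in\cd_Q}m_Q(c)(H/c)$ for each non-square $H$, and then sum over $\vec{\bm{h}}$. For each non-square $|H|$ I write $|H|=H_0k^2$ with $H_0>1$ squarefree (handling the $2$-adic part separately). The Jacobi symbol then factors as $(H/c)=(H_0/c)\cdot[\gcd(k,c)=1]$, and by quadratic reciprocity using $c\equiv 1\pmod 4$, $c\mapsto(H_0/c)$ is a nonprincipal Dirichlet character modulo a divisor of $4|H_0|$. The squarefree condition $\mu^2(c)=1$ is unfolded via $\mu^2(c)=\sum_{d^2\mid c}\mu(d)$, and the coprimality $\gcd(k,c)=1$ by inclusion-exclusion, reducing $S_H$ to nontrivial character sums over arithmetic progressions of $c$'s of length $\asymp Q/d^2$.

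Next I split the $\vec{\bm{h}}$-sum on the size of $|H_0(\vec{\bm{h}})|$ at a threshold $X$ to be chosen. For $|H_0|\leqslant X$, three character-sum bounds combine to give $|S_H|\ll Q^{-1/3+\varepsilon}$ uniformly: P\'olya--Vinogradov ($|S_H|\ll |H_0|^{1/2+\varepsilon}/Q$), Burgess (Proposition~\ref{Burgess}, giving $|S_H|\ll Q^{-1/2+\varepsilon}|H_0|^{3/16}$), and Chang--Burgess (Proposition~\ref{Chang-Burgess}, giving $|S_H|\ll_{\kappa}|H_0|^{-\delta_\kappa}$ for $|H_0|\leqslant Q^{4-\varepsilon}$). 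Since $\sum_{\vec{\bm{h}}\in\mathcal{H}'}|\beta(\vec{\bm{h}};\bm{t})|\ll(\log Q)^{m+n}$, the total contribution of this regime is $\ll Q^{-1/3+\varepsilon}$.

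For $|H_0|>X$, no individual character-sum bound reaches $Q^{-1/3}$, so I apply Cauchy--Schwarz in $c$ and pass to the second moment
\[ \bigg|\sum_{\vec{\bm{h}}:\,|H_0|>X}\beta(\vec{\bm{h}};\bm{t})\,S_{H(\vec{\bm{h}})}\bigg|^2\ll\frac{1}{|\cd_Q|}\sum_{c\in\cd_Q}\bigg|\sum_{\vec{\bm{h}}:\,|H_0|>X}\beta(\vec{\bm{h}};\bm{t})\bigg(\frac{H(\vec{\bm{h}})}{c}\bigg)\bigg|^2, \]
group $\vec{\bm{h}}$ by their squarefree kernel with coefficients $B_{H_0}:=\sum_{\vec{\bm{h}}:\,H_0(\vec{\bm{h}})=H_0}\beta(\vec{\bm{h}};\bm{t})$, and handle the $\gcd(k,c)=1$ conditions by M\"obius inversion. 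The divisor bound $|\{\vec{\bm{h}}:H(\vec{\bm{h}})=H\}|\leqslant d_{m+n}(|H|)$ combined with the crude estimate $|\beta(\vec{\bm{h}};\bm{t})|\ll_{m,n}|H(\vec{\bm{h}})|^{-1}$ yields $|B_{H_0}|\ll_{\varepsilon}|H_0|^{-1+\varepsilon}$, and hence $\sum_{H_0>X}|B_{H_0}|^2\ll X^{-1+\varepsilon}$. Heath--Brown's quadratic large sieve (Proposition~\ref{HBQLS}) then controls this second moment, and balancing the threshold $X$ against the Chang--Burgess validity yields the target exponent.

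The main obstacle will be the large-$|H_0|$ regime: the largest admissible $|H_0|$ is $(Q/2)^{m+n}$, so the $(M+N)$-factor in Heath--Brown's inequality is polynomially large in $Q$, and it must be absorbed by the $|B_{H_0}|^2$-decay. Balancing this against the Chang--Burgess validity range $|H_0|<Q^{4-\varepsilon}$ and the target $Q^{-1/3}$ (while cleanly tracking the coprimality, squarefree, and $\bmod\,4$ constraints on $c$) is the delicate bookkeeping that ultimately drives the final exponent.
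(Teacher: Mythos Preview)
Your approach is the same as the paper's at the structural level: write $|H(\vec{\bm h})|=H_0k^2$ with $H_0>1$ squarefree, bound the inner character sum $S_{H_0,k}$ pointwise for small $H_0$, and use Cauchy--Schwarz plus Heath--Brown's quadratic large sieve for large $H_0$, balancing at a threshold $X\asymp Q^{2/3}$. The paper carries this out with P\'olya--Vinogradov alone on the small side (using $\min(H_0^{1/2},Q/\alpha^2)$ across the $\mu^2$-sieve parameter $\alpha$ to get $|S_{H_0,k}|\ll Q^{-1/2+\epsilon}H_0^{1/4}$, hence the small-$H_0$ contribution $\ll Q^{-1/2+\epsilon}D^{1/4}$) and with Cauchy--Schwarz in $d=H_0$ followed by Heath--Brown dyadically on the large side (giving $D^{-1/2}+Q^{-1/2+\epsilon}$); the choice $D=Q^{2/3}$ then yields $Q^{-1/3+\epsilon}$.

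A few points where your write-up drifts or overcomplicates:
\begin{itemize}
\item Your stated P\'olya--Vinogradov bound $|S_H|\ll H_0^{1/2+\epsilon}/Q$ is too optimistic: after unfolding $\mu^2(c)=\sum_{d^2\mid c}\mu(d)$ you must sum the P--V bound over $d\leqslant(2Q)^{1/2}$, which costs an extra $Q^{1/2}$. The paper recovers by taking $\min(H_0^{1/2},Q/d^2)$, landing at $Q^{-1/2+\epsilon}H_0^{1/4}$. Your Burgess bound $Q^{-1/2+\epsilon}H_0^{3/16}$ \emph{is} correct and is already enough for a uniform $|S_H|\ll Q^{-1/3+\epsilon}$ in the range $H_0\leqslant Q^{8/9}$, which comfortably covers $X=Q^{2/3}$; so your small-$H_0$ step works via Burgess, not via P--V as stated.
\item The appeal to Proposition~\ref{Chang-Burgess} is unnecessary (and its hypothesis $I\subset[1,q]$ does not directly apply here); in the paper that proposition is used only for tightness in Section~\ref{sec-inlaw}.
\item On the large-$H_0$ side, your Cauchy--Schwarz in $c$ is dual to the paper's Cauchy--Schwarz in $d$, and both give $X^{-1/2}+Q^{-1/2+\epsilon}$ \emph{once you split $H_0$ dyadically} before applying Heath--Brown; without that split the $(M+N)$ factor is indeed $\asymp Q^{m+n}$ and would swamp you. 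With it, $\sum_{H_0\sim N'}|B_{H_0}|^2\ll (N')^{-1+\epsilon}$ absorbs the $N'$-term, so the ``obstacle'' you flag dissolves. Also note that grouping solely by $H_0$ loses the $\gcd(k,c)=1$ dependence: as in the paper, fix $k$ first and run the large-sieve argument for each $k$ (the $1/k^2$ weight then sums).
\end{itemize}
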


Taking Lemmata~\ref{main-term-lemma} and \ref{error-terms-lemma} for granted, we are now ready for the proofs of the main results of this section.

\begin{proof}[Proof of Proposition~\ref{moments-eval-prop}]
Proposition~\ref{moments-eval-prop} follows immediately by combining \eqref{tilde-non-tilde-moments-eq}, \eqref{moments-main-error-decomp}, and Lemmata~\ref{main-term-lemma} and \ref{error-terms-lemma}.
\end{proof}

\begin{proof}[Proof of Corollary~\ref{finite-distributions-cor}]
For every $\bm{t}=(t_1,\dots,t_k)\in[0,1]^k$, the $\mathbb{C}^k$-valued random variable $G^{\ast}(\bm{t})=(G^{\ast}(t_1),\dots,G^{\ast}(t_k))$ is mild by Lemma~\ref{evaluation-of-limit-moment}.
According to Proposition~\ref{method-of-moments-prop}, the convergence in law of $G_Q(\bm{t})$ to $G^{\ast}(\bm{t})$ as $Q\to\infty$ may be verified by checking that the corresponding moments satisfy $\mathcal{M}_Q(\bm{t};\bm{m},\bm{n})\to\mathcal{M}^{\ast}(\bm{t};\bm{m},\bm{n})$ as $Q\to\infty$ for every $\bm{m},\bm{n}\in\mathbb{Z}_{\geqslant 0}^k$. But this follows immediately (in a strong form) from Proposition~\ref{moments-eval-prop}.
\end{proof}

\subsection{Square and non-square contributions}
\label{main-error-subsec}
In this subsection, we prove Lemmata~\ref{main-term-lemma} and \ref{error-terms-lemma}.

\begin{proof}[Proof of Lemma~\ref{main-term-lemma}]
Adapting the sieving argument of \eqref{DQ-comput}, we find that for every $0\neq H=Q^{\mathrm{O}(1)}$,
\begin{align*}
&\sum_{\substack{c\in\cd_Q\\(c,H)=1}}m_Q(c)
=\frac1Q\Big(3\zeta(2)+\mathrm{O}\Big(\frac1{Q^{1/2}}\Big)\Big)\sum_{\substack{\alpha^2\leqslant 2Q\\ (\alpha,2H)=1}}\mu(\alpha)\sum_{\delta\mid H}\mu(\delta)\sum_{\substack{c\in [Q,2Q]\\\alpha^2\delta\mid c,\,c\equiv 1\bmod 4}}1\\
&\quad=3\zeta(2)\prod_{\delta\mid H,\,2\nmid\delta}\frac{\mu(\delta)}{\delta}\sum_{(\alpha,2H)=1}\frac{\mu(\alpha)}{4\alpha^2}+\mathrm{O}_{\epsilon}\Big(\frac{H^{\epsilon}}{Q^{1/2}}\Big)\\
&\quad=3\zeta(2)\prod_{p\mid H,\,p>2}\Big(1-\frac1p\Big)\prod_{p\mid 2H}\Big(1-\frac1{p^2}\Big)^{-1}\frac1{4\zeta(2)}+\mathrm{O}_{\epsilon}\Big(\frac1{Q^{1/2-\epsilon}}\Big)
=\eta(H)+\mathrm{O}_{\epsilon}\Big(\frac{1}{Q^{1/2-\epsilon}}\Big).
\end{align*}
Inserting this into the definition of $\widetilde{\mathcal{M}_Q^0}(\bm{t};\bm{m},\bm{n})$ yields
\[ \widetilde{\mathcal{M}_Q^0}(\bm{t};\bm{m},\bm{n})=\sum_{\substack{\vec{\bm{h}}\in\mathcal{H}'\\|H(\vec{\bm{h}})|=\square}}\beta(\vec{\bm{h}};\bm{t})\bigg(\eta(H(\vec{\bm{h}}))+\mathrm{O}_{\epsilon}\Big(\frac{1}{Q^{1/2-\epsilon}}\Big)\bigg). \]
We then estimate, arguing as in \eqref{absolute-convergence-eq} and \eqref{ESmSn2p-estimate},
\begin{equation}
\label{tails-estimate}
\begin{aligned}
&\sum_{\substack{\vec{\bm{h}}\in\mathcal{H}^{\ast}_{\bm{m},\bm{n}}\setminus\mathcal{H}'\\|H(\vec{\bm{h}})|=\square}}|\beta(\vec{\bm{h}};\bm{t})\eta(H(\vec{\bm{h}}))|\ll_{\epsilon,m,n}\sum_{df^2>Q/2}\mu^2(d)\sum_{df^2\mid k^2}\frac{1}{k^{2-\epsilon}}\\
&\qquad\ll\sum_{df^2>Q/2}\sum_{n=1}^{\infty}\frac1{(dfn)^{2-\epsilon}}
\ll_{\epsilon}\sum_{d,f}\frac1{Q^{1/2-\epsilon}}\frac1{(df)^{1+\epsilon}}\ll_{\epsilon}\frac1{Q^{1/2-\epsilon}},
\end{aligned}
\end{equation}
as well as
\[ \frac{1}{Q^{1/2-\epsilon}}\sum_{\substack{\vec{\bm{h}}\in\mathcal{H}^{\ast}_{\bm{m},\bm{n}}\\|H(\vec{\bm{h}})|=\square}}|\beta(\vec{\bm{h}};\bm{t})|\ll_{m,n}\frac1{Q^{1/2-\epsilon}}. \]
Putting everything together and invoking the evaluation of $\mathcal{M}^{\ast}(\bm{t};\bm{m},\bm{n})$ from Lemma~\ref{evaluation-of-limit-moment} completes the proof.
\end{proof}

\begin{proof}[Proof of Lemma~\ref{error-terms-lemma}]
We begin by noting the (direct) upper bound
\[ |\beta(\vec{\bm{h}};\bm{t})|\ll\prod_{(j,\ell)\in\Sigma_{\bm{k},\bm{m},\bm{n}}}\frac1{1+|h_{j,\ell}|}\leqslant \prod_{(j,\ell)\in\Sigma_{\bm{k},\bm{m},\bm{n}}}\frac1{|1-h_{j,\ell}|}=\frac1{|H(\vec{\bm{h}})|}. \]
Therefore, grouping the terms indexed by $\vec{\bm{h}}\in\mathcal{H}'$ according to the values of $\square\neq |H(\vec{\bm{h}})|\leqslant Q^{\ast}:=(Q/2+1)^{m+n}$, we have
\[ \widetilde{\mathcal{M}_Q'}(\bm{t};\bm{m},\bm{n})\ll\sum_{k\leqslant Q^{\ast}}\frac1{k^2}\sumast_{1<d\leqslant Q^{\ast}/k^2}\frac{\tilde{\tau}_{\mathcal{H}'}(dk^2)}d\bigg|\sum_{\substack{c\in [Q,2Q]\cap\mathcal{D}\\(c,k)=1}}m_Q(c)\bigg(\frac dc\bigg)\bigg|, \]
where, by the divisor bound,
\[ \tilde{\tau}_{\mathcal{H}'}(dk^2):=\#\big\{\vec{\bm{h}}\in\mathcal{H}':|H(\vec{\bm{h}})|=dk^2\big\}\ll_{\epsilon} Q^{\epsilon}. \]

Now, sieving for the conditions $c\in\mathcal{D}$ and $(c,k)=1$, we find that
\[ \sum_{\substack{c\in [Q,2Q]\cap\mathcal{D}\\(c,k)=1}}m_Q(c)\bigg(\frac dc\bigg)=\sum_{\delta\mid k}\sum_{\alpha^2\leqslant 2Q}\mu(\delta)\mu(\alpha)\sum_{2\nmid c\in\big[\frac{Q}{[\alpha^2,\delta]},\frac{2Q}{[\alpha^2,\delta]}\big]}m_Q([\alpha^2,\delta]c)\bigg(\frac{d}{[\alpha^2,\delta]c}\bigg). \]
Since $d>1$ is square-free, $(d/c)$ is a non-principal character of conductor $\asymp d$, we find by the P\'olya--Vinogradov inequality and straightforward estimates that
\[ \sum_{\substack{c\in [Q,2Q]\cap\mathcal{D}\\(c,k)=1}}m_Q([\alpha^2,\delta]c)\bigg(\frac dc\bigg)\ll_{\epsilon}Q^{\epsilon}\sum_{\alpha^2\leqslant 2Q}\min\Big(\sqrt{d},\frac{Q}{\alpha^2}\Big)\ll_{\epsilon} Q^{-1/2+\epsilon}d^{1/4}. \]
Therefore
\[ \widetilde{\mathcal{M}_Q}(\bm{t};\bm{m},\bm{n})_{\leqslant D}:=\sum_{k\leqslant Q^{\ast}}\frac1{k^2}\sumast_{1<d\leqslant D}\frac1d \bigg|\sum_{\substack{c\in [Q,2Q]\cap\mathcal{D}\\(c,k)=1}}m_Q(c)\bigg(\frac dc\bigg)\bigg|\ll_{\epsilon}Q^{-1/2+\epsilon}D^{1/4}. \]

On the other hand, using the Cauchy--Schwarz inequality followed by Heath-Brown's quadratic large sieve (Proposition~\ref{HBQLS}), we find that
\[ \sumast_{D'<d\leqslant 2D'}\frac1d\bigg|\sum_{\substack{c\in [Q,2Q]\cap\mathcal{D}\\(c,k)=1}}m_Q(c)\bigg(\frac dc\bigg)\bigg|\ll\frac1{(D')^{1/2}}\frac{(D'+Q))^{1/2}}{Q^{1/2}}\ll Q^{-1/2}+\frac1{(D')^{1/2}}. \]
Therefore
\[ \widetilde{\mathcal{M}_Q}(\bm{t};\bm{m},\bm{n})_{>D}:=\sum_{k\leqslant Q^{\ast}}\frac1{k^2}\sumast_{D<d\leqslant Q^{\ast}/k^2}\frac1d\bigg|\sum_{\substack{c\in [Q,2Q]\cap\mathcal{D}\\(c,k)=1}}m_Q(c)\bigg(\frac dc\bigg)\bigg|\ll_{\epsilon} Q^{-1/2+\epsilon}+\frac{Q^{\epsilon}}{D^{1/2}}. \]

Making the optimal choice $D=Q^{2/3}$ we conclude that
\[ \widetilde{\mathcal{M}_Q'}(\bm{t};\bm{m},\bm{n})\ll_{\epsilon} Q^{\epsilon}\big(\widetilde{\mathcal{M}_Q}(\bm{t};\bm{m},\bm{n})_{\leqslant D}+\widetilde{\mathcal{M}_Q}(\bm{t};\bm{m},\bm{n})_{>D}\big)\ll_{\epsilon} Q^{-1/3+\epsilon}. \qedhere \]
\end{proof}

We note that we did not try to optimize the final exponent of power savings in Lemma \ref{error-terms-lemma} and that the character sums in the proof can surely be treated more delicately. We opted for brevity since the present power savings suffice for us.

\section{Convergence in law}
\label{sec-inlaw}

The goal of this section is to prove the following statement, which will in turn be used to verify the tightness of the sequence of $C^0([0,1],\mathbb{C})$-valued random variables $(G_Q)$ as $Q\to\infty$.

\begin{prop}
\label{moment-claim}
For every real $\alpha>2$, there exists a $\delta=\delta(\alpha)>0$ such that for every $Q\geqslant 1$ and every $0\leqslant s,t\leqslant 1$,
\[ \frac1Q\sum_{c\in\cd_Q}|G(t;c)-G(s;c)|^{\alpha}\ll_{\alpha} |t-s|^{1+\delta}. \]
\end{prop}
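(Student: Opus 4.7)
The approach is to apply Kolmogorov's tightness criterion (Proposition \ref{prop: Kolmogorov-tightness}) at an even integer exponent $\alpha=2k$, then deduce the general real case via Lyapunov's inequality: for any real $\alpha>2$, choosing $k\geqslant\alpha/2$ one has $\mathbb{E}(|G_Q(t)-G_Q(s)|^\alpha)\leqslant\mathbb{E}(|G_Q(t)-G_Q(s)|^{2k})^{\alpha/(2k)}$, so a bound of the form $\mathbb{E}(|G_Q(t)-G_Q(s)|^{2k})\ll_k|t-s|^k$ immediately gives the claim with $\delta=\alpha/2-1>0$.

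Expanding $|G_Q(t)-G_Q(s)|^{2k}$ by the binomial theorem, the expectation decomposes into a finite linear combination of joint complex moments $\mathcal{M}_Q((t,s);\bm{m},\bm{n})$, to each of which Proposition \ref{moments-eval-prop} applies, yielding
\[ \mathbb{E}(|G_Q(t)-G_Q(s)|^{2k})=\mathbb{E}(|G^*(t)-G^*(s)|^{2k})+O_k(Q^{-1/3+\epsilon}). \]
The main term is in turn bounded by $C_k|t-s|^k$ using the Fourier expansion of $G^*$ and the elementary inequality $|e((n+1)t)-e((n+1)s)|\leqslant 2\pi\min(1,(n+1)|t-s|)$: a computation mirroring Lemma \ref{evaluation-of-limit-moment} exhibits $G^*(t)-G^*(s)$ as effectively sub-Gaussian with variance $\sum_n\min(1,(n+1)^2|t-s|^2)/(n+1)^2\ll|t-s|$. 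This gives the desired $\ll_k|t-s|^k$ bound on the main term, which absorbs the $O_k(Q^{-1/3+\epsilon})$ error as long as $|t-s|\geqslant Q^{-(1/3-\epsilon)/k}$.

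For smaller $|t-s|$ I would bound $G(t;c)-G(s;c)$ deterministically. After reducing to $\widetilde{G}$ as in Lemma \ref{lem: moment-diff-bound}, this amounts to controlling $c^{-1/2}\sum_{n\in I_{s,t}}\Leg{n}{c}e_c(n)$ on an interval $I_{s,t}$ of length $N\asymp|t-s|Q$. The trivial bound $\ll N/\sqrt{c}$ gives $\mathbb{E}(|G_Q(t)-G_Q(s)|^{2k})\ll(|t-s|\sqrt{Q})^{2k}$, sufficient for $|t-s|\ll Q^{-1/2}$ when $k$ is chosen large; Burgess's inequality (Proposition \ref{Burgess}) in the form $\ll N^{1/2}c^{-5/16+\epsilon}$ extends coverage to $|t-s|\ll Q^{-3/8+\eta}$.

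The main technical obstacle is the narrow intermediate window $|t-s|\in(Q^{-3/8},Q^{-1/3})$, where neither Proposition \ref{moments-eval-prop} nor Burgess is individually strong enough. To close this gap I would perform a direct second-moment computation: opening $|T_c(I)|^2=\sum_{n,m\in I}\Leg{nm}{c}e_c(n-m)$, using Abel summation in $c$ (noting that $|\frac{d}{dc}e_c(d)|\ll|d|/c^2$ is integrable on $[Q,2Q]$) to decouple the multiplicative twist from the smooth factor $e_c(n-m)$, and then invoking Heath-Brown's quadratic large sieve (Proposition \ref{HBQLS}) on the resulting character sums over $c\in\cd_Q$. This should yield the sharper uniform estimate $\mathbb{E}(|G_Q(t)-G_Q(s)|^2)\ll|t-s|Q^{\epsilon}$, strictly improving the $Q^{-1/3+\epsilon}$ error term of Proposition \ref{moments-eval-prop} precisely in the problematic window; interpolating this improved second moment with a pointwise $L^\infty$ bound supplied by Chang's variant of Burgess (Proposition \ref{Chang-Burgess}) via Hölder's inequality then closes the remaining range and completes the proof.
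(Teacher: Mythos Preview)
Your core strategy---moment method (Proposition~\ref{moments-eval-prop}) for the long range, a direct second-moment estimate via Heath-Brown's large sieve for the middle range, interpolated against a Chang--Burgess pointwise bound via H\"older---is exactly what the paper does. The paper's split (Lemmata~\ref{very-short-range}--\ref{long-range}) is $|t-s|\leqslant 1/(c-1)$ (trivial), $1/(c-1)\leqslant|t-s|\leqslant c^{-\lambda}$ for a fixed $\tfrac14<\lambda<\tfrac13$ (large-sieve second moment times pointwise Chang--Burgess), and $|t-s|\geqslant c^{-\lambda}$ (moment method); your final paragraph lands on precisely this combination, and your separate ordinary-Burgess step is then redundant since the large-sieve/Chang--Burgess argument already covers that range.

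The framing via Lyapunov, however, does not work as written. The sharp target $\mathbb{E}|X|^{2k}\ll|t-s|^k$ is not uniformly achievable: the $O(Q^{-1/3+\epsilon})$ error from Proposition~\ref{moments-eval-prop} is $\ll|t-s|^k$ only for $|t-s|\gtrsim Q^{-1/(3k)}$ (so the long range shrinks with $k$, not grows), and your trivial bound satisfies $(|t-s|\sqrt{Q})^{2k}\ll|t-s|^k$ only for $|t-s|\ll Q^{-1}$, independently of $k$. Your stated gap $(Q^{-3/8},Q^{-1/3})$ is consistent with the weaker target $|t-s|^{1+\delta}$, but then one-sided Lyapunov from $2k$ down to $\alpha$ loses too much when $\alpha$ is close to~$2$. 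The fix (and what the paper does) is to prove $\mathbb{E}|X|^{\alpha}\ll_{\alpha,\epsilon}|t-s|^{1+\delta(\alpha)-\epsilon}$ directly for even $\alpha\geqslant 2$ and interpolate two-sidedly; this yields $\delta(\alpha)=\min(\delta_1(\alpha-2),\delta_2)$ with $\delta_2<\tfrac13$ bounded, rather than the unbounded $\alpha/2-1$ your opening suggests.
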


We will prove Proposition~\ref{moment-claim} in \S\ref{tightness-proof-sec}, along with the corollaries for the tightness and convergence in law of the sequence $(G_Q)$, after laying the ground work in \S\ref{tightness-prep-sec} and \S\ref{tightness-ranges-sec}. It will be seen that we can, in fact, choose $\delta(\alpha)=\min(\delta_1(\alpha-2),\delta_2)$ for some two constants $\delta_1,\delta_2>0$, which may in principle be explicated; in particular, any $\delta_2<\frac13$ is allowable with a corresponding suitable $\delta_1>0$.

\subsection{Preparatory lemmata}
\label{tightness-prep-sec}
The first principal arithmetic input into the proof of tightness is the following lemma, which is a simple variation of the Burgess-like bound for short mixed character sums (Proposition~\ref{Chang-Burgess}).

\begin{lem}
\label{Burgess-var}
For every $\kappa<\frac34$, there exists a $\delta=\delta_{\kappa}>0$ such that for every primitive character of any square-free conductor $c>1$ and any $M$ and every $1\leqslant N\leqslant c^{\kappa}$,
\[ \sum_{M<n\leqslant M+N}\chi(n)e^{2\pi in/c}\ll_{\kappa} c^{1/2-\delta}. \]
\end{lem}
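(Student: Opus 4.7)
The plan is a straightforward two-case analysis based on the size of $N$, interpolating between the trivial bound for short intervals and Proposition~\ref{Chang-Burgess} for longer ones. Fix $\kappa<\tfrac34$ and a parameter $\kappa_0\in(0,\tfrac14)$ to be chosen depending on $\kappa$. For $N\leqslant c^{1/4+\kappa_0}$, the trivial bound $|S|\leqslant N\leqslant c^{1/4+\kappa_0}=c^{1/2-(1/4-\kappa_0)}$ immediately yields the conclusion with $\delta=\tfrac14-\kappa_0>0$.

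For $c^{1/4+\kappa_0}<N\leqslant c^{\kappa}$, the idea is to invoke Proposition~\ref{Chang-Burgess} directly, with the additive frequency $\alpha=1/c$ and interval $I=(M,M+N]$. This yields
\[ \bigg|\sum_{M<n\leqslant M+N}\chi(n)e^{2\pi in/c}\bigg|\ll_{\kappa_0}Nc^{-\delta_{\kappa_0}}\leqslant c^{\kappa-\delta_{\kappa_0}}=c^{1/2-(\delta_{\kappa_0}+1/2-\kappa)}, \]
giving the lemma with $\delta=\delta_{\kappa_0}+\tfrac12-\kappa$. Taking $\delta=\min\bigl(\tfrac14-\kappa_0,\,\delta_{\kappa_0}+\tfrac12-\kappa\bigr)$ completes the argument, provided both quantities are positive — the first automatically for $\kappa_0<\tfrac14$, the second iff $\delta_{\kappa_0}>\kappa-\tfrac12$.

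The main obstacle, then, is producing a $\kappa_0<\tfrac14$ with $\delta_{\kappa_0}>\kappa-\tfrac12$. For $\kappa\leqslant\tfrac12$ any $\kappa_0>0$ works (the right-hand side is nonpositive), but as $\kappa\uparrow\tfrac34$ the required saving approaches the marginal value $\tfrac14$. Here one uses the observation (made right after Proposition~\ref{Chang-Burgess}) that admissible values of $\delta_{\kappa_0}$ can be taken near $c'\kappa_0^2$ for an absolute constant $c'$, supplemented by the more quantitatively precise forms in \cite{Kerr2014,Heath-BrownPierce2015} when the crude constant is not sufficient. These refinements guarantee the existence of an admissible $\kappa_0\in(0,\tfrac14)$ for every $\kappa<\tfrac34$, and the $\delta_\kappa>0$ so produced depends only on $\kappa$, as claimed.
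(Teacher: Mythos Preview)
Your argument has a genuine gap in the range roughly $5/8 \leqslant \kappa < 3/4$. The crux is the final paragraph: you need some $\kappa_0 \in (0, \tfrac14)$ with $\delta_{\kappa_0} > \kappa - \tfrac12$, and you assert that the cited refinements furnish one. They do not. The Burgess-type bounds (including the more precise forms of Kerr and Heath-Brown--Pierce) give a saving $\delta_{\kappa_0}$ of order at most $\kappa_0^2$; concretely, optimizing $N^{1-1/r} c^{(r+1)/(4r^2)+\epsilon}$ over $r$ at $N = c^{1/4+\kappa_0}$ yields exactly $\delta_{\kappa_0} = \kappa_0^2$. Hence $\delta_{\kappa_0} < 1/16$ for every $\kappa_0 < 1/4$, and your argument as written (via the $|I|\,c^{-\delta_{\kappa_0}}$ form of Proposition~\ref{Chang-Burgess}) stalls at $\kappa < 9/16$. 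Even bypassing Proposition~\ref{Chang-Burgess} and applying the sharp Burgess bound $N^{1-1/r} c^{(r+1)/(4r^2)+\epsilon}$ directly to $N = c^{\kappa}$, the condition for beating $c^{1/2}$ is $\kappa < \tfrac12 + \tfrac1{4r}$, maximized at $r=2$ to give $\kappa < 5/8$. No choice of $\kappa_0$ or $r$ pushes either threshold to $3/4$; the quantitative refinements you invoke sharpen constants but do not alter this structural limitation.

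The paper's proof reaches $\kappa < 3/4$ by a different mechanism: completion. Writing $\chi(n)e_c(n) = \epsilon_{\chi}\,c^{-1/2}\sum_{h \bmod c}\overline{\chi(1-h)}\,e_c(hn)$ and summing over $n\in(M,M+N]$ turns the inner $n$-sum into a geometric sum of size $\min(N,\|h/c\|^{-1})$. After splitting at $\|h/c\| = 1/N$, swapping the order of summation (with partial summation to handle the tail), one is left with character sums in the \emph{dual} variable $h$ of length $\asymp c/N$. Since $c/N \geqslant c^{1-\kappa} > c^{1/4}$ precisely when $\kappa < 3/4$, Proposition~\ref{Chang-Burgess} applies on the dual side and gives $c^{1/2-\delta}$. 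The threshold $3/4$ is thus a manifestation of the duality $N \leftrightarrow c/N$ around $c^{1/2}$, not of the strength of the saving in Proposition~\ref{Chang-Burgess} itself.
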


\begin{proof}
We begin with an application of completion, as in Lemma~\ref{lem: g-squiggle-val}, obtaining
\begin{align*}
S:=\sum_{M<n\leqslant M+N}\chi(n)e^{2\pi in/c}
&=\frac1c\sum_{h\bmod c}G(1-h;\chi)\sum_{M<x\leqslant M+N}e_c(hx)\\
&=\frac{\epsilon_{\chi}}{c^{1/2}}\bigg(\sum_{\substack{h\bmod c\\\|h/c\|\leqslant 1/N}}+\sum_{\substack{h\bmod c\\\|h/c\|>1/N}}\bigg)\overline{\chi(1-h)}\sum_{M<x\leqslant M+N}e_c(hx),
\end{align*}
where $\epsilon_{\chi}$ is the sign of the Gauss sum for the character $\chi$. Denote the two sums above (including the factor $\epsilon_{\chi}/c^{1/2}$) by $S_1$ and $S_2$. On the one hand, by exchanging the order of summation and applying Proposition~\ref{Chang-Burgess} we have
\[ S_1\ll\frac1{c^{1/2}}N\cdot \frac{c}{N}c^{-\delta}=c^{1/2-\delta}. \]

On the other hand, denoting
\[ S_x(t)=\sum_{0\leqslant h\leqslant t}\overline{\chi(1-h)}e_c(hx),\quad f(t)=\frac{e_c(t)}{1-e_c(t)}, \]
we have by integration by parts  that
\begin{align*}
S_2&=\frac{\epsilon_{\chi}}{c^{1/2}}\int_{(c/N)+}^{(c-c/N)-}f(t)\,\dd (S_{M+N}(t)-S_M(t))\\
&=\frac{\epsilon_{\chi}}{c^{1/2}}f(t)(S_{M+N}(t)-S_M(t))\bigg|_{(c/N)+}^{(c-c/N)-}-\frac{\epsilon_{\chi}}{c^{1/2}}\int_{c/N}^{c-c/N}S(t)f'(t)\,\dd t.
\end{align*}
Now, the conclusion of Proposition~\ref{Chang-Burgess} implies, for $|I|>c^{1/4+\kappa}$, that
\[ \sum_{h\in I}\overline{\chi(1-h)}e_c(hx)\ll_{\kappa}\min(|I|,c-|I|)c^{-\delta}, \]
simply because the complete sum over all $n\bmod c$ is of size at most $\mathrm{O}(c^{1/2})$. This implies that
\[ S_2\ll\frac1{c^{1/2}}N\cdot\frac{c}{N}c^{-\delta}+\frac1{c^{1/2}}\int_{c/N}^{c-c/N}\frac{c}{\min(t,c-t)}\,\dd t\cdot c^{-\delta}\ll_{\kappa} c^{1/2-\delta+\delta/100}. \]
The proof is complete.
\end{proof}

The following variation of Heath-Brown's quadratic large sieve is convenient and probably known, but we could not locate a ready reference.

\begin{lem}
\label{HB-var}
For any $M\in\mathbb{N}$,\! real numbers $0<s<t$,\! and complex numbers $(a_n)_{sM<n\leqslant 2tM}$,
\begin{equation}
\label{HB-var-eq}
\sumast_{M<m\leqslant 2M}\bigg|\sumast_{sm<n\leqslant tm}a_n\Big(\frac nm\Big)\bigg|^2\ll_{\epsilon}M^{\epsilon}\Big[1+M\min\Big(\frac{|t-s|}{\min(1,s)},\Big(\frac{|t-s|}{s}\Big)^{\epsilon}\Big)\Big]\sumast_{sM<n\leqslant 2tM}|a_n|^2.
\end{equation}
\end{lem}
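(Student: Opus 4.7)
The lemma is a refinement of Heath-Brown's quadratic large sieve (Proposition~\ref{HBQLS}) that accommodates the $m$-dependent inner summation range $(sm,tm]$. The two bounds in the minimum reflect two regimes and would be handled by two distinct decompositions, with the minimum then taken at the end.

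For the first bound $1+M|t-s|/\min(1,s)$, the plan is to open the square to obtain
\[
\sumast_{M<m\leq 2M}\bigg|\sumast_{sm<n\leq tm}a_n\Big(\frac{n}{m}\Big)\bigg|^2=\sumast_{n_1,n_2}a_{n_1}\overline{a_{n_2}}\sumast_{m\in\tilde{M}(n_1,n_2)}\Big(\frac{n_1n_2}{m}\Big),
\]
where $\tilde{M}(n_1,n_2)=(\max(n_1,n_2)/t,\min(n_1,n_2)/s]\cap(M,2M]$. The key observation is that this $m$-interval is nonempty only when $|n_1-n_2|\ll M|t-s|/s$, so only ``near-diagonal'' pairs $(n_1,n_2)$ contribute. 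I would then partition the $n$-support $(sM,2tM]$ into blocks of length $L\asymp M|t-s|/\min(1,s)$, so that any contributing pair lies within one or two consecutive blocks. The diagonal terms $n_1=n_2$ yield $\sum_n|a_n|^2\cdot|\tilde{M}(n,n)|\ll(M|t-s|/\min(1,s))\sum|a_n|^2$, producing the main term directly, while off-diagonal contributions (within and across consecutive blocks) are controlled by applying Heath-Brown's QLS to each block of size $\asymp L$. The ``$1$'' in the bound handles the degenerate regime $M|t-s|/\min(1,s)<1$, where each $m$ admits only $\mathrm{O}(1)$ values of $n$ in $(sm,tm]$ and so a trivial Cauchy-Schwarz suffices.

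For the second bound $M((t-s)/s)^\epsilon$, I would instead dyadically decompose the $n$-support $(sM,2tM]$ into $\mathrm{O}(\log(t/s))\ll_{\epsilon}((t-s)/s)^\epsilon$ blocks of size $\asymp M$ (using $\log x\ll_\epsilon x^\epsilon$), apply Heath-Brown's QLS to each (with effective $N\asymp M$, giving $\mathrm{O}(M^{1+\epsilon})$ per block), and use Cauchy-Schwarz over the $\mathrm{O}(\log(t/s))$ blocks that a given $m$ participates in (since $(sm,tm]$ spans a factor of $t/s$). The result is $S\ll M^{1+\epsilon}\log(t/s)\sum|a_n|^2$, which absorbs into the claimed $M^\epsilon\cdot M((t-s)/s)^\epsilon$ bound.

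The main technical obstacle in both decompositions is the careful handling of the boundary contributions---pairs $(n_1,n_2)$ straddling block boundaries, or values of $m$ whose $(sm,tm]$ only partially overlaps a given block $J_k$. I would control these by a secondary Cauchy-Schwarz over the short transitional $n$-ranges (of length $\ll L$) combined with the identity $\mathbf{1}_{sm<n\leq tm}=\mathbf{1}_{n\leq tm}-\mathbf{1}_{n\leq sm}$, which replaces the sharp $m$-dependent cutoff by $m$-independent ones at the cost of a further QLS-controllable error. Optimizing the block length $L$ and combining the two decompositions finally yields the stated minimum.
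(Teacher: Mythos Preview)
Your plan for the second bound in the minimum (dyadic decomposition of $(sM,2tM]$ into $O(\log(t/s))$ blocks, Cauchy--Schwarz, and QLS on each) is essentially correct and matches the paper's initial reduction to $s<t\leqslant 2s$.

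For the first bound, however, there is a genuine gap. Your near-diagonal observation is fine, but the step ``off-diagonal contributions are controlled by applying QLS to each block of size $\asymp L$'' does not yield the claimed bound. If you apply QLS to a block $J_k$ of length $L$ with the full $m$-range $(M,2M]$, you get $(M+L)^{1+\epsilon}\sum_{n\in J_k}|a_n|^2$, which sums over $k$ to $(M+L)^{1+\epsilon}\sum|a_n|^2$ --- too large by a factor of $M/L$ when $L\ll M$. Opening the square and noting $|n_1-n_2|\ll L$ does not by itself restrict the $m$-range, and the identity $\mathbf{1}_{sm<n\leqslant tm}=\mathbf{1}_{n\leqslant tm}-\mathbf{1}_{n\leqslant sm}$ only trades one $m$-dependent cutoff for two others. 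Your diagnosis of ``boundary contributions'' as the main obstacle misidentifies the issue: even the \emph{bulk} $m$ (those with $J_k\subset(sm,tm]$) already overshoot.

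The paper's approach is different and supplies the missing idea. For each fixed $m$, it decomposes $(sm,tm]$ via a greedy dyadic algorithm into pieces drawn from global collections $\mathcal{I}_k$ of intervals of length $|t-s|M/2^k$. The key counting observation is that a given interval $I\in\mathcal{I}_k$ can occur in the decomposition of $(sm,tm]$ only if one of the endpoints $sm$ or $tm$ lands in a neighborhood of $I$ of size $O(|I|)$; hence $I$ is used by at most $O(|I|/s+1)$ values of $m$. Applying QLS with this \emph{restricted} $m$-range gives $(|I|+|I|/s+1)^{1+\epsilon}\sum_{n\in I}|a_n|^2$ per piece, and summing over $I\in\mathcal{I}_k$ and over $k$ produces the bound $M^{\epsilon}[1+M|t-s|/\min(1,s)]\sum|a_n|^2$. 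The restriction of the $m$-range for each piece is precisely what your block argument lacks.
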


\begin{proof}
Clearly we may assume with out loss of generality that $1/(2M)\leqslant s<t$, and then we may also assume that $s<t\leqslant 2s$, as the general case follows by splitting $[s,t]$ into $\mathrm{O}(1+\log(t/s))$ intervals of this form.

Now, for every $0\leqslant k\leqslant \log_2M+1$, consider the collection $\mathcal{I}_k$ of intervals of length $|t-s|M/2^k$ intersecting $[sM,2tM]$. For every $M<m\leqslant 2M$, we split the interval $(sm,tm]$ into $\mathrm{O}(\log M)$ such intervals (at most two for each value of $k$), using the obvious greedy algorithm, and we estimate the inner sum in \eqref{HB-var-eq} using the Cauchy--Schwarz inequality.

Now, each specific interval $I\in\mathcal{I}_k$ appears in at most $\mathrm{O}(|I|/s+1)$ such decompositions. For, indeed, for $I$ to appear in the decomposition of $[sm,tm]$, one of the points $sm$ or $tm$ must appear in the interval $\tilde{I}$ centered at the midpoint of $I$ and of length $3|I|$; but this forces $m$ to lie in the union $\tilde{J}(I)=\tilde{I}/s\cup\tilde{I}/t$ and, in particular, determines $m$ to within the stated number of choices. Putting this together, we have the estimate
\[ S:=\sumast_{M<m\leqslant 2M}\bigg|\sumast_{sm<n\leqslant tm}a_n\Big(\frac nm\Big)\bigg|^2\ll \log M\sum_{0\leqslant k\ll\log M}\sum_{I\in\mathcal{I}_k}\sumast_{m\in J(I)}\bigg|\sumast_{n\in I}a_n\Big(\frac nm\Big)\bigg|^2. \]
Estimating the inner double sum using Heath-Brown's quadratic large sieve (Proposition~\ref{HBQLS}), we have that
\begin{align*}
S&\ll_{\epsilon}M^{\epsilon}\sum_{0\leqslant k\ll\log M}\sum_{I\in\mathcal{I}_k}\Big(|I|+\frac{|I|}s+1\Big)^{1+\epsilon}\sumast_{n\in I}|a_n|^2\\
&\ll_{\epsilon}M^{\epsilon}\sum_{0\leqslant k\ll\log M}\Big(\frac{|t-s|}s\frac{M}{2^k}+|t-s|\frac{M}{2^k}+1\Big)\sum_{sM<n\leqslant 2tM}|a_n|^2\\
&\ll_{\epsilon}M^{\epsilon}\Big[M\frac{|t-s|}{\min(1,s)}+1\Big]\sum_{sM<n\leqslant 2tM}|a_n|^2.
\end{align*}
This completes the proof.
\end{proof}

Lemma~\ref{HB-var} allows us to prove the following estimate, which will be crucial in our estimates.
\begin{lem}
\label{HB-var-cor}
For every $0\leqslant s<t\leqslant 1$ with $|t-s|\geqslant 1/(2Q)$, we have
\[ \sumast_{Q<c\leqslant 2Q}\bigg|\sumast_{sc<x\leqslant tc}\Big(\frac xc\Big)e^{2\pi ix/c}\bigg|^2\ll_{\epsilon} |t-s|Q^{2+\epsilon}. \]
\end{lem}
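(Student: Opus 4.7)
My plan is to decouple the $c$-dependent phase $e^{2\pi ix/c}$ from the character sum by an Abel summation in $x$, reducing the estimate to a uniform application of Lemma~\ref{HB-var} to the purely character-theoretic sums $\sumast_{sc<x\leq vc}(x/c)$. Concretely, I set $U_c(u):=\sumast_{sc<x\leq u}(x/c)$ for $u\in[sc,tc]$ and invoke the elementary identity $e^{2\pi iy/c}=e^{2\pi it}-2\pi i\int_{y/c}^{t}e^{2\pi iv}\,dv$ term by term in $S_c:=\sumast_{sc<x\leq tc}(x/c)e^{2\pi ix/c}$. Interchanging the finite $x$-sum with the integral and noting that $\sum_{k:x_k/c\leq v}(x_k/c)=U_c(vc)$ yields the exact identity
\[ S_c = e^{2\pi it}U_c(tc) - 2\pi i\int_s^t e^{2\pi iv}U_c(vc)\,dv. \]

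Bounding $|S_c|^2$ via Cauchy--Schwarz on the integral, summing over $c\in(Q,2Q]\cap\cd$, and exchanging the $c$-sum with the $v$-integration by Fubini gives
\[ \sum_c|S_c|^2 \ll \sum_c|U_c(tc)|^2 + (t-s)\int_s^t \sum_c|U_c(vc)|^2\,dv. \]
Each inner sum $\sum_c\bigl|\sumast_{sc<x\leq vc}(x/c)\bigr|^2$ (for $v\in(s,t]$) is precisely of the form addressed by Lemma~\ref{HB-var} with $a_n\equiv 1$, yielding an upper bound of $Q^{\epsilon}\bigl[1+Q\min((v-s)/\min(1,s),((v-s)/s)^{\epsilon})\bigr](2v-s)Q$. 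A short case analysis on whether $s\geq v-s$ or $s<v-s$ shows that in either regime this simplifies to $\sum_c|U_c(vc)|^2\ll_\epsilon(v-s)Q^{2+\epsilon}$: the spurious factor $(2v-s)/(v-s)$ is absorbed by $(v-s)/s$ inside the $\min$ in the first regime, and by the trivial inequality $(2v-s)\leq 3(v-s)$ in the second. Plugging back in, the first term contributes $\ll(t-s)Q^{2+\epsilon}$, while the integral contributes $(t-s)\int_s^t(v-s)Q^{2+\epsilon}\,dv\ll(t-s)^3Q^{2+\epsilon}\ll(t-s)Q^{2+\epsilon}$ since $t-s\leq 1$.

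The only boundary case needing separate attention is $s<1/(2Q)$ (notably $s=0$, which is permitted by the hypothesis), since Lemma~\ref{HB-var} is stated for $s>0$. However, in this regime the lower cut-off $sc<x$ is vacuous for every $c\in(Q,2Q]$, so one may harmlessly shift $s$ up to $1/(2Q)$ without altering $S_c$; similarly, the sub-range $v\in(s,s+1/(2Q)]$ of the integral is handled by the trivial estimate $|U_c(vc)|\ll 1$, contributing only $O(t-s)$ overall. I expect the integration-by-parts step and the subsequent application of Lemma~\ref{HB-var} to be essentially routine, so the technical heart of the argument is the short case analysis identifying the $\min$-structure of Lemma~\ref{HB-var} as exactly what produces the linear $(v-s)$-dependence; the $v$-integration then closes the estimate with one full power of $(t-s)$ to spare.
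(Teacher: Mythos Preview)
Your approach is essentially identical to the paper's: Abel summation to strip off the phase $e^{2\pi ix/c}$, then Cauchy--Schwarz on the resulting integral (the paper uses the equivalent Minkowski integral inequality), then Lemma~\ref{HB-var} with $a_n\equiv 1$ and the same two-case analysis according to the relative sizes of $s$ and the interval length.

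One small imprecision worth flagging: your pointwise claim $\sum_c|U_c(vc)|^2\ll_\epsilon(v-s)Q^{2+\epsilon}$ is too strong when $v-s\ll 1/Q$, because the ``$+1$'' in the bracket of Lemma~\ref{HB-var} contributes $Q^\epsilon(2v-s)Q$, which is not $\ll(v-s)Q^{2+\epsilon}$ for arbitrarily small $v-s$. The paper avoids this by proving instead the uniform-in-$v$ bound $\sum_c|U_c(vc)|^2\ll|t-s|Q^{2+\epsilon}$ (the hypothesis $|t-s|\geqslant 1/(2Q)$ is precisely what absorbs the ``$+1$''), and that coarser bound is all that is needed after integration. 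Your own trivial handling of the sub-range $v\in(s,s+1/(2Q)]$ also repairs the issue, but note that this sub-range needs separate treatment for \emph{every} $s$, not only in the boundary case $s<1/(2Q)$ where you invoke it.
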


\begin{proof}
Clearly we may assume that $s\geqslant 1/(2Q)$ without loss of generality. For $\tau\in[s,t]$, denote
\[ S_c(\tau)=\sumast_{sc<x\leqslant\tau c}\Big(\frac xc\Big). \]
By Lemma~\ref{HB-var}, we have the estimate
\begin{equation}
\label{estimate-from-lemma}
\sumast_{Q<c\leqslant 2Q}|S_c(\tau)|^2\ll_{\epsilon}tQ^{2+\epsilon}\min\Big(\frac{|t-s|}s,\Big(\frac{|t-s|}{s}\Big)^{\epsilon}\Big)\ll_{\epsilon}|t-s|Q^{2+\epsilon},
\end{equation}
where the final estimate follows by separately considering the cases  $|t-s|\leqslant s$ and $|t-s|>s$ and keeping in mind that $s\geqslant 1/(2Q)$.

Now, by summation by parts and the Cauchy--Schwarz inequality, we have
\[ \sumast_{Q<c\leqslant 2Q}\bigg|\sumast_{sc<x\leqslant tc}\Big(\frac xc\Big)e^{2\pi ix/c}\bigg|^2
\ll\sumast_{Q<c\leqslant 2Q}|S_c(t)|^2+\sumast_{Q<c\leqslant 2Q}\frac1{c^2}\bigg|\int_s^tS_c(\tau)e^{2\pi i\tau/c}\,\dd\tau\bigg|^2. \]
Moreover, by the integral Minkowski's inequality,
\[ \bigg(\sumast_{Q<c\leqslant 2Q}\bigg|\int_s^tS_c(\tau)e^{2\pi i\tau/c}\,\dd\tau\bigg|^2\bigg)^{1/2}\leqslant\int_s^t\bigg(\sumast_{Q<c\leqslant 2Q}|S_c(\tau)|^2\bigg)^{1/2}\,\dd\tau. \]
Upon applying \eqref{estimate-from-lemma} in the two previous displays, we conclude that
\[ \sumast_{Q<c\leqslant 2Q}\bigg|\sumast_{sc<x\leqslant tc}\Big(\frac xc\Big)e^{2\pi ix/c}\bigg|^2\ll_{\epsilon}|t-s|Q^{2+\epsilon}+|t-s|^3Q^{2+\epsilon}\ll_{\epsilon} |t-s|Q^{2+\epsilon}, \]
as desired.
\end{proof}

\subsection{Estimates according to ranges}
\label{tightness-ranges-sec}
he following two lemmata are purely analytical in nature.
\begin{lem}
\label{very-short-range}
If $\alpha>0$ and
\[ 0\leqslant |t-s|\leqslant\frac1{c-1}, \]
then
\[ |G(t,c)-G(s;c)|^{\alpha}\leqslant 2^{\alpha}|t-s|^{\alpha/2}=2^{\alpha}|t-s|^{1+(1/2)(\alpha-2)}. \]
\end{lem}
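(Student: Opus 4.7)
My plan is to exploit the piecewise-linear structure of the path $G(\cdot; c)$ and bound its Lipschitz constant directly.

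First, I would observe that on each sub-interval $[j/(c-1), (j+1)/(c-1)]$ ($0 \leq j \leq c-2$), the path $G(\cdot; c)$ is by definition the linear interpolant between $g_j$ and $g_{j+1}$, and the increment satisfies
\[ |g_{j+1} - g_j| = \Big| \frac{1}{\sqrt{c}} \Big(\frac{j+1}{c}\Big) e_c(j+1) \Big| \leq \frac{1}{\sqrt{c}}. \]
Hence on each sub-interval of length $1/(c-1)$ the slope of $G(\cdot; c)$ is at most $(c-1)/\sqrt{c} \leq \sqrt{c}$ in modulus, and consequently the whole path is globally Lipschitz with constant at most $\sqrt{c}$:
\[ |G(t; c) - G(s; c)| \leq \sqrt{c}\, |t - s| \qquad (s, t \in [0, 1]). \]

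Next, I would use the hypothesis $|t-s| \leq 1/(c-1)$ to convert the linear bound into the desired square-root bound. Writing
\[ \sqrt{c}\, |t - s| = \sqrt{c |t - s|} \cdot \sqrt{|t - s|} \leq \sqrt{\tfrac{c}{c-1}} \cdot \sqrt{|t - s|} \leq \sqrt{2}\, |t - s|^{1/2} \leq 2\, |t - s|^{1/2}, \]
we obtain $|G(t; c) - G(s; c)| \leq 2 |t - s|^{1/2}$, and raising to the $\alpha$-th power yields the claim. There is no real obstacle here: the result is a direct consequence of the piecewise-linear definition and the normalization $1/\sqrt{c}$ in \eqref{gausssum-def}, with the hypothesis $|t - s| \leq 1/(c-1)$ serving only to make the product $c |t - s|$ bounded.
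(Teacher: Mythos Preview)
Your proof is correct and is essentially the standard argument the paper has in mind: the paper omits the proof, noting only that it is a verbatim adaptation of \cite[Lemma 4.2]{RicottaRoyerShparlinski2020}, and that reference uses precisely the piecewise-linear Lipschitz bound you wrote down. There is nothing to add.
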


\begin{lem}
\label{non-tilde-to-tilde}
If $\alpha\geqslant 1$ and
\[ |t-s|\geqslant\frac1{c-1}, \]
then
\[ |G(t;c)-G(s;c)|^{\alpha}\ll_{\alpha}|t-s|^{\alpha/2}+|\widetilde{G}(t;c)-\widetilde{G}(s;c)|^{\alpha}. \]
\end{lem}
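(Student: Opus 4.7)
The plan is to exploit the fact that $G(\cdot;c)$ and $\widetilde{G}(\cdot;c)$ coincide at the knots $j/(c-1)$, while between two consecutive knots $G(\cdot;c)$ is an affine segment of total variation $\leqslant c^{-1/2}$ and $\widetilde{G}(\cdot;c)$ is constant with a jump of size $\leqslant c^{-1/2}$ at each knot. The hypothesis $|t-s|\geqslant 1/(c-1)$ is precisely what guarantees that at least one knot lies in the closed interval between $s$ and $t$, which is what is needed to compare the two functions directly.

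Concretely, I would let $s':=\lceil(c-1)s\rceil/(c-1)$ and $t':=\lfloor(c-1)t\rfloor/(c-1)$ be the nearest interior knots. The hypothesis yields $\lfloor(c-1)t\rfloor-\lceil(c-1)s\rceil\geqslant (c-1)|t-s|-1\geqslant 0$, so $s'\leqslant t'$, and one may telescope
\[
G(t;c)-G(s;c)=[G(t;c)-G(t';c)]+[G(t';c)-G(s';c)]+[G(s';c)-G(s;c)].
\]
The outer brackets are each $\mathrm{O}(c^{-1/2})$, since $t,t'$ (respectively $s,s'$) both lie within a single affine segment of $G(\cdot;c)$ of total variation at most $c^{-1/2}$. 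For the middle bracket, I would use that $G=\widetilde{G}$ at knots, whence $G(t';c)-G(s';c)=\widetilde{G}(t';c)-\widetilde{G}(s';c)$. A short inspection of the defining sum for $\widetilde{G}$ shows that $\widetilde{G}(t;c)=\widetilde{G}(t';c)$ (no new integer falls in $((c-1)t',(c-1)t]$ as $t'\leqslant t<t'+1/(c-1)$), while $\widetilde{G}(s';c)$ differs from $\widetilde{G}(s;c)$ by at most a single summand of modulus $c^{-1/2}$. Combining these gives
\[
G(t';c)-G(s';c)=\widetilde{G}(t;c)-\widetilde{G}(s;c)+\mathrm{O}(c^{-1/2}).
\]

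Putting the three pieces together yields $|G(t;c)-G(s;c)|\ll|\widetilde{G}(t;c)-\widetilde{G}(s;c)|+c^{-1/2}$, and since $|t-s|\geqslant 1/(c-1)\geqslant 1/c$, the error $c^{-1/2}$ is absorbed into $|t-s|^{1/2}$. Raising to the $\alpha$th power via $(a+b)^\alpha\leqslant 2^{\alpha-1}(a^\alpha+b^\alpha)$ yields the claim. The only care required is the bookkeeping around how $\widetilde{G}$ jumps at knots versus being constant on each open segment — in particular, matching $\widetilde{G}(s;c)$ to the correct neighbouring knot-value — but this is a straightforward case distinction rather than a genuine obstacle; no arithmetic input beyond the definitions is needed.
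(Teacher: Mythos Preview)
Your proof is correct and follows essentially the same route as the paper's. The paper omits the argument, citing \cite{RicottaRoyerShparlinski2020} and noting that the only needed input is the pointwise bound $|G(t;c)-\widetilde{G}(t;c)|\leqslant c^{-1/2}$ from \eqref{simplebounds}; with that in hand, the triangle inequality gives $|G(t;c)-G(s;c)|\leqslant|\widetilde{G}(t;c)-\widetilde{G}(s;c)|+2c^{-1/2}$ directly, without passing through the interior knots $s',t'$. Your telescoping through $s',t'$ is effectively a re-derivation of that pointwise bound, so the two arguments coincide once one realises that $|G-\widetilde{G}|\leqslant c^{-1/2}$ already holds at every $t\in[0,1]$.
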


These statements are analogues of \cite[Lemma 4.2]{RicottaRoyerShparlinski2020} and \cite[Lemma 4.3]{RicottaRoyerShparlinski2020} (as well as of the corresponding statements in \cite{MilicevicZhang2023}). The proofs are essentially verbatim, and so we omit them for brevity, the only notable adaptation being the insertion of \eqref{simplebounds} in place of \cite[(5)]{RicottaRoyerShparlinski2020}.

\begin{lem}
\label{middle-range}
For every $\kappa>\frac14$, there exists a $\delta=\delta_{\kappa}>0$ such that, if $\alpha\geqslant 2$ and if
\[ \frac1{c-1}\leqslant |t-s|\leqslant c^{-\lambda} \]
for some $\lambda\geqslant\kappa$, then
\[ \frac1Q\sum_{c\in\cd_Q}|G(t;c)-G(s;c)|^{\alpha}\ll_{\alpha,\kappa,\epsilon} |t-s|^{1+\delta(\alpha-2)-\epsilon}. \]
\end{lem}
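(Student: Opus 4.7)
The strategy is to pass to the simpler proxy $\widetilde{G}(\cdot;c)$, split $|\widetilde{G}(t;c)-\widetilde{G}(s;c)|^{\alpha}$ via H\"older into a pointwise factor to the $(\alpha-2)$-th power times an $L^2$ factor, and exploit that the range $|t-s|\leq c^{-\kappa}$ with $\kappa>\frac14$ falls precisely into the Burgess regime for short twisted character sums.

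First I invoke Lemma~\ref{non-tilde-to-tilde} to obtain
\[ |G(t;c)-G(s;c)|^{\alpha}\ll_{\alpha}|t-s|^{\alpha/2}+|\widetilde{G}(t;c)-\widetilde{G}(s;c)|^{\alpha}. \]
The first term contributes $|t-s|^{\alpha/2}=|t-s|^{1+(\alpha-2)/2}$ on average, already of the required form. For the remaining piece I use $\alpha\geq 2$ to write
\[ |\widetilde{G}(t;c)-\widetilde{G}(s;c)|^{\alpha}\leq\Big(\max_{c\in\cd_Q}|\widetilde{G}(t;c)-\widetilde{G}(s;c)|\Big)^{\alpha-2}\cdot|\widetilde{G}(t;c)-\widetilde{G}(s;c)|^{2}. \]

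For the pointwise factor, the difference
\[ \widetilde{G}(t;c)-\widetilde{G}(s;c)=c^{-1/2}\sum_{(c-1)s<x\leq(c-1)t}\Big(\frac{x}{c}\Big)e_c(x) \]
is a twisted short character sum of length $N\leq(c-1)|t-s|\leq c^{1-\lambda}\leq c^{1-\kappa}$. The hypothesis $\kappa>\frac14$ forces $1-\kappa<\frac34$, placing $N$ within the range of Lemma~\ref{Burgess-var} (applied with its parameter $1-\kappa$) and producing a constant $\delta_1=\delta_{1-\kappa}>0$ with
\[ |\widetilde{G}(t;c)-\widetilde{G}(s;c)|\ll_{\kappa}c^{-\delta_1}\ll Q^{-\delta_1}, \]
using that the Jacobi symbol $(\cdot/c)$ is primitive modulo $c$ for square-free $c\in\cd$. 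For the $L^2$ factor, shifting the summation range from $((c-1)s,(c-1)t]$ to $(sc,tc]$ at the cost of $O(1)$ boundary terms and applying Lemma~\ref{HB-var-cor} gives
\[ \frac{1}{Q}\sum_{c\in\cd_Q}|\widetilde{G}(t;c)-\widetilde{G}(s;c)|^{2}\ll|t-s|Q^{\epsilon}+Q^{-1}\ll|t-s|Q^{\epsilon}, \]
where the $Q^{-1}$ error is absorbed via $|t-s|\geq 1/(2Q)$.

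Combining the two inputs yields
\[ \frac{1}{Q}\sum_{c\in\cd_Q}|\widetilde{G}(t;c)-\widetilde{G}(s;c)|^{\alpha}\ll_{\alpha,\kappa}Q^{-\delta_1(\alpha-2)+\epsilon}\cdot|t-s|. \]
The final step converts $Q$-savings into $|t-s|$-savings: from $|t-s|\geq 1/(2Q)$ we have $Q^{-\delta_1(\alpha-2)}\ll|t-s|^{\delta_1(\alpha-2)}$ and $Q^{\epsilon}\ll|t-s|^{-\epsilon}$, giving the final bound $\ll|t-s|^{1+\delta_1(\alpha-2)-\epsilon}$. Taking $\delta=\min(\delta_1,\tfrac12)>0$ completes the proof. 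The main technical point is the hypothesis $\kappa>\frac14$: this is exactly the threshold at which $N\leq c^{1-\kappa}$ enters the Burgess range $N\leq c^{\tilde\kappa}$ with $\tilde\kappa<\frac34$, producing the strictly positive saving $\delta_1$; its conversion into an $|t-s|$-saving is then painless thanks to the one-for-one correspondence $Q^{-1}\asymp|t-s|$ at the lower end of the middle range.
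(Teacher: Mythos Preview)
Your approach is the same as the paper's: pass to $\widetilde{G}$ via Lemma~\ref{non-tilde-to-tilde}, peel off $(\alpha-2)$ copies pointwise using Lemma~\ref{Burgess-var}, and control the remaining $L^2$ piece via Lemma~\ref{HB-var-cor}. There is one slip in your final conversion: the claim $Q^{\epsilon}\ll|t-s|^{-\epsilon}$ does \emph{not} follow from $|t-s|\geq 1/(2Q)$---that inequality gives the opposite direction $Q^{\epsilon}\gg|t-s|^{-\epsilon}$. You need the \emph{upper} hypothesis $|t-s|\leq c^{-\lambda}$, which for $c\in[Q,2Q]$ yields $Q\ll|t-s|^{-1/\lambda}\leq|t-s|^{-1/\kappa}$ and hence $Q^{\epsilon}\ll|t-s|^{-\epsilon/\kappa}$, with the factor $1/\kappa$ absorbed into the arbitrary $\epsilon$; this is precisely how the paper closes the argument.
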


\begin{proof}
Without loss of generality, let $s<t$. By the very definition of $\widetilde{G}(\cdot;c)$,
\[ \widetilde{G}(t;c)-\widetilde{G}(s;c)=\frac1{\sqrt{c}}\sum_{sc <x\leqslant tc}\Big(\frac xc\Big)e^{2\pi ix/c}+\mathrm{O}\Big(\frac1{\sqrt{c}}\Big), \]
where we incur the harmless error term for no other reason than notational simplicity. Thus, using Lemma~\ref{non-tilde-to-tilde} and recalling the condition that $|t-s|\gg 1/Q$, we first have that
\[ V:=\frac1Q\sum_{c\in\cd_Q}|G(t;c)-G(s;c)|^{\alpha}\ll_{\alpha} |t-s|^{\alpha/2}+\frac1{Q^{1+\alpha/2}}\sum_{c\in\cd_Q}\bigg|\sum_{sc<x\leqslant tc}\Big(\frac xc\Big)e^{2\pi ix/c}\bigg|^{\alpha}. \]

Using Lemmata~\ref{Burgess-var} and \ref{HB-var-cor} we conclude that
\begin{align*}
V&\ll_{\alpha,\epsilon}|t-s|^{\alpha/2}+\frac1{Q^{1+\alpha/2}}\big(Q^{1/2-\delta}\big)^{\alpha-2}\sum_{c\in\cd_Q}\bigg|\sum_{sc<x\leqslant tc}\Big(\frac xc\Big)e^{2\pi ix/c}\bigg|^2\\
&\ll_{\alpha,\epsilon}|t-s|^{\alpha/2}+|t-s|Q^{-\delta(\alpha-2)+\epsilon}.
\end{align*}
The statement of the lemma follows upon inputing the condition $Q\ll |t-s|^{-1/\lambda}$, and setting $\min(\frac12,\delta/\lambda)$ as the value of $\delta$.
\end{proof}

\begin{lem}
\label{long-range}
For every even integer $\alpha\geqslant 2$, if
\[ c^{-\lambda}\leqslant |t-s|\leqslant 1 \]
for some $\lambda<\frac13$, then
\[ \frac1Q\sum_{c\in\cd_Q}|G(t;c)-G(s;c)|^{\alpha}\ll_{\alpha,\epsilon} |t-s|^{1+\min(\frac12(\alpha-2),\frac{1-3\lambda}{3\lambda})-\epsilon}. \]
\end{lem}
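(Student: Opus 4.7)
My plan is to adapt the moment-expansion strategy of Section~\ref{computing-moments-section} to estimate the averaged $\alpha$-th absolute moment of the increment $G(t;c)-G(s;c)$. I will first apply Lemma~\ref{non-tilde-to-tilde}, whose hypothesis is satisfied since $|t-s|\geqslant c^{-\lambda}\geqslant 1/(c-1)$ throughout the range; this reduces to bounding $\widetilde{G}$-increments and contributes the admissible term $|t-s|^{\alpha/2}=|t-s|^{1+(\alpha-2)/2}$. Since $\alpha$ is even, I then expand $|\widetilde G(t;c)-\widetilde G(s;c)|^{\alpha}$ as a product of $\alpha$ factors (with $\alpha/2$ conjugated), apply the completion formula of Lemma~\ref{lem: g-squiggle-val} to each, and replace $\alpha_c(h;t)-\alpha_c(h;s)$ by $\beta(h;t)-\beta(h;s)$ with the summation extended to $|h|<Q/2$, via direct analogues of Lemmas~\ref{diff-coeff-small}, \ref{alpha-beta-sum}, and \ref{moments-order-switched}, incurring only a negligible error.

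After swapping the order of summation, the problem becomes the estimation of
\[
\sum_{\vec{\bm h}\in\mathcal{H}'}\beta_{t-s}(\vec{\bm h})\sum_{c\in\cd_Q}m_Q(c)\bigg(\frac{H(\vec{\bm h})}{c}\bigg),
\]
where $\beta_{t-s}(\vec{\bm h})=\prod_{j=1}^{\alpha}(\beta(h_j;t)-\beta(h_j;s))^{\ast}$ satisfies $|\beta(h;t)-\beta(h;s)|\leqslant\min(|t-s|,1/(\pi|h|))$. I will split this by the condition $|H(\vec{\bm h})|=\square$ versus $\neq\square$. For the non-square contribution, the weaker bound $|\beta_{t-s}(\vec{\bm h})|\leqslant 1/|H(\vec{\bm h})|$ suffices, so the Lemma~\ref{error-terms-lemma} argument (Heath-Brown's quadratic large sieve with the optimal split at $D=Q^{2/3}$) yields $O(Q^{-1/3+\epsilon})$. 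Since $c\leqslant 2Q$ combined with $(2Q)^{-\lambda}\leqslant|t-s|$ forces $Q\gg|t-s|^{-1/\lambda}$, this is $O(|t-s|^{1/(3\lambda)-\epsilon'})=O(|t-s|^{1+(1-3\lambda)/(3\lambda)-\epsilon'})$, precisely matching the second term in the claimed minimum.

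For the square contribution, the argument of Lemma~\ref{main-term-lemma} shows that the inner $c$-sum equals $\eta(H(\vec{\bm h}))+O(Q^{-1/2+\epsilon})$, so the main term reduces (via the analogue of \eqref{tails-estimate}) to the limiting moment $\mathbb{E}|G^{\ast}(t)-G^{\ast}(s)|^{\alpha}$. The principal obstacle is then establishing the bound $\mathbb{E}|G^{\ast}(t)-G^{\ast}(s)|^{\alpha}\ll_{\alpha}|t-s|^{\alpha/2}$. The key inputs will be the Fourier-coefficient bound $|\beta(n;t)-\beta(n;s)|\leqslant\min(|t-s|,1/(\pi|n|))$, which gives $\sum_n|\beta(n;t)-\beta(n;s)|^2\asymp|t-s|$, together with a Wick-type pairing analysis on the explicit moment formula from Lemma~\ref{evaluation-of-limit-moment}: diagonal pairings of the $\alpha$ indices into $\alpha/2$ pairs $h_i=\pm h_{i'}$ contribute at most $(\sum_n|c_n|^2)^{\alpha/2}\ll|t-s|^{\alpha/2}$ up to combinatorial constants, while the remaining genuinely off-diagonal square configurations, bounded via $\min(a,b)\leqslant\sqrt{ab}$ together with convergence of the resulting divisor-type sums, turn out to be of matching or smaller order. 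Combining the square and non-square contributions then yields the claimed bound.
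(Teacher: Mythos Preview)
Your proposal is correct and follows essentially the same approach as the paper: reduce to $\widetilde{G}$-increments via Lemma~\ref{non-tilde-to-tilde}, expand as a linear combination of the moments $\widetilde{\mathcal{M}_Q}((t,s);\cdot,\cdot)$, apply Lemmas~\ref{moments-with-betas-lemma}--\ref{moments-order-switched} to pass to $\beta$-coefficients and swap the order of summation, split by $|H(\vec{\bm h})|=\square$ or not, treat the non-square part via Lemma~\ref{error-terms-lemma} to get $O(Q^{-1/3+\epsilon})$, and convert using $Q\gg|t-s|^{-1/\lambda}$.

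The one place where the paper is simpler than your plan is the square (main) contribution. Rather than a Wick-type diagonal/off-diagonal decomposition, the paper bounds every factor at once by the interpolated estimate
\[
|\beta(h;t)-\beta(h;s)|\ll\frac{|t-s|^{1/2-\epsilon/\alpha}}{(1+|h|)^{1/2+\epsilon/\alpha}},
\]
so that, summing over $|H(\vec{\bm h})|=\square$ (equivalently over $n^2$),
\[
\sum_{|H(\vec{\bm h})|=\square}\prod_{j=1}^{\alpha}|\beta(h_j;t)-\beta(h_j;s)|
\ll_{\alpha}|t-s|^{\alpha/2-\epsilon}\sum_{n\geqslant 1}\frac{1}{n^{1+\epsilon}}
\ll_{\alpha,\epsilon}|t-s|^{\alpha/2-\epsilon}.
\]
This single-line argument replaces your proposed pairing analysis entirely; the $\min(a,b)\leqslant\sqrt{ab}$ trick you mention is exactly this interpolation, applied uniformly rather than after a diagonal/off-diagonal split. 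Your route would also succeed, but the combinatorics is unnecessary.
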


\begin{proof}
First off, in analogy with \eqref{modified-moments-equation}, we denote
\begin{align*}
\widetilde{\mathcal{M}^{\ast}_Q}([s,t];\alpha)
&:=\sum_{c\in\cd_Q}m_Q(c)|\widetilde{G}(t;c)-\widetilde{G}(s;c)|^{\alpha}\\
&=\sum_{c\in\cd_Q}m_Q(c)\overline{(\widetilde{G}(t;c)-\widetilde{G}(s;c))}^{\alpha/2}(\widetilde{G}(t;c)-\widetilde{G}(s;c))^{\alpha/2}.
\end{align*}
In other words, the moment $\widetilde{\mathcal{M}^{\ast}_Q}([s,t];\alpha)$ is constructed in exactly the same fashion as $\widetilde{\mathcal{M}_Q}(\bm{t};\bm{m},\bm{n})$, with $k=1$ and $m_1=n_1=\alpha/2$, but with $\widetilde{G}(t;c)-\widetilde{G}(s;c)$ in place of each $\widetilde{G}(t_1;c)$. In yet other words, we have a finite (with length and coefficients depending only on the fixed value of $\alpha$) expansion
\[ \widetilde{\mathcal{M}^{\ast}_Q}([s,t];\alpha)=\sum_{k_1=0}^{\alpha/2}\sum_{k_2=0}^{\alpha/2}(-1)^{k_1+k_2}\binom{\alpha/2}{k_1}\binom{\alpha/2}{k_2}\widetilde{\mathcal{M}_Q}((t,s);(\alpha/2-k_1,k_1),(\alpha/2-k_2,k_2)). \]
Applying Lemma~\ref{moments-order-switched}, decomposition \eqref{moments-main-error-decomp}, and Lemma~\ref{error-terms-lemma} to this expansion, we have
\[ \begin{split}
\widetilde{\mathcal{M}^{\ast}_Q}([s,t];\alpha)=\sum_{k_1=0}^{\alpha/2}\sum_{k_2=0}^{\alpha/2}(-1)^{k_1+k_2}\binom{\alpha/2}{k_1}\binom{\alpha/2}{k_2} \sum_{\substack{\vec{\bm{h}}\in\mathcal{H}'\\H(\vec{\bm{h}})=\square}}\beta(\vec{\bm{h}};(t,s))\sum_{\substack{c\in\cd_Q\\(c,H(\vec{\bm{h}}))=1}}m_Q(c)\\
+\mathrm{O}_{\alpha,\epsilon}(Q^{-1/3+\epsilon}),
\end{split} \]
where $\mathcal{H}'$ is the set of all pairs $\vec{\bm{h}}=(\vec{h}_1,\vec{h}_2)$ of vectors $\vec{h}_j=(h_{j,\ell})_{1\leqslant\ell\leqslant\alpha/2}$ satisfying $|h_{j,\ell}|<Q/2$, and
\[ \beta(\vec{\bm{h}};(t,s))=\prod_{\ell=1}^{\alpha/2-k_1}\overline{\beta(h_{1,\ell};t)}
\prod_{m=1}^{k_1}\overline{\beta(h_{1,\alpha/2-k_1+m};s)}
\prod_{\ell=1}^{\alpha/2-k_2}\beta(h_{2,\ell};t)
\prod_{m=1}^{k_2}\beta(h_{2,\alpha/2-k_2+m};s). \]
But then a moment's reflection shows that in fact
\[ \widetilde{\mathcal{M}_Q^{\ast}}([s,t];\alpha)=\mathbb{E}\bigg(\bigg|\sum_{|h|<Q/2}(\beta(h;t)-\beta(h;s))X_{1-h}\bigg|^{\alpha}\bigg)+\mathrm{O}_{\alpha,\epsilon}(Q^{-1/3+\epsilon}). \]
The same result can be arrived at by following the evaluation of $\widetilde{\mathcal{M}_Q}(\bm{t};\bm{m},\bm{n})$ in section~\ref{computing-moments-section}, with $k=1$ and $m_1=n_1=\alpha/2$ and with $\widetilde{G}(t;c)-\widetilde{G}(s;c)$ in place of each $\widetilde{G}(t_1;c)$.

Now, the expectation occurring in this evaluation may be further estimated as
\begin{align*}
\mathbb{E}\bigg(\bigg|\sum_{|h|<Q/2}(\beta(h;t)-\beta(h;s))X_{1-h}\bigg|^{\alpha}\bigg)
&\leqslant\sum_{\substack{|h_1|,\dots,|h_k|<Q/2\\H(\vec{\bm{h}})=\square}}\prod_{j=1}^{\alpha}|\beta(h_j;t)-\beta(h_j;s)|\\
&\ll_{\alpha}\sum_{\substack{|h_1|,\dots,|h_k|<Q/2\\H(\vec{\bm{h}})=\square}}\prod_{j=1}^{\alpha}\frac{|t-s|^{1/2-\epsilon/\alpha}}{(1+|h_j|)^{1/2+\epsilon/\alpha}}\\
&\ll_{\alpha,\epsilon} \sum_{n=1}^{\infty}\frac{|t-s|^{\alpha/2-\epsilon}}{(n^2)^{1/2+\epsilon-\epsilon/2}}\ll_{\alpha,\delta} |t-s|^{\alpha/2-\epsilon},
\end{align*}
bounding $|\beta(h;t)-\beta(h;s)|$ by interpolating between the obvious estimates $|\beta(h;t)-\beta(h;s)|\ll\min(|t-s|,1/(1+|h|))$. The statement of the lemma follows upon inputing the condition $Q\gg |t-s|^{-1/\lambda}$ and recalling that $m_Q(c)\asymp 1/Q$.
\end{proof}

\subsection{Proof of tightness}
\label{tightness-proof-sec}

We are now ready for the main proofs of this section.

\begin{proof}[Proof of Proposition~\ref{moment-claim}]
Fix an arbitrary $\frac14<\kappa=\lambda<\frac13$, and let $\delta=\delta_{\kappa}>0$ be as in the statement of Lemma~\ref{middle-range}. Applying Lemma \ref{very-short-range}, \ref{middle-range}, or \ref{long-range} according to the size of $|t-s|$, we conclude that, for every even integer $\alpha\geqslant 2$ and every $t,s\in [0,1]$,
\[ \frac1Q\sum_{c\in\cd_Q}|G(t;c)-G(s;c)|^{\alpha}\ll_{\alpha,\kappa,\epsilon}|t-s|^{1+\delta(\alpha)-\epsilon}, \]
with
\[ \delta(\alpha)=\min\big(\tfrac12(\alpha-2),\delta_{\kappa}(\alpha-2),\tfrac{1-3\lambda}{3\lambda}\big). \]
This completes the proof of Proposition~\ref{moment-claim} when $\alpha>2$ is an even integer; the claim for other values of $\alpha>2$ follows by interpolation. We also see that we can take $\delta(\alpha)=\min(\delta_1(\alpha-2),\delta_2)$ with (in principle) explicit $\delta_1,\delta_2>0$, and we may obtain any $\delta_2<\frac13$ by taking $\kappa=\lambda$ sufficiently close to $\frac14$.
\end{proof}

Using Kolmogorov's Tightness Criterion (Proposition~\ref{prop: Kolmogorov-tightness}), Proposition~\ref{moment-claim} immediately implies the following statement.

\begin{cor}
\label{tight-cor}
The sequence of $C^0([0,1],\mathbb{C})$-valued random variables $(G_Q)$ is tight as $Q\to\infty$.
\end{cor}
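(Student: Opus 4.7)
The plan is a direct application of Kolmogorov's Tightness Criterion (Proposition~\ref{prop: Kolmogorov-tightness}) to the moment bound just established in Proposition~\ref{moment-claim}. Since Proposition~\ref{moment-claim} provides uniform H\"older-type control on the $L^{\alpha}$-modulus of continuity of the random paths $G_Q$, and Kolmogorov's criterion takes exactly such a bound as input, the corollary is essentially a one-step deduction.

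Concretely, I would first fix any $\alpha > 2$ (for definiteness, say $\alpha = 3$) and let $\delta = \delta(\alpha) > 0$ be the exponent produced by Proposition~\ref{moment-claim}. The sample space of $G_Q$ is $\mathcal{D}_Q$ equipped with the uniform measure $m_Q(c) = 1/|\mathcal{D}_Q|$, so
\[ \mathbb{E}\bigl(|G_Q(t) - G_Q(s)|^{\alpha}\bigr) = \frac{1}{|\mathcal{D}_Q|} \sum_{c \in \mathcal{D}_Q} |G(t;c) - G(s;c)|^{\alpha}. \]
The sieve computation \eqref{DQ-comput} yields $|\mathcal{D}_Q| \asymp Q$, so up to a harmless absolute constant this equals $Q^{-1} \sum_{c \in \mathcal{D}_Q} |G(t;c) - G(s;c)|^{\alpha}$, to which Proposition~\ref{moment-claim} applies. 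I therefore obtain
\[ \mathbb{E}\bigl(|G_Q(t) - G_Q(s)|^{\alpha}\bigr) \ll_{\alpha} |t-s|^{1+\delta} \]
uniformly in $Q \geqslant 3$ and $s, t \in [0,1]$, which is precisely the hypothesis of Kolmogorov's Tightness Criterion for the exponents $\alpha$ and $\delta$.

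Invoking Proposition~\ref{prop: Kolmogorov-tightness} then yields the tightness of $(G_Q)$, completing the proof. There is no real obstacle at this stage: all the substantive analytic and arithmetic work (the Burgess-type bound of Lemma~\ref{Burgess-var}, the quadratic large-sieve variant of Lemma~\ref{HB-var-cor}, and the moment computation of Section~\ref{computing-moments-section} that feeds the long-range estimate of Lemma~\ref{long-range}) has been done upstream. The corollary is simply the repackaging of that work into the language of tightness.
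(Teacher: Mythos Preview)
Your proposal is correct and follows exactly the same approach as the paper: the paper simply states that Kolmogorov's Tightness Criterion (Proposition~\ref{prop: Kolmogorov-tightness}) applied to Proposition~\ref{moment-claim} immediately gives the result. Your write-up is in fact slightly more explicit than the paper's, since you spell out the translation between the normalized sum $Q^{-1}\sum_{c\in\mathcal{D}_Q}$ and the expectation via $|\mathcal{D}_Q|\asymp Q$.
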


Combining Corollaries~\ref{finite-distributions-cor} and \ref{tight-cor} and applying Prokhorov's Criterion (Proposition~\ref{prop: Prokhorov}), we then obtain the following capstone statement.

\begin{cor}
The sequence of $C^0([0,1],\mathbb{C})$-valued random variables $(G_Q)$ converges in law to $G^{\ast}$ as $Q\to\infty$.
\end{cor}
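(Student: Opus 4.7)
The statement is essentially the capstone of the two preceding corollaries, so my plan is straightforward assembly rather than new work. The ingredients are already in place: Corollary~\ref{finite-distributions-cor} gives convergence of $(G_Q)$ to $G^{\ast}$ in the sense of finite distributions (via the method of moments, Proposition~\ref{method-of-moments-prop}, applied to the moment asymptotics of Proposition~\ref{moments-eval-prop} and the mildness established in Lemma~\ref{evaluation-of-limit-moment}), while Corollary~\ref{tight-cor} gives tightness of the sequence $(G_Q)$ as $C^0([0,1],\mathbb{C})$-valued random variables (via Kolmogorov's criterion Proposition~\ref{prop: Kolmogorov-tightness} applied to the H\"older-type moment bound in Proposition~\ref{moment-claim}).

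The plan is therefore to invoke Prokhorov's Criterion (Proposition~\ref{prop: Prokhorov}) directly: a sequence of $C^0([0,1],\mathbb{C})$-valued random variables that is tight and converges in the sense of finite distributions to a limit random variable must converge to it in law. Both hypotheses are supplied by the corollaries just cited, with the limit random variable $G^{\ast}$ being the $C^0([0,1],\mathbb{C})$-valued random variable whose existence (almost-sure continuity of the defining random Fourier series) was established in Proposition~\ref{thm: convergence} and the subsequent discussion identifying $G^{\ast}=\widetilde{G^{\ast}}$ a.s.\ in \eqref{equality-tilde}.

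There is essentially no obstacle at this stage: all the genuine analytic content (the moment calculation of section~\ref{computing-moments-section}, the Burgess and quadratic large sieve inputs of section~\ref{tightness-ranges-sec}, and the a.s.\ continuity of $G^{\ast}$ from section~\ref{sec-rv}) has already been expended. The only thing to verify, which is immediate from Definition~\ref{def: convergence}, is that convergence of finite-dimensional marginals in the sense used in Corollary~\ref{finite-distributions-cor} matches the hypothesis of Proposition~\ref{prop: Prokhorov}; this is tautological, since that is precisely how finite-distributional convergence is defined in the present setup. With this single bookkeeping check the proof concludes in one line by citing Prokhorov's criterion, and together with part~\eqref{thm1-item1} of Theorem~\ref{thm1} (already proved in section~\ref{sec-rv}) this completes the proof of Theorem~\ref{thm1}\eqref{thm1-item2}.
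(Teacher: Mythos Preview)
Your proposal is correct and essentially identical to the paper's own proof: the paper likewise just combines Corollary~\ref{finite-distributions-cor} (finite distributions) and Corollary~\ref{tight-cor} (tightness) and invokes Prokhorov's Criterion (Proposition~\ref{prop: Prokhorov}) in a single line.
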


\section{The atlas of shapes}
\label{sec-atlas}

In this section, we consider the local behavior of the limiting shapes $G^{\sharp}_{\bm{\epsilon}_Z}(t)$ at rational points $t_0\in [0,1]\cap\mathbb{Q}$. We prove the general first-order asymptotics in \S\ref{limits-local} and a refined two-term asymptotic expansion in \S\ref{finer-local-subsec}. These asymptotics relate the local behavior of $G^{\sharp}_{\bm{\epsilon}_Z}(t)$ at $t_0=a/q$ to certain complete exponential sums modulo $q$, which we study in detail in \S\ref{expsums-nv-mult}. In \S\ref{main-cor-subsec}, we combine all these conclusions and prove Proposition~\ref{main-cor}, which provides a collection of rational points $t_0=a/q$ at which the path $G^{\sharp}_{\bm{\epsilon}_Z}(t)$ has a cusp and explains the striking sharp reversals observed in the introduction (Figures \ref{fig: gausspaths}--\ref{fig: atlas-5}).

\subsection{Local behavior of limiting shapes}
\label{limits-local}
We begin by writing for short
\begin{gather*}
 G^{\sharp}_{\bm{\epsilon}_Z}(t)-G^{\sharp}_{\bm{\epsilon}_Z}(t_0)=\sum_{n\in\mathcal{N}^{-}_{\bm{\epsilon}_Z}}\epsilon_n(g_n(t)-g_n(t_0))+(t-t_0)=\sum_{n\in\mathcal{N}_{\bm{\epsilon}_Z}}\epsilon_n(g_n(t)-g_n(t_0)),\\
 \mathcal{N}^{-}_{\bm{\epsilon}_Z}=\{n\in\mathbb{Z}\setminus\{-1,0\}:p\mid n\,\Rightarrow\,p\leqslant Z\text{ and }\epsilon_p\neq 0\},\quad \mathcal{N}_{\bm{\epsilon}_Z}=\mathcal{N}_{\bm{\epsilon}_Z}^{-}\cup\{-1\},\\
\epsilon_n=\prod_{p^{a_p}\exmid n}\epsilon_p^{a_p},\quad g_n(t)=\frac{e((n+1)t)-1}{2i\pi(n+1)}\quad(n\neq -1),\quad g_{-1}(t)=t.
\end{gather*}
We record the simple estimates
\begin{equation}
\label{simple-estimates}
|g_n(t)-g_n(t_0)|\leqslant\min\Big(|t-t_0|,\frac1{\pi|n+1|}\Big).
\end{equation}

Now, fix a $t_0=a/b\in[0,1]\cap\mathbb{Q}$ with $(a,b)=1$, and write
\begin{equation}
\label{b-decomp}
b=b_Zb^Z\quad\text{with}\quad b_Z=\big(b,\prod_{p\leqslant Z}p^{\infty}\big).
\end{equation}
By absolute convergence, we may rewrite
\begin{equation}
\label{decomposition-by-divisors}
G^{\sharp}_{\bm{\epsilon}_Z}=\sum_{d\mid b_Z}G^{\sharp}_{\bm{\epsilon}_Z}[d],\quad\text{where}\quad G^{\sharp}_{\bm{\epsilon}_Z}[d]=\sum_{\substack{n=dn'\in\mathcal{N}_{\bm{\epsilon}_Z}\\(n',b_Z/d)=1}}\epsilon_ng_n(t).
\end{equation}
It will be convenient to write
\begin{equation}
\label{NZ-PZ}
\begin{alignedat}{3}
&\mathcal{N}_{\bm{\epsilon}_Z}^{m-}=\{n'\in\mathcal{N}_{\bm{\epsilon}_Z}:(n',m)=1
\}, &&\quad \mathcal{N}_{\bm{\epsilon}_Z}[d]=\mathcal{N}_{\bm{\epsilon}_Z}^{(b/d)-},\\
&\mathcal{P}_{\bm{\epsilon}_Z}^{m-}=\mathcal{P}_Z\cap\mathcal{N}_{\bm{\epsilon}_Z}^{m-}=\{p\leqslant Z:p\nmid m,\epsilon_p\neq 0\},&&\quad \mathcal{P}_{\bm{\epsilon}_Z}[d]=\mathcal{P}_{\bm{\epsilon}_Z}^{(b/d)-},
\end{alignedat}
\end{equation}
as well as $\mathcal{N}_{\bm{\epsilon}_Z}^{+}=\mathcal{N}_{\bm{\epsilon}_Z}\cap\mathbb{N}$, $\mathcal{N}_{\bm{\epsilon}_Z}^{+}[d]=\mathcal{N}_{\bm{\epsilon}_Z}[d]\cap\mathbb{N}$.
Finally, for a modulus $m\in\mathbb{N}$, we will also consider the normalized complete exponential sums
\begin{equation}
\label{sast-def}
s^{\ast}(a/m;\bm{\epsilon}_Z)=\frac1{(2\varphi(m))^{|\mathcal{P}_{\bm{\epsilon}_Z}^{m-}|}}\sum_{\substack{(m_p)_{p\in\mathcal{P}_{\bm{\epsilon}_Z}^{m-}}\\ 0\leqslant m_p<2\varphi(m)}}e\Big(\frac am\prod_{p\in\mathcal{P}_{\bm{\epsilon}_Z}^{m-}}p^{m_p}\Big)\prod_{p\in\mathcal{P}_{\bm{\epsilon}_Z}^{m-}}\epsilon_p^{m_p}.
\end{equation}

The connection between the exponential sums \eqref{sast-def} and the local behavior of the paths $G^{\sharp}_{\bm{\epsilon}_Z}(t)$ and their decomposition \eqref{decomposition-by-divisors} is given by the following lemma.

\begin{lem}
\label{local-d-lemma}
For every
$t_0=a/b\in [0,1]\cap\mathbb{Q}$ as in \eqref{b-decomp}, and for every $d\mid b_Z$, the function $G^{\sharp}_{\bm{\epsilon}_Z}[d]$ defined in \eqref{decomposition-by-divisors} satisfies, for $t\in [0,1]$,
\begin{align*}
G^{\sharp}_{\bm{\epsilon}_Z}[d](t)-G^{\sharp}_{\bm{\epsilon}_Z}[d](t_0)
&=(t-t_0)\big|\log|t-t_0|\big|^{|\mathcal{P}_{\bm{\epsilon}_Z}[d]|}\\
&\quad\times\Big(e(t_0)
c_{\bm{\epsilon}_Z}[d]\cdot 2\mRe s^{\ast}\Big(\frac{a}{b/d};\bm{\epsilon}_Z\Big)+\bigO_{Z,t_0}\Big(\frac1{|\log(t-t_0)|}\Big)\Big),
\end{align*}
where $\mathcal{P}_{\bm{\epsilon}_Z}[d]$, $s^{\ast}(a/(b/d);\bm{\epsilon}_Z)$, and $c_{\bm{\epsilon}_Z}[d]\neq 0$ are as in \eqref{NZ-PZ}, \eqref{sast-def}, and \eqref{cd-toquote}.
\end{lem}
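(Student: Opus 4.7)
Let $h = t - t_0$ and $k = |\mathcal{P}_{\bm{\epsilon}_Z}[d]|$. The plan is to first extract the factor $e(t_0)$. For $n = dn' \in \mathcal{N}_{\bm{\epsilon}_Z}$ with $(n',b_Z/d) = 1$ (so $n'$ has all prime factors in $\mathcal{P}_{\bm{\epsilon}_Z}[d]$), the congruence $(dn'+1)a/b \equiv n'a/(b/d) + a/b \pmod{\mathbb{Z}}$—valid since $d \mid b_Z \mid b$—gives $e((n+1)t_0) = e(t_0)e(n'a/(b/d))$. Combined with $\epsilon_n = \epsilon_d \epsilon_{n'}$ (complete multiplicativity), this reduces the difference to
\[ G^{\sharp}_{\bm{\epsilon}_Z}[d](t) - G^{\sharp}_{\bm{\epsilon}_Z}[d](t_0) = e(t_0)\,\epsilon_d \!\!\sum_{\substack{n' \in \mathcal{N}_{\bm{\epsilon}_Z}[d] \\ n' \neq 0}} c(n') f_h(n') + \bigO(h), \]
where $c(n') := \epsilon_{n'} e(n'a/(b/d))$, $f_h(x) := (e((dx+1)h)-1)/(2\pi i(dx+1))$, and the $\bigO(h)$ absorbs the $n = -1$ linear term present only when $d = 1$.

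\textbf{Partial-sum asymptotics via lattice averaging.} Parametrize $n' \in \mathcal{N}_{\bm{\epsilon}_Z}[d] \cap \mathbb{N}$ as $n' = \prod_{p \in \mathcal{P}_{\bm{\epsilon}_Z}[d]} p^{a_p}$ with $(a_p) \in \mathbb{Z}_{\geqslant 0}^k$. Since $\epsilon_p \in \{\pm 1\}$ and $p^{\varphi(b/d)} \equiv 1 \pmod{b/d}$ for $p \in \mathcal{P}_{\bm{\epsilon}_Z}[d]$ (by Euler's theorem, as these primes are coprime to $b/d$), the function $(a_p) \mapsto c(n')$ is periodic in each coordinate with period dividing $2\varphi(b/d)$, and its mean over one full period is precisely $s^{\ast}(a/(b/d); \bm{\epsilon}_Z)$. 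Combining this periodicity with the lattice-point count
\[ \#\Big\{(a_p) \in \mathbb{Z}_{\geqslant 0}^k : \textstyle\sum_p a_p \log p \leqslant L\Big\} = C_0 L^k + \bigO(L^{k-1}), \quad C_0 := \frac{1}{k!\prod_p \log p}, \]
together with a standard discrepancy estimate for periodic functions on lattice points in a simplex, yields the crucial asymptotic
\[ C^+(Y) := \!\!\sum_{\substack{n' \in \mathcal{N}_{\bm{\epsilon}_Z}[d] \\ 0 < n' \leqslant Y}} \!\!c(n') = s^{\ast}(a/(b/d); \bm{\epsilon}_Z)\, C_0 (\log Y)^k + \bigO\big((\log Y)^{k-1}\big), \]
and an analogous formula for negative $n'$ with $\overline{s^{\ast}}$ in place of $s^{\ast}$.

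\textbf{Abel summation and main term.} Apply Abel summation to write $\sum_{n' > 0} c(n') f_h(n') = -\int_1^{\infty} C^+(x) f_h'(x) \, \dd x$, with boundary terms vanishing since $C^+(N) f_h(N) \ll (\log N)^k/N \to 0$. Substituting $C^+(x) = s^{\ast} C_0 (\log x)^k + R(x)$ with $|R(x)| \ll (\log x)^{k-1}$, the main contribution $-s^{\ast}C_0 \int_1^\infty (\log x)^k f_h'(x)\,\dd x$ is evaluated by integrating by parts against the smooth weight $f_h$ and splitting dyadically at $x = 1/(d|h|)$: for $x \leqslant 1/(d|h|)$ use $f_h(x) = h + \bigO(h^2 x)$, while for $x > 1/(d|h|)$ use $|f_h(x)| \ll 1/(dx)$. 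This yields the leading term $s^{\ast}C_0 \cdot h(\log(1/|h|))^k$ with error $\bigO(h(\log(1/|h|))^{k-1})$. The remainder integral $\int R(x) f_h'(x)\,\dd x$ is controlled analogously by $\bigO(h(\log(1/|h|))^{k-1})$ via parallel integration-by-parts bounds. Adding the conjugate contribution from $n' < 0$ converts $s^{\ast}$ into $2\mRe s^{\ast}$, and matching leading coefficients identifies $c_{\bm{\epsilon}_Z}[d] = \epsilon_d C_0 = \epsilon_d/(k! \prod_{p \in \mathcal{P}_{\bm{\epsilon}_Z}[d]} \log p)$ as in \eqref{cd-toquote}, manifestly nonzero since $\epsilon_d \neq 0$.

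\textbf{Main obstacle.} The central technical challenge is obtaining the sharp relative error $\bigO(1/|\log|t-t_0||)$: a naive termwise approximation $f_h(n') \approx h$ for $|dn'+1| \lesssim 1/|h|$ produces termwise approximation errors of cumulative size $h(\log(1/|h|))^k$, on the same order as the main term itself. The Abel-summation framework addresses this by pushing the refined arithmetic cancellation—encoded both in the leading coefficient $s^{\ast}C_0(\log Y)^k$ of $C^+(Y)$ and in the \emph{lower-order} size of the remainder $R(Y)$—through all analytic estimates against the smooth weight $f_h$, so that a single log-power is saved at each step. Without this coordinated bookkeeping of the exponential-sum cancellation across scales, the error cannot be brought below the main term.
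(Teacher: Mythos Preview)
Your approach and the paper's share the same arithmetic core: the partial-sum asymptotic $C^{+}(Y)=s^{\ast}C_0(\log Y)^{k}+\OO((\log Y)^{k-1})$, obtained via the lattice-point count in the simplex $\{\sum a_p\log p\le\log Y\}$ combined with periodicity of $(a_p)\mapsto c(n')$ modulo $2\varphi(b/d)$. The difference lies only in the analytic packaging. The paper truncates the original sum at $K=1/|t-t_0|$: for $|n'|\le K$ it writes $g_n(t)-g_n(t_0)=\int_{t_0}^{t}e((dn'+1)\tau)\,\dd\tau=(t-t_0)e((dn'+1)t_0)+\OO((t-t_0)^2 d|n'|)$ and sums (the main piece is exactly $hC^{+}(K)$, the error sums dyadically to $\OO(dh^2K(\log K)^{k-1})$); for $|n'|>K$ it uses $|g_n(t)-g_n(t_0)|\le 1/(\pi|dn'+1|)$ and sums dyadically to $\OO((\log K)^{k-1}/(dK))$. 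Your Abel-summation route reaches the same main term via the integration by parts $-\int(\log x)^{k}f_h'\,\dd x=k\int(\log x)^{k-1}f_h(x)\,\dd x/x$ and a split at $X=1/(d|h|)$.

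There is, however, a real gap in your remainder estimate. The integral $\int_1^{\infty}R(x)f_h'(x)\,\dd x$ is only conditionally convergent, and the available pointwise bounds do not suffice: one has $|f_h'(x)|\ll\min(dh^2,\,|h|/x+1/(dx^2))$, and on $[X,\infty)$ the contribution $|h|/x$ (arising from the term $hd\,e((dx+1)h)/(dx+1)$ in $f_h'$) gives $|h|\int_X^{\infty}(\log x)^{k-1}\,\dd x/x=\infty$. So a ``parallel'' argument that merely feeds $|R|\ll(\log x)^{k-1}$ into the same inequalities fails. The fix is either to integrate by parts once more on $[X,\infty)$ against the oscillatory factor $e((dx+1)h)$, after which the resulting Riemann--Stieltjes integral against $\dd R=\dd C^{+}-s^{\ast}C_0k(\log x)^{k-1}x^{-1}\,\dd x$ reduces in its jump part to $\sum_{n'>X}c(n')/(dn'+1)$, bounded dyadically by $(\log X)^{k-1}/(dX)$; or, more simply, to bypass Abel for the tail and bound $\sum_{n'>X}c(n')f_h(n')$ directly via $|f_h(n')|\ll 1/(d|n'|)$. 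Either route recovers exactly the paper's tail estimate, so once this step is closed the two arguments effectively coincide.
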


\begin{proof}
For a $K>0$ to be chosen suitably large later,
we may write
\[ G^{\sharp}_{\bm{\epsilon}_Z}[d](t)-G^{\sharp}_{\bm{\epsilon}_Z}[d](t_0)=S_1[d]+S_2[d], \]
where, separating the terms in \eqref{decomposition-by-divisors} according to whether $n=dn'\leqslant dK$ or $n>dK$, splitting the summands into dyadic ranges of the form $n'\in [K',2K']$ (and denoting dyadic summations over $K'=2^{k'}$, $k'\in\mathbb{Z}_{\geqslant 0}$ by $\sum^{\mathrm{dy}}$), and estimating using \eqref{simple-estimates} and the Mean Value Theorem,
\begin{align*}
S_1[d]&=\sum_{\substack{n=dn'\in\mathcal{N}_{\bm{\epsilon}_Z}\\(n',b_Z/d)=1\\|n|\leqslant dK}}\epsilon_n(g_n(t)-g_n(t_0))
=\int_{t_0}^te(\tau)\sum_{\substack{n'\in\mathcal{N}_{\bm{\epsilon}_Z}[d]\\ |n'|\leqslant K}}\epsilon_{dn'}e(dn'\tau)\,\dd\tau\\
&=(t-t_0)e(t_0)2\mRe\sum_{\substack{n'\in\mathcal{N}_{\bm{\epsilon}_Z}^{+}[d]\\n'\leqslant K}}\epsilon_{dn'}e\Big(\frac{an'}{b/d}\Big)+\bigO\Big(\sumdy_{K'\leqslant K}dK'\big|\mathcal{N}_{\bm{\epsilon}_Z}[d]\cap[K',2K']\big|(t-t_0)^2\Big),\\
S_2[d]&\leqslant\sum_{\substack{n=dn'\in\mathcal{N}_{\bm{\epsilon}_Z}\\(n',b_Z/d)=1\\|n|>dK}}|g_n(t)-g_n(t_0)|\ll\sumdy_{K'\geqslant K}\frac{|\mathcal{N}_{\bm{\epsilon}_Z}[d]\cap[K',2K']|}{dK'}.
\end{align*}
Now, $n'\in\mathcal{N}_{\bm{\epsilon}_Z}^{+}[d]$ if and only if $p\mid n'\,\Rightarrow\,p\in\mathcal{P}_{\bm{\epsilon}_Z}[d]$. Moreover, denoting by $B=[0,2\varphi(b/d)-1)^{\mathcal{P}_{\bm{\epsilon}_Z}[d]}$ the cube with edges indexed by $p\in\mathcal{P}_{\bm{\epsilon}_Z}[d]$ and all edge lengths $2\varphi(b/d)$ and
\begin{align*}
\mathcal{M}_{\bm{\epsilon}_Z}[d](K)&=\Big\{\mathbf{m}=(m_p)\geqslant 0\,(p\in\mathcal{P}_{\bm{\epsilon}_Z}[d]):\sum_{p\in\mathcal{P}_{\bm{\epsilon}_Z}[d]}(\log p)m_p\leqslant\log K\Big\},\\
\mathcal{M}_{\bm{\epsilon}_Z}[d](K)^{\circ}&=\Big\{\mathbf{m}=(m_p)\geqslant 0\,(p\in\mathcal{P}_{\bm{\epsilon}_Z}[d]):2\varphi(b/d)\mid m_p,\,\,\mathbf{m}+B\subseteq\mathcal{M}_{\bm{\epsilon}_Z}[d](K)\Big\},
\end{align*}
we have that $|\mathcal{N}_{\bm{\epsilon}_Z}^{+}[d]\cap[1,K]|=|\mathcal{M}_{\bm{\epsilon}_Z}[d](K)|$ as well as
\begin{align*}
&\sum_{\substack{n'\in\mathcal{N}^{+}_{\bm{\epsilon}_Z}[d]\\n'\leqslant K}}\epsilon_{dn'}e\Big(\frac{an'}{b/d}\Big)
=\epsilon_d\sum_{(m_p)\in\mathcal{M}_{\bm{\epsilon}_Z}[d](K)}
e\bigg(\frac{a}{b^Z(b_Z/d)}\prod_{p\in\mathcal{P}_{\bm{\epsilon}_Z}[d]}p^{m_p}\bigg)\prod_{p\in\mathcal{P}_{\bm{\epsilon}_Z}[d]}\epsilon_p^{m_p}\\
&\qquad=\epsilon_d\sum_{(m_p)\in\mathcal{M}_{\bm{\epsilon}_Z}[d](K)^{\circ}}
\sum_{\substack{0\leqslant\mu_p<2\varphi(b/d)\\(p\in\mathcal{P}_{\bm{\epsilon}_Z}[d])}}e\bigg(\frac{a}{b^Z(b_Z/d)}\prod_{p\in\mathcal{P}_{\bm{\epsilon}_Z}[d]}p^{\mu_p}\bigg)\prod_{p\in\mathcal{P}_{\bm{\epsilon}_Z}[d]}\epsilon_p^{\mu_p}\\
&\qquad\qquad\qquad+\bigO\Big(\big|\mathcal{M}_{\bm{\epsilon}_Z}[d](K)\setminus(\mathcal{M}_{\bm{\epsilon}_Z}[d](K)^{\circ}+B)\big|\Big)\\
&\qquad=\frac{\epsilon_d}{|\mathcal{P}_{\bm{\epsilon}_Z}[d]|!}\frac{(2\varphi(b/d))^{|\mathcal{P}_{\bm{\epsilon}_Z}[d]|}s^{\ast}(a/(b/d);\bm{\epsilon}_Z)}{\prod_{p\in\mathcal{P}_{\bm{\epsilon}_Z}[d]}(2\varphi(b/d)\log p)}{(\log K)^{|\mathcal{P}_{\bm{\epsilon}_Z}[d]|}}+\bigO_{Z,t_0}\big((\log K)^{|\mathcal{P}_{\bm{\epsilon}_Z}[d]|-1}\big).
\end{align*}
Putting everything together, we have proved that
\begin{align*}
G^{\sharp}_{\bm{\epsilon}_Z}[d](t)-G^{\sharp}_{\bm{\epsilon}_Z}[d](t_0)
&=(t-t_0)e(t_0)
c_{\bm{\epsilon}_Z}[d]\cdot
2\mRe s^{\ast}\Big(\frac{a}{b/d};\bm{\epsilon}_Z\Big)(\log K)^{|\mathcal{P}_{\bm{\epsilon}_Z}[d]|}\\
&\qquad+\bigO_{Z,t_0}\Big((\log K)^{|\mathcal{P}_{\bm{\epsilon}_Z}[d]|-1}\Big(|t-t_0|+K(t-t_0)^2+\frac1K\Big)\Big),
\end{align*}
where
\begin{equation}
\label{cd-toquote}
c_{\bm{\epsilon}_Z}[d]=\epsilon_d/\big(|\mathcal{P}_{\bm{\epsilon}_Z}[d]|!\prod\nolimits_{p\in\mathcal{P}_{\bm{\epsilon}_Z}[d]}(\log p)\big).
\end{equation}
The claim of the lemma follows upon choosing $K=|t-t_0|^{-1}$ to balance the error terms.
\end{proof}

In view of  Lemma~\ref{local-d-lemma}, the behavior of the deterministic path $G^{\sharp}_{\bm{\epsilon}_Z}(t)$ close to various rational points is guided at first by whether
\begin{equation}
\label{nonvanishing-s}
\mRe s^{\ast}(a/b;\bm{\epsilon}_Z)\neq 0.
\end{equation}
or not. In particular, close to a rational point $t_0=a/b\in\mathbb{Q}$, the deterministic path $G^{\sharp}_{\bm{\epsilon}_Z}(t)$ splits as in \eqref{decomposition-by-divisors} as the sum of components $G^{\sharp}_{\bm{\epsilon}_Z}[d](t)$ over $d\mid b_Z$, with the component $G^{\sharp}_{\bm{\epsilon}_Z}[d]$ having a logarithmic singularity of order $|\log|t-t_0||^{|\mathcal{P}_{\bm{\epsilon}_Z}[d]|}$ as $t\to t_0$ whenever $\mRe s^{\ast}(a/(b/d);\bm{\epsilon}_Z)\neq 0$. The property of $G^{\sharp}_{\bm{\epsilon}_Z}(t)$ having a logarithmic singularity at $t_0=a/b$ should \emph{not} be confused with this path having a cusp at $t_0$, which is substantially more delicate and studied in \S\ref{finer-local-subsec}.

The nonvanishing condition \eqref{nonvanishing-s} is immediate to numerically check for every specific pair $(a/b,\bm{\epsilon}_Z)$, and in \S\ref{expsums-nv-mult} we offer a more detailed analysis of this fascinating question in more generality. In particular, if $\mRe s^{\ast}(a/b^Z;\bm{\epsilon}_Z)\neq 0$, then the full path has a logarithmic singularity of order $|\log|t-t_0||^{|\mathcal{P}_{\bm{\epsilon}_Z}|}$; if $\mRe s^{\ast}(a/b^Z;\bm{\epsilon}_Z)=0$ but $\mRe s^{\ast}(a/(b^Z(b_Z/d));\bm{\epsilon}_Z)\neq 0$ for some $d\mid b_Z$, then the component $G^{\sharp}_{\bm{\epsilon}_Z}[d](t)$ has a logarithmic singularity of lesser severity as $t\to t_0$, which may or may not be inherited by the full path $G^{\sharp}_{\bm{\epsilon}_Z}(t)$.

\subsection{Finer local information}
\label{finer-local-subsec}
Already Lemma~\ref{local-d-lemma} clearly shows that the local behavior of $G^{\sharp}_{\bm{\epsilon}_Z}$ near $t_0=a/b$ is, at the first order, guided by whether $\mRe s^{\ast}(a/b^Z;\bm{\epsilon}_Z)\neq 0$ or not. This condition is analyzed in more detail in \S\ref{expsums-nv-mult}. In the case of nonvanishing, we see that
\[ \lim_{t\to t_0\pm}\frac{G^{\sharp}_{\bm{\epsilon}_Z}(t)-G^{\sharp}_{\bm{\epsilon}_Z}(t_0)}{e(t_0)(t-t_0)}=\infty\text{ or }-\infty, \]
according to the sign of $\mRe s^{\ast}(a/b^Z;\bm{\epsilon}_Z)\in\mathbb{R}_{\neq 0}$, indicating (in contrast to the conclusion of Theorem~\ref{thm2}\eqref{thm2-gsharpZ}) that the function $G^{\sharp}_{\bm{\epsilon}_Z}(t)$ is not differentiable at $t=t_0=a/b$ \emph{but} the path $G^{\sharp}_{\bm{\epsilon}_Z}$ appears smooth around the point $G^{\sharp}_{\bm{\epsilon}_Z}(t_0)$, in the sense that all of its Dini quotients vanish (to an arbitrarily high degree on the logarithmic scale). Such points are clearly observable (but not specifically marked) in Figure~\ref{fig: gsharp-eps}; check, for example, neighborhoods of $t_0=0,\frac12,1$.

To identify points $t_0=a/b$ where the path $G^{\sharp}_{\bm{\epsilon}_Z}(t)$ exhibits cusp behavior, we need to be able to consider the cases where $\mRe s^{\ast}(a/b^Z;\bm{\epsilon}_Z)=0$ and analyze lower-order local behavior. From now on, throughout the rest of Section~\ref{sec-atlas}, we consider the case $b_Z=d=1$, so that $a/b^Z=a/b$.

Fix once and for all an even nonnegative function $\phi\in C_c^{\infty}(\mathbb{R})$ such that $\phi(x)=1$ for $x\in [-1,1]$ and $\phi(x)=0$ for $x\not\in[-2,2]$. By a familiar repeated integration by parts argument, for every $m\in\mathbb{N}$, the Mellin transform $\widetilde{\phi}(s)$ has a meromorphic continuation to $\mRe(s)>-m$ given by
\begin{equation}
\label{mellin-continuation}
\widetilde{\phi}(s)=\frac{(-1)^m}{s(s+1)\cdots(s+m-1)}\int_0^{\infty}\phi^{(m)}(x)x^{s+m-1}\,\dd x.
\end{equation}
In particular, we have the asymptotic expansions
\begin{alignat*}{3}
&\widetilde{\phi}(s)=\frac{\phi(0)}s+\sum_{\nu=0}^{\infty}c_{\nu,0}(\phi)s^{\nu}\quad &&(s\to 0),\quad c_{\nu,0}(\phi)=-\frac1{(\nu+1)!}\int_0^{\infty}\phi'(x)(\log x)^{\nu+1}\,\dd x,\\
&\tilde{\phi}(s)=\sum_{\nu=0}^{\infty}c_{\nu,\ell}(\phi)(s-\ell)^{\nu}\quad &&(s\to\ell),\quad c_{\nu,\ell}(\phi)=\frac1{\nu!}\int_0^{\infty}\phi(x)x^{\ell-1}(\log x)^{\nu}\,\dd x.
\end{alignat*}
for every $\ell>0$, as well as the uniform bounds
\begin{equation}
\label{rapid-decay}
\widetilde{\phi}(\ell+it)\ll_m2^{\ell}(2/(1+|t|))^m\quad (|t|\gg 1).
\end{equation}

For a large parameter $K>0$, to be suitably chosen later, we consider the (finite!) sum
\[ I_K(\phi)=\sum_{n\in\mathcal{N}_{\bm{\epsilon}_Z}}\epsilon_n(g_n(t)-g_n(t_0))\phi\Big(\frac{|n|}K\Big). \]
Using the absolutely and uniformly convergent Taylor series expansion for $e(n\tau)$, we have
\begin{equation}
\label{IKphi}
\begin{aligned}
&I_K(\phi)=\int_{t_0}^te(\tau)\sum_{n\in\mathcal{N}_{\bm{\epsilon}_Z}}\epsilon_ne(n\tau)\phi\Big(\frac{|n|}K\Big)\dd\tau\\
&=e(t_0)\sum_{n\in\mathcal{N}_{\bm{\epsilon}_Z}}\epsilon_n\sum_{k=0}^{\infty}\frac{(2\pi in)^k}{k!}e(nt_0)\frac{(t-t_0)^{k+1}}{k+1}\phi\Big(\frac{|n|}K\Big)+\OO\Big((t-t_0)^2\!\sum_{n\in\mathcal{N}_{\bm{\epsilon}_Z}}\!\phi\Big(\frac{|n|}K\Big)\Big)\\
&=e(t_0)(t-t_0)\sum_{k=0}^{\infty}\frac{(2\pi i)^k}{(k+1)!}I_{K,k}(\phi)(t-t_0)^k+\OO\big((t-t_0)^2(\log K)^{|\mathcal{P}_{\bm{\epsilon}_Z}|}\big),
\end{aligned}
\end{equation}
where
\[ I_{K,k}(\phi)=\sum_{n\in\mathcal{N}_{\bm{\epsilon}_Z}^{+}}\epsilon_n\big(e(na/b)+(-1)^ke(-na/b)\big)n^k\phi\Big(\frac nK\Big). \]
Denoting as usual $G(\chi)=\sum^{\ast}_{x\bmod b}\chi(x)e(x/b)$ the unnormalized Gauss sum of a (not necessarily primitive) character $\chi$ modulo $b$ and using the discrete and archimedean Mellin transforms, we find that, for any $\sigma>k$,
\begin{equation}
\label{IKk}
\begin{aligned}
I_{K,k}(\phi)
&=\frac{2}{\varphi(b)}\sum_{\substack{\chi\bmod b\\\chi(-1)=(-1)^k}}\overline{G(\chi)}\chi(-a)\sum_{n\in\mathcal{N}_{\bm{\epsilon}_Z}^{+}}\epsilon_n\chi(n)\frac1{2\pi i}\int_{(\sigma)}\widetilde{\phi}(s)n^k(n/K)^{-s}\,\dd s\\
&=\frac{2}{\varphi(b)}\sum_{\substack{\chi\bmod b\\\chi(-1)=(-1)^k}}\overline{G(\chi)}\chi(-a)\frac1{2\pi i}\int_{(\sigma)}\widetilde{\phi}(s)\prod_{p\in\mathcal{P}_{\bm{\epsilon}_Z}}\Big(1-\frac{\epsilon_p\chi(p)}{p^{s-k}}\Big)^{-1}K^s\,\dd s.
\end{aligned}
\end{equation}
The function $\gamma(\chi,s)=\prod_{p\in\mathcal{P}_{\bm{\epsilon}_Z}}(1-\epsilon_p\chi(p)/p^s)^{-1}$ continues to a meromorphic function with a pole at $s=0$ of order at most $|\mathcal{P}_{\bm{\epsilon}_Z}|$ and an asymptotic expansion
\begin{equation}
\label{gammas}
\begin{gathered}
\gamma(\chi,s)=P_{\bm{\epsilon}_Z}\sum_{\nu=0}^{\infty}\gamma_{\nu}(\chi)s^{-|\mathcal{P}_{\bm{\epsilon}_Z}|+\nu}\quad (s\to 0),\qquad
\gamma_0(\chi)=\mathbf{1}_{\substack{\chi(p)=\epsilon_p\\(p\in\mathcal{P}_{\bm{\epsilon}_Z})}},\\ \gamma_1(\chi)=\frac12\mathbf{1}_{\substack{\chi(p)=\epsilon_p\\(p\in\mathcal{P}_{\bm{\epsilon}_Z})}}\sum_{p\in\mathcal{P}_{\bm{\epsilon}_Z}}(\log p)+\sum_{q\in\mathcal{P}_{\bm{\epsilon}_Z}}\mathbf{1}_{\substack{\chi(p)=\epsilon_p\\(p\in\mathcal{P}_{\bm{\epsilon}_Z}^{q-}),\\\chi(q)\neq\epsilon_q}}(\log q)(1-\chi(q)\epsilon_q)^{-1},
\end{gathered}
\end{equation}
where $P_{\bm{\epsilon}_Z}=1/{\prod_{p\in\mathcal{P}_{\bm{\epsilon}_Z}}(\log p)}$. In addition to the pole at $s=0$, we encounter in the evaluation of $I_{K,k}(\phi)$ poles whenever
\[ s_{k,p,\ell,\chi}=k+\Big(\ell+\frac {m_p}{\varphi(b)}\Big)\frac{2\pi i}{\log p},\quad (p\in\mathcal{P}_{\bm{\epsilon}_Z},\,\,\epsilon_p\chi(p)=e^{2\pi i m_p/\varphi(b)},\,\,\ell\in\mathbb{Z}), \]
and these poles are simple and pairwise distinct except possibly for the pole at $s=k$. When $k\neq 0$, we also encounter a distinct pole of $\widetilde{\phi}(s)$ at $s=0$; this then accounts for all the poles of the integrand in $\mRe s>-1$. We will show that the total contributions of these poles converge absolutely and that the contour of integration in \eqref{IKk} may be shifted to $\mRe s=-\delta$ for a suitable $0<\delta<1$, and we will write
\begin{equation}
\label{IKkphi-decomp}
\begin{aligned}
&I_{K,k}(\phi)=I^0_{K,k}(\phi,b)+I^1_{K,k}(\phi,b)+I^2_{K,k}(\phi,b)+I^3_{K,k}(\phi,b)\\
&\quad:=\frac{2(-1)^k}{\varphi(b)}\sum_{\substack{\chi\bmod b\\\chi(-1)=(-1)^k}}\overline{G(\chi)}\chi(a)\times\\
&\qquad\times\bigg(\Res_{s=k}+\sum_{\substack{p\in\mathcal{P}_{\bm{\epsilon}_Z},\,\ell\in\mathbb{Z}\\s_{k,p,\ell,\chi}\neq k}}\Res_{s=s_{k,p,\ell,\chi}}+\delta_{k\neq 0}\Res_{s=0}+\frac1{2\pi i}\int_{(-\delta)}\bigg)(\widetilde{\phi}(s)\gamma(\chi,s-k)K^s).
\end{aligned}
\end{equation}

We begin by evaluating
\begin{align*}
\Res_{s=k}(\widetilde{\phi}(s)\gamma(\chi,s-k)K^s)
&=\delta_{k0}\phi(0)P_{\bm{\epsilon}_Z}\sum_{\nu=0}^{|\mathcal{P}_{\bm{\epsilon}_Z}|}\gamma_{\nu}(\chi)K^k\frac{(\log K)^{|\mathcal{P}_{\bm{\epsilon}_Z}|-\nu}}{(|\mathcal{P}_{\bm{\epsilon}_Z}|-\nu)!}\\
&\qquad+P_{\bm{\epsilon}_Z}\sum_{\nu=0}^{|\mathcal{P}_{\bm{\epsilon}_Z}|-1}\sum_{\substack{\nu_1,\nu_2\geqslant 0\\\nu_1+\nu_2=\nu}}c_{\nu_1,k}(\phi)\gamma_{\nu_2}(\chi)K^k\frac{(\log K)^{|\mathcal{P}_{\bm{\epsilon}_Z}|-1-\nu}}{(|\mathcal{P}_{\bm{\epsilon}_Z}|-1-\nu)!}.
\end{align*}
The total contribution of these residues to $I_{K,k}(\phi)$ in \eqref{IKk} equals
\begin{equation}
\label{IKk0}
I_{K,k}^0(\phi,b)=K^kP_{\bm{\epsilon}_Z}\sum_{\nu=0}^{|\mathcal{P}_{\bm{\epsilon}_Z}|}\iota_{k,\nu}(\phi,b)\frac{(\log K)^{|\mathcal{P}_{\bm{\epsilon}_Z}|-\nu}}{(|\mathcal{P}_{\bm{\epsilon}_Z}|-\nu)!},
\end{equation}
where $\iota_{k,\nu}(\phi,b)$ are arithmetic functions given by
\begin{equation}
\label{iota-j}
\begin{gathered}
\iota_{k,\nu}(\phi,b)=\delta_{k0}\phi(0)j_{\nu,0}(b)+\sum_{\substack{\nu_1,\nu_2\geqslant 0\\\nu_1+\nu_2=\nu-1}}c_{\nu_1,k}(\phi)j_{\nu_2,k}(b),\\
j_{\nu,k}(b)=\frac{2}{\varphi(b)}\sum_{\substack{\chi\bmod b\\\chi(-1)=(-1)^k}}\overline{G(\chi)}\chi(-a)\gamma_{\nu}(\chi),
\end{gathered}
\end{equation}
and we note that $j_{\nu,k}(b)$ depends only on $k\bmod 2$.

We proceed to estimate each of the remaining contributions $I_{K,k}^j(\phi,b)$ ($1\leqslant j\leqslant 3$). At each of the poles $s_{k,p,\ell,m}\neq k$, we have for $q\in\mathcal{P}_{\bm{\epsilon}_Z}$, $q\neq p$, that
\begin{equation}
\label{Baker-lower}
\begin{aligned}
&\Big|1-\frac{\epsilon_q\chi(q)}{q^{s_{k,p,\ell,\chi}-k}}\Big|
=\big|e^{2\pi i(\log q/\log p)(\ell+m_p/\varphi(b))}-e^{2\pi im_q/\varphi(b)}\big|\\
&\qquad\asymp_b\min_{\ell'\in\mathbb{Z}}\big|(\log q)(\ell\varphi(b)+m_p)-(\log p)(\ell'\varphi(b)+m_q)\big|\gg_{Z,b}\frac1{(1+|\ell|)^C}
\end{aligned}
\end{equation}
for a fixed constant $C$ depending on $Z$ only, by using Baker's theorem on linear forms in logarithms (Proposition~\ref{Baker}). Therefore, also using the uniform bound \eqref{rapid-decay} for $\widetilde{\phi}(s)$, the total contribution of these poles is at most
\begin{align*}
I^1_{K,k}(\phi,b)&\ll_{Z,b}\sum_{\substack{\chi\bmod b\\\chi(-1)=(-1)^k}}\sum_{p\in\mathcal{P}_{\bm{\epsilon}_Z}}\sum_{\ell_p\in\mathbb{Z}\setminus\{0\}}\Big|\mathop{\mathrm{Res}}_{s=s_{k,p,\ell,\chi}}\Big(\widetilde{\phi}(s)\prod_{p\in\mathcal{P}_{\bm{\epsilon}_Z}}\Big(1-\frac{\epsilon_p\chi(p)}{p^{s-k}}\Big)^{-1}K^s\Big)\Big|\\
&\ll_N\sum_{p\in\mathcal{P}_{\bm{\epsilon}_Z}}\sum_{\ell_p\in\mathbb{Z}\setminus\{0\}}\frac{(2K)^k2^{N}}{|\ell_p|^N}|\ell_p|^{C'}\ll_Z(2K)^k,
\end{align*}
by choosing $N\in(C'+2,C'+3]$. We emphasize that this is only a preliminary bound, whose primary role is to ensure absolute convergence; we will be estimating the combined contributions of all $I_{K,k}^1(\phi,b)$ far more delicately. Using the same Proposition~\ref{Baker}, we also see that the total contributions of the integrals over the horizontal segments $[\delta+it,\sigma+it]$ may be bounded by $\ll_{Z,b}(\sigma-\delta)(2K)^k/(2+|t|)^{C'-N}\to 0$ over a suitable sequence of $|t|\to\infty$, and thus we may indeed shift the vertical contour past the line $\mRe s=k$.

When $k\neq 0$, we also collect the contribution from the simple pole at $s=0$ equal to
\[ I^2_{K,k}(\phi,b)=\phi(0)\frac{2}{\varphi(b)}\sum_{\substack{\chi\bmod b\\\chi(-1)=(-1)^k}}\overline{G(\chi)}\chi(-a)\prod_{p\in\mathcal{P}_{\bm{\epsilon}_Z}}(1-p^k\epsilon_p\chi(p))^{-1}. \]
Finally, using the uniform bound \eqref{rapid-decay}, the total contribution of the remaining integrals over $\mRe s=-\delta$ is easily seen to be
\[ I^3_{K,k}(\phi,b)\ll_{Z,b}\int_{\mathbb{R}}\frac{C^kK^{-\delta}}{(1+|t|)^2}\,\dd t\ll C^kK^{-\delta}, \]
where $C=\prod_{p\in\mathcal{P}_{\bm{\epsilon}_Z}}p$.

We now return to \eqref{IKphi} and compute the summands in the decomposition
\[ I_K(\phi)=I_K^0(\phi)+I_K^1(\phi)+I_K^2(\phi)+I_K^3(\phi) \]
corresponding to the total contributions of the four summands in \eqref{IKkphi-decomp}. Using \eqref{IKk0}, we compute
\begin{equation}
\label{IKphi-1}
I^0_K(\phi)=e(t_0)(t-t_0)P_{\bm{\epsilon}_Z}\sum_{\nu=0}^{|\mathcal{P}_{\bm{\epsilon}_Z}|}\iota_{\nu}(\phi,b,K(t-t_0))\frac{(\log K)^{|\mathcal{P}_{\bm{\epsilon}_Z}|-\nu}}{(|\mathcal{P}_{\bm{\epsilon}_Z}|-\nu)!},
\end{equation}
where
\[ \iota_{\nu}(\phi,b,K(t-t_0))=\phi(0)j_{\nu,0}(b)+\!\sum_{\substack{\nu_1,\nu_2\geqslant 0\\\nu_1+\nu_2=\nu-1}}\!\big(c^0_{\nu_1}(\phi,K(t-t_0))j_{\nu_2,0}(b)+c_{\nu_1}^1(\phi,K(t-t_0))j_{\nu_2,1}(b)\big), \]
and
\begin{align*}
c^0_{\nu_1}(\phi,K(t-t_0))&=\sum_{2\mid k}\frac{(2\pi iK)^k}{(k+1)!}(t-t_0)^kc_{\nu_1,k}(\phi)=-\frac1{(\nu_1+1)!}\int_0^{\infty}\phi'(x)(\log x)^{\nu_1+1}\,\dd x\\
&\qquad\qquad+\frac1{\nu_1!}\int_0^{\infty}\phi(x)(\log x)^{\nu_1}\psi_0(xK(t-t_0))\frac{\dd x}x,\\
c^1_{\nu_1}(\phi,K(t-t_0))&=\sum_{2\nmid k}\frac{(2\pi iK)^k}{(k+1)!}(t-t_0)^kc_{\nu_1,k}(\phi)=\frac1{\nu_1!}\int_0^{\infty}\!\phi(x)(\log x)^{\nu_1}\psi_1(xK(t-t_0))\frac{\dd x}x,
\end{align*}
with the kernels $\psi_0$ and $\psi_1$ given by
\begin{equation}
\label{kernels}
\psi_0(x)=\sum_{2\mid k\geqslant 2}\frac{(2\pi ix)^k}{(k+1)!}=\frac{\sin 2\pi x}{2\pi x}-1,\quad \psi_1(x)=\sum_{2\nmid k}\frac{(2\pi ix)^k}{(k+1)!}=\frac{\cos 2\pi x-1}{2\pi ix}.
\end{equation}

The total contribution of terms with the factor $j_{\nu_2,1}(b)$ to the sum in \eqref{IKphi-1} is $j_{\nu_2,1}(b)$ times
\begin{align*}
&\int_0^{\infty}\phi(x)\psi_1(xK(t-t_0))\sum_{\nu=1+\nu_2}^{|\mathcal{P}_{\bm{\epsilon}_Z}|}\frac{(\log K)^{|\mathcal{P}_{\bm{\epsilon}_Z}|-\nu}(\log x)^{\nu-1-\nu_2}}{(|\mathcal{P}_{\bm{\epsilon}_Z}|-\nu)!(\nu-1-\nu_2)!}\,\frac{\dd x}x\\
&\qquad=\frac1{(|\mathcal{P}_{\bm{\epsilon}_Z}|-1-\nu_2)!}\int_0^{\infty}\phi(x)(\log(Kx))^{|\mathcal{P}_{\bm{\epsilon}_Z}|-1-\nu_2}\psi_1(Kx(t-t_0))\,\frac{\dd x}x\\
&\qquad=\sgn(t-t_0)P^1_{|\mathcal{P}_{\bm{\epsilon}_Z}|-1-\nu_2}(\log|t-t_0|)+\OO\Big(\frac{(\log K)^{|\mathcal{P}_{\bm{\epsilon}_Z}|-1-\nu_2}}{K|t-t_0|}\Big),
\end{align*}
where $P^1_{\nu}(L)$ is the degree $\nu$ polynomial given by the absolutely convergent integral
\begin{equation}
\label{Pnu1}
P^1_{\nu}(L)=\frac1{\nu!}\int_0^{\infty}(\log\xi-L)^{\nu}\psi_1(\xi)\,\frac{\dd\xi}{\xi}.
\end{equation}
Similarly, the total contribution of terms with the factor $j_{\nu_2,0}(b)$ is $j_{\nu_2,0}(b)$ times
\begin{align*}
&\phi(0)\frac{(\log K)^{|\mathcal{P}_{\bm{\epsilon}_Z}|-\nu_2}}{(|\mathcal{P}_{\bm{\epsilon}_Z}|-\nu_2)!}-\int_0^{\infty}\phi'(x)\sum_{\nu=1+\nu_2}^{|\mathcal{P}_{\bm{\epsilon}_Z}|}\frac{(\log K)^{|\mathcal{P}_{\bm{\epsilon}_Z}|-\nu}}{(|\mathcal{P}_{\bm{\epsilon}_Z}|-\nu)!}\frac{(\log x)^{\nu-\nu_2}}{(\nu-\nu_2)!}\,\dd x\\
&\qquad\qquad +\int_0^{\infty}\phi(x)\psi_0(xK(t-t_0))\sum_{\nu=1+\nu_2}^{|\mathcal{P}_{\bm{\epsilon}_Z}|}\frac{(\log K)^{|\mathcal{P}_{\bm{\epsilon}_Z}|-\nu}}{(|\mathcal{P}_{\bm{\epsilon}_Z}|-\nu)!}\frac{(\log x)^{\nu-1-\nu_2}}{(\nu-1-\nu_2)!}\,\frac{\dd x}x\\
&\qquad=\frac1{(|\mathcal{P}_{\bm{\epsilon}_Z}|-\nu_2)!}\bigg(-\int_0^{\infty}\phi'(x)(\log(Kx))^{|\mathcal{P}_{\bm{\epsilon}_Z}|-\nu_2}\big(1+\psi_0(xK(t-t_0))\big)\,\dd x\\
&\qquad\qquad -K(t-t_0)\int_0^{\infty}(\log(Kx))^{|\mathcal{P}_{\bm{\epsilon}_Z}|-\nu_2}\phi(x)\psi_0'(xK(t-t_0))\,\dd x\bigg)\\
&\qquad=P^0_{|\mathcal{P}_{\bm{\epsilon}_Z}|-\nu_2}(\log|t-t_0|)+\OO\Big(\frac{(\log K)^{|\mathcal{P}_{\bm{\epsilon}_Z}|-\nu_2-1}}{K|t-t_0|}\Big),
\end{align*}
by a little calculation using integration by parts, where $P^0_{\nu}(L)$ is the degree $\nu$ polynomial given by the conditionally convergent improper integral
\begin{equation}
\label{Pnu0}
P^0_{\nu}(L)=-\frac1{\nu!}\int_0^{\infty}(\log\xi-L)^{\nu}\psi'_0(\xi)\,\dd\xi.
\end{equation}
Indeed, the error term we encounter in the final line equals
\begin{align*}
&\frac1{(|\mathcal{P}_{\bm{\epsilon}_Z}|-\nu_2)!}\int_0^{\infty}(\log(Kx))^{|\mathcal{P}_{\bm{\epsilon}_Z}|-\nu_2}\frac{\dd}{\dd x}\big((1-\phi(x))\big(1+\psi_0(xK(t-t_0))\big)\,\dd x\\
&\qquad\ll_Z\int_1^{\infty}\frac1{xK|t-t_0|}(\log(Kx))^{|\mathcal{P}_{\bm{\epsilon}_Z}|-\nu_2-1}\,\frac{\dd x}x\ll\frac{(\log K)^{|\mathcal{P}_{\bm{\epsilon}_Z}|-\nu_2-1}}{K|t-t_0|}.
\end{align*}

Putting everything together into \eqref{IKphi-1}, we find that
\begin{align*}
I_K^0(\phi)&=e(t_0)(t-t_0)P_{\bm{\epsilon}_Z}\sum_{\nu=0}^{|\mathcal{P}_{\bm{\epsilon}_Z}|}j_{\nu,0}(b)P^0_{|\mathcal{P}_{\bm{\epsilon}_Z}|-\nu}(\log|t-t_0|)\\
&\qquad +e(t_0)|t-t_0|P_{\bm{\epsilon}_Z}\sum_{\nu=0}^{|\mathcal{P}_{\bm{\epsilon}_Z}|-1}j_{\nu,1}(b)P^1_{|\mathcal{P}_{\bm{\epsilon}_Z}|-1-\nu}(\log|t-t_0|)+\OO_{b,Z}\Big(\frac{(\log K)^{|\mathcal{P}_{\bm{\epsilon}_Z}|-1}}{K}\Big).
\end{align*}

Next, we proceed to estimate $I_K^1(\phi)$, the combined contribution of the poles at $s=s_{k,p,\ell,\chi}\neq k$. Using \eqref{mellin-continuation} with any $m\geqslant 1$, we find that, for every $s_{k,p,\ell,\chi}=k+i\gamma$, $k\geqslant 0$, $\gamma=\gamma_{p,\ell,\chi}\neq 0$,
\begin{align*}
&\Res_{s=k+i\gamma}\Big(\widetilde{\phi}(s)\prod_{p\in\mathcal{P}_{\bm{\epsilon}_Z}}\Big(1-\frac{\epsilon_p\chi(p)}{p^{s-k}}\Big)^{-1}K^s\Big)=\frac1{\log p}\prod_{\substack{q\in\mathcal{P}_{\bm{\epsilon}_Z}\\q\neq p}}\Big(1-\frac{\epsilon_q\chi(q)}{q^{i\gamma}}\Big)^{-1}K^{k+i\gamma}\times\\
&\qquad\qquad\times\frac{(-1)^m}{(k+i\gamma)(k+i\gamma+1)\cdots(k+i\gamma+m-1)}\int_0^{\infty}\phi^{(m)}(x)x^{k+i\gamma+m-1}\,\dd x.
\end{align*}
Using the elementary integral expression for the beta function and substituting into \eqref{IKkphi-decomp}, we find that
\[ I_K^1(\phi)=e(t_0)(t-t_0)\frac{(-1)^m}{(m-1)!}\sum_{\kappa\in\{0,1\}}\frac{2}{\varphi(b)}\sum_{\substack{\chi\bmod b\\\chi(-1)=(-1)^{\kappa}}}\overline{G(\chi)}\chi(-a)J_{\kappa,\chi,K}(\phi), \]
where
\[ J_{\kappa,\chi,K}(\phi)=\mathop{\sum\sum}_{\substack{p\in\mathcal{P}_{\bm{\epsilon}_Z},\,\ell\in\mathbb{Z}\\\gamma_{p,\ell,\chi}\neq 0}}
\frac1{\log p}\prod_{\substack{q\in\mathcal{P}_{\bm{\epsilon}_Z}\\q\neq p}}\Big(1-\frac{\epsilon_q\chi(q)}{q^{i\gamma_{p,\ell,\chi}}}\Big)^{-1}J_{\kappa,\chi,K}^{(p,\ell)}(\phi), \]
and
\begin{align*}
J_{\kappa,\chi,K}^{(p,\ell)}(\phi)&=\int_0^1y^{k+i\gamma_{p,\ell,\chi}-1}(1-y)^{m-1}
\int_0^{\infty}\phi^{(m)}(x)x^{k+i\gamma_{p,\ell,\chi}+m-1}\times\\
&\qquad\times
\sum_{k\equiv\kappa\,(2)}\frac{(2\pi i)^k}{(k+1)!}(t-t_0)^kK^{k+i\gamma_{p,\ell,\chi}}\,\dd x\,\dd y\\
&=\int_0^1\int_0^{\infty}y^{-1}(1-y)^{m-1}\phi^{(m)}(x)x^{m-1}(xKy)^{i\gamma_{p,\ell,\chi}}\widetilde{\psi_{\kappa}}(2\pi(t-t_0)xKy)\,\dd x\,\dd y,
\end{align*}
where $\widetilde{\psi_0}=\psi_0+1$ and $\widetilde{\psi_1}=\psi_1$, with the kernels $\psi_0$ and $\psi_1$ as in \eqref{kernels}. Using the integration by parts in the $x$-variable $N$ times, we find that
\begin{align*}
J_{\kappa,\chi,K}^{(p,\ell)}(\phi)&=\frac{(-1)^N}{(i\gamma_{p,\ell,\chi})\cdots(i\gamma_{p,\ell,\chi}+N-1)}\int_0^1\int_0^{\infty}y^{-1}(1-y)^{m-1}\times\\
&\qquad\times (xKy)^{i\gamma_{p,\ell,\chi}}x^{N-1}\Big(\frac{\dd}{\dd x}\Big)^{(N)}\big(\phi^{(m)}(x)x^{m}\widetilde{\psi_{\kappa}}(2\pi(t-t_0)xKy)\big)\,\dd x\,\dd y\\
&\ll_{m,N}\frac{(|t-t_0|K)^{N-1}}{|\gamma_{p,\ell,\chi}|^N}. \end{align*}
For clarity, we note that this calculation can be performed equally well with any $m\geqslant 1$ (even with $m=1$); if we choose $m$ sufficiently large, then a similar conclusion can be reached by integration by parts in the $y$-variable $N\leqslant m-1$ times. Inserting these estimates above and using Baker's bound as in \eqref{Baker-lower}, we find that
\[ I^1_K(\phi)\ll_{m,N,b,Z} |t-t_0|\sum_{\chi\bmod b}\mathop{\sum\sum}_{\substack{p\in\mathcal{P}_{\bm{\epsilon}_Z},\,\ell\in\mathbb{Z}\\\gamma_{p,\ell,\chi}\neq 0}}(1+|\ell|)^{C'}\frac{(|t-t_0|K)^{N-1}}{(1+|\ell|)^N}\ll_{b,Z}|t-t_0|(|t-t_0|K)^{C''}, \]
by choosing $N\in(C'+2,C'+3]$ and $C''=C'+2+\mathbf{1}_{|t-t_0|K\geqslant 1}$.

Finally, we address the contributions of $I_K^2(\phi)$ and $I_K^3(\phi)$. Indeed, $I_K^2(\phi)$ is a smooth function given by the absolutely and convergent series
\[ I_K^2(\phi)=e(t_0)(t-t_0)\sum_{k=1}^{\infty}\frac{(2\pi i)^k}{(k+1)!}I_{K,k}^2(\phi)(t-t_0)^k\ll_{Z,b}|t-t_0|^{2}\phi(0), \]
and
\[ I_K^3(\phi)\ll|t-t_0|\sum_{k=0}^{\infty}\frac{(2\pi C)^k}{(k+1)!}|t-t_0|^kK^{-\delta}\ll |t-t_0|K^{-\delta}, \] where we may choose, say, $\delta=\frac12$ for simplicity.

In total, we have proved that
\begin{align*}
I_K(\phi)&=
e(t_0)(t-t_0)P_{\bm{\epsilon}_Z}\sum_{\nu=0}^{|\mathcal{P}_{\bm{\epsilon}_Z}|}j_{\nu,0}(b)P^0_{|\mathcal{P}_{\bm{\epsilon}_Z}|-\nu}(\log|t-t_0|)\\
&\qquad +e(t_0)|t-t_0|P_{\bm{\epsilon}_Z}\sum_{\nu=0}^{|\mathcal{P}_{\bm{\epsilon}_Z}|-1}j_{\nu,1}(b)P^1_{|\mathcal{P}_{\bm{\epsilon}_Z}|-1-\nu}(\log|t-t_0|)\\
&\qquad +\OO_{b,Z}\Big(|t-t_0|\Big(\frac{(\log K)^{|\mathcal{P}_{\bm{\epsilon}_Z}|-1}}{K|t-t_0|}+(|t-t_0|K)^{C''}+|t-t_0|+K^{-1/2}\Big)\Big).
\end{align*}
Critically, this yields a nontrivial asymptotic when $K$ is appreciably larger than $1/|t-t_0|$. Now, arguing as in the proof of Lemma~\ref{local-d-lemma}, we can also estimate
\begin{align*}
G^{\sharp}_{\bm{\epsilon}_Z}(t)-G^{\sharp}_{\bm{\epsilon}_Z}(t_0)-I_K(\phi)
&= \sum_{n\in\mathcal{N}_{\bm{\epsilon}_Z}}\epsilon_n(g_n(t)-g_n(t_0))\Big(1-\phi\Big(\frac{|n|}K\Big)\Big)\\
&\ll\mathop{\sum\nolimits^{\mathrm{dy}}}\limits_{K'\geqslant K}
\sum_{\substack{n\in\mathcal{N}_{\bm{\epsilon}_Z}\\K'\leqslant |n|<2K'}}\frac1{K'}\ll\frac{(\log K)^{|\mathcal{P}_{\bm{\epsilon}_Z}|-1}}{K}.
\end{align*}

Choosing
\[ K=\frac1{|t-t_0|}|\log |t-t_0||^{\delta},\quad \delta=\frac{|\mathcal{P}_{\bm{\epsilon}_Z}|-1}{C'+4}, \]
we conclude that
\begin{equation}
\label{concluding-asymptotic}
\begin{aligned}
G^{\sharp}_{\bm{\epsilon}_Z}(t)-G^{\sharp}_{\bm{\epsilon}_Z}(t_0)&=e(t_0)c^{+}(t_0)\cdot(t-t_0)\ell^{|\mathcal{P}_{\bm{\epsilon}_Z}|}\\
&\qquad +e(t_0)\big(c'(t_0)\cdot(t-t_0)+c^{-}(t_0)\cdot |t-t_0|\big)\ell^{|\mathcal{P}_{\bm{\epsilon}_Z}|-1}\\
&\qquad+\OO_{b,Z}\big(|t-t_0|\ell^{|\mathcal{P}_{\bm{\epsilon}_Z}|-1-\delta}\big),
\end{aligned}
\end{equation}
where $\ell=|\log|t-t_0||$, and the constants $c^{+}(t_0)$, $c'(t_0)$, and $c^{-}(t_0)$ (which also depend on $\bm{\epsilon}_Z$) may be read off from \eqref{gammas}, \eqref{iota-j}, \eqref{Pnu0}, and \eqref{Pnu1} as
\begin{equation}
\label{constants}
\begin{aligned}
&c^{+}(t_0)=
c_{\bm{\epsilon}_Z}s^{+}(a/b,\bm{\epsilon}_Z),\quad c^{-}(t_0)=
-c_{\bm{\epsilon}_Z}|\mathcal{P}_{\bm{\epsilon}_Z}|\frac{\pi}{2i}s^{-}(a/b,\bm{\epsilon}_Z),\\
&c'(t_0)=
c_{\bm{\epsilon}_Z}|\mathcal{P}_{\bm{\epsilon}_Z}|\Big(c^{\prime +}_{\bm{\epsilon}_Z}s^{+}(a/b,\bm{\epsilon}_Z)+\sum_{q\in\mathcal{P}_{\bm{\epsilon}_Z}}(\log q)s_q^{+}(a/b,\bm{\epsilon}_Z)\Big),\\
\end{aligned}
\end{equation}
where we also used the classical {evaluations
\[ \int_0^{\infty}\frac{\cos 2\pi x-1}{2\pi x}\frac{\dd x}x=-\frac{\pi}2,\quad \int_0^{\infty}(\log x)\Big(\frac{\sin 2\pi x}{2\pi x}-1\Big)'\,\dd x=\log 2\pi+\gamma-1, \]
kept in mind from \eqref{cd-toquote} the notation $c_{\bm{\epsilon}_Z}=c_{\bm{\epsilon}_Z}[1]=1/(|\mathcal{P}_{\bm{\epsilon}_Z}|!\prod_{p\in\mathcal{P}_{\bm{\epsilon}_Z}}(\log p))=P_{\bm{\epsilon}_Z}/|\mathcal{P}_{\bm{\epsilon}_Z}|!$ and additionally denoted
\[ c^{\prime +}_{\bm{\epsilon}_Z}=-(\log 2\pi+\gamma-1)+(1/2)\sum\nolimits_{p\in\mathcal{P}_{\bm{\epsilon}_Z}}\log p,\]
noted that $\ell=-\log|t-t_0|$ in the ranges in which the leading terms in \eqref{concluding-asymptotic} are dominant,} and set, for $\varepsilon\in\{\pm\}=\{\pm 1\}$ and $q\in\mathcal{P}_{\bm{\epsilon}_Z}$,
\begin{equation}
\label{sepsilons}
\begin{aligned}
s^{\varepsilon}(a/b,\bm{\epsilon}_Z)&=\frac2{\varphi(b)}\sum_{\substack{\chi\bmod b\\\chi(-1)=\varepsilon\\\chi(p)=\epsilon_p\,(p\in\mathcal{P}_{\bm{\epsilon}_Z})}}\overline{G(\chi)}\chi(-a)\\
s^{\varepsilon}_q(a/b,\bm{\epsilon}_Z)&=\frac2{\varphi(b)}\sum_{\substack{\chi\bmod b\\\chi(-1)=\varepsilon\\\chi(p)=\epsilon_p\,(p\in\mathcal{P}_{\bm{\epsilon}_Z}^{q-})\\\chi(q)\neq\epsilon_q}}(1-\chi(q)\epsilon_q)^{-1}\overline{G(\chi)}\chi(-a).
\end{aligned}
\end{equation}

For future reference we record the following simple lemma. In particular, it confirms that (as it must be) the leading constants in \eqref{concluding-asymptotic} and Lemma~\ref{local-d-lemma} match. To simplify the notation, we write $\bm{\epsilon}_Z^{q-}=\bm{\epsilon}_Z\mathbf{1}_{\{q\}^c}$ for the sequence $(\epsilon_p\delta_{p\neq q})_{p\leqslant Z}$, and we introduce the related exponential sum
\begin{equation}
\label{sq-tilde}
\widetilde{s}_q(a/b;\bm{\epsilon}_Z)=\frac{-\varphi(b)}{(2\varphi(b))^{|\mathcal{P}_{\bm{\epsilon}_Z}|}}\sum_{\substack{(m_p)_{p\in\mathcal{P}_{\bm{\epsilon}_Z}}\\0\leqslant m_p<2\varphi(b)}}\Big\{\frac{m_q+1/2}{\varphi(b)}\Big\}e\Big(\frac ab\prod_{p\in\mathcal{P}_{\bm{\epsilon}_Z}}p^{m_p}\Big)\prod_{p\in\mathcal{P}_{\bm{\epsilon}_Z}}\epsilon_p^{m_p},
\end{equation}
where $\{x\}$ is the familiar sawtooth function defined by $\{x\}=x-\lfloor x\rfloor-1/2$.

\begin{lem}
\label{constants-lemma}
For every $t_0=a/b\in [0,1]\cap\mathbb{Q}$ with $(a,b)=(\prod_{p\in\mathcal{P}_{\bm{\epsilon}_Z}}p,b)=1$, the constants $c^{+}(t_0),c'(t_0),c^{-}(t_0)\in\mathbb{R}$ shown in \eqref{constants} satisfy
\begin{align*}
c^{+}(t_0)&=c_{\bm{\epsilon}_Z}\cdot 2\mRe s^{\ast}(a/b;\bm{\epsilon}_Z),\quad c^{-}(t_0)=-c_{\bm{\epsilon}_Z}|\mathcal{P}_{\bm{\epsilon}_Z}|\cdot\pi\mIm s^{\ast}(a/b;\bm{\epsilon}_Z),\\
c'(t_0)&=c_{\bm{\epsilon}_Z}|\mathcal{P}_{\bm{\epsilon}_Z}|\cdot 2\mRe\Big(c^{\prime +}_{\bm{\epsilon}_Z}s^{\ast}(a/b;\bm{\epsilon}_Z)+\sum_{q\in\mathcal{P}_{\bm{\epsilon}_Z}}(\log q)\widetilde{s}_q(a/b;\bm{\epsilon}_Z)\Big).
\end{align*}
\end{lem}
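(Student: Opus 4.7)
The plan is to derive each identity by converting the character-sum expressions in \eqref{sepsilons} into the normalized exponential sums \eqref{sast-def} and \eqref{sq-tilde} via orthogonality of Dirichlet characters modulo $b$, with the constraints $\chi(p)=\epsilon_p$ and the parity selector $\chi(-1)=\pm 1$ encoded by suitable finite Fourier expansions.

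First I would use the substitution $x=-ay^{-1}$ in $\overline{G(\chi)}=\sumast_{x\bmod b}\overline{\chi(x)}e(-x/b)$ to rewrite $\overline{G(\chi)}\chi(-a)=\sumast_{y\bmod b}\chi(y)e(ay^{-1}/b)$, and analogously $\overline{G(\chi)}\chi(a)=\sumast_{y}\chi(y)e(-ay^{-1}/b)$. Next, since $\chi(p)\epsilon_p$ is a root of unity of order dividing $2\varphi(b)$ that equals $1$ precisely when $\chi(p)=\epsilon_p$, the geometric-series identity gives $\mathbf{1}_{\chi(p)=\epsilon_p}=\frac{1}{2\varphi(b)}\sum_{m_p=0}^{2\varphi(b)-1}(\chi(p)\epsilon_p)^{m_p}$. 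Multiplying across $p\in\mathcal{P}_{\bm{\epsilon}_Z}$, exchanging the $y$- and $\chi$-sums, and applying $\sum_\chi\chi(u)=\varphi(b)\mathbf{1}_{u\equiv 1\bmod b}$ to force $y^{-1}\equiv\prod_p p^{m_p}\bmod b$, I obtain $s^{+}+s^{-}=2s^{\ast}(a/b;\bm{\epsilon}_Z)$, and from the $e(-ay^{-1}/b)$-variant, $s^{+}-s^{-}=2\overline{s^{\ast}(a/b;\bm{\epsilon}_Z)}$. These combine to give $s^{+}=2\mRe s^{\ast}$ and $s^{-}=2i\mIm s^{\ast}$, from which the formulas for $c^{+}(t_0)$ and $c^{-}(t_0)$ follow on substituting into \eqref{constants}.

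The identity for $c'(t_0)$ would follow by applying the same scheme to $s_q^{\pm}$, provided one can expand $(1-\chi(q)\epsilon_q)^{-1}\mathbf{1}_{\chi(q)\neq\epsilon_q}$ as a finite Fourier polynomial in $\chi(q)\epsilon_q$. Using the classical identity $\sum_{m=0}^{N-1}m\zeta^m=N/(\zeta-1)$ valid for $\zeta^N=1$, $\zeta\neq 1$ (with $N=2\varphi(b)$) together with $\mathbf{1}_{\zeta=1}=\frac{1}{N}\sum_{m=0}^{N-1}\zeta^m$, one obtains $(1-\chi(q)\epsilon_q)^{-1}\mathbf{1}_{\chi(q)\neq\epsilon_q}=\frac{1}{2\varphi(b)}\sum_{m_q=0}^{2\varphi(b)-1}(\tfrac{2\varphi(b)-1}{2}-m_q)(\chi(q)\epsilon_q)^{m_q}$. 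A piecewise evaluation on $[0,\varphi(b))$ and $[\varphi(b),2\varphi(b))$ shows that $\tfrac{2\varphi(b)-1}{2}-m_q$ equals $-\varphi(b)\{(m_q+1/2)/\varphi(b)\}$ up to the residual $\tfrac{\varphi(b)}{2}(-1)^{\lfloor m_q/\varphi(b)\rfloor}$; summed against $(\chi(q)\epsilon_q)^{m_q}$ and using $q^{\varphi(b)}\equiv 1\bmod b$ together with the substitution $m_q=m_q'+\varphi(b)$ in the upper half of the range, this residual contributes the factor $1-\epsilon_q^{\varphi(b)}$, which vanishes for $\varphi(b)$ even (the relevant case $b\ge 3$). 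The orthogonality computation then proceeds as in the first case and delivers $s_q^{+}+s_q^{-}=2\widetilde{s}_q(a/b;\bm{\epsilon}_Z)$, hence $s_q^{+}=2\mRe\widetilde{s}_q$, and the asserted formula for $c'(t_0)$ follows by linearity.

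The main obstacle is matching the coefficient $\tfrac{2\varphi(b)-1}{2}-m_q$ to the specific sawtooth weight $-\varphi(b)\{(m_q+1/2)/\varphi(b)\}$ appearing in \eqref{sq-tilde}: the identity is elementary, but requires careful bookkeeping around the discontinuity of the sawtooth at $m_q=\varphi(b)$ and hinges on $\epsilon_q^{\varphi(b)}=1$, which holds throughout the range $b\ge 3$ of interest.
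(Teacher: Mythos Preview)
Your proposal is correct. For $c^{+}(t_0)$ and $c^{-}(t_0)$ your argument is essentially the paper's: both encode $\chi(p)=\epsilon_p$ via the geometric-series indicator $\frac{1}{2\varphi(b)}\sum_{m_p}(\chi(p)\epsilon_p)^{m_p}$ and then apply orthogonality of characters modulo $b$; your preliminary substitution $x=-ay^{-1}$ in $\overline{G(\chi)}$ is a cosmetic variant of the paper's direct use of the identity $\frac{1}{\varphi(b)}\sum_{\chi}\overline{G(\chi)}\chi(n)=e(-n/b)$.

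For $c'(t_0)$ the routes diverge. The paper evaluates $\sum_{\chi(q)\neq\epsilon_q}(1-\chi(q)\epsilon_q)^{-1}\overline{G(\chi)}\chi(-a)$ by a regularization argument: it inserts a parameter $r<1$, replaces $(1-\chi(q)\epsilon_q)^{-1}$ by the convergent geometric series $\sum_{k\geqslant 0}r^k(\chi(q)\epsilon_q)^k$, subtracts the singular $\chi(q)=\epsilon_q$ contribution, and then computes the limit $r\to 1^{-}$ via l'H\^opital to obtain the sawtooth weight $-\varphi(b)\{(n+1/2)/\varphi(b)\}$. Your approach is more elementary and purely algebraic: the closed-form identity $\sum_{m=0}^{N-1}m\zeta^m=N/(\zeta-1)$ for $\zeta^N=1$, $\zeta\neq 1$, combined with $\mathbf{1}_{\zeta=1}=\frac{1}{N}\sum_m\zeta^m$, gives the finite Fourier polynomial $(1-\zeta)^{-1}\mathbf{1}_{\zeta\neq 1}=\frac{1}{N}\sum_m(\tfrac{N-1}{2}-m)\zeta^m$ directly, with no limiting process. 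Your splitting $\tfrac{2\varphi(b)-1}{2}-m_q=-\varphi(b)\{(m_q+1/2)/\varphi(b)\}+\tfrac{\varphi(b)}{2}(-1)^{\lfloor m_q/\varphi(b)\rfloor}$ and the observation that the residual cancels via $\epsilon_q^{\varphi(b)}=1$ for $b\geqslant 3$ are both correct. The advantage of your route is that it avoids the analytic detour; the paper's regularization, on the other hand, makes the appearance of the sawtooth coefficient somewhat more transparent as a limiting average. Both arguments share the same harmless edge case at $b=1$ (where $\varphi(b)$ is odd), which falls outside the range where the lemma is actually used.
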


\begin{proof}
From orthogonality of characters, we have that
\begin{align*}
s^{\varepsilon}(a/b,\bm{\epsilon}_Z)=&\frac2{\varphi(b)}\sum_{\chi\bmod b}\overline{G(\chi)}\chi(-a)\Big(\frac12(1+\chi(-1)\varepsilon)\Big)\prod_{p\in\mathcal{P}_{\bm{\epsilon}_Z}}\Big(\frac1{2\varphi(b)}\sum_{0\leqslant m_p<2\varphi(b)}(\chi(p)\epsilon_p)^{m_p}\Big)\\
=&\frac1{(2\varphi(b))^{|\mathcal{P}_{\bm{\epsilon}_Z}|}}\bigg(\sum_{\substack{(m_p)_{p\in\mathcal{P}_{\bm{\epsilon}_Z}}\\0\leqslant m_p<2\varphi(b)}}e\Big(\frac ab\prod_{p\in\mathcal{P}_{\bm{\epsilon}_Z}}p^{m_p}\Big)\prod_{p\in\mathcal{P}_{\bm{\epsilon}_Z}}\epsilon_p^{m_p}\\
&+\varepsilon\sum_{\substack{(m_p)_{p\in\mathcal{P}_{\bm{\epsilon}_Z}}\\0\leqslant m_p<2\varphi(b)}}e\Big(-\frac ab\prod_{p\in\mathcal{P}_{\bm{\epsilon}_Z}}p^{m_p}\Big)\prod_{p\in\mathcal{P}_{\bm{\epsilon}_Z}}\epsilon_p^{m_p}\bigg)=s^{\ast}(a/b;\bm{\epsilon}_Z)+\varepsilon s^{\ast}(-a/b;\bm{\epsilon}_Z).
\end{align*}
The first two statements of the lemma follows by substituting this expression into \eqref{constants}.

On the other hand, by using the geometric series expansion and l'H\^opital's rule, we can evaluate
\begin{align*}
\tilde{s}_q(a/b,\epsilon_q)&:=\sum_{\substack{\chi\bmod b\\\chi(q)\neq\epsilon_q}}(1-\chi(q)\epsilon_q)^{-1}\overline{G(\chi)}\chi(-a)\\
&=\lim_{r\to 1-}\bigg(\sum_{\chi\bmod b}\frac1{1-r\chi(q)\epsilon_q}\overline{G(\chi)}\chi(-a)-\sum_{\substack{\chi\bmod b\\\chi(q)=\epsilon_q}}\frac1{1-r}\overline{G(\chi)}\chi(-a)\bigg)\\
&=\lim_{r\to 1-}\bigg(\sum_{k=0}^{\infty}r^{k\varphi(b)}\sum_{n=0}^{\varphi(b)-1}\sum_{\chi\bmod b}r^n\chi(q)^n\epsilon_q^n\overline{G(\chi)}\chi(-a)\\
&\qquad\qquad\qquad-\frac1{1-r}\frac1{\varphi(b)}\sum_{\chi\bmod b}\sum_{n=0}^{\varphi(b)-1}\chi(q)^n\epsilon_q^n\overline{G(\chi)}\chi(-a)\bigg)\\
&=\lim_{r\to 1-}\bigg(\varphi(b)\sum_{n=0}^{\varphi(b)-1}\frac{r^n}{1-r^{\varphi(b)}}\epsilon_q^ne\Big(\frac{aq^n}{b}\Big)-\frac1{1-r}\sum_{n=0}^{\varphi(b)-1}\epsilon_q^ne\Big(\frac{aq^n}b\Big)\bigg)\\
&=-\varphi(b)\sum_{n\bmod\varphi(b)}\Big\{\frac{n+1/2}{\varphi(b)}\Big\}\epsilon_q^ne\Big(\frac{aq^n}b\Big).
\end{align*}
Using this evaluation, we can then argue as above that
\begin{align*}
s^{\varepsilon}_q(a/b,\bm{\epsilon}_Z)=&\frac2{\varphi(b)}\frac1{2(2\varphi(b))^{|\mathcal{P}_{\bm{\epsilon}_Z}|-1}}\sum_{\substack{(m_p)_{p\in\mathcal{P}_{\bm{\epsilon}_Z}^{q-}}\\0\leqslant m_p<2\varphi(b)}}\Big(\tilde{s}_q\Big(\frac{a}{b}\prod_{p\in\mathcal{P}_{\bm{\epsilon}_Z}^{q-}}p^{m_p},\epsilon_q\Big)\prod_{p\in\mathcal{P}_{\bm{\epsilon}_Z}^{q-}}\epsilon_p^{m_p}\\
&+\varepsilon\tilde{s}_q\Big(-\frac{a}{b}\prod_{p\in\mathcal{P}_{\bm{\epsilon}_Z}^{q-}}p^{m_p},\epsilon_q\Big)\prod_{p\in\mathcal{P}_{\bm{\epsilon}_Z}^{q-}}\epsilon_p^{m_p}\Big)=\widetilde{s}_q(a/b;\bm{\epsilon}_Z)+\varepsilon\widetilde{s}_q(-a/b;\bm{\epsilon}_Z),
\end{align*}
and the third statement again follows by invoking \eqref{constants}.
\end{proof}

Putting everything together completes the proof of the following proposition, the crowning achievement of this subsection.

\begin{prop}
\label{two-terms-prop}
There exists a $\delta>0$, depending only on $Z$, such that, for every $t_0=a/b\in[0,1]\cap\mathbb{Q}$ with $(a,b)=1$ and $(\prod_{p\in\mathcal{P}_{\bm{\epsilon}_Z}}p,b)=1$, the function $G^{\sharp}_{\bm{\epsilon}_Z}(t)$ satisfies
\begin{align*}
G^{\sharp}_{\bm{\epsilon}_Z}(t)-G^{\sharp}_{\bm{\epsilon}_Z}(t_0)&=e(t_0)c^{+}(t_0)\cdot(t-t_0)\ell^{|\mathcal{P}_{\bm{\epsilon}_Z}|}\\
&\qquad +e(t_0)\big(c'(t_0)\cdot(t-t_0)+c^{-}(t_0)\cdot |t-t_0|\big)\ell^{|\mathcal{P}_{\bm{\epsilon}_Z}|-1}\\
&\qquad+\OO_{b,Z}\big(|t-t_0|\ell^{|\mathcal{P}_{\bm{\epsilon}_Z}|-1-\delta}\big),
\end{align*}
for certain constants $c^{+}(t_0),c'(t_0),c^{-}(t_0)\in\mathbb{R}$ shown in \eqref{constants} and Lemma~\ref{constants-lemma} and $\ell=|\log|t-t_0||$.
\end{prop}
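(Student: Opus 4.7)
The proof is essentially carried out already by the Mellin-analytic computation developed throughout this subsection, so the plan is to organize those pieces into a coherent argument. First, I would introduce the smooth truncation $I_K(\phi) = \sum_{n \in \mathcal{N}_{\bm{\epsilon}_Z}} \epsilon_n (g_n(t) - g_n(t_0)) \phi(|n|/K)$ with $\phi$ as fixed above, and via dyadic decomposition observe that
\[ G^{\sharp}_{\bm{\epsilon}_Z}(t) - G^{\sharp}_{\bm{\epsilon}_Z}(t_0) = I_K(\phi) + \OO\bigl((\log K)^{|\mathcal{P}_{\bm{\epsilon}_Z}|-1}/K\bigr). \]
Taylor-expanding $e(n\tau)$ inside $I_K(\phi)$ reduces the task to evaluating each $I_{K,k}(\phi)$ in \eqref{IKk}, where the relevant factor is $\gamma(\chi,s-k) = \prod_{p \in \mathcal{P}_{\bm{\epsilon}_Z}}(1 - \epsilon_p\chi(p)/p^{s-k})^{-1}$, holomorphic away from $s = k$ (a pole of order $|\mathcal{P}_{\bm{\epsilon}_Z}|$) and the accumulating simple poles $s_{k,p,\ell,\chi}$.

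Next, I would shift the Mellin contour from $\mRe s = \sigma > k$ past these poles to $\mRe s = -\delta$ for a small $\delta > 0$, using the decomposition \eqref{IKkphi-decomp}. The residue at $s = k$ contributes the principal polynomial part; summing over $k$ (separated by parity) and collecting terms of comparable size in $\log K$ produces, after a careful interchange of summation and integration, the polynomial contributions governed by the kernels $\psi_0$ and $\psi_1$ in \eqref{kernels} and the polynomials $P^0_\nu$, $P^1_\nu$ in \eqref{Pnu0}--\eqref{Pnu1}. The even kernel $\psi_0$ gives the $(t-t_0)$-type contribution with coefficient $c^+(t_0) \ell^{|\mathcal{P}_{\bm{\epsilon}_Z}|} + c'(t_0)\ell^{|\mathcal{P}_{\bm{\epsilon}_Z}|-1}$, while the odd kernel $\psi_1$ produces the $|t-t_0|$-type contribution with coefficient $c^-(t_0)\ell^{|\mathcal{P}_{\bm{\epsilon}_Z}|-1}$. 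The residues at $s = 0$ (for $k \geq 1$) and the integral along $\mRe s = -\delta$ then contribute $\OO(|t-t_0|^2)$ and $\OO(|t-t_0|K^{-\delta})$, respectively.

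The key technical obstacle is controlling the infinite family of non-principal poles $s_{k,p,\ell,\chi}$, whose denominators involve differences $1 - \epsilon_q\chi(q)/q^{i\gamma_{p,\ell,\chi}}$ that can become arbitrarily close to zero. Here Baker's theorem on linear forms in logarithms (Proposition~\ref{Baker}) saves the day, providing the lower bound $\gg_{b,Z}(1+|\ell|)^{-C}$ as in \eqref{Baker-lower}. Combining this with repeated integration by parts in the resulting beta-integral representation of each $J_{\kappa,\chi,K}^{(p,\ell)}(\phi)$ yields a saving $(|t-t_0|K)^{N-1}/|\gamma_{p,\ell,\chi}|^N$ that, after choosing $N$ in terms of $C$, guarantees absolute convergence and furnishes the bound $\OO(|t-t_0|(|t-t_0|K)^{C''})$.

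Finally, I would optimize the truncation by taking $K = |t-t_0|^{-1}|\log|t-t_0||^\delta$ with $\delta = (|\mathcal{P}_{\bm{\epsilon}_Z}|-1)/(C'+4)$, so that all three error sources (the tail, the non-principal residues, and the shifted integral) balance and merge into $\OO_{b,Z}(|t-t_0|\ell^{|\mathcal{P}_{\bm{\epsilon}_Z}|-1-\delta})$. The explicit forms of $c^+(t_0)$, $c^-(t_0)$, and $c'(t_0)$ in terms of the complete exponential sums $s^\ast(a/b;\bm{\epsilon}_Z)$ and $\widetilde{s}_q(a/b;\bm{\epsilon}_Z)$ then follow directly from Lemma~\ref{constants-lemma} via orthogonality of characters modulo $b$.
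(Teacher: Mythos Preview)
Your proposal is correct and follows essentially the same route as the paper: the argument in \S\ref{finer-local-subsec} proceeds exactly by introducing the smooth truncation $I_K(\phi)$, Taylor-expanding into the $I_{K,k}(\phi)$, shifting the Mellin contour and collecting residues via the decomposition \eqref{IKkphi-decomp}, invoking Baker's theorem for the non-principal poles, and finally choosing $K = |t-t_0|^{-1}\ell^{\delta}$ with $\delta = (|\mathcal{P}_{\bm{\epsilon}_Z}|-1)/(C'+4)$. The identification of the constants through Lemma~\ref{constants-lemma} is likewise the paper's final step.
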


\subsection{Nonvanishing and multiplicativity of exponential sums}
\label{expsums-nv-mult}

The exponential sums $s^{\ast}(a/b;\bm{\epsilon}_Z)$ are related (though not always in a straightforward fashion) to the generalized Gauss power sums, defined for any rational number $a/q$ with $(a,q)=1$, $d\mid\varphi(q)$, and $\iota\in\{0,1\}$ as 
\[ \sigma_d^{\iota}(a/q)=\sumast_{x\bmod q}\Big(\frac xq\Big)^{\iota}e\Big(\frac{ax^d}q\Big). \]
We summarize this relationship in the following lemma. To succinctly state the multiplicativity property of the sums $s^{\ast}(a/m;\bm{\epsilon}_Z)$, we introduce for $\delta\mid\varphi(m)$ and $m\mid m'$ slightly more general sums
\[ s(a/m;\bm{\epsilon}_Z)[\delta,m']=\sum_{\substack{(m_p)\,(p\leqslant Z,\,\epsilon_p\neq 0,\,p\nmid m')\\0\leqslant m_p<2\varphi(m)}}e\Big(\frac am\prod_{\substack{p\leqslant Z,\\\epsilon_p\neq 0,\,p\nmid m'}}p^{\delta m_p}\Big)\prod_{\substack{p\leqslant Z\\\epsilon_p\neq 0,\,p\nmid m'}}\epsilon_p^{m_p}, \]
so that $s^{\ast}(a/m,\bm{\epsilon}_Z)=s(a/m,\bm{\epsilon}_Z)[1,m]$.

\begin{lem}
\label{big-lemma-sums}
\begin{enumerate}
\item\label{gp-sum} For $q=p^k$ for an odd prime $p$ and $k\geqslant 1$,
\[ \sigma_d^{\iota}(a/q)\neq 0\quad\text{if and only if}\quad p^{k-1}\mid d\mid\varphi(p^k)/(1+\iota'), \]
where $\iota'=1$ if $\iota=1$ and $2\nmid k$, and $\iota'=0$ otherwise. In particular, for $q=p$:
\begin{itemize}
\item $\sigma_d^{\iota}(a/p)\neq 0$ for $d\mid (p-1)/(1+\iota)$,
\item $\sigma_d^{\iota}(a/p)\in\mathbb{R}$ whenever $-1\in(\mathbb{Z}/p\mathbb{Z})^{\times d(1+\iota)}$,
\item $\sigma_d^1(a/p)\in i\mathbb{R}$ whenever $2\nmid d$ and $p\equiv 3\bmod 4$,
\item for $d=1$, $\sigma_1^0(a/p)=-1$ and $\sigma_1^1(a/p)=(a/p)\sqrt{p}G(p)$.
\end{itemize}

\item\label{item2} If $2\nmid\ord_mp$ for some $p\leqslant Z$ with $p\nmid m$ and $\epsilon_p=-1$, then $s^{\ast}(a/m;\bm{\epsilon}_Z)=0$.  Further, if $m=q=\ttp^k$ for an odd prime $\ttp$ and $k\geqslant 2$, $s^{\ast}(a/q;\bm{\epsilon}_Z)=0$ unless $\ord_qp\mid(\ttp-1)$ for all $p\leqslant Z$ with $\epsilon_p\neq 0$.
 
 \item\label{item-mult}
For $m=m_1m_2$ with $(m_1,m_2)=1$ and $\delta=(\varphi(m_1),\varphi(m_2))$,
\[ s(a/m;\bm{\epsilon}_Z)[\delta',m']=\frac1{4\delta}\sum_{\substack{(\mu_p)\,(p\leqslant Z,\\\epsilon_p\neq 0,\,p\nmid m')\\0\leqslant\mu_p<2\delta}}\bm{\epsilon}_Z^{\bm{\mu}}s\Big(\frac{a\overline{m_2}\bm{p}_Z^{\delta'\bm{\mu}}}{m_1};|\bm{\epsilon}_Z|\Big)[2\delta\delta',m']s\Big(\frac{a\overline{m_1}\bm{p}_Z^{\delta'\bm{\mu}}}{m_2};|\bm{\epsilon}_Z|\Big)[2\delta\delta',m']. \]
where $m_i\overline{m_i}\equiv 1\pmod{m_{3-i}}$ and
\[ \bm{\epsilon}_Z^{\bm{\mu}}=\prod_{\substack{p\leqslant Z\\\epsilon_p\neq 0,\,p\nmid m'}}\epsilon_p^{\mu_p},\quad \bm{p}_Z^{\delta'\bm{\mu}}=\prod_{\substack{p\leqslant Z\\\epsilon_p\neq 0,\,p\nmid m'}}p^{\delta'\mu_p}. \]

\item\label{item3} For $q$ an odd prime power, let (noting that then $\mathcal{P}_{\bm{\epsilon}_Z}\subseteq (\mathbb{Z}/q\mathbb{Z})^{\times d_q}$)
\[ d_q=\frac{\varphi(q)}{\lcm\big[(\ord_qp)_{p\in\mathcal{P}_{\bm{\epsilon}_Z}}\big]},\quad \mathcal{P}_{\bm{\epsilon}_Z}^{+}(q)=\mathcal{P}_{\bm{\epsilon}_Z}\cap (\mathbb{Z}/q\mathbb{Z})^{\times 2d_q},\,\, \mathcal{P}_{\bm{\epsilon}_Z}^{-}(q)=\mathcal{P}_{\bm{\epsilon}_Z}\setminus\mathcal{P}_{\bm{\epsilon}_Z}^{+}(q). \]
Then,
\[ s^{\ast}(a/q,\bm{\epsilon}_Z)=\begin{cases} (1/\varphi(q))\sigma_{d_q}^0(a/q),&\mathcal{P}_{\bm{\epsilon}_Z}^{-}(q)=\emptyset,\,\bm{\epsilon}=\{1\},\\ 
(1/\varphi(q))\sigma_{d_q}^{\iota}(a/q),&\bm{\epsilon}|_{\mathcal{P}_{\bm{\epsilon}_Z}^{-}(q)}=\{(-1)^{\iota}\},\,\bm{\epsilon}|_{\mathcal{P}_{\bm{\epsilon}_Z}^{+}(q)}=\{1\},\\
0,&\text{otherwise}.\end{cases} \]

\end{enumerate}
\end{lem}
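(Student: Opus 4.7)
The plan is to exploit the cyclic structure of $(\zz/p^k\zz)^{\times}$ throughout parts (1), (2), and (4), which reduce the exponential sums to structural questions about a chosen generator $g$; part (3) is a separate CRT bookkeeping exercise and is where the main obstacle lies.

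For Part (1), fix a primitive root $g$ of $p^k$ with $\Leg{g}{p}=-1$. Writing $x=g^y$ converts $\sigma_d^{\iota}(a/p^k)$ into $\sum_{y}(-1)^{\iota y}e(ag^{dy}/p^k)$. The substitution $z\equiv dy\pmod{\varphi(p^k)}$ (using $\gcd(d,\varphi(p^k))=d$) collapses the $y$-fibers to $d\cdot(-1)^{\iota y_0}$ when $\varphi(p^k)/d$ is even and to $0$ otherwise, recovering the condition $d\mid\varphi(p^k)/(1+\iota')$. Separately, the residual exponential sum $\sum_{h\in\langle g^d\rangle}e(ah/p^k)$ vanishes precisely when $\langle g^d\rangle$ meets the Sylow $p$-subgroup $H_1=\{x\in(\zz/p^k\zz)^{\times}:x\equiv 1\pmod p\}$ nontrivially, which happens iff $p^{k-1}\nmid d$: partition $\langle g^d\rangle$ into cosets of $H_0=\langle g^d\rangle\cap H_1$ and observe $\sum_{x\in H_0}e(ax/p^k)=0$ since $H_0$ is an arithmetic progression modulo $p^k$ with common difference $p^{k-s}$, $s\geqslant 1$. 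The four special-value assertions for $q=p$ then follow by direct computation.

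Part (2) uses two successive splittings. For the first claim, write $m_p=r_p+s_p\ord_m p$ with $s_p\in[0,2\varphi(m)/\ord_m p)$: the inner $s_p$-sum equals $2\varphi(m)/\ord_m p$ when $\epsilon_p^{\ord_m p}=1$ and vanishes (being a sum of an even number of alternating $\pm 1$'s) otherwise. For the second claim, suppose $\ord_q p_0\nmid\ttp-1$ for some $p_0\in\mathcal{P}_{\bm{\epsilon}_Z}$; then $\ttp\mid\ord_q p_0$, and $h_0:=p_0^{\ord_q p_0/\ttp}$ is a nontrivial element of the unique order-$\ttp$ subgroup, so $h_0=1+c_0\ttp^{k-1}$ with $c_0\not\equiv 0\pmod\ttp$. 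Splitting $r_{p_0}=r'+(\ord_q p_0/\ttp)j$ with $j\in[0,\ttp)$ produces the factor $e(ac_0j\cdot(\text{unit})/\ttp)$; after verifying $\epsilon_{p_0}^{(\ord_q p_0/\ttp)j}=1$ (automatic whenever the first claim has not already triggered vanishing), orthogonality of the additive character modulo $\ttp$ in the $j$-sum delivers the claim. For Part (4), the very definition of $d_q$ gives $\langle\mathcal{P}_{\bm{\epsilon}_Z}\rangle=(\zz/q\zz)^{\times d_q}$, so the $d_q$-to-$1$ map $x\mapsto x^{d_q}$ expresses $\sigma_{d_q}^{\iota}(a/q)$ as $d_q$ times a weighted sum over $\langle\mathcal{P}_{\bm{\epsilon}_Z}\rangle$; iterating Parts (1)--(2) reduces $s^{\ast}(a/q,\bm{\epsilon}_Z)$ to the same form weighted by the character $p\mapsto\epsilon_p$. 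The two match precisely when that character extends consistently to $\Leg{\cdot}{q}^{\iota}$ on $\langle\mathcal{P}_{\bm{\epsilon}_Z}\rangle$, which is the stated sign-coherence condition on $\mathcal{P}_{\bm{\epsilon}_Z}^{\pm}(q)$; otherwise, the sum vanishes by character orthogonality.

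The main obstacle is Part (3): the multiplicativity formula's $\delta,\delta',\bm{p}^{\delta'\bm{\mu}}$ bookkeeping requires reindexing each $m_p=\mu_p+2\delta n_p$ (using $\varphi(m)/\delta=\lcm(\varphi(m_1),\varphi(m_2))$) so that the $n_p$-sums truly split across the two CRT factors after applying the decomposition $e((a/m)\cdot)=e((a\overline{m_2}/m_1)\cdot)\,e((a\overline{m_1}/m_2)\cdot)$. The $|\bm{\epsilon}_Z|$ appearing in the inner sums arises because $\epsilon_p^{2\delta n_p}=1$, which lets the full $\bm{\epsilon}_Z^{\bm{\mu}}$ factor be pulled outside while leaving sign-less inner sums; the delicate routine part is verifying that the ranges, orders, and normalizations all line up exactly as claimed.
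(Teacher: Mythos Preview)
Your treatment of Parts (2), (3), and (4) is essentially the same as the paper's: Part~(2) is the same shift/splitting in the $m_p$-variable; Part~(3) is the same CRT reindexing $m_p=\mu_p+2\delta n_p$ (the paper writes the $n_p$-part via a further CRT splitting but the content is identical); and Part~(4) is the same primitive-root computation, with your ``the character $p\mapsto\epsilon_p$ extends to $(\cdot/q)^{\iota}$'' being a conceptual rephrasing of the paper's explicit analysis of when $\sum k_pm_p\equiv 0\pmod{\varphi(q)}$ forces $\sum m_p\varepsilon_p\equiv 0\pmod 2$. You also misidentify the main obstacle: Part~(3) is pure bookkeeping, and the paper dispatches it in a few lines.

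The genuine gap is in Part~(1), and it is precisely the direction you do not prove. Your coset argument correctly shows that $\sum_{h\in\langle g^d\rangle}e(ah/p^k)=0$ when $\langle g^d\rangle\cap H_1$ is nontrivial (this is in fact cleaner than the paper's appeal to $p$-adic stationary phase for that direction). But you assert the converse---non-vanishing when $p^{k-1}\mid d$, in particular the first bulleted item $\sigma_d^{\iota}(a/p)\neq 0$---as ``direct computation,'' which it is not. These are Gaussian periods (or differences of two such), and their non-vanishing requires a real argument. The paper supplies one: working in $\mathbb{Z}[\zeta_p]$ with the prime $\varpi_p=1-\zeta_p$, it shows $\sigma_d^0(a/p)\equiv -1\pmod{\varpi_p}$ and, more delicately, $\sigma_d^1(a/p)\not\equiv 0\pmod{\varpi_p^{(p-1)/(2d)+1}}$ via the binomial expansion of $(1-\varpi_p)^{ax^d}$. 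An alternative you could have used is Galois-theoretic: the $d$ periods $\eta_0,\dots,\eta_{d-1}$ are a full Galois orbit summing to $-1$, so none can be zero (and for $\iota=1$ one is comparing two distinct conjugates among the $2d$ periods of order $2d$). Either way, your sketch as written does not close this case, and without it the ``if and only if'' in Part~(1) is only half proved.
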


\begin{proof}
Item \eqref{gp-sum} is essentially elementary (and probably well known). For $k=1$, we may write
\[ \sigma_d^{\iota}(a/p)=\sumast_{x\bmod p}(x/p)^{\iota}e(1/p)^{ax^d}. \]
If $\iota=1$, then changing variables $x\mapsto xg^{(p-1)/d}$ (for $g$ an arbitrary primitive root modulo $p$) shows that $\sigma_d^1(a/p)=0$ unless $2\mid(p-1)/d$, that is, $d\mid (p-1)/2$.

Now, the root of unity $\zeta_p=e(1/p)$ generates the cyclotomic field $\mathbb{Q}(\zeta_p)$, in which $\varpi_p=1-\zeta_p$ is a prime of absolute norm $p\sim\varpi_p^{p-1}$. From $\zeta_p\equiv 1\pmod{\varpi_p}$ we conclude that
\[ \sigma_d^0(a/p)\equiv p-1\equiv -1\pmod{\varpi_p}, \]
and thus in particular $\sigma_d^0(a/p)\neq 0$. For $\iota=1$ and $d\mid (p-1)/2$, we have by a slightly more involved argument that
\begin{align*}
\sigma_d^0(a/p)=\sumast_{x\bmod p}(1-\varpi_p)^{ax^d}
&\equiv \sumast_{x\bmod p}\sum_{0\leqslant i\leqslant (p-1)/d}\binom{ax^d}i(-\varpi_p)^i\\
&\equiv (p-1)\Big(1+\frac{(-a\varpi_p)^{(p-1)/d}}{((p-1)/d)!}\Big)\pmod{\varpi_p^{(p-1)/d+1}},
\end{align*}
from which we conclude that
\[ \sigma_d^1(a/p)=-\sigma_d^0(a/p)+\sigma_{2d}^0(a/p)\neq 0\pmod{\varpi_p^{(p-1)/(2d)+1}}, \]
so in particular $\sigma_d^1(a/p)\neq 0$. As for the realness claim, if $\iota=1$ and $-1\equiv y^{2d}\bmod p$, we see by making a change of variable $x\mapsto xy^2$ that
\[ \sigma_1^1(a/p)=\sumast_{x\bmod p}\Big(\frac xp\Big)e\Big(\frac{ax^dy^{2d}}p\Big)=\overline{\sigma_1^1(a/p)}. \]
The case of $\iota=0$ and $-1\in(\mathbb{Z}/p\mathbb{Z})^{\times d}$ is similar (even easier). That $\sigma_d^1(a/p)=-\overline{\sigma_d^1(a/p)}$ when $2\nmid d$ and $p\equiv 3\bmod 4$ follows by a change of variable $x\mapsto -x$. Finally, the claims identifying $\sigma_1^{\iota}(a/p)$ as the Ramanujan and Gauss sums modulo $p$ are immediate.

For $k\geqslant 2$, we may write $d=p^{\kappa}\delta$ for some $0\leqslant\kappa\leqslant k-1$ and $\delta\mid(p-1)$. Denoting by $g$ a (fixed but otherwise arbitrary) primitive root modulo $p^k$, $g^{p^{\kappa}}$ is a primitive root modulo $p^{k-\kappa}$, from which it is easy to see that
\[ \sigma_d^{\iota}(a/q)=\sum_{0\leqslant i<\varphi(p^k)}\Big(\frac{g^{p^{\kappa}\delta i}}{p^k}\Big)e\Big(\frac{ag^{p^{\kappa}\delta i}}{p^k}\Big)=p^{\kappa}\sumast_{x\bmod p^{k-\kappa}}\Big(\frac{x}{p}\Big)^{\iota'}e\Big(\frac{ax^{\delta}}{p^{k-\kappa}}\Big). \]
When $\kappa=k-1$, we find that $\sigma_d^{\iota}(a/q)$ equals $p^{k-1}\sigma_{\delta}^{\iota'}(a/p)$ and thus doesn't vanish by what we already proved. On other hand, for $\kappa\leqslant k-2$, it follows from the $p$-adic method of stationary phase (see, for example, \cite[Lemma 1]{MilicevicZhang2023}) that the above sum vanishes, since no summands satisfy the stationary phase condition $\delta\cdot ax^{\delta-1}\equiv 0\pmod{p^{\lfloor (k-\kappa)/2\rfloor}}$.

We proceed to item \eqref{item2}. If $2\nmid\ord_mp$ for some $p\leqslant Z$ with $p\nmid m$ and $\epsilon_p=-1$, then we see by shifting variables $m_p\mapsto m_p+\ord_mp$ in \eqref{sast-def} that $s^{\ast}(a/m;\bm{\epsilon}_Z)=0$. Now, let $m=\ttp^k$ for some $k\geqslant 2$, and write $\delta=\ord_{\ttp}p$, $p^{\delta}=1+\ttp^ef$ for some $p\leqslant Z$ with $\epsilon_p\neq 0$, $p\neq\ttp$, $e\geqslant 1$, $\ttp\nmid f$. If $2\nmid\delta$ and $\epsilon_p=-1$, then, noting that $p^{\delta\ttp^{k-e}}\equiv 1\pmod{\ttp^k}$ and changing variables $m_p\mapsto m_p+\delta\ttp^{k-e}$ in \eqref{sast-def}, we again see that $s^{\ast}(a/m;\bm{\epsilon}_Z)=0$; thus, from now on we may assume that $2\mid\delta$ or $\epsilon_p=1$.
If $e\leqslant k-2$, then for $1\leqslant\kappa\leqslant k-e$ we have that
\begin{gather*}
p^{m_p+\delta \ttp^{\kappa}t}
\equiv p^{m_p}\exp_{\ttp}\big(\ttp^{\kappa}t\log_{\ttp}(1+\ttp^ef)\big)
\equiv p^{m_p}+f_1(m_p)\cdot\delta\ttp^{\kappa}t\pmod{\ttp^{2\kappa+2e}},\\
f_1(m_p)=\ttp^e\cdot p^{m_p}\delta^{-1}\frac{\log_{\ttp}(1+\ttp^ef)}{\ttp^e},\quad \ttp^e\exmid f_1(m_p).
\end{gather*}
Using the above with any $\kappa\geqslant k/2-e$ and applying the $\ttp$-adic method of stationary phase (see \cite[Lemma 1]{MilicevicZhang2023} and note that $\epsilon_p^{m_p+\delta\ttp^{\kappa}t}=\epsilon_p^{m_p}$), we conclude that the summation in \eqref{sast-def} may be restricted to $m_p$ satisfying $\ttp^{k-\kappa}\mid f_1(m_p)$; thus, picking any $\max(1,k/2-e)\leqslant\kappa\leqslant k-e-1$ we conclude that $s^{\ast}(a/m;\bm{\epsilon}_Z)=0$. If $e=k-1$, only a minor tweak is needed:
\[ p^{m_p+(\ttp-1)t}=p^{m_p}(1+\ttp^{k-1}f)^{((\ttp-1)/\delta)t}\equiv p^{m_p}+p^{m_p+k-1}((\ttp-1)/\delta)f\cdot t\bmod{\ttp^k}, \]
whence by shifting $m_p\mapsto m_p+(\ttp-1)t$ in \eqref{sast-def} we analogously find that
\[ s^{\ast}(a/m;\bm{\epsilon}_Z)=\frac1{\ttp}\sum_{\substack{(m_p)^{\ast}_{p\leqslant Z}\\0\leqslant m_p<2\varphi(\ttp^k)}}e\Big(\frac a{\ttp^k}\prodast_{p\leqslant Z}p^{m_p}\Big)\prodast_{p\leqslant Z}\epsilon_p^{m_p}\sum_{t\bmod\ttp}e\Big(\frac a{\ttp}\prodast_{p\leqslant Z}p^{m_p}\cdot\frac{\ttp-1}{\delta}f\cdot t\Big)=0. \]
In conclusion, $s^{\ast}(a/\ttp^k;\bm{\epsilon}_Z)=0$ unless $p^{\delta}\equiv 1\pmod{\ttp^k}$, which is to say that $\ord_qp\mid (\ttp-1)$, and this condition must hold for every $p\leqslant Z$ with $\epsilon_p\neq 0$.

Item \eqref{item-mult} is a direct consequence of the Chinese Remainder Theorem. Indeed, the value of each summand in \eqref{sast-def} depends only on $m_p$ modulo $2\varphi(m_1)\varphi(m_2)/\delta$. Writing every $0\leqslant m_p<2\varphi(m_1)\varphi(m_2)/\delta$ as
\[ m_p=\mu_p+2\delta\big(\,\overline{\varphi(m_2)/\delta}(\varphi(m_2)/\delta)k_{p1}+\overline{\varphi(m_1)/\delta}(\varphi(m_1)/\delta)k_{p2}\,\big) \]
where $0\leqslant\mu_p<2\delta$ and $0\leqslant k_{pi}<\varphi(m_i)/\delta$ and inverses are modulo $\varphi(m_i)/\delta$,
we have that
\[ \prod_{\substack{p\leqslant Z,\\\epsilon_p\neq 0,\,p\nmid m'}}p^{\delta' m_p}\equiv \prod_{\substack{p\leqslant Z,\\\epsilon_p\neq 0,\,p\nmid m'}}p^{\delta' \mu_p}\cdot \prod_{\substack{p\leqslant Z,\\\epsilon_p\neq 0,\,p\nmid m'}} p^{\delta\delta' u_ik_{pi}}\bmod{m_i}, \]
where $u_i=\overline{\varphi(m_{3-i})/\delta}(\varphi(m_{3-i})/\delta)$ are independent of $k_{pi}$ and satisfy $(u_i,\varphi(m_i)/\delta)=1$. Thus,
\[ a\bm{p}_Z^{\delta'\bm{m}}\equiv am_2\overline{m_2}\bm{p}_Z^{\delta'\bm{\mu}}\bm{p}_Z^{2\delta\delta'u_1\bm{k}_1}+am_1\overline{m_1}\bm{p}_Z^{\delta'\bm{\mu}}\bm{p}_Z^{2\delta\delta'u_2\bm{k}_2}\pmod{m_1m_2}; \]
the claim follows immediately from this upon summing $e(a\bm{p}_Z^{\delta'\bm{m}}/m)\bm{\epsilon}_Z^{\bm{m}}$ over $0\leqslant\mu_p<2\delta$ and $0\leqslant k_{pi}<\varphi(m_i)/\delta$, which encounters $(1/\delta)s(a/m;\bm{\epsilon}_Z)[\delta',m']$ and the product of $\bm{\epsilon}_Z^{\bm{\mu}}$ with two sums $(1/2\delta)s(a\overline{m_{3-i}}\bm{p}_Z^{\delta'\bm{\mu}}/m_i;|\bm{\epsilon}_Z|)[2\delta\delta',m']$.

Finally, we turn our attention to item \eqref{item3}. Fixing an arbitrary primitive root $g$ modulo $q$ and writing $p=g^{k_p}$, $\epsilon_p=(-1)^{\varepsilon_p}$, we have that $d_q=\gcd[(k_p,\varphi(q))_{p\in\mathcal{P}_{\bm{\epsilon}_Z}}]$, and, by definition,
$(2\varphi(q))^{|\mathcal{P}_{\bm{\epsilon}_Z}|}s^{\ast}(a/q;\bm{\epsilon}_Z)$ equals
\begin{align*}
&\sum_{0\leqslant k<\varphi(q)}e(ag^k/q)\sum_{\substack{(m_p)^{\ast}_{p\leqslant Z}:0\leqslant m_p<2\varphi(q)\\\sum^{\ast}_{p\leqslant Z}k_pm_p\equiv k\bmod{\varphi(q)}}}(-1)^{\sum^{\ast}_{p\leqslant Z}m_p\varepsilon_p}\\
&\qquad=\Bigg(\sum_{\substack{0\leqslant k<\varphi(q)\\ d_q\mid k}}
(-1)^{\sum^{\ast}_{p\leqslant Z}m_p^{\circ}(k)\varepsilon_p}e(ag^k/q)\Bigg)
\Bigg(\sum_{\substack{(m_p)^{\ast}_{p\leqslant Z}:0\leqslant m_p<2\varphi(q)\\\sum^{\ast}_{p\leqslant Z}k_pm_p\equiv 0\bmod\varphi(q)}}(-1)^{\sum^{\ast}_{p\leqslant Z}m_p\varepsilon_p}\Bigg),
\end{align*}
where, for every $0\leqslant k<\varphi(q)$ with $d_q\mid k$, we denote by $(m_p^{\circ}(k))$ an arbitrary particular solution of the congruence $\sum^{\ast}_{p\leqslant Z}k_pm_p^{\circ}(k)\equiv k\bmod\varphi(q)$. The indexing set in the latter sum forms an additive group modulo $2\varphi(q)$ (a disjoint union of $2^{|\mathcal{P}_{\bm{\epsilon}_Z}|}$ additive groups modulo $\varphi(q)$) of combined order $M:=(2\varphi(q))^{|\mathcal{P}_{\bm{\epsilon}_Z}|-1}\cdot 2d_q$, and the sum equals $M$ or $0$ according to whether the implication
\[ \textstyle\sum^{\ast}_{p\leqslant Z}k_pm_p\equiv 0\bmod{\varphi(q)}\quad\Rightarrow\quad \sum^{\ast}_{p\leqslant Z}m_p\varepsilon_p\equiv 0\bmod 2 \]
holds or not. Let $2^{\varphi_q}\exmid\varphi(q)$; then, by adjusting the values of $m_p$ modulo $\varphi(q)/2^{\varphi_q}$ using the Chinese Remainder Theorem, we see that the above implication holds if and only if we have a valid implication
\[ \textstyle\sum^{\ast}_{p\leqslant Z}k_pm_p\equiv 0\bmod{2^{\varphi_q}}\quad\Rightarrow\quad \sum^{\ast}_{p\leqslant Z}m_p\varepsilon_p\equiv 0\bmod 2. \]
Further, denote $2^{\delta_q}\exmid d_q$, so that $\mathcal{P}_{\bm{\epsilon}_Z}^{-}=\{p\leqslant Z:2^{\delta_q}\exmid k_p\}\neq\emptyset$. If $\delta_q=\varphi_q$, the above can clearly hold only if all $\varepsilon_p=0$; otherwise, by dividing through the first congruence by $2^{\delta_q}$ and
adjusting the values of $(m_p)_{p\in\mathcal{P}_{\bm{\epsilon}_Z}^{-}}$ by even amounts, the above implication holds if and only if
\[\textstyle\sum_{p\in\mathcal{P}_{\bm{\epsilon}_Z}^{-}}m_p\equiv 0\bmod 2\quad\Rightarrow\quad \sum_{p\in\mathcal{P}_{\bm{\epsilon}_Z}^{-}}m_p\varepsilon_p+\sum_{p\in\mathcal{P}_{\bm{\epsilon}_Z}\setminus\mathcal{P}_{\bm{\epsilon}_Z}^{-}}m_p\varepsilon_p\equiv 0\bmod 2. \]
The latter plainly holds if and only if there exists a $\varepsilon\in\{0,1\}$ such that $\varepsilon_p=\varepsilon$ for all $p\in\mathcal{P}_{\bm{\epsilon}_Z}^{-}$ and $\varepsilon_p=0$ for all $p\in\mathcal{P}_{\bm{\epsilon}_Z}\setminus\mathcal{P}_{\bm{\epsilon}_Z}^{-}$, and in this case
\[ (2\varphi(q))^{|\mathcal{P}_{\bm{\epsilon}_Z}|}s^{\ast}(a/q;\bm{\epsilon}_Z)=\sum_{\substack{0\leqslant k<\varphi(q)\\d_q\mid k}}(-1)^{\varepsilon(k/2^{\delta_q})}e\Big(\frac{ag^k}{q}\Big)=\frac{M}{d_q}\sumast_{x\bmod q}\Big(\frac xq\Big)^{\varepsilon}e\Big(\frac{ax^{d_q}}q\Big). \qedhere\]
\end{proof}

\subsection{Locating a dense set of singularities}
\label{main-cor-subsec}

In this subsection, we use our results from \S\S\ref{limits-local}--\ref{expsums-nv-mult} to prove the following culminating proposition of Section~\ref{sec-atlas}, which provides a collection of points $t_0\in[0,1]\cap\mathbb{Q}$ at which the path $G^{\sharp}_{\bm{\epsilon}_Z}(t)$ has a cusp.

\begin{prop}
\label{main-cor}
Assume that $\bm{\epsilon}_Z$ is not identically zero or one.
Let $q>Z$ be an odd prime such that $q\equiv 3\bmod 4$ and
\begin{equation}
\label{ArP}
\Big(\frac pq\Big)=\epsilon_p\quad (p\leqslant Z,\,\epsilon_p\neq 0).
\end{equation}
Then, for every $1\leqslant a\leqslant q-1$,
\[ \mRe s^{\ast}(a/q;\bm{\epsilon}_Z)=0,\quad \mIm s^{\ast}(a/q;\bm{\epsilon}_Z)\neq 0, \]
and, for $t_0=a/q$, the path $G^{\sharp}_{\bm{\epsilon}_Z}(t)$ satisfies
\begin{equation}
\label{local-asymp-cor}
\frac{G^{\sharp}_{\bm{\epsilon}_Z}(t)-G^{\sharp}_{\bm{\epsilon}_Z}(t_0)}{t-t_0}\sim c_{\pm}(t_0)\cdot e(t_0)|\log(t-t_0)|^{|\mathcal{P}_{\bm{\epsilon}_Z}|\-1}\quad (t\to t_0\pm)
\end{equation}
where $c_{\pm}(t_0)\in\mathbb{R}$ are given explicitly in \eqref{cpm}.

The curve $G^{\sharp}_{\bm{\epsilon}_Z}(t)$ has a cusp at $t=t_0$ as long as $c_{+}(t_0)c_{-}(t_0)<0$. Specifically:
\begin{enumerate}
\item\label{cor-item1} If $\epsilon_p=-1$ for at least two $p\leqslant Z$, then \eqref{local-asymp-cor} holds with
\[ c_{\pm}(t_0)=\pm c^{-}(t_0) \]
with $c^{-}(t_0)=1/(|\mathcal{P}_{\bm{\epsilon}_Z}|-1)!\prod_{p\in\mathcal{P}_{\bm{\epsilon}_Z}}(\log p))\cdot\mIm s^{\ast}(a/q;\bm{\epsilon}_Z)\in\mathbb{R}_{\neq 0}$,
and the path $G^{\sharp}_{\bm{\epsilon}_Z}(t)$ has a cusp at $t=t_0$. In particular, the set of such $t_0=a/q\in [0,1]\cap\mathbb{Q}$ (over different values of $q$) is everywhere dense in $[0,1]$.
\item\label{cor-item2} If $\epsilon_p=-1$ for exactly one $p=p_1\leqslant Z$ and the residues of all $p\in\mathcal{P}_{\bm{\epsilon}_Z}\setminus\{p_1\}$ modulo $q$ generate all quadratic residues modulo $q$:
\begin{equation}
\label{all-QRs}
\big\langle \{p\bmod q:p\leqslant Z,\,\epsilon_p=1\}\big\rangle=(\mathbb{Z}/q\mathbb{Z})^{\times 2},
\end{equation}
then \eqref{local-asymp-cor} holds with
\[ c_{\pm}(t_0)=\pm(1\pm\delta_{p_1,t_0})c^{-}(t_0), \]
where $c^{-}(t_0)\in\mathbb{R}_{\neq 0}$ is as above, $\delta_{p_1,t_0}=(2/\pi)(a/p_1)\log p_1/\sqrt{p_1}$ satisfies $|\delta_{p_1,t_0}|<1/2$, and the path $G^{\sharp}_{\bm{\epsilon}_Z}(t)$ has a cusp at $t=t_0$.
\end{enumerate}
\end{prop}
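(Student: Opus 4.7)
The plan is to chain Proposition~\ref{two-terms-prop}, Lemma~\ref{constants-lemma}, and Lemma~\ref{big-lemma-sums} to reduce matters to explicit exponential-sum evaluations, the novelty being a sign-flipping involution on the summation tuples $(m_r)$ that controls $\mRe\tilde{s}_p(a/q;\bm{\epsilon}_Z)$.

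I would begin by invoking Lemma~\ref{big-lemma-sums}\eqref{item3} to identify $s^{\ast}(a/q;\bm{\epsilon}_Z)$ with a generalized Gauss power sum. The hypotheses force every $p\in\mathcal{P}_{\bm{\epsilon}_Z}$ with $\epsilon_p=-1$ to be a nonresidue mod $q$, so $2\mid\ord_qp$; combined with $q-1=2m$ ($m$ odd), this makes $d_q$ odd, and a direct check identifies $\mathcal{P}^{+}_{\bm{\epsilon}_Z}(q)$ precisely with the residues in $\mathcal{P}_{\bm{\epsilon}_Z}$ (where $\epsilon_p=1$) and $\mathcal{P}^{-}_{\bm{\epsilon}_Z}(q)$ with the nonresidues (where $\epsilon_p=-1$), placing us in the case $\iota=1$. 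Lemma~\ref{big-lemma-sums}\eqref{gp-sum} then yields $\sigma_{d_q}^{1}(a/q)\in i\mathbb{R}\setminus\{0\}$, giving $\mRe s^{\ast}(a/q;\bm{\epsilon}_Z)=0$ and $\mIm s^{\ast}(a/q;\bm{\epsilon}_Z)\neq 0$. Since $q>Z$, Proposition~\ref{two-terms-prop} applies; its leading $\ell^{|\mathcal{P}_{\bm{\epsilon}_Z}|}$-term vanishes, and dividing the surviving $\ell^{|\mathcal{P}_{\bm{\epsilon}_Z}|-1}$-term by $(t-t_0)$ produces \eqref{local-asymp-cor} with $c_{\pm}(t_0)=c'(t_0)\pm c^{-}(t_0)$, where $c^{-}(t_0)\neq 0$ is the nonzero real multiple of $\mIm s^{\ast}$ prescribed by Lemma~\ref{constants-lemma}.

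The cusp analysis now hinges on $c'(t_0)$. By Lemma~\ref{constants-lemma} (and $\mRe(c^{\prime+}_{\bm{\epsilon}_Z}s^{\ast})=0$ since $s^{\ast}\in i\mathbb{R}$), $c'(t_0)=2c_{\bm{\epsilon}_Z}|\mathcal{P}_{\bm{\epsilon}_Z}|\sum_{p\in\mathcal{P}_{\bm{\epsilon}_Z}}(\log p)\mRe\tilde{s}_p(a/q;\bm{\epsilon}_Z)$. The key observation is a sign-flipping involution: if some $p'\in\mathcal{P}_{\bm{\epsilon}_Z}\setminus\{p\}$ has $\epsilon_{p'}=-1$, then the shift $m_{p'}\mapsto m_{p'}+\varphi(q)/2$ on $\{0,\dots,2\varphi(q)-1\}$ negates every summand in the cosine expansion $2\mRe\tilde{s}_p(a/q;\bm{\epsilon}_Z)=\tilde{s}_p(a/q;\bm{\epsilon}_Z)+\tilde{s}_p(-a/q;\bm{\epsilon}_Z)$ from \eqref{sq-tilde}: by Euler's criterion $p'^{\varphi(q)/2}\equiv -1\bmod q$, which preserves the cosine $\cos(2\pi a\prod r^{m_r}/q)$; the sawtooth indexed by $p\neq p'$ and all other $\epsilon_r^{m_r}$ are unchanged; but $\epsilon_{p'}^{m_{p'}}$ picks up $(-1)^{\varphi(q)/2}=-1$ since $(q-1)/2$ is odd. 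Hence $\mRe\tilde{s}_p(a/q;\bm{\epsilon}_Z)=0$ whenever such a $p'$ exists.

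In case \eqref{cor-item1}, the involution applies to every $p$, yielding $c'(t_0)=0$ and $c_{\pm}(t_0)=\pm c^{-}(t_0)$ of opposite nonzero signs, producing the cusp; density of such $t_0=a/q$ in $[0,1]$ follows from Dirichlet's theorem applied to the arithmetic progressions mod $4\prod_{p\leqslant Z,\epsilon_p\neq 0}p$ encoding $q\equiv 3\bmod 4$ together with $\Leg{p}{q}=\epsilon_p$, which are nonempty by quadratic reciprocity. In case \eqref{cor-item2}, only $\tilde{s}_{p_1}$ requires direct computation: condition \eqref{all-QRs} collapses the inner sum over $(m_r)_{r\neq p_1}$ to a uniform count over $(\mathbb{Z}/q\mathbb{Z})^{\times 2}$, reducing it to the half-Gauss sum $\tfrac12(-1+\Leg{ap_1^{m_{p_1}}}{q}i\sqrt{q})$; the outer $m_{p_1}$-sum then evaluates via the sawtooth identities $\sum_{m=0}^{q-2}\{(m+\tfrac12)/(q-1)\}=0$ and $\sum_{m=0}^{q-2}(-1)^m\{(m+\tfrac12)/(q-1)\}=-\tfrac12$. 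Tracking constants produces $c'(t_0)=\delta_{p_1,t_0}c^{-}(t_0)$ with $\delta_{p_1,t_0}$ as stated, and the elementary bound $(2/\pi)\log p/\sqrt{p}\leqslant(2/\pi)\log 7/\sqrt{7}<1/2$ (maximal at $p=7$) valid for all primes $p$ yields $c_{+}c_{-}=-(1-\delta_{p_1,t_0}^2)(c^{-}(t_0))^2<0$ and the cusp. The main obstacle is the explicit evaluation of $\tilde{s}_{p_1}$ in case \eqref{cor-item2}: the involution fails exactly there, and extracting the precise constant $\delta_{p_1,t_0}$ requires careful bookkeeping of the Gauss-sum and sawtooth contributions.
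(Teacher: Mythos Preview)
Your proposal is correct and follows essentially the same route as the paper: invoke Lemma~\ref{big-lemma-sums}\eqref{item3},\eqref{gp-sum} to get $s^{\ast}(a/q;\bm{\epsilon}_Z)\in i\mathbb{R}_{\neq 0}$, feed this into Proposition~\ref{two-terms-prop} and Lemma~\ref{constants-lemma}, and then use the sign-flipping involution $m_{p'}\mapsto m_{p'}+\varphi(q)/2$ (with $\epsilon_{p'}=-1$, $p'\neq p$) to force $\mRe\tilde s_p=0$. The only notable difference is in case~\eqref{cor-item2}: the paper first carries out a general reduction (introducing the auxiliary parameter $\Delta$ and restricting the $m_{p_1}$-summation to a window of length $2\Delta$) before specializing under \eqref{all-QRs} to the Ramanujan-sum evaluation $2\mRe\tilde s_{p_1}=-1/\varphi(q)$, whereas you assume \eqref{all-QRs} from the outset and compute directly via the half-Gauss sum and the two sawtooth identities. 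Both computations land on the same values; your route is shorter but gives up the intermediate formula that the paper records for general $d_q$.
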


\begin{proof}
For any $p\in\mathcal{P}_{\bm{\epsilon}_Z}$ such that $\epsilon_p=-1$, the congruence conditions on $q$ imply (fixing an arbitrary primitive root $g$ modulo $q$) that $p\equiv g^{k_p}\bmod q$ for some $2\nmid k_p$, whence $\varphi(q)/\ord_qp$ is odd and \emph{a fortiori} $d_q$ is odd as well. In the notation of Lemma~\ref{big-lemma-sums}, this, in turn, implies that, for $p\in\mathcal{P}_{\bm{\epsilon}_Z}$, $p\in\mathcal{P}_{\bm{\epsilon}_Z}^{+}(q)$ if and only if $\epsilon_p=1$, whence according to Lemma~\ref{big-lemma-sums}, items \eqref{gp-sum} and \eqref{item3},
\[ s^{\ast}(a/q,\bm{\epsilon}_Z)=(1/\varphi(q))\sigma_{d_q}^1(a/q)\in i\mathbb{R}_{\neq 0}. \]

From Proposition~\ref{two-terms-prop} and Lemma~\ref{constants-lemma}, we have that
\begin{align*}
G^{\sharp}_{\bm{\epsilon}_Z}(t)-G^{\sharp}_{\bm{\epsilon}_Z}(t_0)
&=e(t_0)\big(c'(t_0)\!\cdot\!(t-t_0)+c^{-}(t_0)\!\cdot\!|t-t_0|\big)\ell^{|\mathcal{P}_{\bm{\epsilon}_Z}|-1}+\OO_{b,Z}\big(|t-t_0|\ell^{|\mathcal{P}_{\bm{\epsilon}_Z}-1-\delta}\big)\\
&=e(t_0)c_{\pm}(t_0)(t-t_0)+\OO_{b,Z}\big(|t-t_0|\ell^{|\mathcal{P}_{\bm{\epsilon}_Z}-1-\delta}\big)\quad (t\to t_0\pm),
\end{align*}
where $\ell=|\log|t-t_0||$ and
\begin{equation}\label{cpm}\begin{aligned}
c_{\pm}(t_0)&=c'(t_0)\pm c^{-}(t_0),\\
c^{-}(t_0)&=-c_{\bm{\epsilon}_Z}|\mathcal{P}_{\bm{\epsilon}_Z}|\cdot\pi\mIm s^{\ast}(a/q;\bm{\epsilon}_Z),\\
c'(t_0)&=-c_{\bm{\epsilon}_Z}|\mathcal{P}_{\bm{\epsilon}_Z}|\cdot 2\mRe\sum_{p_1\in\mathcal{P}_{\bm{\epsilon}_Z}}(\log p_1)\widetilde{s}_{p_1}(a/q;\bm{\epsilon}_Z).
\end{aligned}
\end{equation} This completes the proof of \eqref{local-asymp-cor}.

Recall the condition that $\mathcal{P}^{-}_{\bm{\epsilon}_Z}(q)=\mathcal{P}_{\bm{\epsilon}_Z}^{-}=\{p\in\mathcal{P}_{\bm{\epsilon}_Z}:\epsilon_p=-1\}\neq\emptyset$. Now, if $|\mathcal{P}^{-}_{\bm{\epsilon}_Z}|\geqslant 2$, then, for every $p_1\in\mathcal{P}_{\bm{\epsilon}_Z}$, there exists a $p\in\mathcal{P}_{\bm{\epsilon}_Z}^{-}\setminus\{p_1\}$, and by shifting variables in \eqref{sq-tilde} by $m_p\mapsto m_p+\varphi(q)/2$ and recalling that $p^{\varphi(q)/2}\equiv -1\bmod q$, we see that $\widetilde{s}_{p_1}(a/q;\bm{\epsilon}_Z)\in i\mathbb{R}$; since this conclusion holds for every $p_1\in\mathcal{P}_{\bm{\epsilon}_Z}$, we conclude that $c'(t_0)=0$ and \eqref{local-asymp-cor} follows with $c_{+}(t_0)=c_{-}(t_0)=c^{-}(t_0)$. Moreover, it follows from quadratic reciprocity and Dirichlet's theorem on primes in arithmetic progressions that there are infinitely many primes such that $q\equiv 3\bmod 4$ and $(q/p)=\epsilon_p$ for all $p\leqslant Z$ with $\epsilon_p\neq 0$, whence the set of the corresponding fractions $t_0=a/q$ is dense in $[0,1]$. This settles the case \eqref{cor-item1}.

If $\mathcal{P}_{\bm{\epsilon}_Z}^{-}=\{p_1\}$, the situation is more complicated: the same change of variables $m_p\mapsto m_p+\varphi(q)/2$ in \eqref{sq-tilde} still shows that $\widetilde{s}_p(a/q;\bm{\epsilon}_Z)\in i\mathbb{R}$ for all $p\in\mathcal{P}_{\bm{\epsilon}_Z}\setminus\{p_1\}$, while
\begin{equation}
\label{2mresp1}
2\mRe\widetilde{s}_{p_1}(a/q;\bm{\epsilon}_Z)=\frac{\varphi(q)/2}{\varphi(q)^{|\mathcal{P}_{\bm{\epsilon}_Z}|}}\sum_{\substack{(m_p)_{p\in\mathcal{P}_{\bm{\epsilon}_Z}}\\0\leqslant m_p<\varphi(q)}}e\Big(\frac aq\prod_{p\in\mathcal{P}_{\bm{\epsilon}_Z}}p^{m_p}\Big)\epsilon_{p_1}^{m_{p_1}}(-1)^{\lfloor m_{p_1}/(\varphi(q)/2)\rfloor}
\end{equation}
and
\begin{equation}
\label{cprime}
c'(t_0)=-c_{\bm{\epsilon}_Z}(t_0)=-c_{\bm{\epsilon}_Z}|\mathcal{P}_{\bm{\epsilon}_Z}|(\log p_1)\cdot 2\mRe\widetilde{s}_{p_1}(a/q;\bm{\epsilon}_Z).
\end{equation}
One final simplification is possible, as follows. The group generated by the residues of all $p\in\mathcal{P}_{\bm{\epsilon}_Z}^{+}$ modulo $q$ is of the form $(\mathbb{Z}/q\mathbb{Z})^{\times 2d_1}$, where $2d_1=\varphi(q)/\lcm(\ord_qp:p\in\mathcal{P}_{\bm{\epsilon}_Z}^{+})$. If we denote by $2\Delta>0$ the smallest positive exponent such that $p_1^{2\Delta}\in(\mathbb{Z}/q\mathbb{Z})^{\times 2d_1}$, say $p_1^{2\Delta}=\prod_{p\in\mathcal{P}_{\bm{\epsilon}_Z}}p^{m_p^1}$, then $\Delta\mid(\varphi(q)/2)$ and a change of variables
\[ m_p\mapsto m_p-m_p^1\,\,(p\in\mathcal{P}_{\bm{\epsilon}_Z}^{+}),\quad m_{p_1}\mapsto m_{p_1}+(\varphi(q)/2+\Delta) \]
shows that the summation in \eqref{2mresp1} may be restricted to $\varphi(q)/2-\Delta\leqslant m_{p_1}<\varphi(q)/2+\Delta$, since the contributions of the terms outside this range cancel out. The same argument as in the proof of Lemma~\ref{big-lemma-sums}\eqref{item3} then shows that
\[ 2\mRe\widetilde{s}_{p_1}(a/q;\bm{\epsilon}_Z)=\frac{\Delta}{\varphi(q)}\sumast_{x\bmod q}\iota_{p_1}(x^d)\Big(\frac xq\Big)e\Big(\frac{ax^d}q\Big), \]
where $\iota_{p_1}(x^d)=\iota_{p_1}(p_1^{\delta}y^{2d_1})=(-1)^{\lfloor\delta/\Delta\rfloor}$ for the well-defined value of $\delta\pmod{2\Delta}$ in a decomposition $x^d\equiv p_1^{\delta}y^{2d_1}\pmod b$.

All of the above applies whenever $\mathcal{P}^{-}_{\bm{\epsilon}_Z}=\{p_1\}$. To reach the conclusion that, for some $t_0=a/q$ with $(a/q)=1$, $c_{+}(t_0)c_{-}(t_0)<0$ holds (whence the path $G^{\sharp}_{\bm{\epsilon}_Z}(t)$ would have a cusp at $t=t_0$), it is thus necessary and sufficient to verify that the constants $c'(t_0)$ and $c^{-}(t_0)$ as explicated in \eqref{cpm} and \eqref{cprime} satisfy
\begin{equation}
\label{equivalent-condition}
\begin{gathered}
|c'(t_0)|<|c^{-}(t_0)|,\quad\text{that is,}\\
(\log p_1)2|\mRe \widetilde{s}_{p_1}(a/q;\bm{\epsilon}_Z)|<\pi|\mIm s^{\ast}(a/q;\bm{\epsilon}_Z)|.
\end{gathered}
\end{equation}
We do not know how to verify or fully characterize the set of points $t_0$ where this fascinating condition is satisfied. However, under the condition \eqref{all-QRs}, $d_1=\Delta=d=1$, and the sums $\mIm s^{\ast}(a/q;\bm{\epsilon}_Z)$ and $\mRe\widetilde{s}_{p_1}(a/q;\bm{\epsilon}_Z)$ are essentially the quadratic Gauss and Ramanujan sums:
\begin{align*}
\mIm s^{\ast}(a/q;\bm{\epsilon}_Z)&=\frac1{i\varphi(q)}\sigma_1^1(a/q)=\Big(\frac aq\Big)\frac{\sqrt{q}}{\varphi(q)},\\
2\mRe\widetilde{s}_{p_1}(a/q;\bm{\epsilon}_Z)&=\frac1{\varphi(q)}\sumast_{x\bmod q}e\Big(\frac{ax}q\Big)=-\frac1{\varphi(q)}.
\end{align*}
Indeed, the former follows from Lemma~\ref{big-lemma-sums}; the latter is clear from the above expressions, or directly from the expression \eqref{sepsilons} for $s_{p_1}^{+}(1/b,\bm{\epsilon}_Z)=2\mRe\widetilde{s}_{p_1}(a/b;\bm{\epsilon}_Z)$, in which only the prinicipal character $\chi=\chi_0$ contributes $(1/\varphi(q))\overline{G(\chi_0)}$. This completes the proof of \eqref{cor-item2}.
\end{proof}

\begin{remark}
\label{remark1}
The condition \eqref{all-QRs}, which pertains to the case when exactly all but one $\epsilon_p=1$, appears similar in spirit to (and perhaps in some ways weaker than) Artin's primitive root conjecture, so it is perhaps reasonable to expect that it is satisfied for infinitely many primes $q\equiv 3\bmod 4$ in the fixed arithmetic progression described by \eqref{ArP}, which would give an everywhere dense set of corresponding points $t_0=a/q$; we stop short of stating this as a formal conjecture. Unconditionally, using Schmidt's estimates on complete character sums~\cite[Theorem II.2C']{Schmidt1976} and Poisson summation, one can argue that, for every sufficiently large odd prime $q$ and every $2\nmid d\mid(q-1)$,
\[ \sum_{x\bmod q}\Big(\frac xq\Big)e\Big(\frac{ax^d}q\Big)\gg\sqrt{qd} \]
for at least one $a\in I$ in every sufficiently large interval $I$; for example, we were able to prove that this holds in the mean square average over all $a\in I$ for $|I|\geqslant 2\sqrt{d}q^{1/4}$. This alone shows that $\pi|\mIm s^{\ast}(a/q;\bm{\epsilon}_Z)|$ in \eqref{equivalent-condition} is often rather large (that is, of expected size), but a sufficiently good complementary upper bound on $2|\mRe\widetilde{s}_{p_1}(a/q;\bm{\epsilon}_Z)|$ is also needed, and in the case $d_1=\Delta=d=1$ of \eqref{all-QRs} this is guaranteed by reduction to the Ramanujan sum.
\end{remark}

\begin{remark}
\label{remark2}
An interesting situation arises when all $\epsilon_p=1$. Consider the specific case when $Z=5$, $\bm{\epsilon}_5=(\epsilon_2,\epsilon_3,\epsilon_5)=(1,1,1)$. As in other cases, pictures strongly suggest that the path $G^{\sharp}_{\bm{\epsilon}_5}$ has an everywhere dense set of cusps, with the most visually prominent ones at many of the points $a/71$ ($1\leqslant a\leqslant 70$); see Figure~\ref{fig: g71}, which appears to be in parallel with the situation of Figure~\ref{fig: gsharp-eps} (in which $\bm{\epsilon}_5=(1,1,-1)$). That these appear at the denominator $q=71$ is not a coincidence in light of
\[ (p/q)=\epsilon_p\quad (p=2,3,5). \]
Nevertheless, it is not difficult to verify that both 2 and 3 are of multiplicative order 35 modulo 71, so they generate (already each one of them generates) the subgroup $(\mathbb{Z}/71\mathbb{Z})^{\times 2}$, whence in this case
\[ s^{\ast}(a/q;\bm{\epsilon}_Z)=\frac1{\varphi(q)}\sum_{x\bmod q}e\Big(\frac{ax^2}q\Big)=\frac1{\varphi(q)}\Big(-1+\Big(\frac aq\Big)\sqrt{q}i\Big). \]
Thus, $\mRe s^{\ast}(a/q;\bm{\epsilon}_Z)\neq 0$, so already in light of Lemma~\ref{local-d-lemma}, the left and right slopes of $G^{\sharp}_{\bm{\epsilon}_Z}$ agree at $t_0=a/q$, and the path has \emph{no cusp} at any of these points, in apparent contradiction with the very convincing Figure~\ref{fig: g71}. What is going on here?

\begin{figure}[ht]
    \centering
    \includegraphics[width=0.9\textwidth]{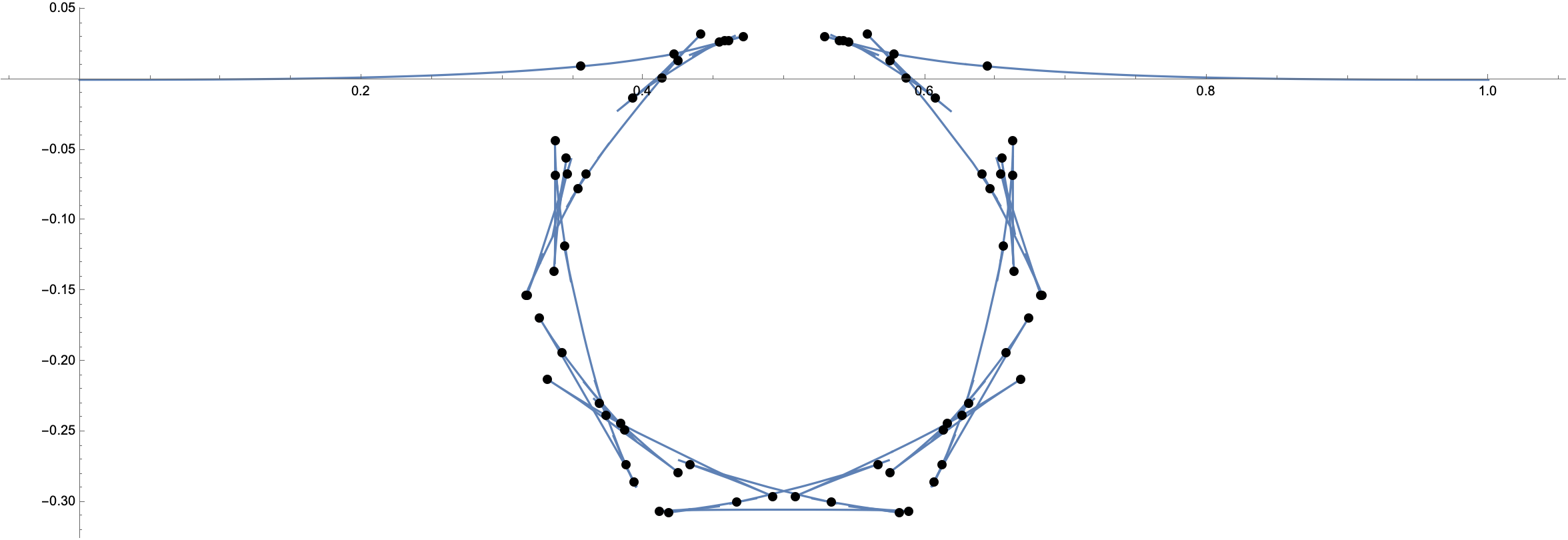}
   \caption{The deterministic path $G^{\sharp}_{\bm{\epsilon}_5}$ for $(\epsilon_2,\epsilon_3,\epsilon_5)=(1,1,1)$, with marked points at $G^{\sharp}_{\bm{\epsilon}_5}(i/71)$, $1\leqslant i\leqslant 71$.}
    \label{fig: g71}
\end{figure}

The answer is that the apparent ``cusps'' are effects of lower-order terms which, in fact, disappear as $t$ gets \emph{really close} to $t_0$. Indeed, according to Proposition~\ref{two-terms-prop} and a quick calculation of constants using Lemma~\ref{constants-lemma} (in which all $\widetilde{s}_p(a/b;\bm{\epsilon}_Z)=0$), the leading two terms in the asymptotic expansion for $G^{\sharp}_{\bm{\epsilon}_Z}(t)-G^{\sharp}_{\bm{\epsilon}_Z}(t_0)$ are given by
\[ \frac{c_{\bm{\epsilon}_5}}{70}\Big(-e(t_0)\cdot 2(t-t_0)\ell^3+e(t_0)\big(-6c^{\prime +}_{\bm{\epsilon}_5}(t-t_0)-\Big(\frac{a}{71}\Big)\cdot 3\pi\sqrt{71} |t-t_0|\big)\ell^2\Big), \]
where $c^{\prime +}_{\bm{\epsilon}_5}=-(\log 2\pi+\gamma-1)+\log(30)/2\approx 0{.}286$ and
\[ -6c^{\prime +}_{\bm{\epsilon}_5}(t-t_0)-\Big(\frac{a}{71}\Big)\cdot 3\pi\sqrt{71} |t-t_0|\approx \Big(\frac{a}{71}\Big)(-79{.}41\pm 1{.}71)|t-t_0| \]
as $t\to t_0\pm$ (or $t\to t_0\mp$, according to the value of $(a/71)$). Thus, while the leading term indeed dominates for extremely small $|t-t_0|$, the secondary term is substantially dominant for $\ell$ of moderate size, say up to $\ell\leqslant 30$, that is up to around $|t-t_0|\leqslant 10^{-15}$, and explains the illusion of cusp behavior. In truth, the path exhibits a transition to a ``smooth'' behavior through $t=t_0$, which can also be observed upon zooming in to the appropriate scale; see Figure~\ref{fig: g71-zoom}.
\end{remark}

\begin{figure}[ht]
    \centering
    \includegraphics[width=0.25\textwidth]{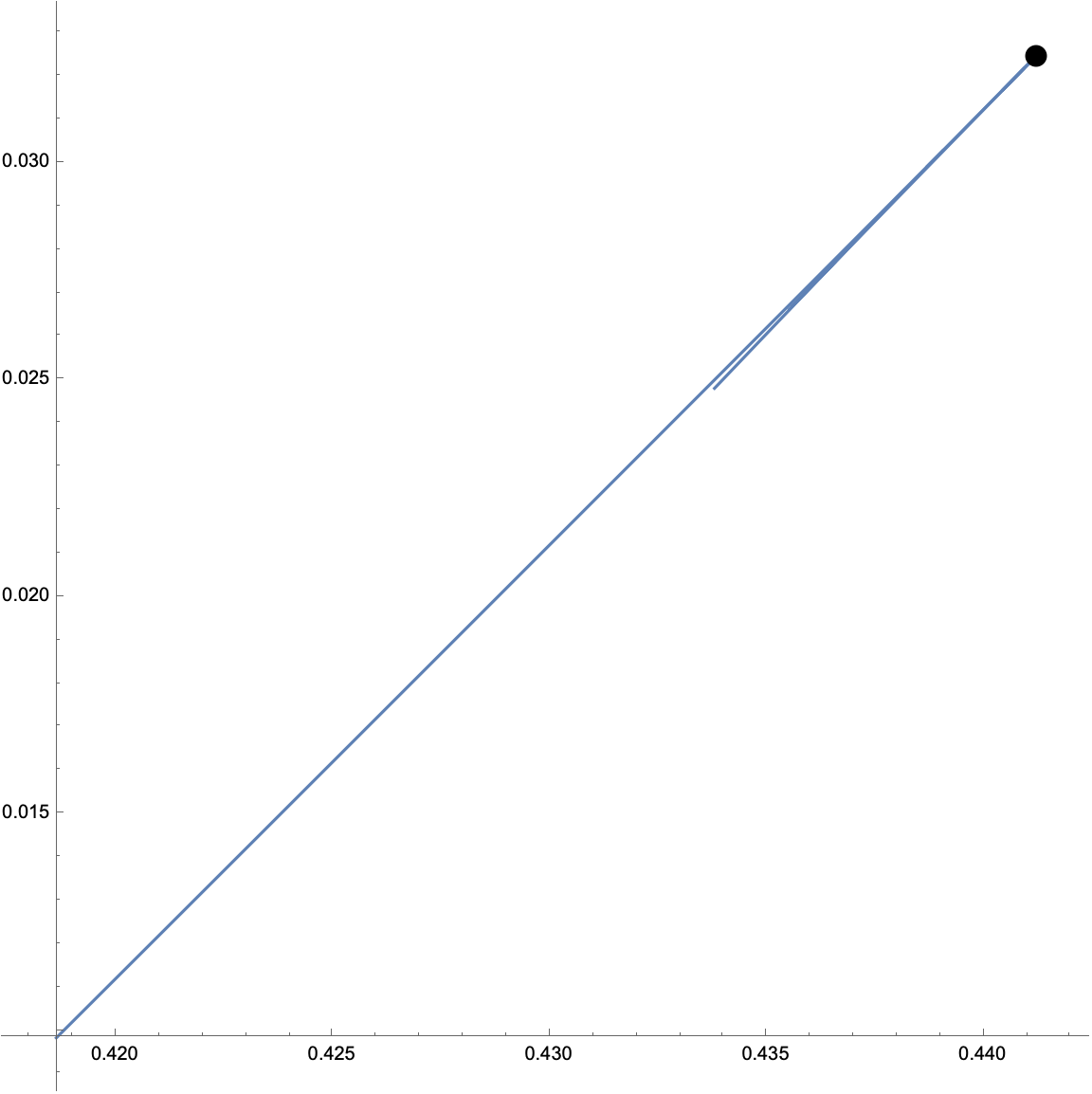}
    \includegraphics[width=0.25\textwidth]{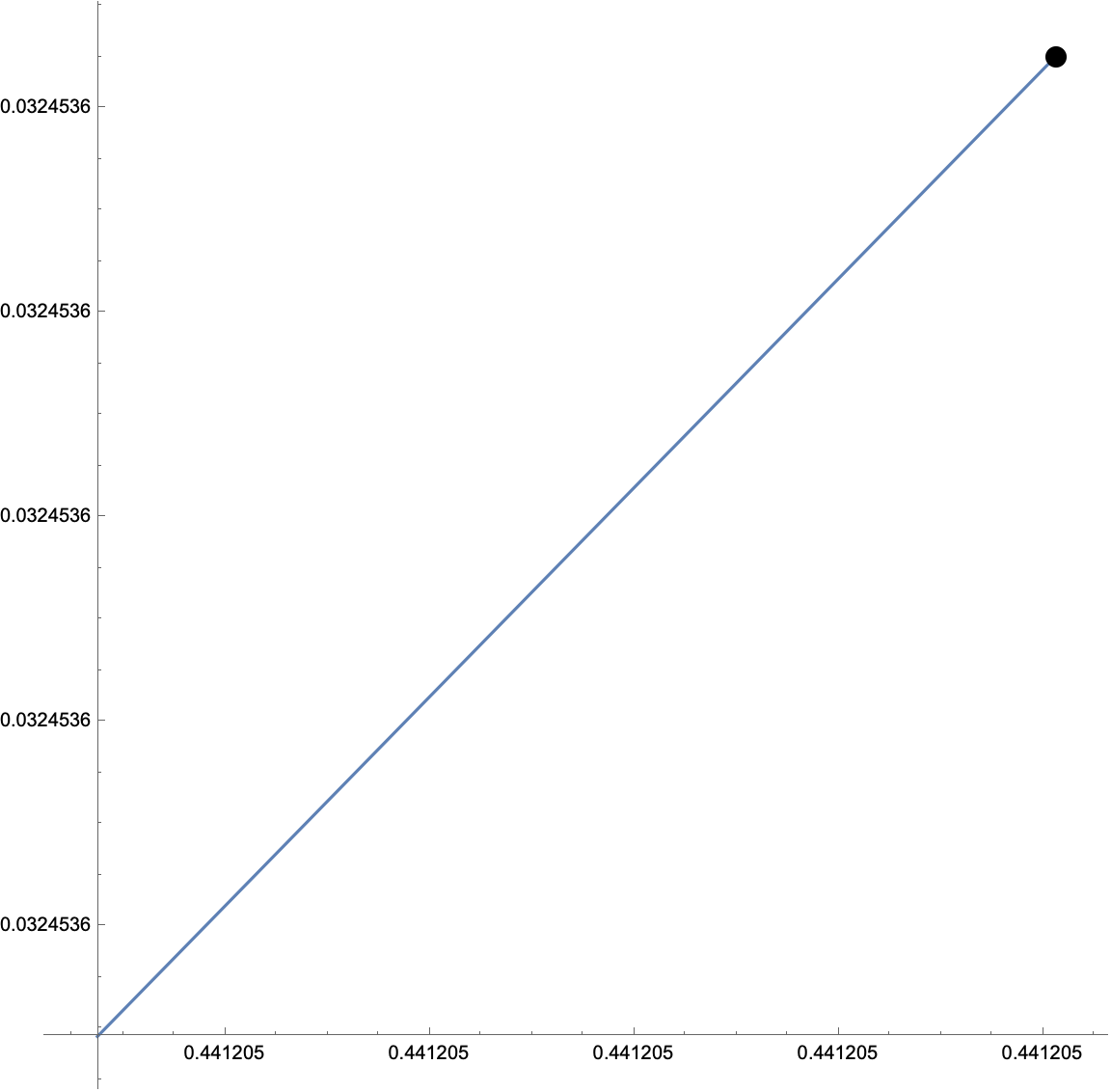}
    \includegraphics[width=0.25\textwidth]{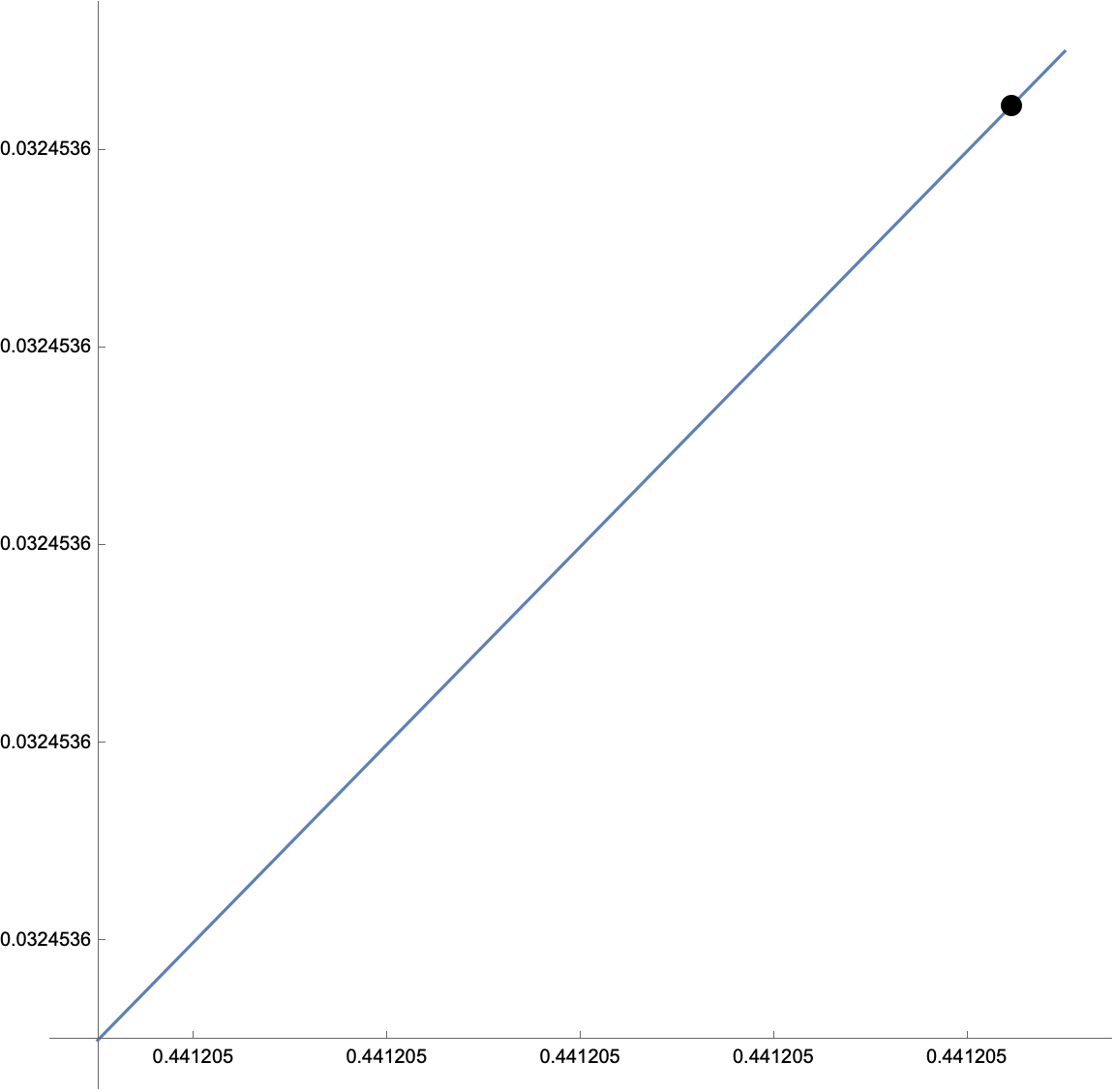}
   \caption{Snapshots of the same deterministic path $G^{\sharp}_{\bm{\epsilon}_Z}$ for $(\epsilon_2,\epsilon_3,\epsilon_5)=(1,1,1)$, computed  with over 22-digit precision, in three progressively zoomed-in neighborhoods of the marked point $t_0=9/71$: $|t-t_0|\leqslant 1/500$, $|t-t_0|\leqslant 1/10^{10}$, and $|t-t_0|\leqslant 1/(5\cdot 10^{17})$.
}
    \label{fig: g71-zoom}
\end{figure}

\section{Classifying quadratic Gauss paths}
\label{final-section}
Having prepared the ground with establishing the properties of the limiting shapes $G^{\sharp}_{\bm{\epsilon}_Z}$ in \S\S\ref{limits-local}--\ref{expsums-nv-mult}, in this section we complete the proof of our main result on the atlas of shapes of Gauss paths, Theorem~\ref{thm2}.

\begin{proof}[Proof of Theorem~\ref{thm2}~\eqref{thm2-gastz}]
The arguments of section~\ref{sec-rv} proceed analogously with the random series $G^{\ast}_{\bm{\epsilon}_Z}(t)$ and its coefficients $(X_{n,\bm{\epsilon}_Z})$ in place of $G^{\ast}(t)$ and $(X_n)$.

Indeed, the sets $A_{y_1}$ and $A_{y_1}^{y_2}$ of \eqref{Ay1-Ay1y2} and along with them the sums $S_{A_{y_1}}$, $S_{A_{y_1}^{y_2}}$, $S_{A_{y_1}}(\Epsilon)$, and $S_{A_{y_1}^{y_2}}(\Epsilon)$ of \eqref{eq: S-A} are insensitive to the values of $(\bm{\epsilon}_Z)$ as soon as $y_1\geqslant Z$; hence the statement and proof of Proposition~\ref{prop: small-sum} remain the same as long as $y_2>y_1\geqslant\max(k^3,Z)$.

Turning to the proof of Proposition~\ref{thm: convergence}, the sums $S_y(t)$ and $S_y(\Epsilon,t)$ of \eqref{Syt} are affected by the change of $(X_n)$ to $(X_{n,\bm{\epsilon}_Z})$, and so are the sums $R_{y_1,y_2}(t)$ and $R_{y_1,y_2}(\Epsilon;t)$ of \eqref{Ry1y2-def}, the dependence in $R_{y_1,y_2}(t)$ in \eqref{Ry1y2-decomp} being that, in the outer sum over $n\neq 0$ with $P^{+}(|n|)\leqslant y_1$, the coefficients $X_n$ are replaced by $X_{n,\bm{\epsilon}_Z}$. Since $|X_{n,\bm{\epsilon}_Z}|\leqslant 1$, the key estimate on $\|R_{y_1,y_2}(\Epsilon;t)\|_{\infty}$ in \eqref{Ry1y2-estimate} remains as stated for $y_1\geqslant Z$, and the rest of the proof of Lemma~\ref{ry1y2-lemma} and Proposition~\ref{thm: convergence} remains exactly the same with the additionally assumption that $y_1\geqslant Z$ and correspondingly replacing $y_0(\delta)$ with $\max(y_0(\delta),Z)$.

This shows that, indeed, the random Fourier series $\widetilde{G^{\ast}_{\bm{\epsilon}_Z}}(t)$ almost surely uniformly converges and defines a continuous function such that the $n$th Fourier coefficient of $\widetilde{G^{\ast}_{\bm{\epsilon}_Z}}(t)-t$ (for $n\neq 0,1$) is precisely $X_{n,\bm{\epsilon}_Z}/2\pi i n$; then, it follows as in \eqref{equality-tilde} that the original random Fourier series $G^{\ast}_{\bm{\epsilon}_Z}(t)$ converges a.s.\ and that $G^{\ast}_{\bm{\epsilon}_Z}(t)=\widetilde{G^{\ast}_{\bm{\epsilon}_Z}}(t)$ a.s.

Turning to the computation of the moments, we may write every $h\in\mathbb{Z}\setminus\{0\}$ as $h=h_Zh^Z$ as in \eqref{b-decomp}; then, in place of \eqref{expectations-eta-def} we have the evaluations
\[ \mathbb{E}(X_{h,\bm{\epsilon}_Z})=\begin{cases} \eta_Z(h),&|h^Z|=\square;\\ 0,&\text{otherwise},\end{cases}\qquad \eta_Z(h)=\epsilon_{h_Z}\eta(h^Z). \]
For every $\bm{t}=(t_1,\dots,t_k)\in[0,1]^k$ and $\bm{m},\bm{n}\in\mathbb{Z}_{\geqslant 0}^k$, the evaluation of the moments $\mathcal{M}^{\ast}_{\bm{\epsilon}_Z}(\bm{t};\bm{m},\bm{n})$ (defined analogously to \eqref{M-ast-moment-def}) of the $\mathbb{C}^k$-valued random variable $G^{\ast}_{\bm{\epsilon}_Z}(\bm{t})=(G^{\ast}_{\bm{\epsilon}_Z}(t_1),\dots,G^{\ast}_{\bm{\epsilon}_Z}(t_k))$ proceeds analogously to the proof of Lemma~\ref{evaluation-of-limit-moment}, with $\eta_Z(h)$ in place of $\eta(h)$. In the evaluation of $\mathbb{E}(|S_{n,\bm{\epsilon}_Z}(t)|^{2p})$ in \eqref{upperbound-sn-p}, all terms with $|(h_1\cdots h_{2p})^Z|=\square$ contribute, and consequently we may estimate
\begin{equation}
\label{E2pZ}
\begin{aligned}
\mathbb{E}(|S_{n,\bm{\epsilon}_Z}(t)|^{2p})
&\leqslant 8^p\sum_{d\mid\prod_{q\leqslant Z}q}\sum_{m=1}^{\infty}\frac{d_{2p}(dm^2)}{dm^2}\\
&\leqslant 8^p\prod_{q\leqslant Z}\Big(1+\frac{2p}{q}\Big)\sum_{m=1}^{\infty}\frac{d_{2p}(m^2)}{m^2}\leqslant e^{\mathrm{O}(p\log\log Z)},
\end{aligned}
\end{equation}
also using \eqref{d2-estimate}. For the same reason, we encounter the dependence of the implied constants on $Z$ in \eqref{Lpnorm-estimate}, \eqref{ESmSn2p-estimate} and below, \eqref{differences} and below, and \eqref{absolute-convergence-eq}. The proof otherwise runs verbatim the same (with $\eta_Z(h)$ and the condition $|h^Z|=\square$ in place of $\eta(h)$ and $|h|=\square$), and we obtain the evaluation
\begin{equation}
\label{M-ast-epsZ}
 \mathcal{M}^{\ast}_{\bm{\epsilon}_Z}(\bm{t};\bm{m},\bm{n})=\sum_{\substack{\vec{\bm{h}}\in\mathcal{H}^{\ast}_{\bm{m},\bm{n}}\\|H(\bm{h})^Z|=\square}}\beta(\vec{\bm{h}};\bm{t})\eta_Z(\bm{h}).
 \end{equation}
In particular, our discussion above shows that $\mathcal{M}^{\ast}_{\bm{\epsilon}_Z}(\bm{t};\bm{m},\bm{n})\leqslant C_Z^{m+n}$ for some $C_Z>0$ depending only on $Z$ (in fact, $C_Z=e^{\mathrm{O}(\log\log Z)}$ is admissible), and so $G^{\ast}_{\bm{\epsilon}_Z}(\bm{t})$ is a mild random variable.

We then proceed to follow section~\ref{computing-moments-section} and demonstrate the analogue of Proposition~\ref{moments-eval-prop}, that the complex moments $\mathcal{M}_{Q,\bm{\epsilon}_Z}(\bm{t};\bm{m},\bm{n})$, defined as in \eqref{MQ-moment-def} with $G_{Q,\bm{\epsilon}_Z}(t_i)$ in place of $G_Q(t_i)$, satisfy
\begin{equation}
\label{moments-match-eq-Z}
\mathcal{M}_{Q,\bm{\epsilon}_Z}(\bm{t};\bm{m},\bm{n})=\mathcal{M}^{\ast}_{\bm{\epsilon}_Z}(\bm{t};\bm{n},\bm{n})+\mathrm{O}_{\epsilon,Z}(Q^{-1/3+\epsilon}).
\end{equation}

Denoting $\lambda'_p(\pm 1)=(1-1/p)/2$ and $\lambda'_p(0)=1/p$ for $p>2$ and $\lambda_2'(\pm 1)=1/2$, we first confirm by sieving as in \eqref{DQ-comput} that
\begin{equation}
\label{DQ-epsZ}
|\mathcal{D}_{Q,\bm{\epsilon}_Z}|=\sum_{\substack{\alpha\in\mathcal{N}^Z\\\alpha^2\leqslant 2Q}}\mu(\alpha)\sum_{\substack{c\in [Q,2Q]\\\alpha^2\mid c,\,c\equiv 1\bmod 4\\(p/c)=\epsilon_p\,(p\leqslant Z)}}1
=\frac{Q}{4\zeta(2)}\lambda'(\bm{\epsilon}_Z)+\mathrm{O}_Z(Q^{1/2}),
\end{equation}
where $\mathcal{N}^Z=\{n\in\mathbb{N}:p\mid n\,\Rightarrow\,p>Z\}$ and
\[ \lambda'(\bm{\epsilon}_Z)=\prod_{p\leqslant Z}\Big(\lambda_p'(\epsilon_p)\Big(1-\frac1{p^2}\Big)^{-1}\Big),\quad m_{Q,\bm{\epsilon}_Z}=\frac{4\zeta(2)}{\lambda'(\bm{\epsilon}_Z)}\frac1Q+\mathrm{O}_Z\Big(\frac1{Q^{3/2}}\Big). \]
Lemmata \ref{lem: g-squiggle-val}--\ref{alpha-beta-sum} are valid for all values of $c$. In the rest of \S\ref{reduction-subsec}, we only need to replace $m_Q(c)$ with $m_Q(\bm{\epsilon}_Z)$ and all sums over $c\in\mathcal{D}_Q$ by $c\in\mathcal{D}_{Q,\bm{\epsilon}_Z}$, in particular when replacing the modified moment $\widetilde{\mathcal{M}_Q}(\bm{t};\bm{m},\bm{n})$ defined in \eqref{modified-moments-equation} with the analogously defined moment $\widetilde{\mathcal{M}_{Q,\bm{\epsilon}_Z}}(\bm{t};\bm{m},\bm{n})$; with these changes, the rest of \S\ref{reduction-subsec} is valid verbatim (with even the implied constants independent of $(\bm{\epsilon}_Z)$). In place of \eqref{moments-main-error-decomp} we obtain
\begin{align*}
\widetilde{\mathcal{M}_{Q,\bm{\epsilon}_Z}^0}(\bm{t};\bm{m},\bm{n})&=\sum_{\substack{\Vec{\bm{h}}\in\mathcal{H}'\\|H(\vec{\bm{h}})^Z|=\square}} \beta(\vec{\bm{h}};\bm{t})\epsilon_{H(\vec{\bm{h}})_Z} \sum_{\substack{c \in \mathcal{D}_{Q,\bm{\epsilon}_Z}\\(c,H(\vec{\bm{h}}))=1}}  m_Q(c),\\
\widetilde{\mathcal{M}_{Q,\bm{\epsilon}_Z}'}(\bm{t}; \bm{m}, \bm{n}) &=   \sum_{\substack{\Vec{\bm{h}}\in\mathcal{H}'\\|H(\vec{\bm{h}})^Z|\neq\square}} \beta(\vec{\bm{h}};\bm{t}) \sum_{c \in\mathcal{D}_{Q,\bm{\epsilon}_Z}}  m_Q(c)\bigg(\frac{H(\Vec{\bm{h}})}{c}\bigg).
\end{align*}

Adjusting the proof of Lemma~\ref{main-term-lemma} as in \eqref{DQ-epsZ}, we find that for $0\neq H=Q^{\OO_Z(1)}$
\begin{align*}
&\sum_{\substack{c\in\mathcal{D}_{Q,\bm{\epsilon}_Z}\\(c,H)=1}}m_{Q,\bm{\epsilon}_Z}(c)=\frac1Q\Big(\frac{4\zeta(2)}{\lambda'(\bm{\epsilon}_Z)}+\OO_Z\Big(\frac1{Q^{1/2}}\Big)\Big)\sum_{\substack{\alpha\in\mathcal{N}^Z\\\alpha^2\leqslant 2Q\\(\alpha,2H^Z)=1}}\mu(\alpha)\sum_{\delta\mid H^Z}\mu(\delta)\sum_{\substack{c\in [Q,2Q]\\\alpha^2\delta\mid c,\,c\equiv 1\bmod 4\\(p/c)=\epsilon_p\,(p\leqslant Z)}}1\\
&=\frac{\zeta(2)}{\lambda'(\bm{\epsilon}_Z)}\prod_p\Big(1-\frac1{p^2}\Big)\prod_{p\mid H^Z}\Big(1+\frac1p\Big)^{-1}\lambda'(\bm{\epsilon}_Z)+\OO_{Z,\epsilon}\Big(\frac{H^{\epsilon}}{Q^{1/2}}\Big)=\eta(H^Z)+\OO_{Z,\epsilon}\Big(\frac1{Q^{1/2-\epsilon}}\Big).
\end{align*}
Estimating tails as in \eqref{tails-estimate} and \eqref{E2pZ}, including
\[ \sum_{\substack{\vec{\bm{h}}\in\mathcal{H}^{\ast}_{\bm{m},\bm{n}}\setminus\mathcal{H}'\\|H(\vec{\bm{h}})^Z|=\square}}|\beta(\vec{\bm{h}};\bm{t})\eta_Z(H(\vec{\bm{h}}))|\ll_{\epsilon,m,n}\sum_{q\mid\prod_{p\leqslant Z}p}\sum_{df^2>Q/2}\mu^2(d)\sum_{df^2\mid k^2q}\frac1{(k^2q)^{1-\epsilon}}\ll_{Z,\epsilon}\frac1{Q^{1/2-\epsilon}}, \]
and keeping in mind the evaluation \eqref{M-ast-epsZ}, we finally conclude
\[ \widetilde{\mathcal{M}^0_{Q,\bm{\epsilon}_Z}}(\bm{t};\bm{m},\bm{n})=\sum_{\substack{\vec{h}\in\mathcal{H}^{\ast}_{\vec{m},\vec{n}}\\|H(\vec{h})^Z|=\square}}\beta(\vec{\bm{h}};\bm{t})\eta_Z(\bm{h})+\OO_{Z,\epsilon}(Q^{-1/2+\epsilon})=\mathcal{M}^{\ast}_{\bm{\epsilon}_Z}(\bm{t};\bm{m},\bm{n})+\OO_{Z,\epsilon}(Q^{-1/2+\epsilon}). \]

We estimate the off-diagonal contributions as in the proof of Lemma~\ref{error-terms-lemma}, with the basic estimate after grouping $\vec{\bm{h}}\in\mathcal{H}'$ according to the values of $|H(\vec{\bm{h}})^Z|\neq\square$ and $H(\vec{\bm{h}})_Z$ with $|H(\vec{\bm{h}})|\leqslant Q^{\ast}$ being
\[ \widetilde{\mathcal{M}'_{Q,\bm{\epsilon}_Z}}(\bm{t};\bm{m},\bm{n})\ll\sum_{\substack{q\in\mathcal{N}_Z^{+}\\\epsilon_p=0\,\Rightarrow\,p\nmid q}}\frac1q\sum_{\substack{k\leqslant Q^{\ast}\\k\in\mathcal{N}^Z}}\frac1{k^2}\sumast_{\substack{1<d\leqslant Q^{\ast}/(qk^2)\\d\in\mathcal{N}^Z}}\frac{\tilde{\tau}_{\mathcal{H}'}(dk^2q)}{d}\bigg|\sum_{\substack{c\in[Q,2Q]\cap\mathcal{D}\\(c,k)=1\\(p/c)=\epsilon_p\,(p\leqslant Z)}}m_Q(c)\left(\frac dc\right)\bigg|. \]
The rest of the argument proceeds completely analogously, additionally restricting the sieving variables to $\alpha,\delta\in\mathcal{N}^Z$, and using the P\'olya--Vinogradov inequality for non-principal characters $(dd_Z/c)$ (for various $d_Z\mid\prod_{p\leqslant Z}p$) of conductor $\asymp_Zd$ to additionally detect congruence conditions $(p/c)=\epsilon_p\neq 0$, and we obtain
\[ \widetilde{\mathcal{M}'_{Q,\bm{\epsilon}_Z}}(\bm{t};\bm{m},\bm{n})\ll_{Z,\epsilon}Q^{-1/3+\epsilon}. \]
Putting everything together completes the proof of \eqref{moments-match-eq-Z} and along with it the convergence of $(G_{Q,\bm{\epsilon}_Z})\to G^{\ast}_{\bm{\epsilon}_Z}$ in the sense of finite distributions as $Q\to\infty$.

Moreover, the sequence of $C^0([0,1],\mathbb{C})$-valued random variables $(G_{Q,\bm{\epsilon}_Z})$ is tight at $Q\to\infty$ by Kolmogorov's Tightness Criterion, because in light of \eqref{DQ-epsZ} we have in the situation of Proposition~\ref{moment-claim} that \emph{a fortiori}
\[ \sum_{c\in\mathcal{D}_{Q,\bm{\epsilon}_Z}}m_{Q,\bm{\epsilon}_Z(c)}|G(t;c)-G(s;c)|^{\alpha}\ll_{\alpha,Z}|t-s|^{1+\delta}. \]
(Here, we profit from the fact that $|\mathcal{D}_{Q,\bm{\epsilon}_Z}|\asymp_Z|\mathcal{D}_Q|$ to execute this bootstrap argument, but, alternatively, it takes only minimal changes to sequentially adapt the arguments of section~\ref{sec-inlaw} to the family $\mathcal{D}_{Q,\bm{\epsilon}_Z}$, with implied constants depending on $Z$.) As in \S\ref{tightness-proof-sec}, using Prokhorov's Criterion we conclude that, indeed, $(G_{Q,\bm{\epsilon}_Z})\to G^{\ast}_{\bm{\epsilon}_Z}$ in law as $Q\to\infty$.
\end{proof}

\begin{proof}[Proof of Theorem~\ref{thm2}~\eqref{thm2-gsharpZ}]
The deterministic Fourier series $G^{\sharp}_{\bm{\epsilon}_Z}(t)$ converges absolutely and uniformly by comparison with the absolutely convergent series $\sum_{n\in\mathcal{N}_{\bm{\epsilon}_Z}}(1/n)\leqslant\prod_{p\in\mathcal{P}_{\bm{\epsilon}_Z}}(1+1/p)$; along with its individual summands, its sum is therefore also a continuous function. The remaining statements about the everywhere dense set of points $t_0\in[0,1]\cap\mathbb{Q}$ at which the path $G^{\sharp}_{\bm{\epsilon}_Z}(t)$ (for $\bm{\epsilon}_Z$ not identically zero) has a cusp follow from Proposition~\ref{main-cor}.
\end{proof}

\begin{proof}[Proof of Theorem~\ref{thm2}~\eqref{exceptional-probabilities}]
We begin by recalling the beginning of the proof of Lemma~\ref{evaluation-of-limit-moment}, where we established that all results of \S\ref{arith-covergence-sec}, including crucially Proposition \ref{prop: small-sum} and Lemma \ref{ry1y2-lemma}, remain valid for
\[ S^{\ast}_{A_{y_1}^{y_2},\bm{1}}=\max_{t\in [0,1]}\bigg|\sum_{n\in A_{y_1}^{y_2}}\frac{e(nt)}{n+1}X_n\bigg|,\quad
S^{\ast}_y(t)=\sum_{\substack{n\neq 0,-1\\P^{+}(|n|)\leqslant y}}\frac{e(nt)-e(-t)}{n+1}X_n, \]
and $R^{\ast}_{y_1,y_2}(t)=S_{y_2}^{\ast}(t)-S^{\ast}_{y_1}(t)$. Further, denote
\[ S^{\ast}_{y,\bm{\epsilon}_Z}:=\sum_{\substack{n\neq 0,-1\\P^{+}(|n|)\leqslant y}}\frac{e(nt)-e(-t)}{n+1}X_{n,\bm{\epsilon}_Z}. \]
As already discussed in the proof of item~\eqref{thm2-gastz}, for $y_1\geqslant \max(y_0(\delta),Z)$, Proposition~\ref{prop: small-sum} is literally unchanged when replacing $X_n$ by $X_{n,\bm{\epsilon}_Z}$ because $X_n=X_{n,\bm{\epsilon}_Z}$ whenever $P^{-}(n)>Z$; the same is true for the statement of Lemma~\ref{ry1y2-lemma}, because the proof only additionally requires that $|X_n|,|X_{n,\bm{\epsilon}_Z}|\leqslant 1$ for $P^{+}(n)\leqslant Z$. Hence Proposition~\ref{thm: convergence} and its proof remain valid for $S^{\ast}_{y,\bm{\epsilon}_Z}$ as well.

As in \eqref{Gtilde-decomp}, we have that
\begin{equation}
\label{G-in-terms-of-S}
G^{\sharp}_{\bm{\epsilon}_Z}(t)=\frac{e(t)}{2\pi i}S^{\ast}_{Z,\bm{\epsilon}_Z}+t,\quad G^{\ast}_{\bm{\epsilon}_Z}(t)=\lim_{y\to\infty}\frac{e(t)}{2\pi i}S^{\ast}_{y,\bm{\epsilon}_Z}+t,
\end{equation}
where we have already verified that the limit converges almost surely. Now, as in \eqref{uniform-over-y2}, for $y_1\geqslant\max(y_0(\delta),Z)$ we have that
\[ \mathbb{P}\Big(\sup_{y_2>y_1}\|S^{\ast}_{y_2,\bm{\epsilon}_Z}-S^{\ast}_{y_1,\bm{\epsilon}_Z}\|_{\infty}>\delta\Big)\ll\exp(-\delta^2y_1^{1/7}). \]
Using this for $y_1=Z\geqslant y_0(\delta)$, we have that outside an event of probability $\ll\exp(-\delta^2y_1^{1/7})$, $\|S^{\ast}_{y_2,\bm{\epsilon}_Z}-S^{\ast}_{Z,\bm{\epsilon}_Z}\|_{\infty}\leqslant\delta$ for all $y_2>Z$; taking limits as $y_2\to\infty$ and invoking \eqref{G-in-terms-of-S} we conclude that
\begin{equation}
\label{explicit-conclusion-1}
\mathbb{P}\big(\|G^{\ast}_{\bm{\epsilon}_Z}-G^{\sharp}_{\bm{\epsilon}_Z}\|_{\infty}>\delta\big)\ll\exp(-\delta^2Z^{1/7}).
\end{equation}

Finally, for any $Z\geqslant y_0(\delta)$, we may consider the bounded continuous function $\varphi:C^0([0,1],\mathbb{C})\to\mathbb{C}$ defined by
\[ \varphi(f)=\min\big(\delta,\|f-G^{\sharp}_{\bm{\epsilon}_Z}\|_{\infty}\big). \]
Since $(G_{Q,\bm{\epsilon}_Z})\to G^{\ast}_{\bm{\epsilon}_Z}$ in law as $Q\to\infty$, for sufficiently large $Q\geqslant Q_0(\delta,\varepsilon,\bm{\epsilon})$ we have that
\[ |\mathbb{E}(\varphi(G_{Q,\bm{\epsilon}_Z}))-\mathbb{E}(\varphi(G^{\ast}_{\bm{\epsilon}_Z}))|<\varepsilon. \]
Since $\|G^{\ast}_{\bm{\epsilon}_Z}-G^{\sharp}_{\bm{\epsilon}_Z}\|_{\infty}\leqslant\delta$ outside an event of probability $\ll\exp(-\delta^2Z^{1/7})$, we have using Chebyshev's inequality that, for all $Q\geqslant Q_0(\delta,\varepsilon,\bm{\epsilon})$,
\[ \mathbb{P}\big(\|G_{Q,\bm{\epsilon}_Z}-G^{\sharp}_{\bm{\epsilon}_Z}\|_{\infty}>\delta\big)\leqslant(1/\delta)\mathbb{E}(\varphi(G_{Q,\bm{\epsilon}_Z}))\ll (1/\delta)\exp(-\delta^2Z^{1/7})+\varepsilon. \]
Therefore
\begin{equation}
\label{explicit-conclusion-2}
\limsup_{Q\to\infty}\mathbb{P}\big(\|G_{Q,\bm{\epsilon}_Z}-G^{\sharp}_{\bm{\epsilon}_Z}\|_{\infty}>\delta/2\big)\ll\exp(-\delta^2Z^{1/8})
\end{equation}
for all sufficiently large $Z\geqslant Z_1(\delta)$. This completes the proof.
\end{proof}

\bibliographystyle{amsalpha}
\bibliography{gauss-references}

\end{document}